\definecolor{lgreen}{rgb}{0.0, 0.48, 0.0}
\definecolor{lpurple}{rgb}{0.48, 0.0, 0.48}
\definecolor{bblue}{rgb}{0.2, 0.4, 0.8}
\renewcommand{\tocsection}[3]{%
  \indentlabel{\@ifnotempty{#2}{\bfseries\ignorespaces#1 #2\quad}}\bfseries#3}
\renewcommand{\tocsubsection}[3]{%
  \indentlabel{\@ifnotempty{#2}{\ignorespaces#1 #2\quad}}#3}
\definecolor{bblue}{rgb}{0.2, 0.4, 0.8}
\definecolor{bgreen}{rgb}{0.2, 0.6, 0.4}
\definecolor{bred}{rgb}{0.8, 0.4, 0.2}
\definecolor{bviolet}{rgb}{0.7, 0.2, 0.7}
\definecolor{blackred}{rgb}{0.6, 0.3, 0.3}
\definecolor{blackblue}{rgb}{0.3, 0.3, 0.6}
\tikzset{
  treenode/.style = {align=center, inner sep=0pt, text centered,
    font=\sffamily},
  arn_nn/.style = {treenode, circle, bblue, draw=bblue,
    fill=bblue!10,
    minimum width=0.5em, minimum height=0.5em
},
  arn_n/.style = {treenode, circle, bblue, draw=bblue,
    text width=1.5em, very thick,
    fill=bblue!10},
  arn_g/.style = {treenode, circle, bgreen, draw=bgreen,
    text width=1.5em, very thick,
    fill=bblue!10},
  arn_r/.style = {treenode, circle, bviolet, draw=bviolet,
    text width=1.5em, very thick,
    fill=bviolet!10},
  arn_x/.style = {treenode, triangle, draw=black,
    minimum width=0.5em, minimum height=0.5em},
  triangle/.style = {treenode, bred, draw=bred, fill=bred!20, regular polygon, regular polygon
    sides=3, very thick, text width=1.5em },
  triangle_b/.style = {treenode, bblue, draw=bblue,
    fill=bblue!20, regular polygon, regular polygon
    sides=3, very thick, text width=1.5em },
  triangle_g/.style = {treenode, bgreen, draw=bgreen,
    fill=bgreen!20, regular polygon, regular polygon
    sides=3, very thick, text width=1.5em },
  triangle_v/.style = {treenode, bviolet, draw=bviolet,
    fill=bviolet!20, regular polygon, regular polygon
    sides=3, very thick, text width=1.5em },
  triangle_h/.style = {treenode, bblue, draw=bblue,
    fill=gray!20, regular polygon, regular polygon
    sides=3, very thick, text width=1.5em },
  arn_e/.style = {treenode, blackblue, draw=blackblue,
    fill=bblue!10, circle,
    very thick, text width=1.5em },
  arn_w/.style = {treenode, black, draw=black,
    fill=white, circle,
    densely dashed, thick, text width=1.5em }
}
\theoremstyle{definition}
\newtheorem{theorem}{Theorem}
\numberwithin{theorem}{section}
\newtheorem{prop}[theorem]{Proposition}
\newtheorem{lemma}[theorem]{Lemma}
\newtheorem{definition}[theorem]{Definition}
\newtheorem{example}[theorem]{Example}
\newtheorem{remark}[theorem]{Remark}
\renewcommand\autoref\Cref
\renewcommand\doteq\approx
\let\geq\geqslant
\let\leq\leqslant
\newcommand{\set}[1]{\{#1\}}
\newcommand{\seq}[1]{\left(#1\right)}
\def\vec{\boldsymbol}
\newcommand{\idx}[1]{\mbox{\underline{\sf #1}}}
\newcommand{\classL}[1][\infty]{\mathcal{L}_{ #1 }}
\newcommand{\deriv}[2]{\frac{\partial}{\partial #1} #2}
\newcommand{\at}[2][]{#1|_{#2}}
\newcommand{\lterm}{$`l$\nobreakdash-term}
\newcommand{\lterms}{$`l$\nobreakdash-terms}
\newcommand{\lcalculus}{$`l$\nobreakdash-calculus}
\newcommand{\bredex}{$`b$\nobreakdash-redex}
\newcommand{\bredexes}{$`b$\nobreakdash-redexes}
\newcommand{\breduction}{$`b$\nobreakdash-reduction}
\DeclareMathOperator*{\Geom}{\mbox{\sc Geom}}
\numberwithin{equation}{section}
\begin{document}

\title{Statistical properties of lambda terms}
\author{Maciej Bendkowski$^\dagger$}
\address{
    $^\dagger$Theoretical Computer Science Department\\
  Faculty of Mathematics and Computer Science\\
  Jagiellonian University\\
  ul. Prof. {\L}ojasiewicza 6, 30-348 Krak\'ow, Poland.}
\email{maciej.bendkowski@tcs.uj.edu.pl}

\author{Olivier Bodini$^\ddagger$}
\address{
  $^\ddagger$Institut Galilée\\
  Université Paris 13\\
  99 Avenue Jean Baptiste Clément 93430\\
  Villetaneuse, France.
}
\email{\{olivier.bodini, dovgal\}@lipn.univ-paris13.fr}

\author{Sergey Dovgal$^{\ddagger \mathsection \mathparagraph}$}
\address{
  $^\mathsection$Institut de Recherche en Informatique Fondamentale\\
  Université Paris 7\\
  5 Rue Thomas Mann 75013\\
  Paris, France.
}

\address{
$^\mathparagraph$Moscow Institute of Physics and Technology\\
Institutskiy per. 9\\
Dolgoprudny, Russia 141700.
}

\date{\today}
\thanks{Maciej Bendkowski was partially supported within the Polish National
Science Center grant 2016/21/N/ST6/01032 and the French Government Scholarship
within the French-Polish POLONIUM grant number 34648/2016.  Olivier Bodini and
Sergey Dovgal were supported by the French project ANR project MetACOnc,
ANR-15-CE40-0014.}

\begin{abstract}
We present a quantitative, statistical analysis of random lambda terms in the
    de~Bruijn notation. Following an analytic approach using multivariate
    generating functions, we investigate the distribution of various
    combinatorial parameters of random open and closed lambda terms, including
    the number of redexes, head abstractions, free variables or the de~Bruijn
    index value profile. Moreover, we conduct an average-case complexity
    analysis of finding the leftmost-outermost redex in random lambda terms
    showing that it is on average constant. The main technical ingredient of our
    analysis is a novel method of dealing with combinatorial parameters inside
    certain infinite, algebraic systems of multivariate generating functions.
    Finally, we briefly discuss the random generation of lambda terms following
    a given skewed parameter distribution and provide empirical results
    regarding a series of more involved combinatorial parameters such as the
    number of open subterms and binding abstractions in closed lambda terms.
\end{abstract}

\maketitle

\tableofcontents

\section{Introduction}\label{sec:introduction}

Lambda calculus (often abbreviated to \lcalculus) is a functional calculus
proposed by Alonzo Church in the 1930s as an alternative foundation of
mathematics. Although the initial plan failed, due to logical inconsistencies
discovered in Church's naive system, it was quickly realised that
\lcalculus~itself is able to elegantly capture the, by then still informal,
notion of computability, see~\cite{DBLP:series/hhl/CardoneH09}. Nowadays,
\lcalculus~is considered not only as an important theoretical model of
computation, but is also used in practical applications ranging from functional
programming languages~\cite{peytonJones1987}, including the evaluation and
testing of functional programming language
compilers~\cite{Claessen-2000,palka2012}, to automated theorem
provers~\cite{Bertot:2010:ITP:1965123}.

Despite the extensive use of \lterms~(i.e.~formal expressions of \lcalculus) as
computations in functional programming languages or as components of proof
artifacts in various automated theorem provers, quantitative investigations into
the combinatorial or statistical properties of \lterms~were initialised only
quite recently. Motivated by the uniformly random (conditioned on size)
generation of \lterms, Wang~\cite{Wang05generatingrandom} explored a
combinatorial model of \lcalculus~where $`a$\nobreakdash-convertible
\lterms~(i.e.~terms identical up to bound variable names) are considered
equivalent. The central problem of providing asymptotic estimates on the number
of \lterms~in this model remained, however, open.  Some time later, David et
al.~\cite{dgkrtz} investigated a similar model of \lcalculus~where variables do
not contribute to the term size and showed that asymptotically almost all
\lterms~are strongly normalising. In other words, the fraction of \lterms~for
which all evaluation strategies terminate approaches one as the term size tends
to infinity. Likewise, in this model the central problem of giving accurate
estimates on the number of \lterms~of size $n$ remained open. Enumeration problems for
restricted classes of closed affine and linear \lterms, where binders capture at
most and exactly one variable, respectively, were investigated by Bodini, Gardy,
Jacquot and
Gittenberger~\cite{BGGJ2013,BODINI2013227,doi:10.1137/1.9781611973204.3}.  The
class of \lterms~with restricted unary height was considered by Bodini, Gardy
and Gittenberger in~\cite{doi:10.1137/1.9781611973013.3}.

The canonical models of Wang and David et al.~pose considerable difficulties due
to the global, intractable structure of binders (abstractions) and their
associated variables, all considered modulo $`a$\nobreakdash-equivalence.
Explicit variable names, though elegant for manual manipulation, introduce also
problems with substitution of terms for variables, especially when computations
in \lcalculus~are meant to be automatised. For the latter purpose, de~Bruijn
proposed an alternative notation of \lterms, involving non-negative indices
instead of explicit variable names~\cite{deBruijn1972}. This notation was later
adopted by Lescanne~\cite{Lescanne2013,grygiel_lescanne_2013} who proposed a new
combinatorial representation for the enumeration of \lterms. Within this new
representation, \lterms~represent entire $`a$\nobreakdash-equivalence classes in
the former models. Consequently, it became possible to enumerate also open terms
(i.e.~containing free variables) not just closed ones. Let us also remark that
independently, a different combinatorial model was proposed by Tromp who
considered a binary encoding of \lcalculus~meant for the construction of a
compact and efficient self-interpreter with applications to Kolmogorov
complexity~\cite{tromp2006}.  Enumeration problems related to the binary
\lcalculus, as well as the effective random generation of \lterms, were later
studied by Grygiel and Lescanne~\cite{grygiel_lescanne_2015}.

Investigations into quantitative properties of \lterms~in the de~Bruijn notation
were continued by Bendkowski et al.~\cite{Bendkowski2016,BendkowskiGLZ16} who
showed that, in contrast to the canonical representation of David et al.,
asymptotically almost all \lterms~are not strongly normalising. In other words,
the proportion of terms for which all evaluation strategies terminate approaches
zero as the term size tends to infinity. Various size models based on the de
Bruijn notation, such as Tromp’s binary encoding or the so-called natural size
notion introduced by Bendkowski et al.~were later generalised in a common
framework by Gittenberger and Gołębiewski who provided tight lower and upper
asymptotic bounds on the number of closed
\lterms~\cite{gittenberger_et_al:LIPIcs:2016:5741}. Recently, the gap between
both the lower and bounds was closed by Bodini, Gittenberger and
Gołębiewski~\cite{BodiniGitGol17}. Subsequently, efficient sampling methods for
closed terms were developed and the enumeration of closed \lterms~was finally
completed.

In the current paper we propose to deepen the quantitative analysis of
\lcalculus~in the de~Bruijn notation, offering a detailed statistical analysis
of random \lterms. We investigate the distribution of  several combinatorial
parameters related to plain (i.e.~unrestricted) and closed
\lterms.~\autoref{table:basic:statistics} provides a brief overview of the
obtained results.

\begin{table}[hbt!]
    \caption{Comparison of obtained statistics for random \lterms. Listed constants are
    approximated up to the third decimal point. See~\autoref{subsec:lambda:calculus}
    for details on {\sf trivial}.}
    \label{table:basic:statistics}
    \centering
    \begin{tabular}{ccccc}
        \multirow{2}{*}{\textbf{Parameter}}
        &
        \multicolumn{2}{c}{
            \textbf{Mean, $\sim$}
        }
        &
        \multicolumn{2}{c}{
            \textbf{Distribution}
        }
        \\
        \cline{2-3}
        \cline{4-5}
        &
        \textbf{plain}
        &
        \textbf{closed}
        &
        \textbf{plain}
        &
        \textbf{closed}
        \\
        \hline
        Variables
        &
        \multicolumn{2}{c}{$0.307n$}
        &
        \multicolumn{2}{c}{Normal}
        \\
        Abstractions
        &
        \multicolumn{2}{c}{$0.258n$}
        &
        \multicolumn{2}{c}{Normal}
        \\
        Successors
        &
        \multicolumn{2}{c}{$0.129n$}
        &
        \multicolumn{2}{c}{Normal}
        \\
        Redexes
        &
        \multicolumn{2}{c}{$0.091n$}
        &
        \multicolumn{2}{c}{Normal}
        \\
        Index value
        &
        \multicolumn{2}{c}{0.420}
        &
        \multicolumn{2}{c}{Geometric}
        \\
        Redex search time
        &
        6.222 & 6.054
        &
        Discrete & Discrete
        \\
        Head abstractions
        &
        0.420 & 1.447
        &
        Geometric & Discrete
        \\
        \hline
        $m$-openness
        &
        2.019 & 0
        &
        Discrete & {\sf trivial}
        \\
        Free variables
        &
        5.722 & 0
        &
        Discrete & {\sf trivial}
        \\
        Unary height profile
        &
        \multicolumn{2}{c}{$ 0.122 \sqrt{n} $}
        &
        \multicolumn{2}{c}{Rayleigh}
        \\
        Natural height profile
        &
        \multicolumn{2}{c}{$ 0.412 \sqrt{n} $}
        &
        \multicolumn{2}{c}{Rayleigh}
    \end{tabular}
\end{table}

In the current paper, we provide limit laws and asymptotic estimates using
techniques from analytic combinatorics. While plain \lterms~in de~Bruijn size
notion can be analysed using classical methods, the respective analysis of
closed \lterms~requires solving infinite systems of algebraic equations.  Let us
mention that an earlier paper by Drmota, Gittenberger and
Morgenbesser~\cite{infinitesystems} deals with infinite algebraic systems which
are strongly connected and whose Jacobian can be represented as a sum of a
scaled identity operator and an operator whose power is compact.
Here, we develop a general tool meant for analysis of infinite
algebraic systems that resemble in structure systems for closed \lterms, however
do not fit into the framework of Drmota, Gittenberger and Morgenbesser.  In
this context, our result can be considered as a continuation
of~\cite{infinitesystems}.

The paper is structured as follows. In~\autoref{sec:preliminaries} we provide a
concise presentation of preliminary notions and techniques. In particular, we
discuss the de~Bruijn representation of
\lterms~(\autoref{subsec:lambda:calculus}) and introduce the utilised analytic
toolbox (\autoref{subsec:analytic:tools}). We then continue with a fairly
standard analysis of basic statistics corresponding to plain
\lterms~(\autoref{sec:basic:statistics}). Next, we provide an empirical
evaluation of several statistical properties corresponding to plain, closed, and
so-called $h$\nobreakdash-shallow \lterms, i.e.~terms with de~Bruijn indices
whose value does not exceed $h$ (\autoref{sec:empirical:results}).  We give
empirical histograms and relate the discovered distributions with considered
term types, exhibiting some intriguing correlations. In the next section we
develop our main technical tool for investigating combinatorial parameters in
closed \lterms~(\autoref{sec:infinite:systems}). In the subsequent section we
use our advanced marking techniques and study various parameters in closed
\lterms~(\autoref{sec:advanced:marking}).  Finally, we conclude the paper with
remarks and open questions (\autoref{sec:conclusions}).

\section{Preliminaries}\label{sec:preliminaries}

\subsection{Lambda calculus}\label{subsec:lambda:calculus}
\lcalculus~is a theoretical formalism famously equivalent in expressiveness to
Turing machines, see~\cite{barendregt1984}.  In this calculus, computations are
represented as \emph{\lterms} defined by the formal grammar $T ::= x~|~(`lx.
T)~|~(T~T)$ in which $x$ belongs to the countable, infinite alphabet of \emph{variables};
$(`lx. T)$ is an \emph{abstraction} of variable $x$ in $T$; and $(T~T)$ denotes
an \mbox{\emph{application}} of two \lterms. Given an abstraction $(`lx.T)$,
occurrences of $x$ in $T$ are said to be \emph{bound}. Unbound variable
occurrences are said to occur \emph{freely}.

Lambda terms, intended to represent anonymous functions, are executed by means
of the iterated process of \emph{$`b$-reduction}. First, an arbitrary
\emph{$`b$-redex} subterm in form of $(`lx.  N) M$ is selected (if no such
subterm exists, computations are terminated). Next, the selected $`b$-redex is
replaced with $N[x := M]$, i.e.~$N$ in which each occurrence of \( x \) is
substituted, in a \emph{capture-avoiding} manner, by \( M \). While substituting
$M$ for $x$ in $N$ we have to avoid the unintended situation in which free
variable occurrences in $M$ get bound, in other words \emph{captured}, by some
abstractions occurring in $N$.  For instance, let \( N = (\lambda y.x) \) and \(
M = y \). The term \( (\lambda x.N)M \) should not be reduced to $`ly.y$ as, by
doing so, the free variable occurrence $y$ gets bound due to a coincidental
\emph{clash} with the inner abstraction variable name. Certainly, the arbitrary choice of
the formal variable name $y$ should not influence the intended semantics of the
represented computation.  Following this motivation, \lterms~differing only in
bound variable names are considered equivalent (in other words
\emph{$`a$-convertible}). In order to avoid potential name clashes, we can
therefore \emph{rename} bound variable occurrences before proceeding with
$`b$-reduction.  Since there is an infinite supply of available variable names,
it is always possible to avoid variable captures.  Consequently, we can
equivalently $`a$-convert $(`lx.`ly.x) y$ into, say, $(`lx.`lw.x) y$ and proceed
with $(`lx.`lw.x) y \rightarrow_{`b} (`lw. x)[x := y] = `lw. y$.

Though intuitive, explicit variable names pose considerable conceptual and
implementation problems. For instance, consider the terms $`lx.x$ and $`ly.y$.
Although syntactically different, both semantically represent the same anonymous
identity function as $(`lx.x)T \to_{`b} T$ and $(`ly.y)T \to_{`b} T$ for
arbitrary $T$.  In order to facilitate
automatic computations, de~Bruijn proposed a different notation for
\lterms~eliminating in effect the troublesome variable
names~\cite{deBruijn1972}. In his notation, variable occurrences are replaced
with \emph{indices} represented as non-negative integers.  The intention is to
view \lterms~as natural tree-like structures and encode variable occurrences as
indices denoting their relative distance to respective variable binders -- each
index $\idx{n}$ denotes  a variable occurrence $x$ whose relative distance to
its binder (in terms of passed lambda symbols) is equal to $n+1$. For instance,
$\idx{0}$ corresponds to a variable occurrence bound to the nearest abstraction
on its unique path to the root in the associated tree-like representation of the
considered \lterm.  Consequently, $`a$-convertible \lterms~have the same
de~Bruijn representation. In effect, each \lterm~in the de~Bruijn notation
represents an entire $`a$-equivalence class of \lterms~in the classic variable
notation. For instance, both $`lx.x$ and $`ly.y$, being $`a$-convertible, are
represented as $`l \idx{0}$ in the de~Bruijn notation.  The use of de~Bruijn
indices significantly simplifies the automatic substitution operation.  Due to
the lack of explicit variable names, variable captures and name clashes do not
pose implementation issues.

\begin{remark}
There exists a disagreement in the literature whether to start
    de~Bruijn indices with $\idx{0}$ or $\idx{1}$. Although de~Bruijn himself
    assumed the latter~\cite{deBruijn1972}, some authors follow his footsteps,
    see e.g~\cite{grygiel_lescanne_2013,grygiel_lescanne_2015} whereas others
    do not, including $\idx{0}$ in the set of admissible indices, see
    e.g.~\cite{gittenberger_et_al:LIPIcs:2016:5741,BendkowskiGLZ16,bodinitarau2017}.
    Certainly, neither convention is better than the other. In the current
    paper, we follow the convention of starting de~Bruijn indices with $\idx{0}$
    so the keep consistent with the most recent literature.
\end{remark}

\begin{definition}\label{def:lambda:terms}
    Let $\set{\idx{0},\idx{1},\ldots}$ be an infinite, denumerable set of
    available indices. Then, the set $\classL$ of \emph{\lterms~in the de~Bruijn
    notation} is defined inductively as follows:
    \begin{enumerate}
        \item Each index $\idx{n}$ is a \lterm;
        \item If $N$ and $M$ are \lterms, then $(N M)$ is a \lterm;
        \item If $N$ is a \lterm, then $(`l N)$ is a \lterm.
    \end{enumerate}

    Following usual notational conventions, we omit outermost parentheses and
    drop parentheses from left-associated \lterms. For instance, $`l x. `l y. `l
    z. ((x y) z)$ in the classical variable notation is written as $`l`l`l
    \idx{2} \idx{1} \idx{0}$.

    An index occurrence $\idx{n}$ is said to be \emph{bound} in the term $N$ if
    there exist at least $n+1$ lambda symbols on the unique path from $\idx{n}$
    to the root of the associated tree-like representation of $N$, see
    e.g.~\autoref{fig:example:lambda:tree}. Otherwise, $\idx{n}$ is said to be
    occurring \emph{freely} in $N$ and hence corresponds to a free variable in
    the classical \lcalculus~notation. For convenience, we refer to de~Bruijn
    indices both as indices and variables. If each index occurrence in $N$ is
    bound, then $N$ is said to be \emph{closed}.  Otherwise, it is said to be
    \emph{open}. And so, $`l`l`l \idx{2} \idx{1} \idx{0}$ is closed whereas
    $`l`l\idx{2} \idx{1}$ is not as here $\idx{2}$ is not bound.  If prepending
    $N$ with $m$ lambdas turns it into a closed \lterm, then $N$ is said to be
    \emph{$m$-open}.  Certainly, if $N$ is $m$-open, then it is also
    $(m+1)$-open. Moreover, $0$-open \lterms~correspond exactly to closed
    \lterms.  Hence, though $`l`l\idx{2} \idx{1}$ is not closed, it is $1$-open
    as $`l`l`l\idx{2} \idx{1}$ is a closed \lterm.  Finally, we write that a
    \lterm~is \emph{plain} if we mean to denote that it is either open or
    closed, without specifying which case holds.
    \begin{figure}[ht!]
    \centering
\begin{subfigure}{.45\textwidth}
    \centering
\begin{tikzpicture}[>=stealth',level/.style={thick}]
\draw
node[arn_r](A){\( \lambda x \)}
child[level distance=1cm]{
node[arn_r](B){\( \lambda y \)}
child[level distance=1cm]{
node[arn_r](C){\( \lambda z \)}
child[level distance=1cm]{
    node[arn_n]{ \( @ \) }
    child[level distance=1cm]{
        node[arn_n]{ \( @ \) }
        child[sibling distance=.8cm]{
            node[arn_g](D){ $x$ }
        }
        child[sibling distance=.8cm]{
            node[arn_g](E){ $z$ }
        }
    }
    child[level distance=1cm]{
        node[arn_n]{ \( @ \) }
        child[sibling distance=.8cm]{
            node[arn_g](F){ $y$ }
        }
        child[sibling distance=.8cm]{
            node[arn_g](G){ $z$ }
        }
    }
}
}
};
\path[->] (D) edge [bred,dashed,thick,bend left=48] node {} (A);
\path[->] (E) edge [bred,dashed,thick,bend left=75] node {} (C);
\path[->] (F) edge [bred,dashed,thick,bend right=70] node {} (B);
\path[->] (G) edge [bred,dashed,thick,bend right=60] node {} (C);
\end{tikzpicture}
\end{subfigure}
\begin{subfigure}{.45\textwidth}
    \centering
\begin{tikzpicture}[>=stealth',level/.style={thick}]
\draw
node[arn_r](A){\( \lambda \)}
child[level distance=1cm]{
node[arn_r](B){\( \lambda \)}
child[level distance=1cm]{
node[arn_r](C){\( \lambda \)}
child[level distance=1cm]{
    node[arn_n]{ \( @ \) }
    child[level distance=1cm]{
        node[arn_n]{ \( @ \) }
        child[sibling distance=.8cm]{
            node[arn_g](D){ $\idx{2}$ }
        }
        child[sibling distance=.8cm]{
            node[arn_g](E){ $\idx{0}$ }
        }
    }
    child[level distance=1cm]{
        node[arn_n]{ \( @ \) }
        child[sibling distance=.8cm]{
            node[arn_g](F){ $\idx{1}$ }
        }
        child[sibling distance=.8cm]{
            node[arn_g](G){ $\idx{0}$ }
        }
    }
}
}
};
\path[->] (D) edge [bred,dashed,thick,bend left=48] node {} (A);
\path[->] (E) edge [bred,dashed,thick,bend left=75] node {} (C);
\path[->] (F) edge [bred,dashed,thick,bend right=70] node {} (B);
\path[->] (G) edge [bred,dashed,thick,bend right=60] node {} (C);
\end{tikzpicture}
\end{subfigure}
\caption{Two tree-like representations associated with the same example \lterm~--- $`lx.`ly.`lz. x z (y
z)$ and its de~Bruijn notation variant $`l`l`l \idx{2} \idx{0} (\idx{1}
\idx{0})$. Back pointers to abstractions are included for illustrative purposes
only.}\label{fig:example:lambda:tree}
\end{figure}
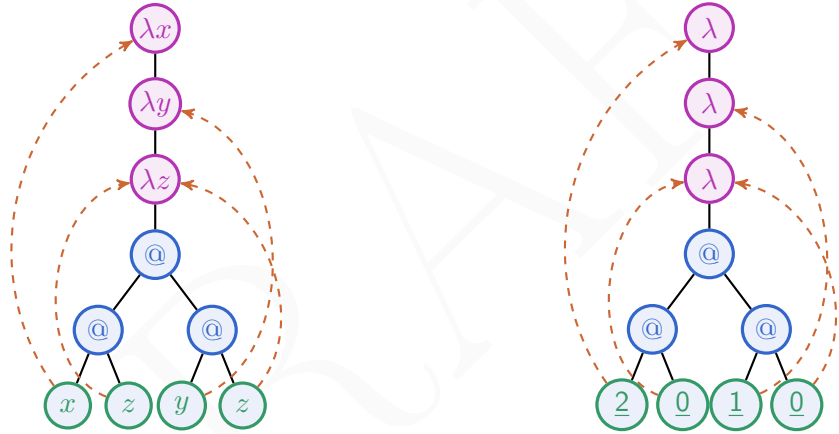
\end{definition}

\subsubsection{Enumeration}
In the current paper we
follow~\cite{Bendkowski2016,BendkowskiGLZ16,gittenberger_et_al:LIPIcs:2016:5741,BodiniGitGol17}
and investigate the statistical properties of random \lterms~in the de~Bruijn
representation. We assume a unary base encoding of indices, i.e.~an encoding in
which $\idx{n}$ is identified with an $n$-fold application of the
\emph{successor} operator ${\sf S}$ to zero. Formally, the set $\classL$ of
\lterms~is described by the following formal grammar:
\begin{align}\label{eq:plain:terms:grammar}
    \begin{split}
        \classL &::= \idx{n}~|~(`l \classL)~|~(\classL \classL)\\
        \idx{n} &::= {\sf 0}~|~{\sf S}~\idx{n} .
    \end{split}
\end{align}
In order to enumerate \lterms, we have to assign a formal notion of \emph{size}
to each term in such a manner that for each available size $n$ the number of terms
of size $n$ is finite. Though various size measures are considered in the
literature, most notably the general size model framework of Gittenberger and
Gołębiewski~\cite{gittenberger_et_al:LIPIcs:2016:5741}, we assume the simple
\emph{natural size notion}~\cite{Bendkowski2016} in which the size of $T$ is
equal to the total number of abstractions, applications, successors and zeros
occurring in $T$. Formally, we define the size of $T$ inductively as follows:
\begin{equation}
  \begin{array}{l@{\hspace*{50pt}}l}
    \begin{array}{lcl}
        | \mathsf{0} | &=& 1\\
        | \mathsf{S}~ \idx{n}| &=& |\idx{n}| + 1
    \end{array}
    &
    \begin{array}{lcl}
      |M \, N| &=& |M| + |N| + 1\\
      |`l M | &=& |M| + 1 .
    \end{array}
  \end{array}
\end{equation}

\begin{example}
    Note that, in general, $\idx{n}$ is of size $n+1$ as it consists of $n$
    successors applied to zero. Consequently, the term $`l`l`l \idx{2} \idx{1}
    \idx{0}$ is of size $11$ as it consists of three $`l$ symbols, two
    applications between \( \idx{2}, \idx{1} \) and \( \idx{0} \), by convention
    omitted in writing, and indices $\idx{0},\idx{1},\idx{2}$ of total size six.
\end{example}

\begin{remark}
It is worth noticing that, with some minor technical overhead, the analysis
    presented in the current paper extends onto the more general size model
    framework of Gittenberger and
    Gołębiewski~\cite{gittenberger_et_al:LIPIcs:2016:5741} including the assumed
    natural size notion as a special case.  We prefer to avoid technicalities
    related to the general size notion and so, for the reader's convenience,
    favour a lucid presentation of the key arguments.
\end{remark}

Let $\classL[m]$ denote the set of $m$-open \lterms,
see~\autoref{def:lambda:terms} (plain terms can be viewed as ``infinitely'' open,
hence the $\infty$ symbol in the subscript of $\classL$).  Like plain
\lterms~\eqref{eq:plain:terms:grammar}, $\classL[m]$ can be described in terms of
a formal, though now infinite, grammar as follows:
\begin{align}\label{eq:m-open:terms:grammar}
    \begin{split}
        \classL[m] &::= (`l \classL[m+1])~|~(\classL[m] \classL[m])~|~ \idx{0},
        \idx{1}, \ldots, \idx{m - 1}\\
        \classL[m+1] &::= (`l \classL[m+2])~|~(\classL[m+1] \classL[m+1])~|~ \idx{0},
        \idx{1}, \ldots, \idx{m}\\
        \ldots & \qquad \ldots
    \end{split}
\end{align}
An $m$-open \lterm~$T$ can take one of the three forms. Either $T$ is in the
form of abstraction followed by an $(m+1)$-open \lterm; or it is an application
of two $m$-open \lterms; or, finally, $T$ is one of the indices
$\idx{0},\idx{1},\ldots,\idx{m-1}$.

Due to the infinite combinatorial specification~\eqref{eq:m-open:terms:grammar}
for $\classL[m]$ standard analytic combinatorics techniques are not readily
applicable. Consequently, enumerating closed \lterms~poses a considerable
challenge. In~\cite{gittenberger_et_al:LIPIcs:2016:5741} a partial solution
bounding the asymptotic growth rate of the number of $m$-open \lterms~of size
$n$ was proposed.  Although both the lower and upper bounds were of the form $C
\rho^n n^{-3/2}$, a typical trait of various tree-like structures, the
asymptotic growth rate of $m$-open terms remained open. Remarkably, some time
later in their joint paper~\cite{BodiniGitGol17} Bodini, Gittenberger and
Gołębiewski closed the remaining gap and confirmed the conjectured $C \rho^n
n^{-3/2}$ form of the asymptotic growth of $m$-open \lterms. Furthermore, two
combinatorial problems related to random closed \lterms~were studied.
Specifically, the number of terms with an \emph{a priori} fixed number of
abstractions and the number of terms in $`b$-normal form, i.e.~without
$`b$-redexes. In this context, our contribution is a natural continuation of
their work. In addition, we offer a different proof of the asymptotic growth
rate of $m$-open \lterms.

\subsection{Analytic tools}\label{subsec:analytic:tools}
In the following subsection we present standard, analytic combinatorics tools
following the exposition of Flajolet and Sedgewick~\cite{flajolet09}.  We also
assume conventional notation corresponding to generating functions, their
coefficients and asymptotic expansions. We refer the reader
to~\cite{flajolet09,Wilf2006} for a detailed exposition.

For our purposes, combinatorial parameter analysis outlines as follows:
\begin{itemize}
    \item Let \( a_{n,k} \) denote the number of plain (closed) lambda terms of
        size \( n \) for which the investigated parameter takes value \( k \).
        Note that we do not assume that the numbers \( a_{n,k} \) are \emph{a
        priori} known.  With such a two-dimensional sequence of numbers we
        associate a bivariate generating function
        \begin{equation}
            A(z, u) := \sum_{n,k \geq 0} a_{n,k} z^n u^k;
        \end{equation}
        In order to simultaneously study several different parameters of
        interest, we introduce \emph{multivariate generating functions}
        in form of
        \begin{equation}
            A(z, \vec u) = \sum_{n, \vec k \geq 0} a_{n, \vec k} z^n \vec
            u^{\vec k}
        \end{equation}
        where \( \vec u = (u_1, \ldots, u_d) \) is a $d$-dimensional variable,
        \( \vec k = (k_1, \ldots, k_d) \) is a $d$-dimensional index satisfying
        \( k_i \geq 0 \), \( \vec u^{\vec k} := u_1^{k_1} u_2^{k_2} \cdots
        u_d^{k_d} \), and \( a_{n, \vec k} \) denotes the number of plain
        (closed) lambda terms of size $n$ for which the investigated parameter
        values equal \( k_1, k_2, \ldots, k_d \), respectively;
    \item Considered combinatorial parameters (patterns) inside plain or closed
        \lterms~are described in terms of admissible combinatorial
        specifications (sometimes infinite, as in the case of closed terms);
\item So obtained specifications are then converted into systems of equations
    involving multivariate generating functions where additional
        variables \( \vec u = \seq{u_1, u_2, \ldots, u_d} \) \emph{mark} associated
        combinatorial patterns;
\item In the case of plain lambda terms, the resulting systems of equations are
    solved, usually approximately, in terms of standard complex-valued functions
        like \( f(z) = \sqrt{1 - z} \).  The coefficients of associated
        generating functions depend on the marking variables \( \vec u =
        \seq{u_1, u_2, \ldots, u_d} \).  In the case of closed lambda terms,
        novel tools developed in~\autoref{sec:infinite:systems} are applied;
    \item Finally, an application of Flajolet and Odlyzko's transfer theorem
        provides access to probability generating functions of the limiting
        probability distributions.  In consequence, properties of investigated
        combinatorial parameters become readily available.
\end{itemize}

\subsubsection{Asymptotic expansions}
In order to access the asymptotic form of the coefficients of corresponding
generating functions, we view them as functions analytic at the origin of the
complex plane and examine their singularities, in particular so-called
\emph{dominant singularities} of smallest possible absolute value. Typically, at
this point, an analytic continuation of the formal power series outside its
circle of convergence is required. The following, usual domain in which the
function is considered, is called a \emph{delta-domain}.

\begin{prop}[Transfer
    theorem~{\cite[Section~VI.3]{flajolet09}}]\label{proposition:transfer:theorem}
    Suppose that \( f(z/\rho) \) is a function analytic in the so-called
    \emph{delta-domain} \( \Delta(R, \phi) \) for some \( R > 1 \) and
    \( 0 < \phi < \frac{\pi}{2} \), where
    \begin{equation}
        \Delta(R, \phi) = \{
            \zeta \colon |\zeta| < R, \ \zeta \neq 1, \
            \arg(\zeta-1) > \phi
        \}.
    \end{equation}
    Suppose that as \( z \to \rho \), for \( z/\rho \in \Delta(R, \phi) \), it holds
    \begin{equation}
        f(z) = h(z) - g(z)
        \left(
            1 - \dfrac{z}{\rho}
        \right)^{-\alpha}
        +
        O
        \left( \left|
            1 - \dfrac{z}{\rho}
        \right|^{-\beta} \right)
    \end{equation}
    where $\alpha, \beta \in \mathbb{C} \setminus \mathbb{Z}_{\leq 0}$,
    and \( h(z) \) and \( g(z) \) are functions analytic in \( |z| < R \).

    Then, as \( n \to \infty \), the coefficients $[z^n]f(z)$
    admit an asymptotic approximation in form of
    \begin{equation}
        [z^n]f(z)
        \sim
        g(\rho)
        \cdot
        \rho^{-n}
        \cdot
        \dfrac{n^{\alpha - 1}}{\Gamma(\alpha)}
        + O \left(
            \rho^{-n} n^{\beta - 1}
        \right)
    \end{equation}
    where $\Gamma \colon
    \mathbb{C} \setminus \mathbb{Z}_{\leq 0} \to \mathbb{C}$ is the
    Gamma function defined as
    \(
        \Gamma(z) = \int_{0}^{\infty}
        x^{z-1} e^{-x} dx
    \).
\end{prop}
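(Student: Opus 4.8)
The plan is to follow the classical singularity-analysis scheme, deriving the coefficient asymptotics directly from Cauchy's integral formula by deforming the contour onto a Hankel-type path anchored at the dominant singularity. First I would normalise the singularity to $1$ via the substitution $\zeta = z/\rho$; writing $\tilde f(\zeta) = f(\rho\zeta)$ we have $[z^n]f(z) = \rho^{-n}[\zeta^n]\tilde f(\zeta)$, and $\tilde f$ inherits both analyticity in $\Delta(R,\phi)$ and a singular expansion of the same shape near $\zeta = 1$, with the roles of $g(z)$ and $h(z)$ now played by $g(\rho\zeta)$ and $h(\rho\zeta)$. This reduces the statement to the case $\rho = 1$, the factor $\rho^{-n}$ reappearing at the very end and $g(\rho)$ emerging as the value of $g(\rho\zeta)$ at $\zeta = 1$.

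With $\rho = 1$, I would split the singular expansion into its three constituents and transfer each separately. The analytic remainder $h$ is analytic in $|z| < R$ with $R > 1$, so Cauchy's bound on a circle of radius $R' \in (1, R)$ gives $[z^n]h(z) = O((R')^{-n})$, which is exponentially negligible against $n^{\alpha - 1}$. For the principal term $g(z)(1 - z)^{-\alpha}$ I would first record the benchmark expansion $[z^n](1 - z)^{-\alpha} = \binom{n + \alpha - 1}{n} \sim n^{\alpha - 1}/\Gamma(\alpha)$, obtainable either from the generalised binomial series together with Stirling's formula or, more robustly, from the Hankel-contour representation of $1/\Gamma$. Since $g$ is analytic at $1$, expanding $g(z) = g(1) + O(1 - z)$ shows that only $g(1)$ contributes to the leading order, the remaining terms producing contributions of order $n^{\alpha - 2}$ absorbed into the error; this yields the announced main term $g(1)\, n^{\alpha - 1}/\Gamma(\alpha)$.

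The heart of the argument, and the step I expect to be the main obstacle, is the big-$O$ transfer: showing that an error of order $O(|1 - z|^{-\beta})$ uniform as $z \to 1$ inside $\Delta$ contributes $O(n^{\beta - 1})$ to the coefficients. Here I would apply Cauchy's formula along a contour $\gamma$ scaled at distance $1/n$ from the singularity --- an inner circular arc of radius $1/n$ around $z = 1$, two rectilinear segments emanating from it at angle $\phi$, and an outer arc of radius $1 + 1/n$ closing the path inside $\Delta$. The delta-domain hypothesis guarantees that $\gamma$ lies in the region of analyticity for $n$ large, and the uniform bound $|f(z)| = O(|1 - z|^{-\beta})$ applies throughout. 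The crux is then bounding $\frac{1}{2\pi i}\oint_\gamma f(z) z^{-n-1}\, dz$ componentwise: on the inner arc the integrand is $O(n^{\beta})$ over a path of length $O(1/n)$; on the rectilinear segments the substitution $z = 1 + t/n$ turns the estimate into a convergent integral times $n^{\beta - 1}$, exploiting $|z^{-n-1}| \leq e^{-ct}$ for a constant $c > 0$; and on the outer arc $|z^{-n-1}| = O(1)$ while its length contributes the matching order. Collecting the three bounds gives $O(n^{\beta - 1})$. Reinstating the factor $\rho^{-n}$ and recombining the three transferred pieces then yields the claimed asymptotic approximation of $[z^n]f(z)$.
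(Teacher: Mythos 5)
The paper gives no proof of this proposition at all---it is quoted directly from Flajolet and Sedgewick, Section~VI.3---so the only sensible benchmark is the standard singularity-analysis argument from that source, which is precisely the route you follow: rescale to $\rho=1$, split the expansion into the analytic part $h$, the principal term $g(z)(1-z)^{-\alpha}$, and the $O$-error, and transfer each piece separately, with the $O$-transfer done via Cauchy's formula on a Hankel-type contour anchored at distance $1/n$ from the singularity. Your handling of $h$ (Cauchy bound on a circle of radius $R'\in(1,R)$), of the benchmark coefficients of $(1-z)^{-\alpha}$, and of the reduction of $g(z)$ to $g(1)$ plus a lower-order correction is correct and is exactly the textbook decomposition.

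There is, however, one step that fails as written: closing the contour with an arc of radius $1+1/n$. On that arc $|z|^{-n-1}\to e^{-1}$, its length is $\Theta(1)$, and away from the singularity the hypothesis only gives $|f(z)|=O(|1-z|^{-\beta})=O(1)$, so the bound you can extract from that arc is $O(1)$, not ``the matching order'' $O(n^{\beta-1})$. This is harmless when $\Re\beta\geq 1$, but the paper invokes the proposition exclusively with error terms of the form $O\left(\left|1-z/\rho\right|^{+1}\right)$, i.e.\ $\beta=-1$ and a claimed coefficient error $O(n^{-2})$; an $O(1)$ contribution from the outer arc swamps that entirely. The standard fix is to close the contour along $|z|=r$ for a \emph{fixed} $r\in(1,R)$, where $|z|^{-n}=r^{-n}$ is exponentially small and annihilates the outer contribution for every $\beta$. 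Relatedly, the two rectilinear segments should leave $z=1$ at an angle strictly greater than $\phi$ (so they stay strictly inside the delta-domain rather than on its boundary) and should extend out to the circle $|z|=r$. With those two adjustments your sketch becomes the complete classical proof.
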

In particular, if for \( z/\rho \in \Delta(R, \phi) \), as \( z \to \rho \),
we have
\begin{equation}
    f_1(z) \sim h_1(z) - g_1(z) \sqrt{1 - \dfrac{z}{\rho}}
    \quad \text{and} \quad
    f_2(z) \sim \dfrac{g_2(z)}{\sqrt{1 - \dfrac{z}{\rho}}}
    ,
\end{equation}
then we obtain, as \( n \to \infty \), the following coefficient expansions:
\begin{equation}
[z^n]f_1(z) \sim \frac{g_1(\rho)\rho^{-n}}{2 \sqrt{\pi} n^{3/2}}
    \quad \text{and} \quad
[z^n]f_2(z) \sim \frac{g_2(\rho)\rho^{-n}}{\sqrt{\pi} n^{1/2}}.
\end{equation}

\begin{prop}[Semi-large powers theorem, {\cite[Theorem IX.16]{flajolet09}}, \cite{banderier2001random}]
    \label{proposition:semilarge:power:theorem}
    Suppose that \( f(z/\rho) \) is a function analytic in delta-domain \(
    \Delta(R, \phi) \), see~\autoref{proposition:transfer:theorem}, for some \( R >
    1 \), and \( f(z) \) admits asymptotic expansion as \( z \to \rho \)
    for \( z/\rho \) staying in \( \Delta(R, \phi) \):
    \begin{equation}
        f(z) \sim 1 - a \sqrt{1 - \dfrac{z}{\rho}}.
    \end{equation}
    Then, for \( x \) in any compact subinterval of \( (0, +\infty) \) the
    coefficient standing by \( z^n \) in \( {f(z)}^k \) admits an asymptotic
    estimate
    \begin{equation}
        [z^n] {f(z)}^k \sim \dfrac{\rho^{-n}}{n}
        S(
            ax
        )
    \end{equation}
    where
    \( S(x) \) is the Rayleigh function satisfying
    \begin{equation}
        S(x) = \dfrac{x e^{-x^2 / 4}}{2 \sqrt \pi}
        \quad \text{and} \quad
        x = \dfrac{k}{\sqrt n}.
    \end{equation}
\end{prop}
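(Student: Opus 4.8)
The plan is to follow the classical singularity-analysis scheme of Flajolet and Odlyzko, but carried out uniformly in the joint regime where the exponent $k$ grows like $\sqrt n$. First I would remove the scale $\rho$: setting $F(\zeta) := f(\rho\zeta)$, the hypothesis that $f(z/\rho)$ is analytic in $\Delta(R,\phi)$ says precisely that $F$ is analytic in $\Delta(R,\phi)$ with $F(\zeta)\sim 1 - a\sqrt{1-\zeta}$ as $\zeta\to 1$, and $[z^n]f(z)^k = \rho^{-n}[\zeta^n]F(\zeta)^k$. This reduces the problem to the case $\rho=1$ and accounts for the prefactor $\rho^{-n}$; from now on I work with $F$ and the branch point at $\zeta=1$.

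Next I would start from Cauchy's formula
\[
[\zeta^n]F(\zeta)^k = \frac{1}{2\pi i}\oint \frac{F(\zeta)^k}{\zeta^{n+1}}\,d\zeta,
\]
and deform the contour, exactly as in the proof of \autoref{proposition:transfer:theorem}, onto a Hankel-type contour hugging $\zeta=1$ from inside $\Delta(R,\phi)$ together with an outer arc; analyticity on the whole delta-domain licenses this deformation, and the standard estimates show that the outer arc contributes only terms exponentially smaller than $\rho^{-n}$. The substitution balancing the two growing quantities is $\zeta = 1 + t/n$: then $\zeta^{-(n+1)}\to e^{-t}$, $d\zeta = dt/n$ (producing the announced $1/n$ factor), and $\sqrt{1-\zeta} = \sqrt{-t/n}$, so that $k\sqrt{1-\zeta} = (k/\sqrt n)\sqrt{-t} = x\sqrt{-t}$. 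The local analysis then rests on writing $F(\zeta)^k = \exp\!\big(k\log F(\zeta)\big)$ and using $\log F(\zeta) = -a\sqrt{1-\zeta} + O(1-\zeta)$ near $\zeta=1$. Under this scaling $k\,(1-\zeta) = O(x/\sqrt n)\to 0$ on the central part of the contour, so the quadratic-and-higher corrections disappear in the limit and $F(\zeta)^k \to e^{-ax\sqrt{-t}}$ pointwise, the two signs of the square root arising on the two banks of the cut. Passing to the limit under the integral sign then yields
\[
[\zeta^n]F(\zeta)^k \sim \frac{1}{n}\cdot\frac{1}{2\pi i}\int_{\mathcal H} e^{-ax\sqrt{-t}}\,e^{-t}\,dt.
\]

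Finally I would evaluate this contour integral in closed form. Expanding $e^{-ax\sqrt{-t}} = \sum_{m\ge 0}\frac{(-ax)^m}{m!}(-t)^{m/2}$ and integrating term by term against Hankel's representation $\frac{1}{2\pi i}\int_{\mathcal H}e^{-t}(-t)^{m/2}\,dt = \frac{1}{\Gamma(-m/2)}$, only the odd values of $m$ survive, since the poles of $\Gamma$ annihilate the even terms. Re-summing the odd terms with $\Gamma(-j-\tfrac12)^{-1} = (-1)^{j+1}(2j+1)!!/(2^{j+1}\sqrt\pi)$ and $(2j+1)! = 2^j j!\,(2j+1)!!$ collapses the series to $\frac{ax}{2\sqrt\pi}\,e^{-(ax)^2/4} = S(ax)$, which is exactly the claimed Rayleigh limit.

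The main obstacle I anticipate is not this formal computation but the uniform control needed to make the limit rigorous: one must bound $F(\zeta)^k$ along the whole rescaled Hankel contour, including its tails where $|t|$ is large, uniformly for $x$ ranging over a fixed compact subinterval of $(0,+\infty)$, so that dominated convergence applies on the contour and the outer-arc contribution is genuinely negligible. The restriction of $x$ to such a subinterval is essential: as $x\to 0$ the leading term $S(ax)$ degenerates and lower-order corrections take over, whereas as $x\to\infty$ one leaves the Rayleigh regime and enters the large-powers (saddle-point) regime, so the single clean estimate $\rho^{-n}S(ax)/n$ can only be expected on compact subintervals bounded away from both $0$ and $\infty$.
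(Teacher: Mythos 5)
Your argument is correct and is essentially the standard proof of this result: the paper itself offers no proof of \autoref{proposition:semilarge:power:theorem} (it is quoted from Flajolet--Sedgewick, Theorem IX.16, and Banderier et al.), and the same Cauchy-integral-plus-Hankel-rescaling scheme \( z = \rho(1 \pm t/n) \), \( h_{n,k}(z) = k\log f(z) - n\log z \), is exactly what the authors recall later in the proof of \autoref{theorem:semilarge:powers:variation}. Your closed-form evaluation of the limit contour integral via termwise Hankel integration correctly yields \( S(ax) \), and you rightly isolate the uniform tail/dominated-convergence control on the rescaled contour as the only part requiring the careful estimates of the cited references.
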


\subsubsection{Algebraic systems}
The following theorem, commonly known as the Drmota--Lalley--Woods theorem, is a
fundamental result obtained independently by several
authors~\cite{drmota1997systems,woods1997coloring,lalley1993finite} in order to
establish limit laws in various families of tree structures specified by
context-free grammars. In our exposition, we reference Drmota's
book~\cite[Section 2.2.5]{drmota2009random}, and
the papers~\cite{drmota1997systems,infinitesystems,pivoteau2012algorithms,bell2010characteristic}.

\begin{definition}
    \label{definition:nested:systems}
    Consider a polynomial system of equations
    \begin{equation}
        \vec y = \vec \Phi(z, \vec y, \vec u)
    \end{equation}
    which is a vector notation for \( \seq{y_j = \Phi_j(z, y_1, \ldots, y_m,
    u_1, \ldots, u_d)} \)
    with $j$ ranging over $1, \ldots, m$. Assume that
    \( \vec \Phi(0, \vec 0, \vec 0) = \vec 0 \). Then,
\begin{itemize}
\item
    $\vec \Phi (z, \vec y, \vec u)$ is said to be \emph{non-linear} if at least one
        of its component polynomials \( \Phi_j \) is non-linear in one of
        the formal variables \( y_1, \ldots, y_m \);
\item
    $\vec \Phi (z, \vec y, \vec u)$ is said to be
        \emph{algebraic positive} if all of its component
        polynomials \( \Phi_j \) have non-negative coefficients;
\item
    $\vec \Phi (z, \vec y, \vec u)$ is said to be
    \emph{algebraic proper} if it admits a unique formal power series solution
        to which the iteration
        \begin{equation}
            \vec y_0(z, \vec u) = \vec 0
            \quad \text{and} \quad
            \vec y_{k+1}(z, \vec u) = \vec \Phi(z,
            \vec y_k(z, \vec u), \vec u)
        \end{equation}
        considered in the metric space of formal power series with valuation,
        converges as $k \to \infty$, and the Jacobian matrix
        \( \dfrac{\partial \vec \Phi}{\partial \vec y} \) is nilpotent at
        \( (z, \vec y) = (0, \vec 0) \).
\item
    $\vec \Phi (z, \vec y, \vec u)$ is said to be \emph{algebraic irreducible}
        if its dependency graph (i.e.~a graph whose vertices are the integers \(
        1, \ldots, m \) and there exists a directed edge \(k \to j \) if \( y_j
        \) figures in a monomial of \( \Phi_k \)) is strongly connected;
\item
    $\vec \Phi (z, \vec y, \vec u)$ is said to be
    \emph{algebraic aperiodic} if each of its component solutions
    \( y_j(z, \vec 1) \) for \( j = 1,\ldots,m \) is aperiodic in the sense that
   the greatest common divisor of the pairwise differences
        of the set of exponent indices of \( z \) within
        \( y_j(z, \vec 1) \) is equal to \( 1 \).
\end{itemize}
\end{definition}

\begin{remark}
    The notion of algebraic properness of systems, also referred to as
    \emph{well-foundedness}, is extensively studied in~\cite[Section
    5]{pivoteau2012algorithms}.  As discussed
    in~\cite{pivoteau2012algorithms,joyal1981theorie}, the system has
    combinatorial meaning only if the Jacobian is nilpotent, i.e.~if the
    recursive definition is well-defined and allows to inductively construct all
    the instances of combinatorial species.

    Let us note that the condition \( \vec \Phi (0, \vec 0, \vec 0) = \vec 0 \)
    is a technical assumption of the Drmota--Lalley--Woods theorem.  Pivoteau,
    Salvy and Soria consider, \emph{inter alia}, well-founded systems for which
    \( \vec y(0, \vec 0) \neq \vec 0 \).  One possible characterisation of such
    systems is the condition that the limit of a suitable iterative
    approximation procedure yields the solution of the initial functional
    system.

    The assumption that a system is \emph{polynomial} can be replaced by a more
    general assumption that the functions are \emph{analytic},
    see~\cite{drmota1997systems}. For a detailed and non-trivial study of the
    conditions regarding analytic functions and the configuration of the
    critical points in this more general case,
    see~\cite{bell2010characteristic}.
\end{remark}

\begin{prop}[Irreducible positive polynomial systems]
    \label{proposition:irreducible:polynomial:systems}
    Let
    \begin{equation}
        \vec y = \vec \Phi (z, \vec y, \vec u)
        = \seq{y_j = \Phi_j(z, y_1, \ldots, y_m, \vec u)}
        , \quad
        j = 1, \ldots, m
    \end{equation}
    be a non-linear polynomial system of equations which
    is algebraic positive, proper, and irreducible.
    Then there exists \( \varepsilon > 0 \) such that
    all component solutions \( y_j(z, \vec u) \)
    admit representation of the form
    \begin{equation}
        y_j(z, \vec u) =
        h_j \left(
            \sqrt{1 - \dfrac{z}{\rho(\vec u)}}, \vec u
        \right)
        = \sum_{k \geq 0} c_{k,j}(\vec u)
        \left(
            1 - \dfrac{z}{\rho(\vec u)}
        \right)^{k/2}
    \end{equation}
    for \( \vec u \) in a neighbourhood of \( \vec 1 \),
    \( |z - \rho(\vec u)| < \varepsilon \) and
    \( \arg(z - \rho(\vec u)) \neq 0 \), where
    \( c_{k,j}(\vec u) \) and \( \rho(\vec u) \)
    are analytic functions of \( \vec u \), and the functions
    \( h_j(t, \vec u) \) are analytic at \( (t, \vec u) = (0, \vec 1) \).
    In addition, if the system is algebraic
    aperiodic, then all \( y_j \) have \( \rho(\vec u) \) as their unique dominant
    singularity, and there exist constants \( 0 < \delta < \pi/2 \) and \( \eta
    > 0 \) such that \( \vec y(z, \vec u) \) is analytic in a region of the form
    \begin{equation}
        \Delta := \{
            z \colon
            |z| < \rho(\vec 1) + \eta,
            |\arg(z / \rho(\vec u) - 1)| > \delta
        \}.
    \end{equation}
\end{prop}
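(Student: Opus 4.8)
The plan is to follow the classical proof of the Drmota--Lalley--Woods theorem, first treating the univariate specialisation \( \vec u = \vec 1 \) and then promoting the analysis to a neighbourhood of \( \vec 1 \) by analytic perturbation. First I would fix \( \vec u = \vec 1 \) and record that, since the system is algebraic proper, \( \vec y(z, \vec 1) \) is the unique formal power series solution obtained as the limit of the iteration \( \vec y_{k+1} = \vec \Phi(z, \vec y_k, \vec 1) \); by positivity all coefficients are non-negative, so each \( y_j(z, \vec 1) \) has a finite radius of convergence \( \rho_j \) with a singularity on the positive real axis. Irreducibility (strong connectivity of the dependency graph) forces all the \( \rho_j \) to coincide with a common value \( \rho := \rho(\vec 1) \), since a singularity of one coordinate propagates through the directed cycles of the dependency graph to every other coordinate.

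The central step is to identify the nature of the singularity at \( \rho \). Here I would invoke Perron--Frobenius theory applied to the Jacobian \( J(z) := \partial \vec \Phi / \partial \vec y \) evaluated along the solution. For \( 0 < z < \rho \) the matrix \( J(z) \) is non-negative and, by irreducibility, irreducible in the Perron--Frobenius sense, hence possesses a simple dominant eigenvalue \( \lambda(z) \) with strictly positive left and right eigenvectors \( \vec \ell(z), \vec r(z) \). One shows \( \lambda(z) \) increases to \( 1 \) as \( z \uparrow \rho \), and that the system becomes singular precisely when \( \det(I - J) = 0 \), i.e.~when \( \lambda(\rho) = 1 \). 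Projecting the vector equation onto the Perron eigenvectors reduces the local analysis to a single scalar equation \( Y = \psi(z, Y) \) in which \( \partial_Y \psi = 1 \) at the critical point, while \( \partial_Y^2 \psi \neq 0 \) (this is exactly where the \emph{non-linearity} hypothesis enters) and \( \partial_z \psi \neq 0 \); by the Weierstrass preparation theorem, equivalently a second-order implicit function argument, this yields the square-root singular expansion \( y_j(z, \vec 1) = h_j(\sqrt{1 - z/\rho}, \vec 1) \) with \( h_j \) analytic at the origin.

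To obtain the dependence on \( \vec u \), I would consider the \emph{characteristic system} consisting of \( \vec y = \vec \Phi(z, \vec y, \vec u) \) together with the singularity condition \( \det(I - \partial_{\vec y} \vec \Phi) = 0 \), viewed as equations in \( (z, \vec y) \) with parameter \( \vec u \). Non-degeneracy of the relevant Jacobian at \( (\rho, \vec y(\rho, \vec 1), \vec 1) \), guaranteed by the simplicity of the Perron eigenvalue and the non-vanishing of \( \partial_Y^2 \psi \), lets me apply the analytic implicit function theorem, so that \( \rho(\vec u) \) and the branch-point data, hence the coefficients \( c_{k,j}(\vec u) \), extend analytically to a neighbourhood of \( \vec 1 \), and the square-root expansion persists uniformly.

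Finally, under the aperiodicity hypothesis I would rule out competing singularities on the circle \( |z| = \rho(\vec u) \). The greatest-common-divisor condition on the exponents of \( z \) in \( y_j(z, \vec 1) \) prevents the solution from being a power series in \( z^p \) for any \( p > 1 \); a Daffodil-lemma argument on the non-negative coefficients then shows that \( \rho(\vec u) \) is the \emph{unique} dominant singularity, and continuity of \( \rho(\vec u) \) with a compactness argument lets me fix uniform constants \( \delta, \eta \) so that \( \vec y(z, \vec u) \) continues analytically into the delta-domain \( \Delta \). The main obstacle I anticipate is the reduction to the scalar equation together with the verification that \( \partial_Y^2 \psi \neq 0 \): controlling the second-order behaviour of \( \psi \) through the eigenvector projections, and ensuring this non-degeneracy is inherited uniformly for \( \vec u \) near \( \vec 1 \), is where the positivity and irreducibility hypotheses must be used most carefully.
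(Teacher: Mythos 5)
This proposition is quoted in the paper as the classical Drmota--Lalley--Woods theorem, with pointers to Drmota's book and the original papers; the paper itself supplies no proof. Your sketch is a correct outline of exactly the standard argument found in those references (Pringsheim plus irreducibility to get a common singularity, Perron--Frobenius reduction to a scalar equation with a square-root branch point, the characteristic system and the implicit function theorem for the dependence on \( \vec u \), and a daffodil-lemma argument under aperiodicity), so there is nothing to compare beyond noting that it matches the cited proof rather than anything done in this paper.
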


\begin{remark}
    The above Drmota--Lalley--Woods theorem has been further generalised by
    Drmota, Gittenberger and Morgenbesser in the case of strongly connected
    systems with infinitely many equations~\cite{infinitesystems}. In their
    generalisation, the authors require that the Jacobian of the system (or some
    of its power) is a compact operator.  Alas, as the system corresponding to
    closed \lterms~is not strongly connected, it does not fit into their
    framework. In the current paper we introduce a new condition of
    \emph{exponential convergence} which is independent of the Jacobian and
    conjecture that it is crucial for obtaining the respective Puiseux
    expansions of generating functions.
\end{remark}

\begin{prop}[Differential condition for the systems of equations~{\cite[see
    proof of Theorem 1]{infinitesystems}}]
\label{proposition:dominant:singularity}
    Let \( \vec y = \vec \Phi (z, \vec y) \) be a non-linear system of
    polynomial  equations \(y_j = \Phi_j(z, y_1, \ldots, y_m) \) with $j$
    ranging over $1,\ldots,m$. Assume that \( \vec y = \vec \Phi (z, \vec y) \)
    is algebraic positive, proper and irreducible.  Let \( \rho \) be the common
    singularity of its solution vector \( y_j \).  Then, the spectral radius
    (largest absolute value of its eigenvalues) of the Jacobian matrix \(
    \dfrac{\partial \vec \Phi}{\partial \vec y}\) is a strictly increasing
    function of \( z \) on the interval \([0, \rho]\) and is bounded from above
    by \( 1 \), with the equality holding if and only if \( z = \rho \).
\end{prop}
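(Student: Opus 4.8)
The plan is to study the Jacobian along the real, positive branch of the solution and to combine Perron--Frobenius theory with the implicit function theorem. Write $J(z) := \frac{\partial \vec \Phi}{\partial \vec y}(z, \vec y(z))$ for the Jacobian evaluated along the solution vector, regarded as a function of the real variable $z \in [0, \rho]$. First I would record that, since $\vec \Phi$ is algebraic positive, each partial derivative $\partial \Phi_i / \partial y_j$ is a polynomial with non-negative coefficients; composing with the components $y_k(z)$, themselves power series with non-negative coefficients convergent on $[0, \rho]$ (the finiteness of $\vec y(\rho)$ being guaranteed by the square-root singularity of \autoref{proposition:irreducible:polynomial:systems}), shows that every entry $J_{ij}(z)$ is a non-negative, non-decreasing function of $z$. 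Irreducibility of the system means its dependency graph is strongly connected, which is exactly the statement that $J(z)$ is an irreducible non-negative matrix for $z \in (0, \rho]$. Perron--Frobenius theory then furnishes, for each such $z$, a dominant eigenvalue $\lambda(z)$ equal to the spectral radius, which is simple and admits strictly positive right and left eigenvectors $\vec v(z), \vec w(z) > 0$.

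For strict monotonicity I would differentiate the simple eigenvalue. As $\lambda(z)$ is a simple root of the characteristic polynomial, it depends analytically on $z$ on $(0, \rho)$ and standard first-order perturbation theory yields
\[
  \lambda'(z) = \frac{\vec w(z)^{T} J'(z)\, \vec v(z)}{\vec w(z)^{T} \vec v(z)}.
\]
The denominator is positive because both eigenvectors are positive. For the numerator, differentiating $J_{ij}(z)$ by the chain rule produces a sum of second partial derivatives of $\Phi_i$ — again non-negative polynomials — weighted by the $y_k'(z) \geq 0$; since the system is \emph{non-linear}, at least one $\Phi_i$ carries a genuinely non-linear monomial, forcing the corresponding entry of $J'(z)$ to be strictly positive for $z \in (0, \rho)$. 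Hence $\lambda'(z) > 0$, and by continuity of the spectral radius up to the endpoints $\lambda$ is strictly increasing on all of $[0, \rho]$, with $\lambda(0) = 0$ because $J(0)$ is nilpotent by properness.

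It remains to locate the value $1$. For $z \in [0, \rho)$ the solution $\vec y(z)$ is analytic, so the implicit function theorem cannot fail there; equivalently $\det(I - J(z)) \neq 0$, so $1$ is never an eigenvalue of $J(z)$. Combined with $\lambda(0) = 0$, continuity, and strict monotonicity, this gives $\lambda(z) < 1$ throughout $[0, \rho)$. At the singularity, the Puiseux expansion of \autoref{proposition:irreducible:polynomial:systems} exhibits a square-root branch point, whose occurrence is precisely the breakdown of the implicit function theorem, that is $\det(I - J(\rho)) = 0$; thus $1$ is an eigenvalue of $J(\rho)$ and $\lambda(\rho) \geq 1$. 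Continuity from the left forces $\lambda(\rho) \leq 1$, whence $\lambda(\rho) = 1$, with equality attained only at $\rho$.

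The hard part will be the two boundary justifications rather than the interior monotonicity, namely that $\vec y(\rho)$ is finite and $\lambda$ extends continuously to $z = \rho$, and that the singularity at $\rho$ is of the \emph{Jacobian-degenerate} type $\det(I - J(\rho)) = 0$ rather than a genuine blow-up of some $y_j$. Both hinge on the analytic structure already encoded in \autoref{proposition:irreducible:polynomial:systems}; I would make this dependence explicit, since it is exactly this structural input — that positive, proper, irreducible systems develop only square-root singularities — that rules out the degenerate alternatives and pins the Perron root to $1$ at the dominant singularity.
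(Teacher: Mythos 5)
First, a structural remark: the paper does not prove this proposition at all --- it is imported verbatim from the proof of Theorem~1 of~\cite{infinitesystems} (cf.~Drmota's book), so there is no internal proof to compare against. Your argument follows the same standard route as that source: Perron--Frobenius theory for the non-negative irreducible Jacobian $J(z)$ along the positive branch, first-order perturbation of the simple dominant eigenvalue for strict monotonicity, and an implicit-function-theorem dichotomy at the endpoints. The interior part is sound: non-negativity and monotonicity of the entries of $J(z)$, positivity of $\vec w^{T}J'(z)\vec v$ via a non-linear monomial (you omit the direct term $\partial^2\Phi_i/\partial z\,\partial y_j$ in $J'$, but it is non-negative and only helps), and $\lambda(0)=0$ from nilpotency.

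The genuine gap is the sentence ``the implicit function theorem cannot fail there; equivalently $\det(I-J(z))\neq 0$.'' The implicit function theorem is a \emph{sufficient} condition for local analytic continuation, not a necessary one, so analyticity of $\vec y$ on $[0,\rho)$ does not by itself exclude $\det(I-J(z_1))=0$ at some $z_1<\rho$. To close this you must again use positivity: differentiating the fixed-point equation gives $(I-J(z))\,\vec y'(z)=\partial_z\vec\Phi(z,\vec y(z))$; if $\lambda(z_1)=1$ for some $z_1\in(0,\rho)$, then $1$ is the Perron eigenvalue of $J(z_1)$ and admits a strictly positive left eigenvector $\vec w$, whence $\vec w^{T}\partial_z\vec\Phi=\vec w^{T}(I-J)\vec y'=0$, contradicting $\partial_z\vec\Phi\succeq\vec 0$ with at least one strictly positive component (the system must genuinely involve $z$). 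Combined with $\lambda(0)=0$, continuity and strict monotonicity, this yields $\lambda<1$ on $[0,\rho)$, after which your endpoint argument ($\vec y'(z)\to\infty$ along the square-root branch forces $I-J(\rho)$ to be singular, hence $\lambda(\rho)=1$) goes through. One further caveat: you lean on \autoref{proposition:irreducible:polynomial:systems} for the finiteness of $\vec y(\rho)$ and for the square-root branch point at $\rho$. Inside this paper both propositions are quoted facts, so that dependence is admissible; but in the cited source the present differential condition is a \emph{lemma used to prove} the Puiseux expansion, so your derivation would be circular if offered as a self-contained proof of the Drmota--Lalley--Woods machinery rather than as a consequence of it.
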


\subsection{Limit laws}\label{subsec:limit:laws}
Consider a bivariate generating function \( L(z, u) \) with non-negative
coefficients and a sequence of random variables \( (X_n)_{n \geq 0} \) such that
\begin{equation}
    L(z, u) = \sum_{n, k \geq 0} a_{n,k} z^n u^k
    \quad \text{and} \quad
    \mathbb P (X_n = k) = \dfrac{a_{n,k}}{\sum_{j \geq 0} a_{n,j}}.
\end{equation}
We say that \emph{$X_n$ is associated with variable $u$}.  In order to
understand the limiting behaviour of \( X_n \) we investigate the
\emph{probability generating function} \( p_n(u) \) of \( X_n \) defined as
\begin{equation}
    p_n(u) :=
    \sum_{k \geq 0} \mathbb P(X_n = k) u^k =
    \dfrac{[z^n] L(z, u)}{[z^n] L(z, 1)}
    .
\end{equation}

Once accessed, \( p_n(u) \) proves extremely useful in establishing the traits
of $X_n$ as $n$ tends to infinity. In what follows, we focus on two types of
limiting distributions. The first type is related to the case of a so-called
\emph{fixed} singularity, which results in discrete limit law; the second type
is related to so-called \emph{moving} singularity, and typically results in
a Gaussian limit law.

\subsubsection{Discrete limit laws}
\begin{prop}[{\cite[Section IX.2]{flajolet09}}]
    \label{proposition:discrete:limit:laws}
    Suppose that bivariate power series \( L(z, u) \) admits in a complex
    neighbourhood of \( u = 1 \) a Puiseux series expansion in form of
    \begin{equation}
        L(z, u) = \alpha(u) - \beta(u) \sqrt{1 - \dfrac{z}{\rho}}
        + O \left( \left|
            1 - \dfrac{z}{\rho}
        \right| \right)
    \end{equation}
    as \( z \to \rho \) uniformly in delta-domain \( \Delta(R) \) for some \( R
    > \rho \) (see~\autoref{proposition:transfer:theorem}).  Then, the random
    variable \( X_n \) associated with the marking variable \( u \) converges
    in distribution to a discrete limiting distribution with probability
    generating function
    \begin{equation}
        p(u) = \lim_{n \to \infty} p_n(u) = \dfrac{\beta(u)}{\beta(1)}.
    \end{equation}
    The corresponding mean values satisfy
    \begin{equation}
        \lim_{n \to \infty} \mathbb E X_n = \dfrac{\beta'(1)}{\beta(1)}.
    \end{equation}
\end{prop}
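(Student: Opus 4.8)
The goal is a discrete limit law: given the Puiseux expansion
\[
  L(z,u) = \alpha(u) - \beta(u)\sqrt{1 - \tfrac{z}{\rho}} + O\!\left(\left|1 - \tfrac{z}{\rho}\right|\right),
\]
show that $p_n(u) \to \beta(u)/\beta(1)$ and that the mean converges to $\beta'(1)/\beta(1)$. The essential point is that the singularity at $\rho$ is \emph{fixed} — it does not move as $u$ varies near $1$ — so the asymptotics of $[z^n]L(z,u)$ in $n$ are governed uniformly by the coefficient $\beta(u)$ of the $\sqrt{1-z/\rho}$ term. The plan is to extract $[z^n]L(z,u)$ via singularity analysis, take the ratio defining $p_n(u)$, and pass to the limit.

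\textbf{Step 1: coefficient asymptotics for fixed $u$.} For each $u$ in the complex neighbourhood of $1$, I would apply the transfer theorem (\autoref{proposition:transfer:theorem}) to the expansion of $L(z,u)$. The analytic part $\alpha(u)$ contributes nothing to $[z^n]$ for $n \geq 1$, while the singular term $-\beta(u)\sqrt{1-z/\rho}$ falls under the case $f_1$ following that proposition with exponent $\alpha = -\tfrac12$, giving
\[
  [z^n] L(z,u) \sim \frac{\beta(u)\,\rho^{-n}}{2\sqrt{\pi}\,n^{3/2}},
\]
with the $O(|1-z/\rho|)$ remainder transferring to a term of order $\rho^{-n} n^{-2}$, hence negligible. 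In particular, setting $u=1$ yields $[z^n]L(z,1) \sim \beta(1)\rho^{-n}/(2\sqrt{\pi}\,n^{3/2})$.

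\textbf{Step 2: the ratio and the limit.} Forming the probability generating function,
\[
  p_n(u) = \frac{[z^n] L(z,u)}{[z^n] L(z,1)}
  \xrightarrow{\;n\to\infty\;} \frac{\beta(u)}{\beta(1)},
\]
the common factor $\rho^{-n} n^{-3/2}/(2\sqrt{\pi})$ cancelling exactly. Because convergence of probability generating functions in a neighbourhood of $u=1$ entails convergence in distribution of the $X_n$, this gives the claimed discrete limit law with limiting probability generating function $p(u) = \beta(u)/\beta(1)$. For the mean, I would differentiate: $\mathbb{E} X_n = p_n'(1)$, and then the convergence $p_n(u) \to p(u)$ yields $\lim_n \mathbb{E} X_n = p'(1) = \beta'(1)/\beta(1)$ since $p(1) = 1$.

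\textbf{Main obstacle.} The one delicate point is justifying that the limit may be taken \emph{termwise} — i.e.\ that the convergence $p_n(u) \to \beta(u)/\beta(1)$ is uniform on a neighbourhood of $u=1$, so that differentiation and passage to the limit commute and the convergence of generating functions genuinely implies convergence in distribution together with convergence of moments. This is exactly what the hypothesis ``uniformly in delta-domain $\Delta(R)$'' is designed to supply: the transfer theorem's error term can be controlled uniformly in $u$, so that the $o(1)$ in $p_n(u) = \beta(u)/\beta(1) + o(1)$ holds uniformly. I would invoke this uniformity to appeal to the continuity theorem for probability generating functions, and to differentiate under the limit for the mean.
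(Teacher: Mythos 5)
The paper does not prove this proposition---it is quoted directly from Flajolet--Sedgewick \cite[Section IX.2]{flajolet09}---and your argument is precisely the standard proof of that result: term-by-term transfer of the fixed square-root singularity (the $O(|1-z/\rho|)$ remainder contributing only $O(\rho^{-n}n^{-2})$), cancellation of the common $\rho^{-n}n^{-3/2}/(2\sqrt{\pi})$ factor in the ratio $p_n(u)$, and the continuity theorem for probability generating functions, with the uniformity in $u$ on a complex neighbourhood of $u=1$ (together with the implicit hypothesis $\beta(1)\neq 0$) justifying both the passage to the limit and the differentiation needed for $\lim_n \mathbb{E}X_n = \beta'(1)/\beta(1)$. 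Your proposal is correct and takes essentially the same route as the cited source.
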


\subsubsection{Central limit theorem}
\begin{remark}\label{remark:mclt:quasi-powers}
In 1983, Bender and Richmond~\cite{BenderCLT} proved a multi-dimensional variant
    of the central limit theorem for probability generating functions taking the
    quasi-power form \mbox{\( p_n(\vec u) \sim A(\vec u) B(\vec u)^n \)}. This
    line of research was later continued by Hwang~\cite{hwang1998convergence} who
    established precise rates of convergence in the one-dimensional case.  The
    two-dimensional case by was next  investigated by
    Heuberger~\cite{heuberger2007hwang}.  More recently, in 2016, the full
    multi-dimensional version has been resolved by Heuberger and
    Kropf~\cite{higherdimensional2016quasipower} using a multi-dimensional
    version of the Berry--Esseen inequality.  Although we do not touch on the
    rates of convergence in the current paper, let us mention that they can be
    obtained using the above results.
\end{remark}

In order to formulate the multivariate central limit theorem, it is convenient
to introduce the notion of logarithmic derivative which enters the mean value
and the covariance matrix of the resulting random variable.
\begin{definition}
    The logarithmic derivative of $A(z,u)$ is given by the expression
\begin{equation}
    \dfrac{\partial}{\partial \log u} A(z, u)
    :=
    \left.\dfrac{\partial}{\partial \eta}
    A(z, e^\eta)\right|_{\eta = \log u}
    =
    u\dfrac{\partial}{\partial u} A(z, u).
\end{equation}
\end{definition}

\begin{prop}[Multivariate central limit theorem,~{\cite[Theorem 1]{BenderCLT}}]
\label{proposition:multivariate:clt}
Let \( (\vec X_n)_{n=1}^\infty \) be a sequence of coordinate-wise non-negative \( d
    \)-dimensional discrete random vectors with probability generating functions
    \( p_n(\vec u) := \mathbb E(\vec u^{X_n}) \),
    \( \vec u = (u_1, u_2, \ldots, u_d) \). Suppose that uniformly in a
    fixed complex neighbourhood of \( \vec u = \vec 1 \) one has
    \begin{equation}
    p_n(\vec u) \sim A(\vec u) \cdot B(\vec u)^{n}
    \end{equation} where \(
    A(\vec u) \) is uniformly continuous and \( B(\vec u) \) has a quadratic
    Taylor series expansion with error term \( O\big( \sum| u_k - 1|^3\big) \).
    Assume that \( B(\vec u) \) satisfies the following \emph{variability
    condition}:
    \begin{equation}\label{eq:variability:condition}
    \det
    \left[
        \dfrac
        {\partial^2 \log B(\vec u)}
        {\partial \log u_i \, \partial \log u_j}
    \right]_{i,j} > 0.
\end{equation}
Then, the sequence of random variables \( \vec X_n \), after standardization,
    converges in law to Gaussian random variable satisfying
\begin{equation}
    \dfrac{\vec X_n - \mathbb E \vec X_n}{\sqrt{n}} \overset{d}\to
\mathcal N\left(\vec 0, \Sigma\right).
\end{equation}
The mean vector and the covariance matrix satisfy
    \begin{equation}\label{eq:multivariate:clt:mean:covariance:matrix}
    \mathbb E \vec X_n
    \sim
    n \cdot \left.\dfrac{\partial B}{\partial \vec u}\right|_{\vec u = \vec 1}
        \quad \text{and} \quad
    \mathrm{Cov}\, \vec X_n \sim
    n \cdot
    \left.
    \left[
        \dfrac{\partial^2 \log B(\vec u)}{\partial \log u_i \partial \log u_j}
    \right]_{i,j}
    \right|_{\vec u = 1}.
\end{equation}
    In the one-dimensional
    case~\eqref{eq:multivariate:clt:mean:covariance:matrix} simplifies to
    \begin{equation}\label{eq:multivariate:clt:one:dimension:exp:var}
        \mathbb E(X_n) \sim n B'(1) \quad \text{and} \quad
        \mathbb V(X_n) \sim n \left(B''(1) + B'(1) - {B'(1)}^2\right).
\end{equation}
\end{prop}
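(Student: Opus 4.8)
The plan is to follow the classical quasi-powers route via characteristic functions. Since \( p_n(\vec u) = \mathbb E(\vec u^{\vec X_n}) \) is a probability generating function, the characteristic function of \( \vec X_n \) is recovered by the substitution \( \vec u = e^{i \vec t} \), that is \( \mathbb E(e^{i \vec t \cdot \vec X_n}) = p_n(e^{it_1}, \ldots, e^{it_d}) \). To access the limiting law of the standardised vector \( \vec Y_n := (\vec X_n - \mathbb E \vec X_n)/\sqrt n \), I would study its characteristic function
\[
    \mathbb E\!\left(e^{i \vec t \cdot \vec Y_n}\right)
    = \exp\!\left(- \tfrac{i}{\sqrt n}\, \vec t \cdot \mathbb E \vec X_n\right)\,
    p_n\!\left(e^{it_1/\sqrt n}, \ldots, e^{it_d/\sqrt n}\right),
\]
and prove that for every fixed \( \vec t \) it converges to the characteristic function \( \exp(-\tfrac12 \vec t^{\top} \Sigma \vec t) \) of the centred Gaussian \( \mathcal N(\vec 0, \Sigma) \). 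The asserted convergence in law then follows from the multivariate Lévy continuity theorem.

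The heart of the argument is a second-order Taylor expansion of \( \log B \). Because \( p_n(\vec 1) = 1 \) forces \( A(\vec 1) = B(\vec 1) = 1 \), it is natural to work with the logarithmic derivatives introduced above: writing \( u_j = e^{\eta_j} \) and \( g(\vec \eta) := \log B(e^{\vec \eta}) \), the assumed quadratic expansion of \( B \) with error \( O(\sum |u_k - 1|^3) \) gives
\[
    g(\vec \eta) = \sum_j \mu_j \eta_j
    + \tfrac12 \sum_{j,k} \Sigma_{jk}\, \eta_j \eta_k + O(|\vec \eta|^3),
\]
where \( \mu_j \) and \( \Sigma_{jk} \) are respectively the first and second logarithmic derivatives of \( B \) at \( \vec u = \vec 1 \). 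Substituting \( \eta_j = i t_j / \sqrt n \) and multiplying by \( n \) yields
\[
    n\, g\!\left(\tfrac{i \vec t}{\sqrt n}\right)
    = i \sqrt n\, \vec \mu \cdot \vec t
    - \tfrac12 \vec t^{\top} \Sigma \vec t + O(n^{-1/2}).
\]
Taking logarithms in the quasi-power approximation gives \( \log p_n(\vec u) = \log A(\vec u) + n \log B(\vec u) + o(1) \) uniformly near \( \vec 1 \); since \( \log A(e^{i \vec t/\sqrt n}) \to \log A(\vec 1) = 0 \), the term \( n \log B \) dominates. The mean estimate \( \mathbb E \vec X_n \sim n\, \vec \mu \) (the ordinary and logarithmic first derivatives agreeing at \( \vec u = \vec 1 \) because \( B(\vec 1) = 1 \)) then ensures that the centring factor cancels exactly the linear term \( i \sqrt n\, \vec \mu \cdot \vec t \), leaving \( \exp(-\tfrac12 \vec t^{\top} \Sigma \vec t) \) in the limit. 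The variability condition \( \det \Sigma > 0 \) guarantees that \( \Sigma \) is positive definite, hence that the limiting Gaussian is non-degenerate, and reading off the quadratic term identifies the covariance matrix as claimed.

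The main obstacle is analytic bookkeeping of the uniformity and error terms rather than any conceptual difficulty. One must check that (i) the quasi-power approximation \( p_n \sim A B^n \) holds uniformly as \( \vec u \) approaches \( \vec 1 \) along the shrinking path \( \vec u = e^{i \vec t / \sqrt n} \), so that its multiplicative error contributes only \( o(1) \) after taking logarithms; (ii) the cubic Taylor remainder \( O(|\vec \eta|^3) = O(n^{-3/2}) \) survives multiplication by \( n \) only as \( O(n^{-1/2}) \to 0 \); and (iii) the complex logarithm is single-valued along this path, which holds since \( B \) stays in a neighbourhood of \( B(\vec 1) = 1 \) bounded away from the origin. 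Controlling these errors uniformly for \( \vec t \) in compact sets --- as required for Lévy's theorem --- is exactly where the uniform continuity of \( A \) and the controlled quadratic error of \( B \) enter. The formulas for the mean and covariance in \eqref{eq:multivariate:clt:mean:covariance:matrix} then follow by reading off, respectively, the first and second cumulants from the expansion of \( \log p_n \).
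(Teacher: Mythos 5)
The paper offers no proof of this proposition: it is imported verbatim from Bender and Richmond \cite[Theorem 1]{BenderCLT} (cf.\ the surrounding remarks on quasi-powers), so there is no in-paper argument to measure yours against. Your proposal is the standard quasi-powers proof --- substitute \( \vec u = e^{i\vec t/\sqrt n} \), expand \( \log B \) to second order in the logarithmic variables, and conclude by the multivariate L\'evy continuity theorem --- and this is indeed how the theorem is established in the literature. The outline is sound, including the observations that \( p_n(\vec 1)=1 \) forces \( A(\vec 1)=B(\vec 1)=1 \) and that the variability condition is what makes \( \Sigma \) nondegenerate.

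One step needs tightening. You assert that \( \mathbb E\vec X_n \sim n\vec\mu \) ``ensures that the centring factor cancels exactly the linear term \( i\sqrt n\,\vec\mu\cdot\vec t \).'' The relation \( \mathbb E\vec X_n \sim n\vec\mu \) only gives \( \mathbb E\vec X_n = n\vec\mu + o(n) \), and an \( o(n) \) discrepancy divided by \( \sqrt n \) need not vanish; the cancellation requires the sharper estimate \( \mathbb E\vec X_n = n\vec\mu + o(\sqrt n) \). This does follow from the hypotheses, but it needs an explicit extra argument: since the approximation \( p_n \sim A\cdot B^n \) is assumed uniform in a full \emph{complex} neighbourhood of \( \vec u = \vec 1 \), it can be transferred to the first and second derivatives of \( \log p_n \) at \( \vec u = \vec 1 \) (by Cauchy's integral formula, or Ritt's theorem on differentiating uniform asymptotics of analytic functions), yielding \( \mathbb E\vec X_n = n\vec\mu + O(1) \) and \( \mathrm{Cov}\,\vec X_n = n\Sigma + O(1) \). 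This is also the correct justification of the moment formulas \eqref{eq:multivariate:clt:mean:covariance:matrix}, rather than ``reading off cumulants'' from a pointwise expansion of \( \log p_n \). With that insertion the argument is complete; the remaining uniformity bookkeeping for \( \vec t \) in compact sets is routine, as you say.
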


\begin{remark}\label{remark:bender:richmond:framework}
Typically, when the singularity is moving
    (see~\autoref{proposition:irreducible:polynomial:systems}) the bivariate
    generating function takes the form
\begin{equation}
    A(z, \vec u) = \alpha(\vec u) - \beta(\vec u)
    \sqrt{1 - \dfrac{z}{\rho(\vec u)}} +
        O\left( \left|
            1 - \dfrac{z}{\rho(\vec u)}
        \right| \right)
\end{equation}
uniformly as \( z \to \rho(\vec u) \) for \( \vec u \) in a vicinity of \( \vec 1 \).

    Consequently, the probability generating function takes form
\begin{equation}
    p_n(\vec u) \sim \dfrac{\beta(\vec u)}{\beta(\vec 1)}
    \left(
        \dfrac{\rho(\vec 1)}{\rho(\vec u)}
    \right)^n.
\end{equation}
In this form, the probability generating function satisfies the premises of the
    multivariate quasi-power theorem (see~\autoref{remark:mclt:quasi-powers})
    and so one can also obtain the speed of convergence. In our situations this
    speed is typically of order \( O\left( \frac{1}{\sqrt n} \right) \).
\end{remark}

\begin{remark}
For convenience, we say that a random vector \( \vec X_n \) converges
in law to multivariate Gaussian distribution with mean \(n  \vec \mu \) and
variance \( n \Sigma \) writing
\begin{equation}
    \vec X_n \overset{d}\longrightarrow \mathcal N(n \vec \mu, n \Sigma)
\end{equation}
to denote that \( \dfrac{\vec X_n - n \vec \mu}{\sqrt n} \overset{d}\longrightarrow
\mathcal N(\vec 0, \Sigma) \).
\end{remark}

\section{Basic statistics of plain lambda terms}\label{sec:basic:statistics}
In this section we investigate several basic combinatorial parameters related to
random plain \lterms. Let us start with invoking the combinatorial
specification~\eqref{eq:plain:terms:grammar} describing the set $\classL$ of
plain \lterms. Recall that $\classL$ is specified as
\begin{align}
    \begin{split}
        \classL &::= \idx{n}~|~`l \classL~|~(\classL \classL)\\
        \idx{n} &::= {\sf 0}~|~{\sf S}~\idx{n}.
    \end{split}
\end{align}
Equivalently, the set $\classL$ of \lterms~can be specified using a
pictorial tree grammar, see~\autoref{fig:plain-terms:specification}
(note the explicit @ symbol for term application).
\begin{figure}[hbt]
\centering
\begin{tikzpicture}[>=stealth',level/.style={sibling distance = 1.5cm/#1,
  level distance = 1.5cm, thick}]
\matrix
{
\draw
node[triangle]{\( \classL \)}
;
&
\draw
node{\( \boldsymbol = \)}
;
&
\draw
++(0,.5)
node(up)[arn_r]{\( \lambda\)}
    child{
        node[triangle]{\( \classL \)}
    }
++(1.0,-0.5)
node{\( \boldsymbol + \)}
++(1.3, 0.5)
node(up)[arn_n]{\( @ \)}
    child[child anchor = north]{
        node[triangle]{\( \classL \)}
    }
    child[child anchor = north]{
        node[triangle]{\( \classL \)}
    }
++(1.2,-0.5)
node{\( \boldsymbol + \)}
++(1,0)
node[arn_g](var){\( \mathcal D \)}
;
\\[1ex]
\draw
node[arn_g](var){\( \mathcal D \)}
;
&
\draw
node{\( \boldsymbol = \)}
;
&
\draw
    node[arn_n](var){\( \mathsf{0} \)}
++(1.0,0)
node{\( \boldsymbol + \)}
++(1.0,0.5)
node(up)[arn_r]{\( S \)}
    child{
        node[arn_g]{\( \mathcal D \)}
    }
;
\\
};
\end{tikzpicture}
\caption{\label{fig:plain-terms:specification}Combinatorial specification for plain $\lambda$-terms.}
\end{figure}

Following symbolic methods~\cite[Part A: Symbolic Methods]{flajolet09} we note that the
generating function $D(z)$ corresponding to de~Bruijn indices takes the form
$D(z) = \dfrac{z}{1-z}$ and so the generating function $L_\infty(z)$ associated
with plain \lterms~satisfies the following functional equation:
\begin{equation}\label{eq:spec-plain-lambda-terms}
L_{\infty}(z) = z L_{\infty}(z) + z {L_{\infty}(z)}^2 + \frac{z}{1-z}.
\end{equation}
Solving~\eqref{eq:spec-plain-lambda-terms} for $L_\infty(z)$ we obtain two
formal solutions. Since we know \emph{a priori} that the resulting generating
function has non-negative coefficients $[z^n]L_\infty(z)$ we conclude that
\begin{align}\label{eq:genfun-plain-lambda-terms}
\begin{split}
    L_\infty(z) = \frac{1}{2 z} \left(1-z-\sqrt{\left(1-z\right)^2-\frac{4
    z^{2}}{1-z}}\right).
\end{split}
\end{align}

In this form, we can easily verify that the radicand expression \(
\left(1-z\right)^2-\dfrac{4 z^{2}}{1-z}\) carries the single dominant
square-root type singularity $\rho$ of $L_\infty(z)$.  At this point, a
straightforward application of the transfer theorem
(see~\autoref{proposition:transfer:theorem}) gives us access to the asymptotic
growth rate of the counting sequence corresponding to plain \lterms.

\begin{prop}[see~{\cite{Bendkowski2016}}]\label{prop:asymptotic:plain:terms}
    Let $L_\infty(z)$ be the generating function associated with plain
    \lterms~\eqref{eq:genfun-plain-lambda-terms}.  Then, the number
    $[z^n]L_\infty(z)$ of plain terms of size $n$ admits the following
    asymptotic approximation:
\begin{equation}\label{eq:plain-lambda-terms-asymptotics}
[z^n]L_\infty(z) \xrightarrow[n\to\infty]{} C \rho^{-n} n^{-3/2}
\end{equation}
where
    \begin{equation}\label{eq:plain-lambda-terms-dominant-singularity}
    \rho \doteq 0.29559774
        \quad \text{and} \quad C \doteq 0.606767.
\end{equation}
More specifically,
\( \rho \) is the positive real root of the polynomial
\( z^3 + z^2 + 3z - 1 = 0 \) whereas
\( C = \frac{1}{2(1 - \rho)} \sqrt{\frac{\rho + 2}{\pi}} \).
\end{prop}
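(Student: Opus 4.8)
The plan is to apply singularity analysis directly to the explicit algebraic form~\eqref{eq:genfun-plain-lambda-terms} of $L_\infty(z)$, the only nontrivial ingredient being the location and nature of the dominant singularity. First I would rewrite the radicand over a common denominator,
\[
    \left(1-z\right)^2 - \frac{4z^2}{1-z}
    = \frac{(1-z)^3 - 4z^2}{1-z}
    = \frac{-\left(z^3 + z^2 + 3z - 1\right)}{1-z},
\]
which exhibits the claimed cubic $z^3 + z^2 + 3z - 1$ as (minus) the numerator. Since $1-z$ does not vanish before $z=1$, the candidate singularities of the square root are exactly the roots of this cubic together with the pole at $z=1$, and $\rho$ is its unique positive real root.

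\textbf{The dominant singularity (the main obstacle).}
The hard part will be justifying that $\rho$ is the \emph{unique dominant} singularity, so that the transfer theorem applies on a genuine delta-domain. Here I would argue as follows. The cubic $z^3+z^2+3z-1$ has one real root $\rho \approx 0.2956$ and a conjugate pair; using the elementary symmetric relations (root sum $-1$, root product $1$) one checks that the complex roots have modulus $\approx 1.84 > \rho$, while the competing pole sits at $z=1>\rho$. Consequently the radicand is analytic and strictly positive on the real segment $[0,\rho)$ and nonvanishing on the punctured closed disk of radius $\rho$, so $L_\infty(z)$ admits an analytic continuation to a delta-domain $\Delta(R,\phi)$ with $\rho$ its only boundary singularity; the branch with the minus sign is the one with non-negative coefficients, as already fixed in~\eqref{eq:genfun-plain-lambda-terms}. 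Finally I would check that $\rho$ is a \emph{simple} root of the cubic (equivalently $3\rho^2+2\rho+3\neq0$, which is immediate), guaranteeing a bona fide square-root-type singularity rather than a higher-order branch point.

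\textbf{Local expansion and transfer.}
Given simplicity, near $z=\rho$ the radicand vanishes linearly: differentiating the factored form yields
\[
    \left(1-z\right)^2 - \frac{4z^2}{1-z}
    = c\left(1 - \frac{z}{\rho}\right) + O\!\left(\left(1-\tfrac{z}{\rho}\right)^2\right),
    \qquad
    c = \frac{\rho\left(3\rho^2 + 2\rho + 3\right)}{1-\rho} > 0.
\]
Hence $L_\infty(z) = \dfrac{1-z}{2z} - \dfrac{\sqrt{c}}{2\rho}\sqrt{1-\tfrac{z}{\rho}} + \cdots$, which is exactly the form $f_1(z) \sim h_1(z) - g_1(z)\sqrt{1-z/\rho}$ with $h_1(z)=\tfrac{1-z}{2z}$ analytic at $\rho$ and $g_1(\rho)=\tfrac{\sqrt c}{2\rho}$. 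Applying~\autoref{proposition:transfer:theorem} in this specialised form gives $[z^n]L_\infty(z) \sim \dfrac{g_1(\rho)\,\rho^{-n}}{2\sqrt{\pi}\,n^{3/2}}$, establishing the $C\rho^{-n}n^{-3/2}$ shape with $C=g_1(\rho)/(2\sqrt\pi)$.

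\textbf{Cleaning up the constant.}
The remaining step is the cosmetic simplification of $C=\frac{1}{4\rho\sqrt\pi}\sqrt{c}$ into the stated closed form. I would square both expressions and reduce the resulting rational identity using the defining relation $\rho^3+\rho^2+3\rho-1=0$; this collapses $3\rho^2+2\rho+3$ against the denominators and yields precisely $C=\frac{1}{2(1-\rho)}\sqrt{\frac{\rho+2}{\pi}}$. This last manipulation is purely algebraic and routine once the relation is invoked, so the only genuinely delicate point of the whole argument remains the uniqueness of the dominant singularity discussed above.
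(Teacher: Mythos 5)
Your proposal is correct and follows exactly the route the paper (implicitly) takes: locate the dominant singularity as the positive root of $z^3+z^2+3z-1$ coming from the radicand, verify it is the unique singularity on the disk of convergence (the conjugate roots have modulus $\approx 1.84$ and the pole sits at $z=1$), and apply the transfer theorem to the square-root expansion. Your constant computation checks out — $(3\rho^2+2\rho+3)(1-\rho)=4\rho(\rho+2)$ modulo the defining cubic — and reproduces $C=\frac{1}{2(1-\rho)}\sqrt{\frac{\rho+2}{\pi}}\doteq 0.606767$.
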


\subsection{Variables in plain lambda terms}
We start our investigations with the variable distribution in plain \lterms.

\begin{prop}\label{prop:plain:terms:variables}
Let $X_n$ be a random variable corresponding to the number of variables in a
    random plain \lterm~of size $n$. Then, after
    standardisation, $X_n$ converges in law to a Gaussian distribution.
    Its expectation $\mu_n$ and variance $\sigma_n^2$ satisfy (up to numerical
    approximation)
    \begin{equation}\label{eq:plain:terms:expctation:and:variance}
        \mu_n \xrightarrow[n\to\infty]{} 0.306849 n \quad \text{and} \quad
        \sigma_n^2 \xrightarrow[n\to\infty]{} 0.0516364 n.
    \end{equation}
\end{prop}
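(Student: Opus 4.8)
The plan is to mark each de~Bruijn index occurrence with an auxiliary variable $u$ and track its distribution through the singularity structure of the resulting bivariate generating function. Revisiting the specification in~\autoref{fig:plain-terms:specification}, the only change is that the index generating function $D(z) = \frac{z}{1-z}$ becomes $u D(z)$, so that the bivariate generating function $L(z,u)$ satisfies
\begin{equation}
L(z,u) = z L(z,u) + z L(z,u)^2 + \frac{uz}{1-z},
\end{equation}
which we solve explicitly, retaining the branch with non-negative coefficients, as
\begin{equation}
L(z,u) = \frac{1}{2z}\left( (1-z) - \sqrt{(1-z)^2 - \frac{4u z^2}{1-z}} \right).
\end{equation}
Setting $u=1$ recovers~\eqref{eq:genfun-plain-lambda-terms}.

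First I would locate the dominant singularity. The radicand factors as $\frac{(1-z)^3 - 4uz^2}{1-z}$, so the square-root singularity $\rho(u)$ is the smallest positive root of $(1-z)^3 = 4uz^2$. At $u=1$ this is precisely $z^3+z^2+3z-1=0$, recovering $\rho(1) = \rho$ from~\autoref{prop:asymptotic:plain:terms}. Since the partial derivative of $(1-z)^3 - 4uz^2$ with respect to $z$ does not vanish at $(\rho,1)$, the implicit function theorem yields an analytic branch $\rho(u)$ in a neighbourhood of $u=1$ with $\rho(1)=\rho < 1$; by continuity $\rho(u) < 1$ there, so the square-root singularity remains dominant over the pole of $\frac{1}{1-z}$ at $z=1$. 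Consequently, as $z \to \rho(u)$ uniformly for $u$ near $1$, the function admits a Puiseux expansion of the moving-singularity type
\begin{equation}
L(z,u) = \alpha(u) - \beta(u) \sqrt{1 - \frac{z}{\rho(u)}} + O\left( \left| 1 - \frac{z}{\rho(u)} \right| \right),
\end{equation}
with $\alpha(u), \beta(u), \rho(u)$ analytic near $u=1$.

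The next step is to transfer this into a limit law. Following the moving-singularity framework of~\autoref{remark:bender:richmond:framework}, the probability generating function takes the quasi-power form $p_n(u) \sim \frac{\beta(u)}{\beta(1)} \left( \frac{\rho(1)}{\rho(u)} \right)^n$, so the one-dimensional case of~\autoref{proposition:multivariate:clt} applies with $B(u) = \rho(1)/\rho(u)$. The Gaussian conclusion then follows once the variability condition is checked, which here reduces to verifying that the variance coefficient $B''(1) + B'(1) - B'(1)^2$ is strictly positive, i.e.~that the singularity genuinely moves and the associated variance is non-degenerate. The mean and variance constants in~\eqref{eq:plain:terms:expctation:and:variance} are then obtained from~\eqref{eq:multivariate:clt:one:dimension:exp:var} by implicitly differentiating $(1-\rho(u))^3 = 4u\rho(u)^2$ at $u=1$; for instance $\mathbb{E} X_n / n \to B'(1) = -\rho'(1)/\rho(1) = \frac{4\rho}{3(1-\rho)^2 + 8\rho} \doteq 0.306849$.

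The routine parts are the explicit solution and the numerical evaluation; the main obstacle is the analytic bookkeeping needed to justify the uniform Puiseux expansion and verify the variability condition. Because the auxiliary polynomial system $\{L = zL + zL^2 + uD,\ D = z + zD\}$ is not strongly connected (the equation for $D$ does not involve $L$), the Drmota--Lalley--Woods machinery of~\autoref{proposition:irreducible:polynomial:systems} does not apply directly, so the expansion must instead be read off from the explicit closed form and the implicitly defined $\rho(u)$, with uniformity in $u$ near $1$ established by hand.
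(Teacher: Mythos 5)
Your proposal is correct and follows essentially the same route as the paper: mark the indices to get $L_\infty(z,u) = zL_\infty(z,u)+zL_\infty(z,u)^2+\frac{uz}{1-z}$, solve explicitly, locate the moving square-root singularity $\rho(u)$ as the root of the radicand, and apply the quasi-powers/central limit framework with the mean and variance extracted by implicit differentiation of $F(\rho(u),u)=0$. Your additional remarks (implicit function theorem for analyticity of $\rho(u)$, dominance of the square-root singularity over the pole at $z=1$, and reading the Puiseux expansion off the closed form rather than invoking Drmota--Lalley--Woods) are sound elaborations of steps the paper treats as routine, and your numerical value $-\rho'(1)/\rho = 4\rho/(3(1-\rho)^2+8\rho) \doteq 0.306849$ matches.
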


\begin{proof}
\begin{figure}[hbt]
\centering
\begin{tikzpicture}[>=stealth',level/.style={sibling distance = 1.5cm/#1,
    level distance = 1.5cm, node/.style={anchor=north}, thick}]
\draw
node[triangle]{\( \classL \)}
++(1.1,0)
node{\( \boldsymbol = \)}
++(1.0,0.5)
node(up)[arn_r]{\( \lambda\)}
    child{
        node[triangle]{\( \classL \)}
    }
++(1.0,-0.5)
node{\( \boldsymbol + \)}
++(1.3, 0.5)
node(up)[arn_n]{\( @ \)}
    child[child anchor = north]{
        node[triangle]{\( \classL \)}
    }
    child[child anchor = north]{
        node[triangle]{\( \classL \)}
    }
++(1.2,-0.5)
node{\( \boldsymbol + \)}
++(1,0)
node[arn_g](var){\( \mathcal D \)}
;
\node[rectangle,dashed,draw,fit=(var),
rounded corners=3mm,inner sep=8pt, bgreen, very thick] {};
\end{tikzpicture}
\caption{\label{fig:marking:variables}Marking variables in plain terms.}
\end{figure}

    Let us consider a bivariate generating function $L_\infty(z,u)$ in which $[z^n u^k]
    L_\infty(z,u)$, i.e.~the coefficient standing by $z^n u^k$, denotes the number of plain \lterms~of size $n$ with $k$ variables
    (equivalently $k$ occurrences of $\mathsf{0}$).  Marking all occurrences
    of $\mathsf{0}$ in the defining equation~\eqref{eq:spec-plain-lambda-terms}
    of $L_{\infty}(z)$, see~\autoref{fig:marking:variables}, we obtain the
    following combinatorial specification for $L_\infty(z,u)$:
\begin{equation}\label{eq:variables-gen-fun-spec}
L_\infty(z,u) = z L_\infty(z,u) + z {L_\infty(z,u)}^2 + \frac{u z}{1-z}
\end{equation}
and hence
\begin{equation}\label{eq:variables-bivariate-gen-fun}
L_\infty(z,u) = \frac{1}{2 z} \left(1-z-\sqrt{\left(1-z\right)^2-\frac{4 u
    z^{2}}{1-z}}\right)
\end{equation}
    as $L_\infty(z,1) = L_{\infty}(z)$.

    The dominant singularity \( \rho(u) \) of \( L_\infty(z, u) \), is the real
    positive root of the radicand expression $F(z,u) =
    \left(1-z\right)^2-\dfrac{4 u z^{2}}{1-z}$.  Moreover, the singularity has a
    non-zero derivative at \( u = 1 \). According
    to~\autoref{remark:bender:richmond:framework} (moving singularity framework)
    this yields a Gaussian limit law.

    The mean and the variance of the resulting normal distribution can be
    computed by~\autoref{proposition:multivariate:clt} using the values \(
    \rho'(1) \) and \( \rho''(1) \) from the partial derivatives of \( F(z, u)
    \). Since \( F(\rho(u), u) = 0 \), after taking the derivative with respect
    to \( u \) we obtain
    \begin{equation}
        \rho'(1) = - \dfrac{\partial_u F(\rho, 1)}{\partial_z F(\rho, 1)}
        \quad \text{and} \quad
        \rho''(1) =
        - \dfrac
        {\partial_{u}^2 F(\rho, u)
        +
        2 \rho'(1) \partial_{z}\partial_u F(\rho, 1)
        +
        {\rho'(1)}^2
        \partial_{z}^2 F(\rho,1)
        }
        {\partial_z F(\rho, 1)}
        .
    \end{equation}
\end{proof}

\begin{remark}
Let us note that, in general, $\rho(u)$ is a root of the polynomial
\mbox{$(1-z^b)F(z,u)$} whose degree depends on the specific size model parameters
$a,b,c,d$ denoting the weights of zero, successor, abstraction and application,
respectively, cf.~\cite{gittenberger_et_al:LIPIcs:2016:5741}. Specifically,
\begin{equation}\label{eq:plain:terms:variables:rho:u:implicit}
    (1-z^b) F(z,u) = (1-z^b)\left(1-z^c\right)^2 -4 u z^{a+d}.
\end{equation}
Consequently, for most admissible size notion parameters we cannot explicitly
    obtain analytic expression of $\rho(u)$. Instead, in order to check the
    premises of the multivariate central limit theorem we have to work with the
    implicit equation~\eqref{eq:plain:terms:variables:rho:u:implicit}.

The main technical obstacle lies in the verification of the requested
    variability condition $B''(1) + B'(1) -{B'(1)}^2 \neq 0$,
    see~\eqref{eq:variability:condition}. The remaining argumentation is
    virtually identical to the one presented for the specific case of
    $a=b=c=d=1$.
\end{remark}

\subsection{Redexes in plain lambda terms}
Basic marking techniques allow us also to investigate limiting distributions of
various sub-patterns in plain \lterms. In what follows we study the fundamental
pattern of \bredexes. Recall that a \bredex~is a \lterm~in form of $(`l
N) M$ where $N$ and $M$ arbitrary $`l$-terms. In other words, a
sub-pattern which can be depicted as
\raisebox{-2.0 ex}{
\begin{tikzpicture}[level/.style={sibling distance = 0.4cm/#1, level
distance = .3cm}]
\draw
++(0, -1.5)
    node[arn_nn]{\( {\scriptscriptstyle @} \)}
    child{
        node[arn_nn]{\( {\scriptscriptstyle \lambda} \)}
        child{
            node[arn_nn]{\( \cdot \)}
        }
    }
    child{
        node[arn_nn]{\( \cdot \)}
    }
;
\end{tikzpicture}
}.

\begin{prop}\label{prop:plain:terms:redexes}
Let $X_n$ be a random variable denoting the number of \bredexes~in a random
    plain \lterms~of size $n$. Then, after standardisation, $X_n$ converges in
    law to a Gaussian distribution with expectation $\mu_n$ and variance
    $\sigma_n^2$ satisfying (up to numerical
    approximation)
    \begin{equation}
        \mu_n \xrightarrow[n\to\infty]{} 0.0907039 n \quad \text{and} \quad
        \sigma_n^2 \xrightarrow[n\to\infty]{} 0.0519495 n.
    \end{equation}
\end{prop}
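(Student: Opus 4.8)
The plan is to mirror exactly the argument used for \autoref{prop:plain:terms:variables}, replacing the marking of variables with a marking of \bredexes. First I would set up the bivariate generating function $L_\infty(z,u)$ in which $[z^n u^k] L_\infty(z,u)$ counts plain \lterms~of size $n$ carrying exactly $k$ \bredex~subpatterns, i.e.~exactly $k$ applications whose left argument is an abstraction. The combinatorial difficulty here, in contrast to the variable case, is that a \bredex~is a \emph{compound} pattern spanning an application node and an abstraction node, so I cannot simply decorate a single atom in the grammar. To handle this, I would refine the specification by splitting the class $\classL$ according to whether the head symbol is an abstraction. Concretely, write $L_\infty(z,u) = A(z,u) + R(z,u)$, where $A$ enumerates terms whose outermost constructor is an abstraction and $R$ enumerates the rest (applications with a non-abstraction left child, or bare indices). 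An application $(N\,M)$ forms a \bredex~precisely when $N$ falls in the class $A$, so the application term contributes $u \cdot z \cdot A(z,u) L_\infty(z,u)$ when the left child is an abstraction and $z \cdot R(z,u) L_\infty(z,u)$ otherwise.

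The next step is to collapse this refined system back into a single equation for $L_\infty(z,u)$. Since $A(z,u) = z L_\infty(z,u)$ (an abstraction is a $`l$ followed by an arbitrary term) and $R(z,u) = L_\infty(z,u) - z L_\infty(z,u)$, the application contribution becomes
\begin{equation}
    z L_\infty(z,u)^2 + (u-1) z \cdot A(z,u) L_\infty(z,u)
    = z L_\infty(z,u)^2 + (u-1) z^2 L_\infty(z,u)^2 .
\end{equation}
Adding the abstraction term $z L_\infty(z,u)$ and the index term $\frac{z}{1-z}$, I obtain the functional equation
\begin{equation}
    L_\infty(z,u) = z L_\infty(z,u) + z\bigl(1 + (u-1)z\bigr) L_\infty(z,u)^2 + \frac{z}{1-z},
\end{equation}
which at $u=1$ recovers \eqref{eq:spec-plain-lambda-terms}. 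Solving this quadratic for $L_\infty(z,u)$ yields, as before, a square-root closed form whose dominant singularity $\rho(u)$ is the smallest positive real root of the radicand $F(z,u)$.

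From here the analysis runs parallel to the variable case. I would verify that the system is algebraic positive, proper, and irreducible, so that \autoref{proposition:irreducible:polynomial:systems} and the moving-singularity framework of \autoref{remark:bender:richmond:framework} apply, giving the quasi-power form $p_n(u)\sim \frac{\beta(u)}{\beta(1)}\bigl(\rho(1)/\rho(u)\bigr)^n$ and hence, via \autoref{proposition:multivariate:clt}, a Gaussian limit law. The mean and variance follow from $B(u) = \rho(1)/\rho(u)$ through \eqref{eq:multivariate:clt:one:dimension:exp:var}, which reduces to computing $\rho'(1)$ and $\rho''(1)$ by implicit differentiation of $F(\rho(u),u)=0$ using the same formulas displayed in the proof of \autoref{prop:plain:terms:variables}. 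The main obstacle, as flagged in the remark after that proof, is the verification of the variability condition $B''(1)+B'(1)-B'(1)^2 \neq 0$, guaranteeing nondegeneracy of the Gaussian; I expect this to hold and to be confirmed by the nonzero numerical value $\sigma_n^2 \to 0.0519495\,n$, but establishing it rigorously rather than numerically is the delicate point. The numerical constants then arise by evaluating the derivative expressions at $\rho = \rho(1)$, the root of $z^3+z^2+3z-1=0$ from \autoref{prop:asymptotic:plain:terms}.
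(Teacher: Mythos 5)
Your proposal is correct and is essentially the paper's own argument in a lightly different dress: the paper's auxiliary class $\mathcal{N}$ (indices and application-headed terms) is exactly your class $R$, and its two-equation system collapses, after eliminating $N(z,u)=(1-z)L_\infty(z,u)$, to precisely your single equation $L_\infty = zL_\infty + z\bigl(1+(u-1)z\bigr)L_\infty^2 + \tfrac{z}{1-z}$, yielding the same radicand, the same moving singularity $\rho(u)$, and the same application of the central limit theorem. The only substantive step left implicit in both accounts is the explicit verification of the variability condition, which you correctly flag.
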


\begin{proof}
    We start with establishing a formal specification for \bredexes~in plain
    \lterms. For that purpose, we introduce an auxiliary class $\mathcal{N}$
    consisting of de~Bruijn indices and terms in application form.

    Note that if $N$ is in application form, then either $N = (`l M) P$,
    i.e.~$N$ is a \bredex, or $N = M P$ where $M$ belongs itself to class
    $\mathcal{N}$. Furthermore, with $\mathcal{N}$ at hand, we notice that each
    plain \lterm~$N$ takes either the form $N = `l M$ for some \lterm~$M$, or is
    an element of $\mathcal{N}$. We can therefore write down the
    following joint combinatorial
    specification~\eqref{eq:plain:terms:redexes:specification} for
    $L_\infty(z,u)$ and $N(z,u)$ corresponding to $\classL$ and $\mathcal{N}$,
    respectively, using the variable $u$ to mark the redex occurrences in
    $\mathcal{N}$, see~\autoref{fig:marking:redexes}:

    \begin{align}\label{eq:plain:terms:redexes:specification}
    \begin{split}
    L_\infty(z,u) &= z L_\infty(z,u) + N(z,u)\\
    N(z,u) &= \frac{z}{1-z} + u z^2 {L_\infty(z,u)}^2 + z N(z,u) L_\infty(z,u).
    \end{split}
    \end{align}

\begin{figure}[hbt]
\centering
\begin{tikzpicture}[>=stealth',level/.style={sibling distance = 1.5cm/#1,
  level distance = 1.5cm, thick}]
\matrix{
\draw
node[triangle]{\( \mathcal L_\infty \)}
;
&
\draw
node{\( \boldsymbol = \)}
;
&
\draw
++(0,0.5)
node(up)[arn_r]{\( \lambda\)}
    child{
        node[triangle]{\( \classL \)}
    }
++(1,-0.5)
node{\( \boldsymbol + \)}
++(1.2, 0)
node[triangle_b]{ \( \mathcal N \) }
;
\\
\draw
node[triangle_b]{\( \mathcal N \)}
;
&
\draw
node{\( \boldsymbol = \)}
;
&
\draw
node[arn_g]{\( \mathcal D \) }
++(1,0)
node{\( \boldsymbol + \)}
++(2.0,0.5)
node[arn_n](app){\( @ \)}
    child[level distance=0.5cm]{
        node[arn_r]{ \( \lambda \) }
        child{
            node[triangle](lt){\( \classL \)}
        }
    }
    child[level distance=1.5cm, child anchor = north]{
        node[triangle](rt){\( \classL \)}
    }
++(2.0,-0.5)
node{\( \boldsymbol + \)}
++(1.6, +0.5)
node[arn_n]{ \( @ \) }
    child[child anchor = north]{
        node[triangle_b]{ \( \mathcal N \) }
    }
    child[child anchor = north]{
        node[triangle]{ \( \classL \) }
    }
;
\node[rectangle,dashed,draw,fit=(app)(lt)(rt),
rounded corners=3mm,inner sep=8pt, bgreen, very thick] {};
\\
};
\end{tikzpicture}
\caption{\label{fig:marking:redexes}Marking redexes in plain terms.}
\end{figure}
Given that $L_\infty(z,1) = L_\infty(z)$ we
solve~\eqref{eq:plain:terms:redexes:specification} for $L_\infty(z)$ we finally
obtain
\begin{equation}\label{eq:plain:terms:redexes:gf}
L_{\infty}(z,u) = \frac{1-z - \sqrt{(1-z)^2 - \frac{4 z^{2}
    \left(1+z(u-1)\right)}{1-z}} }{2 z \left(1+z(u-1)\right)}
\end{equation}

    With the closed-form formula~\eqref{eq:plain:terms:redexes:gf} we can now
    easily access the dominant singularity $\rho(u)$ of $L_\infty(z,u)$ carried
    by the radicand expression $(1-z)^2 - \frac{4 z^{2}
    \left(1+z(u-1)\right)}{1-z}$.  Consequently, a straightforward application
    of the multivariate central limit theorem finishes the proof,
    see~\autoref{proposition:multivariate:clt}.
\end{proof}

\subsection{Joint distribution of variables, abstractions, successors and redexes}
\begin{prop}\label{prop:joint:plain}
Let \( \vec X_n = (X_{n (\textsf{var})}, X_{n (\textsf{red})},X_{n
    (\textsf{suc})},X_{n (\textsf{abs})}) \) be a random vector
    denoting
    \begin{enumerate}
        \item the number $X_{n (\textsf{var})}$ of variables;
        \item the number $X_{n (\textsf{red})}$ of $`b$\nobreakdash-redexes;
        \item the number $X_{n (\textsf{suc})}$ of successors, and
        \item the number $X_{n (\textsf{abs})}$ of abstractions
    \end{enumerate}
    in a random plane \lterms~of size $n$. Then, after standardisation, the
    random vector \( \vec X_n \) converges in law to a multivariate Gaussian
    distribution satisfying (up to numerical approximation)
    \begin{equation}\label{eq:joint:gaussian:distribution}
        X_n \overset{d}\longrightarrow
        \mathcal N \left(
            n
            \begin{pmatrix}
                0.307 \\
                0.091 \\
                0.129 \\
                0.258
            \end{pmatrix}
            ,
            n
            \begin{pmatrix}
                0.052 & -0.013 & -0.034 & -0.069 \\
               -0.013 &  0.052 & -0.022 &  0.047 \\
               -0.034 & -0.022 &  0.145 & -0.076 \\
               -0.069 &  0.047 & -0.076 &  0.214
            \end{pmatrix}
        \right).
    \end{equation}
\end{prop}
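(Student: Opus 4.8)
The plan is to combine the two marking techniques already developed for \autoref{prop:plain:terms:variables} and \autoref{prop:plain:terms:redexes} into a single multivariate specification, and then invoke the multivariate central limit theorem. I would introduce a four-dimensional marking variable $\vec u = (u_1, u_2, u_3, u_4)$ where $u_1$ marks occurrences of $\mathsf 0$ (variables), $u_2$ marks \bredexes, $u_3$ marks the successor $\mathsf S$, and $u_4$ marks abstractions. The successor and variable markings enter through the de~Bruijn index generating function, which becomes $D(z, \vec u) = \frac{u_1 z}{1 - u_3 z}$, while the redex marking requires the auxiliary class $\mathcal N$ exactly as in \eqref{eq:plain:terms:redexes:specification}. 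Superposing both markings yields the system
\begin{align}\label{eq:joint:plain:system}
\begin{split}
    L_\infty(z, \vec u) &= u_4 z L_\infty(z, \vec u) + N(z, \vec u) \\
    N(z, \vec u) &= \frac{u_1 z}{1 - u_3 z} + u_2 u_4 z^2 {L_\infty(z, \vec u)}^2 + z N(z, \vec u) L_\infty(z, \vec u).
\end{split}
\end{align}
Eliminating $N$ via $N = (1 - u_4 z) L_\infty$ reduces the system to a single quadratic equation whose admissible (non-negative coefficient) solution is
\begin{equation}\label{eq:joint:plain:gf}
    L_\infty(z, \vec u) = \frac{(1 - u_4 z) - \sqrt{F(z, \vec u)}}{2 \left( z + u_4 z^2 (u_2 - 1) \right)},
    \quad
    F(z, \vec u) = (1 - u_4 z)^2 - \frac{4 u_1 z^2 \left( 1 + u_4 z (u_2 - 1) \right)}{1 - u_3 z}.
\end{equation}
I would check that specialising $\vec u$ to $\vec 1$, or to each single marking variable in turn, recovers \eqref{eq:genfun-plain-lambda-terms}, \eqref{eq:variables-bivariate-gen-fun} and \eqref{eq:plain:terms:redexes:gf}, which validates the construction.

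Next I would establish the singularity structure. As in the univariate cases, $L_\infty(z, \vec u)$ carries a square-root type singularity located at the moving singularity $\rho(\vec u)$, defined as the smallest positive root of the radicand $F(z, \vec u)$; one verifies that the denominator $z + u_4 z^2(u_2 - 1)$ does not vanish for $\vec u$ near $\vec 1$ and $z$ near $\rho(\vec 1) = \rho$, so the dominant singularity genuinely comes from $F$. Invoking the moving singularity framework (\autoref{remark:bender:richmond:framework}) then gives the probability generating function in the quasi-power form $p_n(\vec u) \sim A(\vec u) B(\vec u)^n$ with $B(\vec u) = \rho(\vec 1)/\rho(\vec u)$, as required by \autoref{proposition:multivariate:clt}.

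The mean vector and covariance matrix follow from \eqref{eq:multivariate:clt:mean:covariance:matrix} once the first and second partial derivatives of $\rho(\vec u)$ at $\vec u = \vec 1$ are known. Since $F(\rho(\vec u), \vec u) = 0$ identically, I would obtain these by implicit differentiation in each coordinate direction, using the same formulae as in the proof of \autoref{prop:plain:terms:variables}, now with mixed partials $\partial_{u_i} \partial_{u_j} F$ and $\partial_z \partial_{u_i} F$ entering the off-diagonal entries of the covariance matrix. This is routine but lengthy; it produces the numerical mean vector and the symmetric $4 \times 4$ covariance matrix reported in \eqref{eq:joint:gaussian:distribution}.

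The main obstacle is the \emph{variability condition} \eqref{eq:variability:condition}: in the multivariate setting one must verify that the Hessian $\left[ \partial^2 \log B / \partial \log u_i \, \partial \log u_j \right]_{i,j}$ is nondegenerate, i.e.~that its determinant is strictly positive, rather than merely checking a single scalar $B''(1) + B'(1) - {B'(1)}^2 \neq 0$ as in the one-dimensional arguments. Concretely this amounts to confirming that the computed $4 \times 4$ covariance matrix in \eqref{eq:joint:gaussian:distribution} is positive definite, which I would verify by checking, for instance, that all its leading principal minors are positive. Once the variability condition holds, \autoref{proposition:multivariate:clt} applies directly and yields the claimed multivariate Gaussian limit law.
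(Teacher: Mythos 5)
Your proposal is correct and follows essentially the same route as the paper: the same joint specification with the auxiliary application class, the same reduction to a quadratic in $L_\infty(z,\vec u)$ with discriminant $F(z,\vec u)$, and the same appeal to the moving-singularity framework and the multivariate central limit theorem, with the variability condition checked from the (cubic-in-$z$) equation $F(\rho(\vec u),\vec u)=0$. The only cosmetic difference is that the paper exploits the explicit cubic roots of the discriminant while you work by implicit differentiation; both yield the same mean vector and covariance matrix.
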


\begin{proof}
    Like in the corresponding proofs for single parameters
    (see~\autoref{prop:plain:terms:variables}
    and~\autoref{prop:plain:terms:redexes}) we base our proof on the
    multivariate central limit theorem
    (see~\autoref{proposition:multivariate:clt}).  We start with a joint
    multivariate specification for plain \lterms~including the investigated
    combinatorial parameters marked using auxiliary variable vector
    \( \vec u
    =
    (
        u_{(\textsf{var})},
        u_{(\textsf{red})},
        u_{(\textsf{suc})},
        u_{(\textsf{abs})}
    ) \)
    corresponding to respective components of \( \vec X_n
    \), see~\autoref{fig:plain:joint:multivariate:spec} (cf.~\autoref{fig:marking:redexes}).
\begin{figure}[hbt]
\centering
\begin{tikzpicture}[>=stealth',level/.style={sibling distance = 1.5cm/#1,
  level distance = 1.5cm, thick}]
\matrix{
\draw
node[triangle]{\( \mathcal L_\infty \)}
;&\draw
node{\( \boldsymbol = \)}
;&\draw
++(0,.5)
node(abstraction)[arn_r]{\( \lambda\)}
    child{
        node[triangle]{\( \mathcal L_\infty \)}
    }
++(1,-0.5)
node{\( \boldsymbol + \)}
++(1.2, 0)
node[triangle_b]{ \( \mathcal A \) }
;
\node[rectangle,dashed,draw,fit=(abstraction),
rounded corners=3mm,inner sep=4pt, bred , very thick] {};
 \\
\draw
node[triangle_b]{\( \mathcal A \)}
;&\draw
node{\( \boldsymbol = \)}
;&\draw
node(var)[arn_g]{\( \mathcal D \) }
++(1,0)
node{\( \boldsymbol + \)}
++(1.8,0.5)
node[arn_n](app){\( @ \)}
    child[level distance=0.5cm]{
        node(abstraction2)[arn_r]{ \( \lambda \) }
        child{
            node[triangle](lt){\( \mathcal L_\infty \)}
        }
    }
    child[level distance=1.5cm, child anchor=north]{
        node[triangle](rt){\( \mathcal L_\infty \)}
    }
++(1.8,-0.5)
node{\( \boldsymbol + \)}
++(1.6, +0.5)
node[arn_n]{ \( @ \) }
    child[child anchor=north]{
        node[triangle_b]{ \( \mathcal A \) }
    }
    child[child anchor=north]{
        node[triangle]{ \( \mathcal L_\infty \) }
    }
;
\node[rectangle,dashed,draw,fit=(app)(lt)(rt),
rounded corners=3mm,inner sep=8pt, bgreen, very thick] {};
\node[rectangle,dashed,draw,fit=(var),
rounded corners=3mm,inner sep=8pt, bblue, very thick] {};
\node[rectangle,dashed,draw,fit=(abstraction2),
rounded corners=3mm,inner sep=4pt, bred , very thick] {};
\\
};
\end{tikzpicture}
\caption{\label{fig:plain:joint:multivariate:spec}Marking abstractions,
variables, successors and redexes in plain \lterms.}
\end{figure}

Let $\vec u$ denote the vector of considered marking variables. Such a
specification, when converted into a system of functional equations involving
the generating functions $L_\infty(z, \vec u)$ and $A(z, \vec u)$ associated
with $\classL$ and $\mathcal{A}$, respectively, yields
\begin{align}\label{eq:plain:joint:multivariate:system}
\begin{split}
    L_\infty(z, \vec u) &= u_{(\textsf{abs})} z L_\infty(z, \vec u) + A(z, \vec u) \\
        A(z, \vec u) &= \dfrac{u_{(\textsf{var})} z}{1 - u_{(\textsf{suc})} z} +
        u_{(\textsf{red})} u_{(\textsf{abs})} z^2 L_\infty(z, \vec u)^2 + z A(z,
        \vec u) L_\infty(z, \vec u).
\end{split}
\end{align}
Furthermore, reformulating~\eqref{eq:plain:joint:multivariate:system} we
find that
\begin{equation}\label{eq:plain:joint:multivariate:quadratic}
    (1 - u_{(\textsf{abs})} z ) L_\infty(z, \vec u)
    = \left(z u_{(\textsf{abs})} (u_{(\textsf{red})} - 1) + 1 \right)
    z {L_\infty(z, \vec u)}^2
    + \dfrac{u_{(\textsf{var})} z}{1 - u_{(\textsf{suc})} z}.
\end{equation}
Let $\Delta(z, \vec u)$ be the discriminant of the quadratic
equation~\eqref{eq:plain:joint:multivariate:quadratic} defining $L_\infty(z,
\vec u)$.

Note that $\Delta(z, \vec u)$ satisfies
\begin{equation}\label{eq:plain:joint:multivariate:quadratic:disc}
    \Delta(z, \vec u) = {(1 - u_{(\textsf{abs})} z)}^2 - 4
    \left(z u_{(\textsf{abs})} (u_{(\textsf{red})} - 1) + 1 \right)
    \dfrac{u_{(\textsf{var})} z^2}{1 - u_{(\textsf{suc})} z}.
\end{equation}

In this form, we can access the dominant singularity $\rho(\vec u)$ of
$L_\infty(z, \vec u)$ solving $\Delta(z, \vec u) = 0$ for $z$ as a function of
$\vec u$. Since~\eqref{eq:plain:joint:multivariate:quadratic:disc} is a cubic
equation in $z(\vec u)$ we have access to the analytic form of its roots.  We
can therefore easily check that only one solution $z(\vec u)$
of~\eqref{eq:plain:joint:multivariate:quadratic:disc} coincides at $\vec u =
\vec 1$ with the
dominant singularity $\rho$ corresponding to plain
terms~\eqref{eq:plain-lambda-terms-dominant-singularity}.
Consequently, the generating function \( L_\infty(z, \vec u) \) admits a corresponding
Puiseux expansion in form of
\begin{equation}
    L_\infty(z, \vec u) = \alpha(z, \vec u) - \beta(z, \vec u)
    \sqrt{1 - \dfrac{z}{\rho(\vec u)}} + O\left(\bigg| 1 - \dfrac{z}{\rho(\vec
    u)} \bigg|\right)
\end{equation}
where both \( \alpha(z, \vec u) \) and \( \beta(z, \vec u) \) are analytic and
non-vanishing near \( (z, \vec u) = (\rho, \vec 1) \).

The required variability condition can be directly verified once the analytic
form of $\rho(\vec u)$ is calculated.  At this point, the multivariate central
limit theorem is readily applicable yielding the asserted convergence. A direct
computation gives the vector of corresponding means and covariance matrix,
see~\eqref{eq:joint:gaussian:distribution}.
\end{proof}

\begin{remark}
    Arguably, the most interesting part of the covariance
    matrix~\eqref{eq:joint:gaussian:distribution} is the sign of the
    correlations and the absolute values of associated variances.  Note that in
    the natural size notion, the number of abstractions has greater variance
    than other constructors. Interestingly, the number of $`b$\nobreakdash-redexes is
    positively correlated with the number of abstractions. Not surprisingly,
    all other parameters are negatively correlated.
\end{remark}

\subsection{Head abstractions in plain lambda terms}\label{subsec:basic:head:abs}
In this section we turn to head abstractions in plain \lterms~showing that
the corresponding random variable converges to a discrete geometric
distribution.

\begin{prop}\protect\label{prop:plain:terms:head:abstractions}
    Let $X_n$ be a random variable denoting the number of head abstractions in a
    random plain \lterm~of size $n$. Then, $X_n$ converges in law to a geometric
    distribution $\Geom(\rho)$ with parameter $\rho$. Specifically,
    \begin{equation}\label{eq:plain:terms:head:abstractions:distribution}
        \mathbb P(X_n = h) \xrightarrow[n\to\infty]{} \mathbb P(\Geom(\rho) =
        h) = (1 - \rho) \rho^h.
    \end{equation}
\end{prop}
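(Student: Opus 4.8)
The plan is to mark the head abstractions with an auxiliary variable $u$, write down the resulting bivariate generating function in closed form, and recognise that it exhibits a \emph{fixed} dominant singularity, so that \autoref{proposition:discrete:limit:laws} applies and yields a discrete limit law. First I would decompose a plain \lterm~according to its head. Writing $\mathcal N$ for the class of \lterms~that are \emph{not} top-level abstractions (that is, indices or applications), every plain \lterm~is uniquely of the shape $`l^h N$ with $N \in \mathcal N$ and $h \geq 0$ the number of head abstractions. Marking each head $`l$ with $u$ gives
\begin{equation}
  L_\infty(z, u) = \sum_{h \geq 0} (uz)^h N(z) = \frac{N(z)}{1 - uz},
  \qquad
  N(z) = \frac{z}{1-z} + z {L_\infty(z)}^2 ,
\end{equation}
where the internal occurrences use the unmarked series $L_\infty(z) = L_\infty(z,1)$, since abstractions occurring strictly inside $N$ are not head abstractions of the whole term. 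Comparing with the defining equation~\eqref{eq:spec-plain-lambda-terms} one reads off $N(z) = (1-z) L_\infty(z)$, so that
\begin{equation}
  L_\infty(z, u) = \frac{(1-z)\, L_\infty(z)}{1 - uz} .
\end{equation}

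Next I would locate the dominant singularity. The prefactor $\frac{1-z}{1-uz}$ contributes only a simple pole at $z = 1/u$, while $L_\infty(z)$ carries its square-root singularity at $\rho \approx 0.2956$ (see~\autoref{prop:asymptotic:plain:terms}). For $u$ ranging over a small complex neighbourhood of $1$ the pole $1/u$ stays bounded away from $\rho$, because $\rho < 1$; hence the dominant singularity of $L_\infty(z, u)$ remains \emph{fixed} at $\rho$ and is of square-root type. This is precisely the fixed-singularity regime covered by~\autoref{proposition:discrete:limit:laws}.

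Finally, since $g(z, u) := \frac{1-z}{1-uz}$ is analytic and non-vanishing at $z = \rho$ for $u$ near $1$, I would multiply the Puiseux expansion $L_\infty(z) = \alpha - \beta \sqrt{1 - z/\rho} + O(|1 - z/\rho|)$, with $\beta > 0$ furnished by~\autoref{prop:asymptotic:plain:terms}, by $g(z,u)$ to obtain
\begin{equation}
  L_\infty(z, u) = g(\rho, u)\,\alpha - g(\rho, u)\,\beta \sqrt{1 - \frac{z}{\rho}}
  + O\!\left( \left| 1 - \frac{z}{\rho} \right| \right),
\end{equation}
so the coefficient of the square root is $\beta(u) = g(\rho, u)\beta = \frac{1-\rho}{1 - u\rho}\beta$. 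Since $g(\rho, 1) = 1$, \autoref{proposition:discrete:limit:laws} then gives the limiting probability generating function
\begin{equation}
  p(u) = \frac{\beta(u)}{\beta(1)} = \frac{1 - \rho}{1 - \rho u},
\end{equation}
which is exactly the probability generating function of $\Geom(\rho)$, as $\sum_{h \geq 0} (1-\rho)\rho^h u^h = \frac{1-\rho}{1 - \rho u}$. Reading off coefficients establishes~\eqref{eq:plain:terms:head:abstractions:distribution}.

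The only genuinely delicate point is confirming that the hypotheses of~\autoref{proposition:discrete:limit:laws} hold \emph{uniformly}: one must check that $L_\infty(z, u)$ admits the stated expansion uniformly as $z \to \rho$ in a delta-domain $\Delta(R)$ with $R > \rho$, for $u$ in a neighbourhood of $1$. This reduces to the corresponding uniform property of $L_\infty(z)$ together with the analyticity of $g(z, u)$ on $|z| < R$; the latter holds for any $R$ with $\rho < R < 1/|u|$, which can be arranged by shrinking the neighbourhood of $u = 1$ so that $1/|u| > R$. The remaining verifications are routine.
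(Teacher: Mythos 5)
Your proof is correct and follows essentially the same route as the paper: the same decomposition of a plain term into a sequence of head abstractions followed by an index-or-application core, the same bivariate generating function $L_\infty(z,u) = \frac{1}{1-uz}\bigl(\frac{z}{1-z} + z L_\infty(z)^2\bigr)$, the observation that the dominant singularity is fixed at $\rho$, and an application of \autoref{proposition:discrete:limit:laws} to extract $p(u) = \frac{1-\rho}{1-\rho u}$. The only (cosmetic) difference is that you simplify the core to $(1-z)L_\infty(z)$ and read the singular coefficient off the expansion of $L_\infty(z)$ rather than of $L_\infty(z)^2$; both yield the same limiting probability generating function.
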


\begin{proof}
    Note that each \lterm~starts with a (perhaps empty)  sequence of consecutive
    head abstractions followed either by a de~Bruijn index or an application of
    two terms (recall that abstractions therein are no longer considered to be
    head abstractions).  Consequently, the set $\classL$ of plain \lterms~can be
    specified using the auxiliary class $\mathcal{H}$ of head abstractions as
    depicted in~\autoref{fig:marking:head:abstractions}.
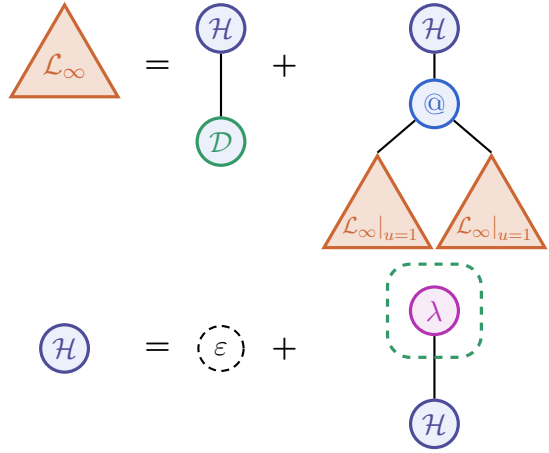
\begin{figure}[hbt]
\centering
\begin{tikzpicture}[>=stealth',level/.style={thick}]
\matrix[row sep=-\pgflinewidth, column sep=.2cm]{
\draw
node[triangle]{\( \mathcal L_\infty \)}
;
&
\node{\( \boldsymbol = \)}
;
&
\draw
++(0,.5)
node(up)[arn_e, align=center]{\(\mathcal H\)}
    child[level distance = 1.5cm]{
        node[arn_g]{\( \mathcal D \)}
    }
;
&
\node{\( \boldsymbol + \)}
;
&
\draw
++(0, +0.5)
node(up)[arn_e]{\(\mathcal H\)}
    child[level distance=1.0cm]{
        node[arn_n]{\( @ \)}
        child[level distance=1.5cm, sibling distance=1.5cm, child anchor = north]{
            node[triangle, text depth = -1.2ex]
            {\scalebox{0.8}{\hspace{-2.3ex}{
                \(
                    \left.
                        \mathcal L_\infty
                    \right|_{u=1}
                \)
            }}}
        }
        child[level distance=1.5cm, sibling distance=1.5cm, child anchor = north]{
            node[triangle, text depth = -1.2ex]
            {\scalebox{0.8}{\hspace{-2.3ex}{
                \(
                    \left.
                        \mathcal L_\infty
                    \right|_{u=1}
                \)
            }}}
        }
    }
;
\\[.5em]
\draw
node[arn_e]{\( \mathcal H \)}
;
&
\node{\( \boldsymbol = \)}
;
&
\node[arn_w, align=center]{\makebox[0pt][c]{\(\varepsilon \)}}
;
&
\node{\( \boldsymbol + \)}
;
&
\draw
++(0,0.5)
node[arn_r](abs){\( \lambda \)}
    child[level distance=1.5cm]{
        node[arn_e]{\( \mathcal H \)}
    }
;
\node[rectangle,dashed,draw,fit=(abs),
rounded corners=3mm,inner sep=8pt, bgreen, very thick] {};
\\[.5em]
};
\end{tikzpicture}
\caption{\label{fig:marking:head:abstractions}Marking head abstractions in plain terms.}
\end{figure}

The bivariate generating function \( L_\infty(z, u) \) corresponding to
plain \lterms~with marked head abstractions satisfies therefore
\begin{equation}\label{eq:head-abs-bounded-gen-fun}
    L_\infty(z, u) = \dfrac{1}{1 - zu} \left(
        \dfrac{z}{1 - z} + z L_\infty(z, 1)^2
    \right).
\end{equation}
In such a form we immediately note that the dominant singularity $\rho(u)$ of
$L_\infty(z,u)$ does not depend on $u$. In fact, it is constant and equal to
$\rho$ (i.e.~the dominant singularity of $L_\infty(z)$,
see~\eqref{eq:plain-lambda-terms-dominant-singularity}) as, by construction,
$L_\infty(z,1) = L_\infty(z)$.

Following the fact that $L_\infty(z)$ admits a Puiseux expansion near $z = \rho$
we can represent ${L_\infty(z,1)}^2$ as a corresponding Puiseux series in form of
\begin{equation}\label{eq:plain:terms:square:root:puiseux}
    {L_\infty(z, 1)}^2 = \alpha - \beta \sqrt{1 - \dfrac{z}{\rho}}
    + O \left( \left|
        1 - \dfrac{z}{\rho}
    \right| \right).
\end{equation}
Given~\eqref{eq:head-abs-bounded-gen-fun} we further note that
\begin{equation}
    {L_\infty(z, u)} = \widetilde\alpha(u) - \left(\dfrac{\rho}{1- \rho u}\right) \beta \sqrt{1 - \dfrac{z}{\rho}}
    + O \left( \left|
        1 - \dfrac{z}{\rho}
    \right| \right)
\end{equation}
for $u$ fixed sufficiently close to $1$.

At this point, we apply~\autoref{proposition:discrete:limit:laws}
and find that the limit probability generating function $p(u)$
associated with $L_\infty(z,u)$ satisfies
\begin{equation}
    p(u) = \dfrac{1 - \rho}{1 - \rho u}
\end{equation}
which indeed corresponds to the asserted limit geometric
distribution~\eqref{eq:plain:terms:head:abstractions:distribution} of $X_n$.
\end{proof}

\begin{remark}
The mean number $\mu_n$ of head abstractions in a random \lterm~of size $n$ satisfies
\begin{equation}\label{eq:plain:terms:head:abstractions:mean}
    \mu_n \xrightarrow[n\to\infty]{} \frac{\rho}{1-\rho}.
\end{equation}
The limit mean~\eqref{eq:plain:terms:head:abstractions:mean} is close to
    $0.4196$. Consequently, sufficiently large plain terms have, on average, less than one
    head abstraction. Such a result stands in sharp contrast
    to the canonical representation of David et al.~\cite{dgkrtz} where the
    number of head abstractions in a random (closed) \lterm~of size $n$ is at
    least of order $\sqrt{n/\log n}$; in particular, it is a
    moderately growing function of $n$.
\end{remark}

\subsection{De Bruijn index values in plain lambda terms}
\label{subsec:debruijn:plain}
In the current subsection we focus on the distribution of de~Bruijn index values
in random \lterms.

\begin{prop}
    \label{proposition:dbindices:plain}
    Let $X_n$ be a random variable denoting the de~Bruijn index value $m$ of an
    index $\idx{m}$ taken uniformly at random from a random plain \lterm~of size
    $n$. Then, $X_n$ converges in law to a geometric distribution $\Geom(\rho)$
    with parameter $\rho$. Specifically,
    \begin{equation}
        \mathbb P(X_n = m) \xrightarrow[n\to\infty]{} \mathbb P(\Geom(\rho) =
        m) = (1 - \rho) \rho^m.
    \end{equation}
\end{prop}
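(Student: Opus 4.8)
The plan is to analyze a \emph{pointed} bivariate generating function in which one index occurrence of a random term is distinguished and the fresh variable $u$ marks its value. A distinguished index $\idx{m}$ contributes $z^{m+1}u^m$, so a single marked index has generating function $\frac{z}{1-zu}$. Pointing at exactly one index in the specification~\eqref{eq:plain:terms:grammar} and recording its value yields a \emph{linear} equation for the pointed generating function $\widetilde{L}(z,u)$:
\begin{equation}
\widetilde{L}(z,u) = \frac{z}{1-zu} + z\widetilde{L}(z,u) + 2z L_\infty(z)\widetilde{L}(z,u),
\end{equation}
where $z\widetilde{L}(z,u)$ accounts for the distinguished index lying under a head abstraction and $2z L_\infty(z)\widetilde{L}(z,u)$ for it lying in either argument of an application. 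Solving gives
\begin{equation}\label{eq:dbidx:pointed:solution}
\widetilde{L}(z,u) = \frac{z}{(1-zu)\,\bigl(1 - z - 2z L_\infty(z)\bigr)}.
\end{equation}

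The crux of the argument, which I expect to be the main obstacle, is the observation that the factor $1 - z - 2z L_\infty(z)$ vanishes precisely at $z = \rho$. Writing the defining relation~\eqref{eq:spec-plain-lambda-terms} as $P(z, L_\infty) = 0$ with $P(z,y) = z y^2 + (z-1) y + \frac{z}{1-z}$, the square-root singularity $\rho$ of $L_\infty$ (see~\autoref{prop:asymptotic:plain:terms}) is exactly the point where $\partial_y P = 2zy + (z-1)$ vanishes along the solution, i.e.\ where $1 - z - 2z L_\infty(z) = 0$. Combining this with the Puiseux expansion of $L_\infty$ near $\rho$, I would show that $1 - z - 2z L_\infty(z) = B\sqrt{1 - z/\rho} + O(|1 - z/\rho|)$ for a nonzero constant $B$, so that $\widetilde{L}(z,u)$ from~\eqref{eq:dbidx:pointed:solution} inherits a singularity of the $\bigl(1 - z/\rho\bigr)^{-1/2}$ type whose location $\rho$ is \emph{fixed}, independent of $u$. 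The only competing singularity, the pole of $\frac{1}{1-zu}$ at $z = 1/u$, stays outside the disc of radius $\rho$ for $u$ in a complex neighbourhood of $1$ (since $\rho < 1$), so $\rho$ remains dominant and $\widetilde{L}(z,u)$ is analytic in a suitable delta-domain.

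With the fixed square-root singularity established, I would apply the transfer theorem in its $f_2$ regime (see~\autoref{proposition:transfer:theorem}) to extract, uniformly for $u$ near $1$,
\begin{equation}
[z^n]\widetilde{L}(z,u) \sim \frac{\rho}{(1-\rho u)\,B}\cdot\frac{\rho^{-n}}{\sqrt{\pi}\,n^{1/2}}.
\end{equation}
Since selecting a uniformly random index from a random term corresponds exactly to the pointing construction, the probability generating function of $X_n$ is $p_n(u) = [z^n]\widetilde{L}(z,u)\big/[z^n]\widetilde{L}(z,1)$. Taking the ratio cancels the common factors $\rho$, $B$ and $\rho^{-n} n^{-1/2}$, leaving
\begin{equation}
p_n(u) \xrightarrow[n\to\infty]{} \frac{1-\rho}{1-\rho u} = \sum_{m\geq 0}(1-\rho)\rho^m u^m,
\end{equation}
which is precisely the probability generating function of $\Geom(\rho)$, yielding the asserted limit law. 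Once the singularity analysis of~\eqref{eq:dbidx:pointed:solution} is in place the remaining computation is routine and, pleasingly, reproduces the same geometric law $\frac{1-\rho}{1-\rho u}$ obtained for head abstractions in~\autoref{prop:plain:terms:head:abstractions}.
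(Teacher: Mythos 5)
Your proposal is correct and follows essentially the same route as the paper: the pointed generating function $\widetilde L(z,u)$ you construct is exactly the paper's $S_\infty(z,w)=\tfrac{z}{(1-zw)(1-z-2zL_\infty(z))}$, obtained there by differentiating a family of marked generating functions and summing with weights $w^m$ rather than by direct pointing. Your identification of $1-z-2zL_\infty(z)$ with the square-root factor (hence a fixed singularity and the limit PGF $\tfrac{1-\rho}{1-\rho u}$) matches the paper's argument, and if anything your explicit appeal to the $(1-z/\rho)^{-1/2}$ transfer regime is slightly more careful than the paper's citation of its discrete-limit-law proposition.
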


\begin{proof}
\begin{figure}[hbt]
\centering
\begin{tikzpicture}[>=stealth',level/.style={sibling distance = 1.5cm/#1,
  level distance = 1.5cm, thick}]
\draw
node[triangle]{\( \classL \)}
++(1.1,0)
node{\( \boldsymbol = \)}
++(1.0,0.5)
node(up)[arn_r]{\( \lambda\)}
    child{
        node[triangle]{\( \classL \)}
    }
++(1.0,-0.5)
node{\( \boldsymbol + \)}
++(1.3, 0.5)
node(up)[arn_n]{\( @ \)}
    child[child anchor = north]{
        node[triangle]{\( \classL \)}
    }
    child[child anchor = north]{
        node[triangle]{\( \classL \)}
    }
++(1.2,-0.5)
node{\( \boldsymbol + \)}
++(1,0)
    node[arn_g](var1){\( \idx{0} \)}
++(1.0,0)
node{\( \boldsymbol + \)}
++(1,0)
node(var2){\( \ldots \)}
++(1.0,0)
node{\( \boldsymbol + \)}
++(1,0)
    node[arn_g](var3){\( \idx{m} \)}
++(1.0,0)
node{\( \boldsymbol + \)}
++(1,0)
node(var4){\( \ldots \)}
;
\node[rectangle,dashed,draw,fit=(var3),
rounded corners=3mm,inner sep=5pt, bviolet, very thick] {};
\end{tikzpicture}
    \caption{\label{fig:marking:indices:m}Marking the index $\idx{m}$ in plain
    \lterms.}
\end{figure}

    Let $V^{(m)}_n$ be a random variable denoting the number of de~Bruijn
    indices $\idx{m}$ in a plain \lterm~of size $n$. Marking the index $\idx{m}$
    in the specification for plain terms, see~\autoref{fig:marking:indices:m},
    we note that the bivariate generating function $L^{(m)}_\infty(z,u)$
    associated with $V^{(m)}_n$ satisfies

    \begin{equation}\label{eq:plain:terms:indices:spec}
        L^{(m)}_\infty(z, u) = z L^{(m)}_\infty(z, u) + z {L^{(m)}_\infty(z, u)}^2 + \dfrac{z}{1 -
        z} + (u - 1)z^{m + 1}.
    \end{equation}

    Denote ${\deriv{u}{L^{(m)}_\infty(z, u)}}\at{u=1}$ as
    $D^{(m)}_\infty(z)$. Then, taking the partial derivative $\partial{u}$
    at $u=1$ of both sides of~\eqref{eq:plain:terms:indices:spec} we arrive at
    \begin{equation}\label{eq:plain:terms:indices:spec:deriv}
        D^{(m)}_\infty(z) = z D^{(m)}_\infty(z) + 2 z D^{(m)}_\infty(z)
        L_\infty(z) + z^{m + 1}
    \end{equation}
    as $L^{(m)}_\infty(z,1) = L_\infty(z)$ for each $m \geq 0$,
    cf.~\eqref{eq:plain:terms:indices:spec}
    and~\eqref{eq:genfun-plain-lambda-terms}.  Note that $[z^n]D^{(m)}(z)$
    corresponds to the weighted sum over all plain \lterms~of size $n$ where
    each term comes with weight equal to the total number of occurrences
    of index $\idx{m}$ in it.

    Let $S_\infty(z,w) = \sum_{m \geq 0} D^{(m)}_\infty(z) w^m$.
    Taking the weighted sum over all $m \geq 0$ of both sides
    of~\eqref{eq:plain:terms:indices:spec:deriv} such that weight corresponding
    to $m$ is $w^m$ we obtain
    \begin{equation}\label{eq:plain:terms:indices:spec:deriv:ii}
        S_\infty(z,w) = z S_\infty(z,w) + 2 z S_\infty(z,w)
        L_\infty(z) + \dfrac{z}{1- z w}.
    \end{equation}
    Consequently, $[z^n w^m]S_\infty(z,w)$ stands for $[z^n]D^{(m)}(z)$ whereas
    $[z^n]S_\infty(z,1)$ denotes the weighted sum of all \lterms~of size $n$ where
    each term has weight equal to its total number of variables. In other words,
    variable $w$ in $S_\infty(z,w)$ marks the probability mass function corresponding to $X_n$.
    Solving~\eqref{eq:plain:terms:indices:spec:deriv:ii} we find that
    \begin{equation}\label{eq:plain:terms:indices:spec:deriv:sum}
        S_\infty(z,w) = \dfrac{1}{1- z w} \left(\dfrac{z}{1-z-2 z
        L_\infty(z)}\right) = \dfrac{z}{1- z w}
        \left(\sqrt{{(1-z)}^2-\dfrac{4z^2}{1-z}}\right)^{-1}
    \end{equation}
    where the latter equality follows from the
    formula~\eqref{eq:genfun-plain-lambda-terms} for $L_\infty(z)$.

    Furthermore, given the known Puiseux expansion of the right-hand side
    square-root expression (see, e.g.~\autoref{prop:asymptotic:plain:terms}) we
    easily note that $S_\infty(z,w)$ admits a Puiseux series expansion required
    for~\autoref{proposition:discrete:limit:laws}.  Finally, a routine
    computation verifies the asserted geometric limit distribution $\Geom(p)$
    corresponding to the variable $X_n$.
\end{proof}

\begin{remark}
The mean value $\mu_n$ of a random index with a random plain terms satisfies
    \begin{equation}\label{eq:plain:terms:index:value:mean} \mu_n
    \xrightarrow[n\to\infty]{} \frac{\rho}{1-\rho}.  \end{equation} The limit
    value~\eqref{eq:plain:terms:index:value:mean}, coinciding in the natural
    size notion with the corresponding mean for head
    abstractions~\eqref{eq:plain:terms:head:abstractions:mean}, is close to
    $0.4196$. This result stands, again, is sharp contrast to the canonical
    model of David et al.~\cite{dgkrtz} in which variables (in closed terms)
    tend to be arbitrarily far from their binding abstractions.

    Let us point out that such a disparity is a consequence of the different
    combinatorial models for \lterms. In the canonical representation, the
    distance from a variable to its binding abstraction does not contribute to
    the weight of the corresponding variable (all variables have weight zero).
    On the other hand, in the de~Bruijn representation weights of bound indices
    are proportional to the distances to their binding abstractions.
    Consequently, de~Bruijn indices in large random \lterms~tend to be, on
    average, shallow. This central difference of both combinatorial models leads
    to remarkably contrasting asymptotic properties, including normalisation of
    large random \lterms, cf.~\cite{dgkrtz,Bendkowski2016,BendkowskiThesis}.
\end{remark}

\subsection{Leftmost-outermost redex search}\label{subsec:basic:redex:discovery}
In order to evaluate an expression represented by a \lterm~$M$ we iteratively
choose a \bredex~(i.e.~a subterm in form of $(`l P)Q$) in $M$ and contract it
using \breduction, see e.g.~\autoref{fig:beta:reduction:example}
(cf.~\autoref{subsec:lambda:calculus}).

\begin{figure}[hbt]
\centering
\begin{tikzpicture}[>=stealth',level/.style={sibling distance = 1.5cm,
    level distance = 1.0cm, thick}]
    \matrix[column sep = .5cm]{
\draw
node[arn_n]{\( @ \)}
child[sibling distance = 3.0cm]{
    node[arn_r](app1){\( \lambda \)}
    child{
        node[arn_n]{\( @ \)}
        child[sibling distance = 2.0cm]{
            node[arn_n]{\( @ \)}
            child{
                node[arn_g](db1){\( \underline 0 \)}
            }
            child{
                node[arn_n]{\( @ \)}
                child{
                    node[arn_g]{\( \underline 1 \)}
                }
                child{
                    node[arn_g](db2){\( \underline 0 \)}
                }
            }
        }
        child[sibling distance = 2.0cm]{
            node[arn_r]{\( \lambda \)}
            child{
                node[arn_g]{\( \underline 0 \)}
            }
        }
    }
}
child[child anchor = north, level distance = 1.8cm, sibling distance = 2.5cm]{
    node[triangle]{\( T \)}
}
;
\node[rectangle,dashed,draw,fit=(db1),
rounded corners=3mm,inner sep=4pt, bred, very thick] {};
\node[rectangle,dashed,draw,fit=(db2),
rounded corners=3mm,inner sep=4pt, bred, very thick] {};
\path[->] (db1) edge [bred,dashed,thick,bend left=48, in=120] node {} (app1);
\path[->] (db2) edge [bred,dashed,thick,bend right=48, in=-100, out=-80] node {} (app1);
&
\draw
++(0,-3)
node{\( \xrightarrow{\quad\beta\quad}\) }
;
&
\draw
node[arn_n]{\( @ \)}
child[sibling distance = 3.0cm]{
    node[arn_n]{\( @ \)}
    child[child anchor = north, sibling distance = 2.0cm, level distance = 1.8cm]{
        node[triangle]{\( T \)}
    }
    child[sibling distance = 2.0cm]{
        node[arn_n]{\( @ \)}
        child{
            node[arn_g]{\( \underline 1 \)}
        }
        child[child anchor = north, level distance = 1.8cm]{
            node[triangle]{\( T \)}
        }
    }
}
child[sibling distance = 3.0cm]{
    node[arn_r]{\( \lambda \)}
    child{
        node[arn_g]{\( \underline 0 \)}
    }
}
;
\\
};
\end{tikzpicture}
\caption{\label{fig:beta:reduction:example}An example of \( \beta \)-reduction
$(`l \idx{0} (\idx{1} \idx{0}) (`l \idx{0})) T \to_{`b} T (\idx{1} T) (`l
\idx{0})$.}
\end{figure}

The order of evaluation (i.e.~the order in which redexes are contracted) has a
crucial impact on the computational effect of repeated \breduction.  Consider
\lterms~$\Omega := (`l \idx{0} \idx{0})(`l \idx{0} \idx{0})$ and $P := (`l `l
\idx{0}) \Omega$. Note that $\Omega$ can be evaluated \emph{ad infinitum} as
$\Omega \to_\beta \Omega$. If we choose the $\Omega$ redex over the main $(`l `l
\idx{0}) \Omega$ redex in $P$, then $P \to_\beta P$.  Otherwise, if we choose
the \emph{leftmost-outermost} redex $(`l `l \idx{0}) \Omega$ instead of $\Omega$
we note that $P \to_{\beta} `l \idx{0}$ since the topmost abstraction in $P$ has
no associated indices. We cannot continue $\beta$\nobreakdash-reducing $`l
\idx{0}$ as it contains no more redexes (such terms are in so-called
\emph{($\beta$\nobreakdash-)normal form}) and so we terminate the evaluation
process.

Terms from which it is possible to reach a $`b$\nobreakdash-normal form using
repeated \breduction~are said to be \emph{normalisable}. Since it is possible to
emulate computations of arbitrary Turing machines in \lcalculus~by means of
normalisable terms, it is undecidable to determine whether a given \lterm~is
normalisable or not, see~\cite{barendregt1984}.  Remarkably, the following
classical result provides a normalising evaluation strategy guaranteed to find
normal forms of normalisable \lterms.

\begin{theorem}[Standardisation theorem, see e.g~\cite{barendregt1984}] Let $N$
    be a normalising \lterm. Then, the iterated process of applying
    \breduction~to the leftmost-outermost redex in $N$ leads to the (unique)
    normal form of $N$.
\end{theorem}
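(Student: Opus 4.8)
The plan is to deduce the stated normalisation property from the full \emph{standardisation theorem}, together with the Church--Rosser (confluence) property, a classical result which secures uniqueness of the normal form. Since the de~Bruijn notation merely supplies a canonical representative for each $`a$-equivalence class, and \breduction~is defined on these classes, it suffices to argue in the ordinary named calculus; the argument then transfers verbatim to $\classL$.

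First I would fix terminology. For a set $\mathcal F$ of \bredexes~in a term $M$, a \emph{development} of $\mathcal F$ is a reduction contracting only residuals of redexes in $\mathcal F$, and a reduction $M_0 \to_\beta M_1 \to_\beta \cdots \to_\beta M_k$ is \emph{standard} if it never contracts a redex lying strictly to the left of, or outside, a redex already contracted earlier (formally, the contracted redexes are non-increasing in the left-to-right positional order on residual traces). The combinatorial heart is the \emph{Finite Developments theorem}: every development of $\mathcal F$ terminates and all complete developments end at the same term. I would establish this by marking the chosen redexes (e.g.\ underlining their abstractions) and proving strong normalisation of the marked calculus via a straightforward weight measure, using that marked redexes cannot create fresh marks. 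Finite Developments is the workhorse from which standardisation follows by a permutation argument: given any $M \to_\beta^{\ast} N$, one repeatedly pulls the contraction of the current leftmost-outermost redex to the front, invoking Finite Developments to reorganise the residuals of the redexes that were jumped over, and then recurses on the tail; this produces a standard reduction $M \to_\beta^{\ast} N$ with the same endpoint.

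With standardisation in hand, the theorem follows quickly. Suppose $N$ is normalising with normal form $N^{\ast}$ (unique by confluence), and take a standard reduction $N = P_0 \to_\beta P_1 \to_\beta \cdots \to_\beta P_k = N^{\ast}$. I claim its first step contracts the leftmost-outermost \bredex~$\Delta$ of $N$: otherwise the first contracted redex lies strictly to the right of $\Delta$, and since a standard reduction only ever moves rightwards, $\Delta$ is never contracted; but contracting redexes disjoint from or nested inside $\Delta$ leaves a residual of $\Delta$ intact, so $\Delta$ survives into $P_k = N^{\ast}$, contradicting that $N^{\ast}$ is normal. Because the suffix of a standard reduction is again standard, the same argument applied inductively shows that each step $P_i \to_\beta P_{i+1}$ contracts the leftmost-outermost redex of $P_i$. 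Hence the standard reduction to $N^{\ast}$ \emph{is} precisely the leftmost-outermost reduction sequence; as it is finite and terminates at $N^{\ast}$, the iterated leftmost-outermost strategy reaches the normal form.

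The main obstacle is the Finite Developments / standardisation core: the termination of developments and the residual bookkeeping in the permutation step are the genuinely technical parts, whereas the passage from standardisation to the normalisation statement is a short persistence argument. As an alternative that sidesteps the full residual calculus, one could instead induct on the size of $N^{\ast}$ via head reduction: a normal form is in particular a head normal form, so by the Head Normalisation theorem head reduction of $N$ terminates at some $`l x_1 \ldots `l x_n.\, y\, M_1 \cdots M_m$ whose subterms $M_i$ inherit strictly smaller normal forms handled by the induction hypothesis, and the leftmost-outermost strategy is exactly head reduction followed by the left-to-right normalisation of the $M_i$. This route is attractive but only relocates the difficulty into the Head Normalisation theorem, whose hard direction again rests on standardisation-type reasoning.
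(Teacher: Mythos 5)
The paper does not prove this statement at all: it is quoted as a classical result with a pointer to Barendregt, and nothing in the paper's subsequent analysis depends on more than the statement itself. Your proposal reconstructs the standard textbook route, and it is correct in outline. The decomposition you use --- Finite Developments established via a marked calculus with a decreasing weight, standardisation obtained from it by the permutation argument, uniqueness of the normal form from Church--Rosser, and finally the persistence argument showing that a standard reduction ending in a normal form must be exactly the leftmost-outermost reduction --- is precisely the proof in Barendregt (the Normalisation Theorem deduced from the Standardisation Theorem). The persistence step is sound: since the leftmost-outermost \bredex~$\Delta = (\lambda\, P)\,Q$ of a term has every other redex either disjoint from it on the right or nested inside $P$ or $Q$, contracting any such redex leaves a residual of $\Delta$ that is still a redex, so if a standard reduction never contracts $\Delta$ then a redex survives into the final term, contradicting normality. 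The genuinely technical content (termination of developments, the residual bookkeeping in the permutation step) is only sketched in your write-up, but you flag this honestly, and for the purposes of this paper --- where the theorem is used solely to motivate the average-case analysis of the leftmost-outermost redex search --- a citation, as the authors give, or your outline both suffice. Your alternative route via head normalisation is also classical and, as you note, merely relocates the same difficulty.
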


When searching for the leftmost-outermost redex in a given \lterm, we traverse
the associated $`l$\nobreakdash-tree in a depth-first manner favouring left
branches of application nodes. Both cases of handling abstractions and indices
are trivial -- when visiting an abstraction node, we immediately recurse to its
subterm looking for the leftmost-outermost redex; indices cannot contain
\bredexes~hence after arriving at an index, we terminate the traversal.

The most interesting part of the traversal algorithm lies in visiting
application nodes.  Suppose that we are currently visiting an application node.  If
its left branch starts with an abstraction node, we terminate the traversal as
we have just found the leftmost-outermost redex.  Otherwise, we have two
possibilities based on whether the left branch is in $`b$\nobreakdash-normal
form or not.  If it is, we move into the left branch and, as we cannot find a
\bredex, return from the recursion moving to the corresponding right branch.
Otherwise if the left branch is not in $`b$\nobreakdash-normal form, we handle
it recursively, however since it contains a \bredex, we eventually terminate the search
before ever returning to visit the right branch.

Let us note that such a traversal algorithm, henceforth abbreviated to {\sf LO},
introduces some \emph{a priori} non-trivial computational overhead to the
execution cost of \breduction~in \lcalculus. If carried out on a
$`b$\nobreakdash-redex, {\sf LO} takes constant time to run. In contrast, when
carried out on a \lterm~in $`b$\nobreakdash-normal form, {\sf LO} traverses
nearly the whole \lterm. Such a varying traversal cost poses the natural
question of the average-case performance of {\sf LO}. In what follows, we show
that the execution cost of {\sf LO}, when viewed as a random variable ranging
over random \lterms, tends to a discrete limit law with constant expectation.
Consequently, finding the leftmost-outermost redex introduces, on average, only
a constant overhead to the cost of carrying out a single \breduction.

\begin{prop}\label{proposition:redex:discovery:plain}
    Let \( X_n \) be a random variable denoting the number of nodes visited by
    the {\sf LO} traversal algorithm while searching for the leftmost-outermost
    \bredex~in a random plain \lterm~of size $n$. Then, \( X_n \) converges in
    law to a discrete limiting distribution with computable probability
    generating function and constant expectation. The corresponding means
    $\mu_n$ satisfy (up to numerical approximation)
    \begin{equation}\label{eq:plain:terms:redex:discovery:mean}
        \mu_n \xrightarrow[n\to\infty]{} 6.222262521.
    \end{equation}
\end{prop}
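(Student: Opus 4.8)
The plan is to set up a bivariate generating function $L_\infty(z,u)$ in which $u$ marks the number of nodes visited by the {\sf LO} traversal, and then to show that its dominant singularity stays pinned at $\rho$ (the square-root singularity of $L_\infty(z)$, see~\autoref{prop:asymptotic:plain:terms}) for $u$ in a neighbourhood of $1$. Once this is in place, the discrete limit law and the constant mean follow directly from~\autoref{proposition:discrete:limit:laws}.

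First I would translate the behaviour of {\sf LO} into a combinatorial decomposition. Each term is stripped of its (possibly empty) sequence of head abstractions, each of which is visited and so contributes a factor $zu$. The exposed core is either a de~Bruijn index, which is read and then terminates the search, or an application $(N\,P)$ whose @-node is visited. At an application I distinguish three cases according to the left child $N$: if $N$ is an abstraction, the \bredex~is located and the search stops after $O(1)$ visited nodes; if $N$ is a non-abstraction term in $`b$-normal form, {\sf LO} fully traverses $N$ and then continues into $P$; and if $N$ is a non-abstraction reducible term, {\sf LO} recurses into $N$ and never reaches $P$. Writing $D(z)=\tfrac{z}{1-z}$, introducing the class of normal-form terms together with its non-abstraction subclass (generating function $M_a$), and letting $R_a(z,u)$ mark the {\sf LO} cost of reducible non-abstraction terms, this yields a system whose principal equation reads
\begin{equation}
  L_\infty(z,u) = zu\,L_\infty(z,u) + D(zu) + u^2 z^2 {L_\infty(z)}^2
  + zu\,M_a(zu)\,L_\infty(z,u) + zu\,R_a(z,u)\,L_\infty(z),
\end{equation}
supplemented by an analogous linear equation for $R_a(z,u)$. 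Crucially, the early-terminating and skipped-subterm branches carry the \emph{plain} series $L_\infty(z)=L_\infty(z,1)$, since those subterms contribute their size but not their traversal cost, whereas fully traversed normal-form left children appear through $M_a(zu)$.

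The second step is algebraic: the system is linear in $L_\infty(z,u)$ and $R_a(z,u)$, so I would solve it explicitly, expressing $L_\infty(z,u)$ as a rational function of $L_\infty(z)$, of the normal-form series $M(zu),M_a(zu)$, and of the elementary factor $\tfrac{1}{1-zu}$. The heart of the matter is then the location of singularities. The series $L_\infty(z)$ carries the dominant square-root singularity at $\rho$; the normal-form series are sparser and possess a strictly larger dominant singularity $\rho_{\mathcal M}>\rho$, so $M(zu),M_a(zu)$ remain analytic at $z=\rho$ for $u$ near $1$; and $\tfrac{1}{1-zu}$ only contributes a pole near $z=1>\rho$. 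After verifying that the denominators produced by solving the linear system---quantities such as $1-zu-zu\,M_a(zu)$ and $1-zu\,L_\infty(z)$---do not vanish at $(z,u)=(\rho,1)$, it follows that the dominant singularity of $L_\infty(z,u)$ is fixed at $\rho$, of square-root type, and inherited from $L_\infty(z)$. Substituting the expansion $L_\infty(z)=\alpha_0-\beta_0\sqrt{1-z/\rho}+O(|1-z/\rho|)$ then gives, uniformly for $u$ near $1$,
\begin{equation}
  L_\infty(z,u) = \alpha(u) - \beta(u)\sqrt{1-\dfrac{z}{\rho}}
  + O\!\left(\left|1-\dfrac{z}{\rho}\right|\right),
\end{equation}
with $\alpha(u),\beta(u)$ analytic at $u=1$ and $\beta(1)\neq 0$.

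Finally, \autoref{proposition:discrete:limit:laws} applies verbatim: $X_n$ converges to a discrete law with probability generating function $p(u)=\beta(u)/\beta(1)$ and mean equal to the finite constant $\beta'(1)/\beta(1)$, whose evaluation from the explicit solution yields the value in~\eqref{eq:plain:terms:redex:discovery:mean}. I expect the main obstacle to be twofold: getting the combinatorial case analysis of {\sf LO} exactly right---in particular the bookkeeping of which subterms are fully traversed, which are skipped, and the precise $O(1)$ cost charged when a redex is found---and rigorously establishing the strict inequality $\rho_{\mathcal M}>\rho$ together with the non-vanishing of the relevant denominators at $(\rho,1)$. These are precisely what pin the singularity at $\rho$ independently of $u$, force a discrete rather than Gaussian limit, and guarantee that the mean stays bounded and hence tends to a constant.
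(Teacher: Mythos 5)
Your proposal is correct and follows essentially the same route as the paper: the same decomposition into head abstractions, indices, redexes, neutral (non-abstraction normal-form) left children, and reducible non-abstraction left children, the same linear system solved against the known $L_\infty(z,1)$, and the same fixed-singularity argument via the Motzkin-type singularity $1/3>\rho$ of the neutral-term series followed by \autoref{proposition:discrete:limit:laws}. The one substantive discrepancy is your index term $D(zu)=\tfrac{zu}{1-zu}$: the paper marks only the topmost atom of a de~Bruijn index (the traversal stops upon reaching an index), giving $\tfrac{zu}{1-z}$ instead, and your choice would alter the numerical constant in~\eqref{eq:plain:terms:redex:discovery:mean} though not the qualitative conclusion.
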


\begin{proof}
    We start with providing a combinatorial specification for plain
    \lterms~marking all nodes that are visited by the leftmost-outermost redex
    traversal algorithm {\sf LO}. For that purpose we introduce the following
    three auxiliary classes:
\begin{itemize}
    \item $\mathcal{N}$ for the class of $\beta$-normal forms;
    \item $\mathcal{M}$ for the class of so-called \emph{neutral terms}, and
    \item $\mathcal{A}$ for the class of de~Bruijn indices and plain \lterms~starting
        with an application.
\end{itemize}
In order to give their combinatorial specification we follow the presentation
    of~\cite{Bendkowski2016} and give a joint description for the class
    $\mathcal{N}$ of $`b$\nobreakdash-normal forms and the associated class
    $\mathcal{M}$ of neutral terms.  A $`b$\nobreakdash-normal form is either a
    plain \lterm~starting with an abstraction followed by another
    $`b$\nobreakdash-normal form, or a neutral term. A neutral term, in turn, is
    either a de~Bruijn index, or an application of a neutral term to a
    $`b$\nobreakdash-normal form, see~\autoref{fig:normal:forms}.
\begin{figure}[hbt]
\centering
\begin{tikzpicture}[>=stealth',level/.style={sibling distance = 1.5cm/#1,
  level distance = 1.5cm, thick}]
  \matrix{
\draw
node[triangle]{\( \mathcal N \)}
;&
\draw
node{\( \boldsymbol = \)}
;&
\draw
++(0,0.5)
node(up)[arn_r]{\( \lambda\)}
    child{
        node[triangle]{\( \mathcal N \)}
    }
++(1,-0.5)
node{\( \boldsymbol + \)}
++(1.2, 0)
node[triangle_b]{ \( \mathcal M \) }
;
\\
\draw
node[triangle_b]{\( \mathcal M \)}
;&
\draw
node{\( \boldsymbol = \)}
;&
\draw
node[arn_g]{\( \mathcal D \) }
++(1,0)
node{\( \boldsymbol + \)}
++(1.6, +0.5)
node[arn_n]{ \( @ \) }
    child[child anchor = north]{
        node[triangle_b]{ \( \mathcal M \) }
    }
    child[child anchor = north]{
        node[triangle]{ \( \mathcal N \) }
    }
;
\\
};
\end{tikzpicture}
    \caption{\label{fig:normal:forms}Joint specification for $`b$-normal forms
        $\mathcal{N}$ and neutral terms $\mathcal{M}$.}
\end{figure}

Such a specification provides the following system of functional equations
defining the generating functions $N(z)$ and $M(z)$ corresponding to the class
of $`b$\nobreakdash-normal forms and neutral terms, respectively:
\begin{align}\label{eq:normal:forms:neutral:terms}
    \begin{split}
        N(z) &= z N(z) + M(z)\\
        M(z) &= z M(z) N(z) + \frac{z}{1-z}.
    \end{split}
\end{align}
Solving~\eqref{eq:normal:forms:neutral:terms}
we note that $M(z)$ and $N(z)$ satisfy
\begin{equation}\label{eq:normal:forms:gf}
   N(z) = \frac{M(z)}{1-z} \quad \text{and} \quad
    M(z) = \frac{1-z-\sqrt{(1+z)(1-3z)}}{2z}.
\end{equation}
With both $N(z)$ and $M(z)$ at hand, we can now proceed and give the announced
specification for plain \lterms~with marked nodes visited during the execution
of {\sf LO}, see~\autoref{fig:search:time:plain}.
\begin{figure}[hbt]
\centering
\begin{tikzpicture}[>=stealth',level/.style={sibling distance = 1.5cm/#1,
  level distance = 1.5cm, thick}]
\matrix
{
\draw
node[triangle]{\( \mathcal L_\infty \)};
&
\draw
node{\(\boldsymbol =\)};
&
\draw
--(0,0.5)
node(abstraction1)[arn_r]{\( \lambda\)}
    child{
        node[triangle]{\( \mathcal L_\infty \)}
    }
++(1,-0.5)
node{\( \boldsymbol + \)}
++(1.2, 0)
node[triangle_b]{ \( \mathcal A \) }
;
\node[rectangle,dashed,draw,fit=(abstraction1),
rounded corners=3mm,inner sep=4pt, bblue , very thick] {};
\\
\draw
node[triangle_b]{\( \mathcal A \)};
&
\draw
node{\(\boldsymbol =\)};
&
\draw
node(var)[arn_g]{\( \mathcal D \) }
++(1,0)
node{\( \boldsymbol + \)}
++(2.0,0.5)
node[arn_n](app){\( @ \)}
    child[level distance=0.5cm, sibling distance=2.3cm]{
        node(abstraction2)[arn_r]{ \( \lambda \) }
        child{
            node[triangle, text depth = -1.2ex](lt)
            {\scalebox{0.8}{\hspace{-2.3ex}{
                \(
                    \left.
                        \mathcal L_\infty
                    \right|_{u=1}
                \)
            }}}
        }
    }
    child[level distance=1.5cm, child anchor=north]{
        node[triangle, text depth = -1.2ex](rt)
        {\scalebox{0.8}{\hspace{-2.3ex}{
            \(
                \left.
                    \mathcal L_\infty
                \right|_{u=1}
            \)
        }}}
    }
++(1.5, -0.5)
node{\( \boldsymbol + \)}
++(2.3, +0.5)
node[arn_n](app2){ \( @ \) }
    child[child anchor=north, sibling distance = 2.5cm]{
        node(nf1)[triangle_g]{ \( \mathcal M \) }
    }
    child[child anchor=north]{
        node[triangle]{ \( \mathcal L_\infty \) }
    }
++(1.6, -0.5)
node{\( \boldsymbol + \)}
++(1.6, +0.5)
node[arn_n](app3){ \( @ \) }
    child[child anchor=north]{
        node[triangle_v, text depth = -1.2ex]{
            \scalebox{0.8}{
                \hspace{-.4cm}
            \(
         \mathcal A \backslash \mathcal M
             \)} }
    }
    child[child anchor=north]{
        node[triangle, text depth = -1.2ex]
        {\scalebox{0.8}{\hspace{-2.3ex}{
            \(
                \left.
                    \mathcal L_\infty
                \right|_{u=1}
            \)
        }}}
    }
;
\node[rectangle,dashed,draw,fit=(app),
rounded corners=3mm,inner sep=4pt, bblue, very thick] {};
\node[rectangle,dashed,draw,fit=(app2),
rounded corners=3mm,inner sep=4pt, bblue, very thick] {};
\node[rectangle,dashed,draw,fit=(app3),
rounded corners=3mm,inner sep=4pt, bblue, very thick] {};
\node[rectangle,dashed,draw,fit=(nf1),
rounded corners=3mm,inner sep=8pt, bblue, very thick] {};
\node[rectangle,dashed,draw,fit=(var),
rounded corners=3mm,inner sep=8pt, bblue, very thick] {};
\node[rectangle,dashed,draw,fit=(abstraction2),
rounded corners=3mm,inner sep=4pt, bblue , very thick] {};
\\
};
\end{tikzpicture}
\caption{\label{fig:search:time:plain}Specification for plain \lterms~with
marked nodes following the execution of the redex finding traversal algorithm {\sf LO}.}
\end{figure}

Let $T$ be a plain \lterm~in the class $\mathcal{A}$. Certainly, if $T$ is a
de~Bruijn index, we mark only its topmost atom (i.e.~the topmost successor or
$\textsf{0}$ if $T$ is equal to $\idx{0}$). Otherwise, $T$ starts with an
application. If $T$ is a \bredex, we mark two atoms -- the topmost application
and the abstraction starting the left branch of $T$. Remaining nodes are left
unmarked. If $T$ is not a redex however its left branch is a neutral term, we
mark the entire left branch as well as the topmost application. Finally, if the
left branch of $T$ is not a neutral term, we take the difference class
$\mathcal{A} \setminus \mathcal{M}$ of $\mathcal{A}$ and (marked) neutral
terms $\mathcal{M}$ for the left branch. The right branch remains unmarked.

Such a specification yields the following system of functional equations
defining the generating functions $L_\infty(z,u)$ and $A(z,u)$ corresponding to
classes $\mathcal{L}_\infty$ and $\mathcal{A}$, respectively:
\begin{align}\label{eq:plain:redex:spec}
    \begin{split}
        L_\infty(z,u) &= z u L_\infty(z,u) + A(z,u)\\
            A(z, u) &= \dfrac{z u}{1- z} + z^2 u^2 {L_\infty(z,1)}^2 + z u M(z
            u) L_\infty(z,u)\\
            &\qquad + z u \left(A(z,u) - M(z u)\right) L_\infty(z,1).
    \end{split}
\end{align}
Knowing \emph{a priori} that $L_\infty(z,1)$ corresponds to the generating
function for plain \lterms~\eqref{eq:genfun-plain-lambda-terms} we solve
system~\eqref{eq:plain:redex:spec} and find that $L_\infty(z,u)$ satisfies
\begin{align}\label{eq:plain:redex:genfun}
    L_\infty(z,u) &= \frac{z u M(z u) L_\infty(z,1) -\left(z u {L_\infty(z,1)}
    \right)^2-\frac{z u}{1-z}}{z u M(z u) -\left(1- z u L_\infty(z,1)\right)
    (1-z u)}.
\end{align}

What remains to finish the proof is to check that $L_\infty(z,u)$ meets the
premises of~\autoref{proposition:discrete:limit:laws}. Specifically, it admits a
single, fixed dominant singularity $\rho(u) = \rho$ and a corresponding Puiseux
expansion in form of
\begin{equation}
    L_\infty(z,u) = \alpha(u) - \beta(u)\sqrt{1-\dfrac{z}{\rho}}
    + O \left( \left|
        1 - \dfrac{z}{\rho}
    \right| \right).
\end{equation}

Denote the denominator expression of~\eqref{eq:plain:redex:genfun} as $F(z,u)$.
Note that $F(\rho,1) > 0$ and hence in a fixed neighbourhood of $u=1$ the
denominator $F(z,u)$ is non-zero.  Consequently, $L_\infty(z,u)$ shares its
(fixed) dominant singularity with $L_\infty(z,1)$.  The required form of the
Puiseux expansion of $L_\infty(z,u)$ follows as a consequence of the Puiseux
expansions of both $L_\infty(z,1)$ and its power ${L_\infty(z,1)}^2$,
see~\eqref{eq:plain:terms:square:root:puiseux}.  A direct computation gives
access to the corresponding probability generating function (omitted for
brevity) and also the specific limit
mean~\eqref{eq:plain:terms:redex:discovery:mean} of $X_n$.
\end{proof}

\begin{remark}\label{rem:motzkin:numbers:neutral:terms}
    The generating function $M(z)$ associated with neutral terms,
    see~\eqref{eq:normal:forms:neutral:terms}, also corresponds to the
    well-known class of Motzkin numbers enumerating, \emph{inter alia}, plane
    unary-binary trees, see e.g.~\cite[Note I.39, p.68]{flajolet09}.  We refer
    the curious reader to~\cite{Bendkowski2016} for a size-preserving
    correspondence between neutral terms of size $n$ and Motzkin trees with $n$
    nodes.
\end{remark}

\subsection{Height profile in plain lambda terms}
\label{subsection:height:profile:plain}
The goal of this section is to obtain some insight into the mean height profile
of plain \lterms. We distinguish essentially two different notions of height.
The first notion which we call \emph{unary height}, takes into account only the
number of abstractions above the considered node.  The second notion concerns
the \emph{natural height} of a tree, i.e.~the number of predecessors of a node
which can be either abstractions or applications. In both situations, the
semi-large powers theorem (see~\autoref{proposition:semilarge:power:theorem})
can be applied.  Consequently, the mean profile is always related to the
Rayleigh distribution.

We are interested in mean profile of different types of nodes.
In what follows we consider three types of mean profiles:
\begin{itemize}
    \item
  the mean (unary or natural) profile of leaves;
    \item
  the mean (unary or natural) profile of abstractions, and
    \item
  the mean (unary or natural) profile of applications.
\end{itemize}

\begin{prop}
\label{proposition:hprofile:vars:plain}
    Let \( H_n \) be a random variable denoting the unary (respectively natural)
    height of a randomly chosen variable (application or abstraction) in a
    random plain \lterm.  Then, with \( x \) in any compact subinterval of \(
    (0, +\infty) \), \( H_n \) admits a limiting Rayleigh distribution
    \begin{equation}
        \mathbb P(H_n = k) \sim
        \dfrac{C}{\sqrt n} \cdot \dfrac{x}{2} e^{-x^2/4}
        \quad \text{where} \quad
        x = \dfrac{k}{\sqrt n} \cdot C
    \end{equation}
with \( C \doteq 4.30187 \) for unary height, and \( C \doteq 1.27162 \) for
    the natural height. The average value of mean height is \( \sqrt{\pi n} / C
    \) whereas the peak value is attained at \( k^\ast = \sqrt{2n} / C \).

More specifically, the average number of
\begin{itemize}
    \item variables at unary height \( k \) is asymptotically equal to
        \( 2.839\, k e^{-C^2 k^2 / 4n} \);
    \item variables at natural height \( k \)
is asymptotically equal to \( 0.248\, k e^{-C^2 k^2/4n} \);
    \item abstractions
        at unary height \( k \) is asymptotically equal to
        \( 2.383 \, k e^{-C^2 k^2 / 4n}\);
    \item abstractions
        at natural height \( k \) is asymptotically equal to
        \( 0.208 \, k e^{-C^2 k^2 / 4n}\);
    \item applications
        at unary height \( k \) is asymptotically equal to
        \( 2.839 \, k e^{-C^2 k^2 / 4n}\);
    \item applications at natural height \( k \) is asymptotically equal to
        \( 0.248 \, k e^{-C^2 k^2 / 4n} \).
\end{itemize}
\end{prop}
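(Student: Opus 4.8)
The plan is to count, for each node type and each height notion, the total number of nodes of that type lying at a prescribed height, summed over all plain \lterms~of size $n$, and then divide by $[z^n]L_\infty(z)$. I would obtain each such quantity by \emph{pointing} at a node of the given type and decomposing the term along its \emph{spine}, i.e.~the path of ancestors running from the root down to the marked node. Along this spine each ancestor is either an abstraction---whose unique child continues the spine, contributing a factor $z$---or an application---where the spine descends into one of the two children (a factor $2$ for the choice of direction), the off-spine subtree being an arbitrary term contributing $L_\infty(z)$ and the node itself contributing $z$, so $2 z L_\infty(z)$ in total. The marked node finally contributes a \emph{weight} $w_{\textsf{type}}(z)$, equal to $\frac{z}{1-z}$ for a variable (an arbitrary de~Bruijn index), to $z L_\infty(z)$ for an abstraction, and to $z {L_\infty(z)}^2$ for an application.

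For the \emph{unary height} only abstractions on the spine increment the height, so a spine reaching unary height $k$ consists of exactly $k$ abstractions with arbitrarily many applications distributed among the $k+1$ gaps between them; summing the geometric series in each gap yields the generating function $P_k^{(\textsf{type})}(z) = f(z)^k \cdot (1 - 2zL_\infty(z))^{-1}\cdot w_{\textsf{type}}(z)$ with transfer function $f(z) = \frac{z}{1 - 2z L_\infty(z)}$. For the \emph{natural height} every ancestor increments the height, so a spine of length $k$ gives $P_k^{(\textsf{type})}(z) = \tilde f(z)^k \cdot w_{\textsf{type}}(z)$ with $\tilde f(z) = z + 2 z L_\infty(z)$. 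The decisive simplification is the identity $2 z L_\infty(z) = 1 - z - \sqrt{\Delta(z)}$, where $\Delta(z) = (1-z)^2 - \frac{4 z^2}{1-z}$, see~\eqref{eq:genfun-plain-lambda-terms}; it gives the closed forms $f(z) = \frac{z}{z + \sqrt{\Delta(z)}}$ and $\tilde f(z) = 1 - \sqrt{\Delta(z)}$. Since $\Delta(\rho) = 0$, both satisfy $f(\rho) = \tilde f(\rho) = 1$, and from $\sqrt{\Delta(z)} \sim \sqrt{\rho\,|\Delta'(\rho)|}\,\sqrt{1 - z/\rho}$ one reads off the Puiseux expansions $f(z) \sim 1 - C\sqrt{1-z/\rho}$ and $\tilde f(z) \sim 1 - C\sqrt{1-z/\rho}$ with $C = \sqrt{|\Delta'(\rho)|/\rho} \doteq 4.30187$ in the unary case and $C = \sqrt{\rho\,|\Delta'(\rho)|} \doteq 1.27162$ in the natural case.

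With the transfer functions cast into the form required by the semi-large powers theorem (\autoref{proposition:semilarge:power:theorem}), I would apply it to $f(z)^k$ (respectively $\tilde f(z)^k$) and absorb the remaining factor $G_{\textsf{type}}(z)$ --- namely $w_{\textsf{type}}(z)/(1 - 2zL_\infty(z))$ in the unary case and $w_{\textsf{type}}(z)$ in the natural case --- as its value $G_{\textsf{type}}(\rho)$ at the singularity. This gives $[z^n]P_k^{(\textsf{type})}(z) \sim G_{\textsf{type}}(\rho)\,\frac{\rho^{-n}}{n}\,S(C x)$ with $x = k/\sqrt n$ and $S$ the Rayleigh function. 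Dividing by $[z^n]L_\infty(z) \sim C_L\,\rho^{-n}\,n^{-3/2}$, with $C_L \doteq 0.606767$ the constant of \autoref{prop:asymptotic:plain:terms}, produces the asserted profile $\frac{G_{\textsf{type}}(\rho)}{C_L}\sqrt n\, S(Cx) = \frac{G_{\textsf{type}}(\rho)\, C}{2\sqrt\pi\, C_L}\, k\, e^{-C^2 k^2/4n}$. A short computation of $G_{\textsf{type}}(\rho)$, using $L_\infty(\rho) = \frac{1-\rho}{2\rho}$ together with the defining identity $(1-\rho)^3 = 4\rho^2$ (equivalently $\rho^3 + \rho^2 + 3\rho - 1 = 0$), yields the explicit prefactors; in particular the identity ${L_\infty(\rho)}^2 = (1-\rho)^{-1}$ explains why variables and applications share the same constant. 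Normalising by the expected total number of nodes of the given type then recovers $\mathbb P(H_n = k) \sim \frac{C}{\sqrt n}\cdot\frac x2 e^{-x^2/4}$, and the standard Rayleigh moments give mean $\sqrt{\pi n}/C$ and peak at $k^\ast = \sqrt{2 n}/C$.

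The hard part will be justifying the application of the semi-large powers theorem in the presence of the extra factor $G_{\textsf{type}}(z)$, which is \emph{not} analytic at $\rho$ but itself carries a square-root singularity inherited from $\sqrt{\Delta(z)}$ and from $L_\infty(z)$. I would argue that, since $G_{\textsf{type}}(\rho)$ is finite and non-zero while its singular correction is of order $O(\sqrt{1-z/\rho})$ and hence subdominant, the leading coefficient asymptotics of $f(z)^k G_{\textsf{type}}(z)$ factor as $G_{\textsf{type}}(\rho)$ times those of $f(z)^k$, uniformly for $x$ in compact subintervals of $(0, +\infty)$. This in turn requires checking that $f$ and $\tilde f$ extend analytically to a delta-domain through the continuation of $\sqrt{\Delta}$ and that they remain bounded by $1$ there, so that the powers $f(z)^k$ stay controlled away from $z = \rho$.
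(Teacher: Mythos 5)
Your proposal is correct and arrives at exactly the same generating functions, transfer functions and constants as the paper; the only genuine difference is how the key generating function is derived. The paper introduces a recursive family \( C_k(z,u) \) marking nodes at height \( k \), differentiates at \( u=1 \) and solves the resulting linear system to obtain \( \frac{1}{1-z}\bigl(\frac{z}{1-2zL_\infty(z)}\bigr)^{k+1} \) for the unary case and \( \frac{z}{1-z}\,(z+2zL_\infty(z))^{k} \) for the natural case; your pointing/spine decomposition writes these down directly, with \( 2zL_\infty(z) \) per application ancestor and \( z \) per abstraction ancestor playing the role of the paper's linear recurrence. Your closed forms \( f(z)=z/(z+\sqrt{\Delta(z)}) \), \( \tilde f(z)=1-\sqrt{\Delta(z)} \), the constants \( C=\sqrt{|\Delta'(\rho)|/\rho}=2b_\infty \) and \( C=\sqrt{\rho|\Delta'(\rho)|}=2b_\infty\rho \), and all six prefactors agree with the paper's (your identity \( L_\infty(\rho)^2=(1-\rho)^{-1} \), via \( (1-\rho)^3=4\rho^2 \), is precisely why variables and applications share the constant \( 2/(1-\rho) \)). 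The issue you flag as the hard part --- the prefactor \( G_{\textsf{type}}(z) \) not being analytic at \( \rho \) --- is real but mild, and the paper silently handles it the same way you propose: in the unary case one absorbs one more factor of \( f \) to leave only the analytic \( 1/(1-z) \) (which is why the paper's exponent is \( k+1 \)), and for the abstraction and application weights the surviving prefactor admits a Puiseux expansion \( G(\rho)+O\bigl(\sqrt{1-z/\rho}\bigr) \) whose singular correction contributes only to lower order, uniformly for \( x \) in compact subintervals of \( (0,+\infty) \).
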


\begin{proof}
We start with the unary height profile of variables.  Consider generating
    functions \( C_{k}(z, u) \) corresponding to plain \lterms~with $u$ marking
    the variables at the unary height $k$. These functions satisfy the following
    system of equations:
\begin{equation}
    \label{eq:marking:height:profile}
\begin{cases}
        C_{0}(z, u) = \dfrac{uz}{1-z} + z L_\infty(z) + z C_{0} (z, u)^2
        , & \text{ if } k = 0; \\[.3cm]
        C_{k}(z, u) = \dfrac{z}{1-z} + z C_{k - 1}(z, u) + z C_{k} (z, u)^2
        , & \text{ if } k  > 0. \\[.3cm]
\end{cases}
\end{equation}
    Taking partial derivatives of each equation
    in~\eqref{eq:marking:height:profile} with respect to \( u \), we obtain a
    linear system for derivatives of generating functions. Setting $u = 1$ we
    can solve this linear system and obtain
\begin{equation}
    \left.
    \dfrac{\partial}{\partial u}
    C_{k}(z, u) \right|_{u=1}
    =  \dfrac{1}{1-z} \left(
        \dfrac{z}{1 - 2 z L_\infty(z)}
    \right)^{k+1}.
\end{equation}
Furthermore, a direct computation provides
the following Puiseux series expansions as \( z \to \rho \):
\begin{equation}
    \dfrac{z}{1 - 2 z L_\infty(z)}
    \sim
    1 - \beta \sqrt{1 - \dfrac{z}{\rho}}
    \quad \text{and} \quad
    L_\infty(z)
    \sim
    \dfrac{1 - \rho}{2 \rho} - b_\infty \sqrt{1 - \dfrac{z}{\rho}}
\end{equation}
where \( \beta = 2 b_\infty \doteq 4.301868701457 \).
Consequently, the numbers \( M_{n,k} \) of variables at unary level
$k$ in a random plain lambda term of size $n$ satisfy
    \begin{equation}
    M_{n,k} =
    \dfrac{
        [z^n] \dfrac{1}{1 - z}
        \left(
            \dfrac{z}{1 - 2z L_\infty(z)}
        \right)^{k+1}
    }{
        [z^n] L_\infty(z)
    }
    .
    \end{equation}
    An application of the semi-large powers theorem
    (see~\autoref{proposition:semilarge:power:theorem}) and the transfer theorem
    (see~\autoref{proposition:transfer:theorem}) to the numerator and
    denominator, respectively, result in the following asymptotic estimate:
    \begin{equation}
    M_{n,k} \sim \dfrac{2k}{1 - \rho} e^{-\beta^2 k^2 / 4n}.
    \end{equation}
After normalising by the total sum $\sum_{k=0}^n M_{n,k}$ we obtain the
declared limiting distribution.

Next, we turn to the case of natural height profile of variables.
Consider generating functions \( C_k(z, u) \) where now \( u \) marks the
variables at the natural height \( k \), instead of the unary height. As in the
previous case, we obtain a system of equations
\begin{equation}
\begin{cases}
    C_0(z, u) = \dfrac{uz}{1 - z} + z L_\infty(z) + z L_\infty(z)^2,
    & \text{ if } k = 0;
    \\
    C_k(z, u) = \dfrac{z}{1 - z} + z C_{k-1}(z, u) + z C_{k-1}(z, u)^2,
    & \text{ if } k > 0.
\end{cases}
\end{equation}
Again, taking partial derivatives $\partial u$ at $ u = 1$ we can solve the
resulting system and find that
\begin{equation}
    \left.\dfrac{\partial}{\partial u} C_k(z, u) \right|_{u=1}
    = \dfrac{z}{1 - z}
    \left(
        z + 2z L_\infty(z)
    \right)^{k} .
\end{equation}
In this case, a direct computation verifies that the function \( z + 2z L_\infty(z) \)
admits a Puiseux series expansion in form of \( 1 - \gamma \sqrt{1 - z/\rho} +
O\left(|1-z/\rho|\right) \) where
\( \gamma = \beta \rho \doteq 1.27162265120953 \). Consequently, this estimate yields a Rayleigh distribution with parameter
\( 1.27162265120953 \). In particular, the average number \( M_{n,k} \) of variables at natural
height $k$ in a random plain \lterm~of size $n$ satisfies
\begin{equation}
    M_{n,k} \sim \dfrac{2 \rho^2}{1 - \rho} k e^{-\gamma^2 k^2 / 4n}.
\end{equation}

In order to mark remaining nodes, i.e.~abstractions and applications, it is
sufficient to change the first equation of the
system~\eqref{eq:marking:height:profile}. Accordingly, only the constant
multiple behind the mean tree width at level $k$ changes.  For abstractions, we
obtain, respectively,
\begin{equation}
    C_0(z, u) = \dfrac{z}{1 - z} + zu L_\infty(z) + z C_0(z, u)^2
\end{equation}
for unary height whereas
\begin{equation}
    C_0(z, u) = \dfrac{z}{1 - z} + z u L_\infty(z) + z L_\infty(z)^2
\end{equation}
for natural height. This change gives the constants
\( 2 L_\infty(\rho) = \frac{(1 - \rho)}{\rho} \doteq 2.383 \) for unary height, and
\( 2 \rho^2 L_\infty(\rho) = \rho(1 - \rho) \doteq 0.208 \) for the natural
height, respectively.

Similarly, marking applications yields a change in the first equation for the
generating function
\begin{equation}
    C_0(z, u) = \dfrac{z}{1 - z} + z L_\infty(z) + zu C_0(z, u)^2
\end{equation}
for unary height, and
\begin{equation}
    C_0(z, u) = \dfrac{z}{1 - z} + z L_\infty(z) + z u L_\infty(z)^2
\end{equation}
for natural height. We obtain the constants
\( 2 L_\infty^2(\rho) = \frac{(1 - \rho)^2}{ 2 \rho^2} \doteq  2.839 \) for
unary height, and
\( 2 \rho^2 L_\infty^2(\rho) = \frac{(1 - \rho)^2}{2} \doteq 0.248 \)
for natural height, respectively.

The mean value is obtained by using the integral approximation for the ratio of
sums \( k M_{n,k} / \sum_{k=0}^n M_{n,k} \) whereas the peak value is obtained by
finding the maximum value of \( M_{n,k} \) as a function of \( k \).
\end{proof}

\section{Empirical results}\label{sec:empirical:results}

The analysis of various combinatorial parameters corresponding to plain
\lterms~can be approached using standard proof templates, typical for algebraic
structures. Starting from a combinatorial specification associated with the
investigated parameter, its analysis follows as an, either direct or indirect,
examination of the resulting system of multivariate generating functions
and related singularities, see~\cite[Chapter IX]{flajolet09}.

Alas, such a general approach to the analysis of combinatorial parameters
related to plain \lterms~does not readily transcend to closed \lterms. Standard
analytic tools, such as Bender and Richmond's multivariate central limit theorem
(see~\autoref{proposition:multivariate:clt}) or the Drmota--Lalley--Woods
theorem (see~\autoref{proposition:irreducible:polynomial:systems}) require that
the investigated system of generating functions is, \emph{inter alia}, finite.
Although this is true for plain terms, closed \lterms~give rise to a more
involved, infinite system of generating
functions~\eqref{eq:m-open:terms:grammar} based on the hierarchical notion of
$m$\nobreakdash-openness.  Consequently, closed \lterms~escape the usual course
of parameter analysis, successfully carried out in the case of plain terms,
see~\autoref{sec:basic:statistics}.

In the current section we present two, somewhat complementary, empirical
approaches to the analysis of combinatorial parameters related to closed
\lterms.  We start with an experimental scheme based on the recent development
of efficient Boltzmann samplers for closed \lterms~due to Gittenberger, Bodini
and Gołębiewski~\cite{BodiniGitGol17}. We generate large, uniformly random
(conditioned on size) closed \lterms~and collect empirical data for various
interesting parameters related to generated terms.  The second approach is based
on the empirical evaluation of, appropriately truncated, systems of multivariate
generating functions. We compute the coefficients of the corresponding formal
power series and consequently analyse the distribution of investigated parameters
for relatively small term sizes.

The benefits of such empirical approaches are threefold. Firstly, the empirical
parameter distribution for large random \lterms~is closely related to its
theoretical, limiting counterpart; hence, develops solid intuitions underlying
the successful analysis of a broad class of combinatorial parameters,
see~\autoref{sec:advanced:marking}. Secondly, the empirical data for various
term sizes provides insight in the convergence rates at which the considered
random variables tend to respective limit laws, relating them in effect with
practically attainable term sizes.  Finally, experimental results for large
closed \lterms~provide strong evidence for conjectures regarding
practical, though even more advanced parameters escaping the restrictions of our
analysis.

\subsection{Empirical evaluation of Boltzmann samplers}
\label{subsec:empirical:boltzmann}
Boltzmann samplers are a prominent sampling framework meant for the random
generation of large combinatorial structures~\cite{Duchon04boltzmannsamplers}.
Given an admissible combinatorial specification, it is possible to construct an
appropriate sampler whose outcome are uniformly random (conditioned on size)
structures built according to the input specification. Although Boltzmann
samplers are guaranteed to return uniformly random structures, their eventual
size is not deterministic, but instead random. Following a proper calibration
during the construction of Boltzmann samplers, the randomness of their output
structure size can be controlled so that it is centred around a given (not
necessarily finite) expectation. A final rejection phase, dismissing structures
of undesired properties, such as for instance inadmissible size, provides the
means of controlling the generated structures.

In order to conduct our experiments, we have implemented three kinds of
Boltzmann
samplers\footnote{see~\url{https://hackage.haskell.org/package/lambda-sampler}.}
for plain, closed and so-called $h$\nobreakdash-shallow \lterms, i.e.~closed
terms in which de~Bruijn indices do not exceed the shallowness bound $h$, see
e.g.~\cite{gittenberger_et_al:LIPIcs:2016:5741,BendkowskiThesis}.  The
respective sampler for closed \lterms~follows the ideas
of~\cite{BodiniGitGol17}. For each investigated type of terms we sample $k =
50,000$ terms of sizes in the interval $[20,000; 50,000]$ using a dedicated
singular Boltzmann sampler (i.e.~a Boltzmann sampler with unbounded outcome size
expectation, see e.g.~\cite{BodGenRo2015}) whose controlling parameter is
calculated numerically with accuracy of order $10^{-9}$. For closed
$h$\nobreakdash-shallow terms, we fix $h = 30$. Finally, we record several
combinatorial parameters related to so obtained terms and use them in the
subsequent evaluation.

For each considered parameter and term type, we plot a histogram relating the
collected samples and the respective parameter values. The $x$\nobreakdash-axis
denotes the parameter value (either raw or aptly normalised) whereas the
$y$\nobreakdash-axis corresponds to the number of samples attaining the
associated value. Averages, variances and standard deviations corresponding to
the investigated parameter are rounded up to the fifth decimal point and
summarised in respective tables. For brevity, we include only histograms for
plain and closed \lterms. We comment on the missing $h$\nobreakdash-shallow
\lterms~at the end of the current section.

\subsubsection{Head abstractions}\label{subsubsec:head:abs}
We start with head abstractions, see~\autoref{subsec:basic:head:abs}.
\autoref{fig:experimental:plot:closed:headabs} depicts the distribution
histogram of the obtained data sets. The corresponding numerical approximations
of averages, variances and standard deviations are listed
in~\autoref{fig:experimental:plot:closed:headabs:mean}.

\begin{figure}[!htb]
\centering
\begin{subfigure}{.5\textwidth}
  \centering
  \includegraphics[width=\linewidth]{./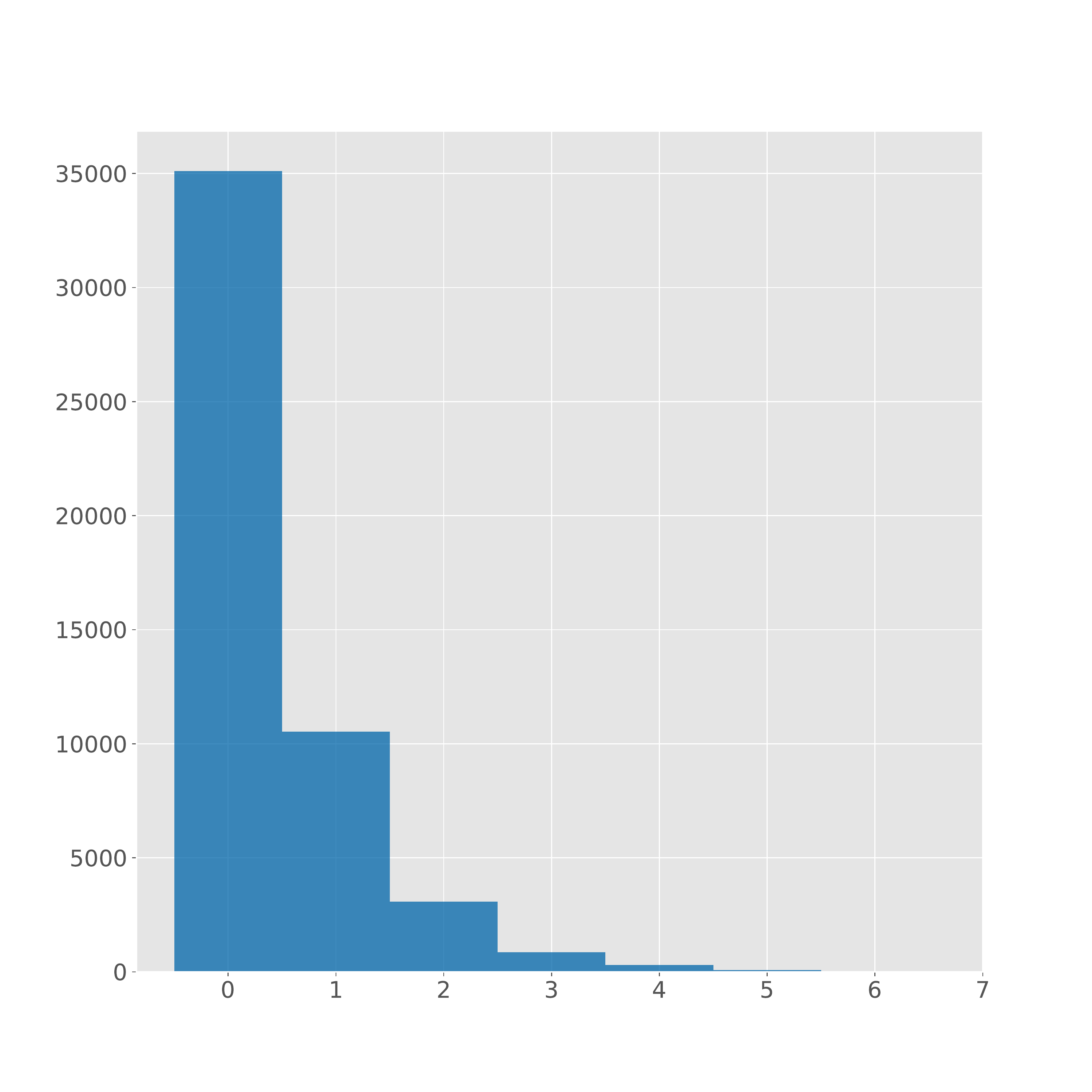}
  \caption{Plain \lterms.}
\end{subfigure}%
\begin{subfigure}{.5\textwidth}
  \centering
  \includegraphics[width=\linewidth]{./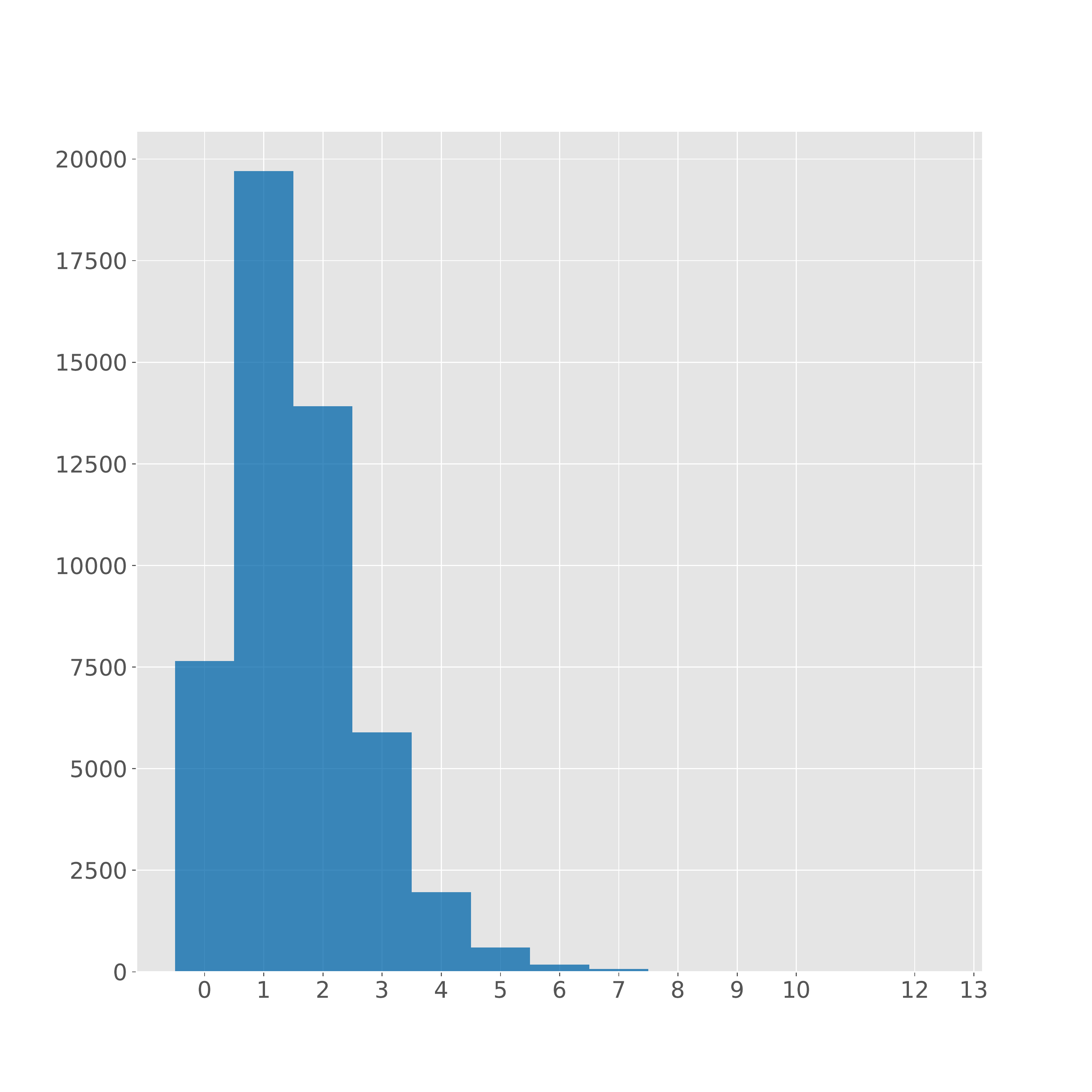}
  \caption{Closed \lterms.}
\end{subfigure}
\caption{Distribution histograms for head abstractions in plain and closed \lterms.}
\label{fig:experimental:plot:closed:headabs}
\end{figure}

\begin{table}[!htb]
    \caption{The average, variance and standard deviation
    corresponding to  head abstractions in plain, closed and $h$\nobreakdash-shallow
    \lterms~within the obtained data sets.}
\label{fig:experimental:plot:closed:headabs:mean}
\centering
\begin{tabular}{cccc}
    & Average & Variance & Std. dev. \\
  \hline
    plain & $0.42202$ & $0.59632$ & $0.77222$ \\
    closed & $1.55712$ & $1.30818$ & $1.14376$ \\
    $h$-shallow & $1.56644$ & $1.28803$ & $1.13491$ \\
\end{tabular}
\end{table}

\begin{remark}
The distribution corresponding to plain terms resembles a geometric law, as
    expected by~\autoref{prop:plain:terms:head:abstractions}. The observed
    average corresponds closely to the theoretical limit
    average~\eqref{eq:plain:terms:head:abstractions:mean}. Notably, head
    abstractions in closed \lterms~do not follow the same distribution as their
    plain counterpart.
\end{remark}

\subsubsection{Free variables in plain lambda terms}

When viewed as programs of an abstract programming language, variables in
\lterms~become formal arguments bound to abstractions introducing them in
respective name scopes (namespaces). In this perspective, free variable
occurrences correspond to expressions available in the global namespace such as for
instance predefined operators or constants, see e.g.~\cite{peytonJones1987}.

\autoref{fig:experimental:plot:closed:freevars} depicts the distribution
histogram of the obtained data set for plain terms (recall that closed
\lterms~contain, by definition, no free variable occurrences). The corresponding
numerical approximations of averages, variances and standard deviations are
listed in~\autoref{fig:experimental:plot:closed:freevars:mean}.

\begin{figure}[!htb]
  \centering
  \includegraphics[width=0.5\linewidth]{./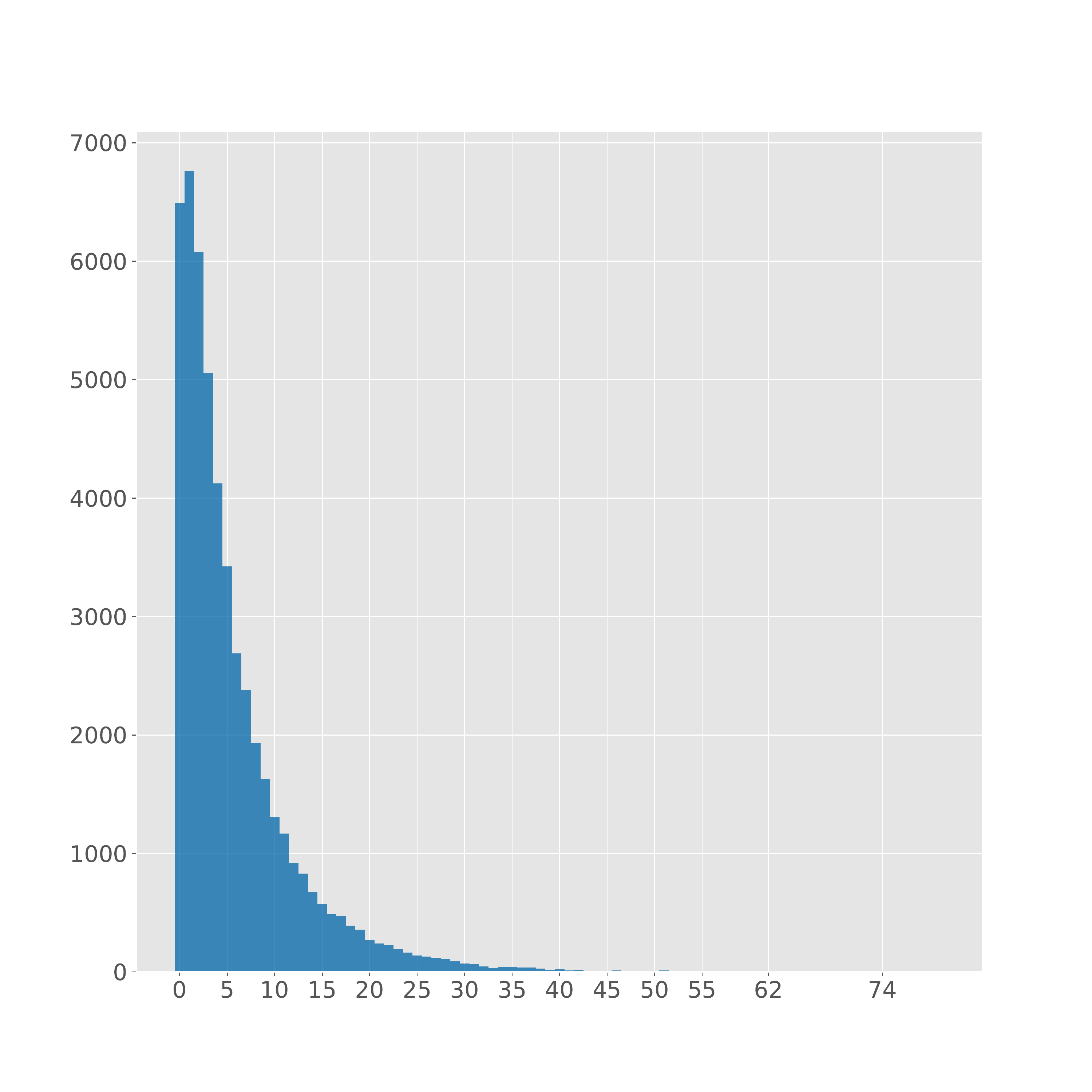}
\caption{Distribution histogram for free variables in plain \lterms.}
\label{fig:experimental:plot:closed:freevars}
\end{figure}

\begin{table}[!htb]
    \caption{The average, variance and standard deviation
    corresponding to free variables in plain \lterms~within the obtained data set.}
\label{fig:experimental:plot:closed:freevars:mean}
\centering
\begin{tabular}{cccc}
    & Average & Variance & Std.\ dev. \\
  \hline
    plain & $5.74976$ & $43.40258$ & $6.58806$ \\
\end{tabular}
\end{table}

\begin{remark}
The empirical distribution of free variables in plain terms resembles closely a
    geometric law ${\mathbb{P}(\Geom = k) = {(1-p)}^k p}$ with parameter $p
    \doteq 0.14815$. Note however that the observed distribution is not
    geometric; its global maximum is not attained at the parameter value
    corresponding to the lack of free variables,
    see~\autoref{fig:experimental:plot:closed:freevars}.  Remarkably,
    the observed value $p$ is a quite good estimate for the probability that a
    sufficiently large random plain \lterm~is closed. Using the numerical
    estimate for the multiplicative constant $C$ in the asymptotic growth rate
    $C \cdot n^{-3/2} \rho^{-n}$ corresponding to closed \lterms~\cite[Section
    6.1]{gittenberger_et_al:LIPIcs:2016:5741} one can find that this probability
    is in fact close to $0.12840$.
\end{remark}

\subsubsection{Leftmost-outermost redex search}
The next parameter we evaluate is the cost of finding the leftmost-outermost
$`b$\nobreakdash-redex in plain and closed \lterms,
see~\autoref{subsec:basic:redex:discovery}.
\autoref{fig:experimental:plot:closed:redex} depicts the distribution histogram
of the obtained data sets. Corresponding numerical approximations of averages,
variances and standard deviations are given
in~\autoref{fig:experimental:plot:closed:redex:mean}.

\begin{figure}[!htb]
\centering
\begin{subfigure}{.5\textwidth}
  \centering
  \includegraphics[width=\linewidth]{./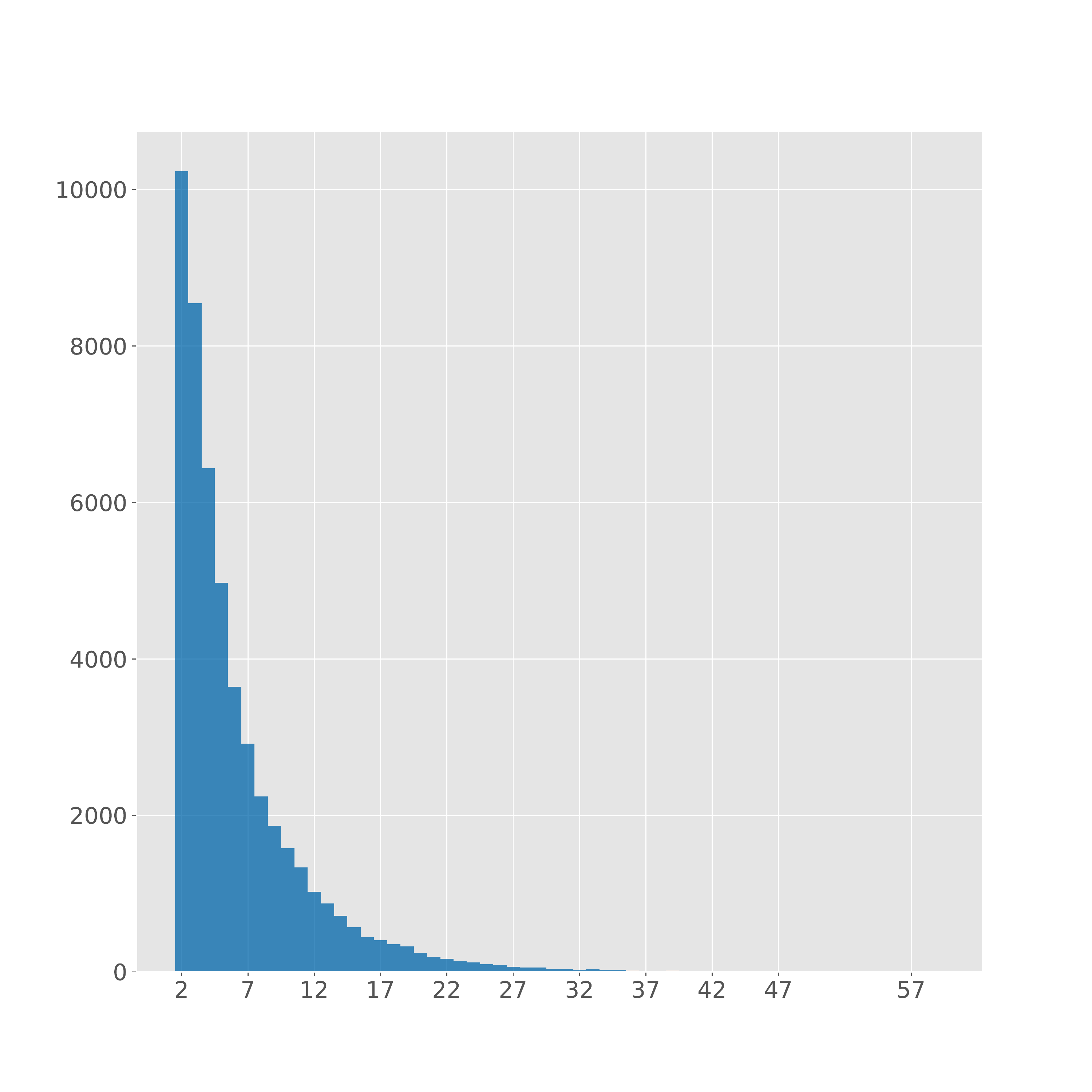}
  \caption{Plain \lterms.}
\end{subfigure}%
\begin{subfigure}{.5\textwidth}
  \centering
  \includegraphics[width=\linewidth]{./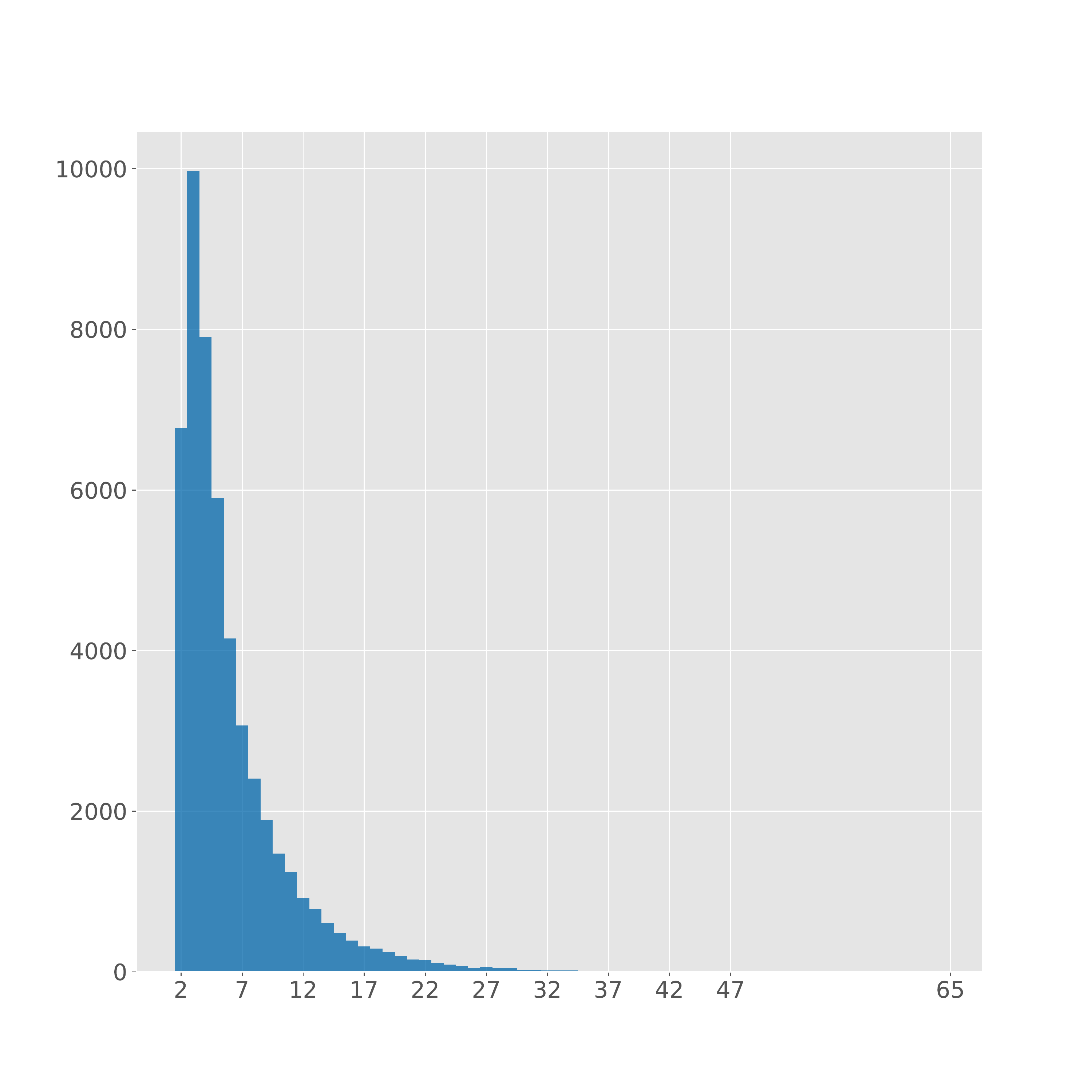}
  \caption{Closed \lterms.}
\end{subfigure}
\caption{Distribution histograms for leftmost-outermost redex search time in plain and closed \lterms.}
\label{fig:experimental:plot:closed:redex}
\end{figure}

\begin{table}[!htb]
    \caption{The average, variance and standard deviation
    corresponding the search cost of the leftmost-outermost redex in plain, closed and $h$\nobreakdash-shallow
    \lterms~within the obtained data sets.}
\label{fig:experimental:plot:closed:redex:mean}
\centering
\begin{tabular}{cccc}
    & Average & Variance & Std. dev. \\
  \hline
    plain & $6.22118$ & $26.72478$ & $5.1696$ \\
    closed & $6.06994$ & $21.78197$ & $4.66712$ \\
    $h$-shallow & $6.12196$ & $22.40393$ & $4.73328$ \\
\end{tabular}
\end{table}

\begin{remark}
The empirical histograms corresponding to plain and closed terms do not follow
    the same law, similarly to the case of head abstractions,
    see~\autoref{subsubsec:head:abs}. However, here the observed variance is in
    both cases significantly larger. Note that all discovered averages are remarkably
    close to each other.
\end{remark}

\subsubsection{Open subterms}
In the current subsection we are interested in the degree to which variables
connect various subterms in random \lterms. In other words, in the proportion of
open subterms in a random term. Let us note that such a parameter provides some
insight in the extend to which subterms, viewed as components of the entire term
(i.e.~functional program) depend syntactically on each other. In this
perspective, terms consisting of many closed subterms correspond to programs
with equally many independent subprograms. On the other hand, \lterms~with few
open subterms represent computations in which various parts of the program
depend on each other through common variable usage.  Since each index $\idx{n}$
consists of $n$ open proper subterms which do not contribute to the intended
degree of functional dependence (variables in functional programs are atomic
expressions) for our current considerations we assume that $\idx{n}$ is itself
atomic, i.e.~does not have proper subterms.  Consequently, the total number of
subterms of a given \lterm~$T$ becomes equal to the number of indices,
applications and abstractions occurring in $T$, without accounting for
successors.

The resulting distribution histograms for plain and closed \lterms~are depicted
in~\autoref{fig:experimental:plot:closed:opensubterms}. The $x$-axis denotes the
(normalised with respect to the total number of subterms as defined above)
number of open subterms whereas the $y$-axis corresponds to the number of terms
attaining the corresponding value. Respective averages, variances and standard
deviations are listed in~\autoref{fig:experimental:plot:closed:opensubterms:mean}.

\begin{figure}[!htb]
\centering
\begin{subfigure}{.5\textwidth}
  \centering
  \includegraphics[width=\linewidth]{./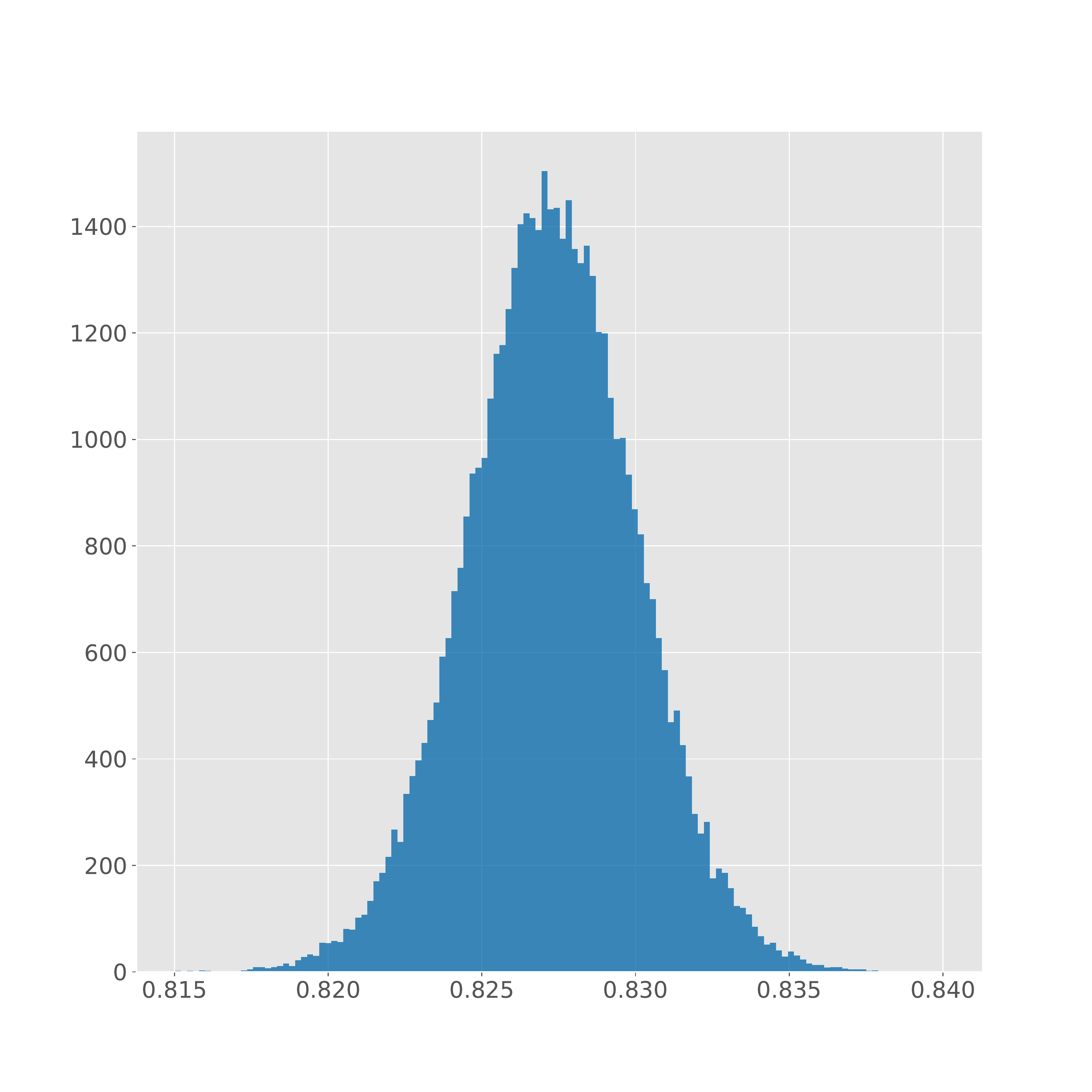}
  \caption{Plain \lterms.}
\end{subfigure}%
\begin{subfigure}{.5\textwidth}
  \centering
  \includegraphics[width=\linewidth]{./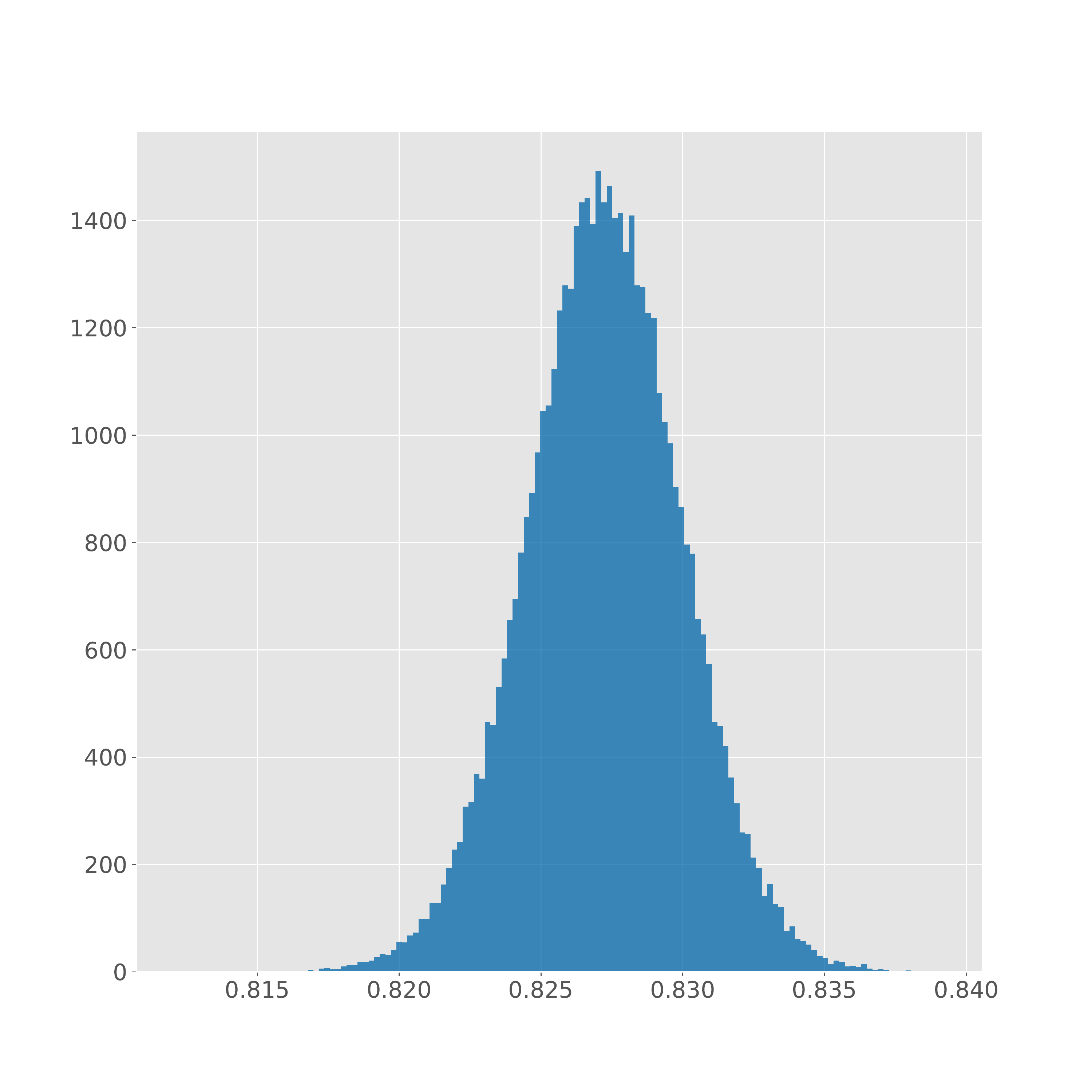}
  \caption{Closed \lterms.}
\end{subfigure}
\caption{Distribution histograms for open subterms in plain and closed \lterms.}
\label{fig:experimental:plot:closed:opensubterms}
\end{figure}

\begin{table}[!htb]
    \caption{The average, variance and standard deviation corresponding the
    proportion of open subterms in plain, closed and $h$\nobreakdash-shallow
    \lterms~within the obtained data sets.}
\label{fig:experimental:plot:closed:opensubterms:mean}
\centering
\begin{tabular}{cccc}
    & Average & Variance & Std. dev. \\
  \hline
    plain & $0.82728 $ & $0.00001$ & $0.00276$ \\
    closed & $0.82723$ & $0.00001$ & $0.00275$ \\
    $h$-shallow & $0.82724$ & $0.00001$ & $0.00276$ \\
\end{tabular}
\end{table}

\begin{remark}
Both empirical distributions resemble Gaussian laws with respective means and
    variances, see~\autoref{fig:experimental:plot:closed:opensubterms:mean}.
    The distributions seem to be concentrated around their means whereas the observed
    variances and standard deviations are strikingly modest and (almost)
    identical, though still positive. The high expectations in the order of $0.8272$
    suggest that the vast majority of subterms in a random \lterm~is open,
    independently of whether it is itself open or closed.
\end{remark}

\subsubsection{Binding abstractions}
The next parameter we consider is the proportion of abstractions binding
variables.~\autoref{fig:experimental:plot:closed:bindingabs} depicts
the empirical distribution histograms for plain and closed \lterms,
respectively.  The $x$-axis denotes the proportion of binding abstractions among
all abstractions. Corresponding averages, variances and standard deviations are
presented in~\autoref{fig:experimental:plot:closed:bindingabs:mean}.

\begin{figure}[!htb]
\centering
\begin{subfigure}{.5\textwidth}
  \centering
  \includegraphics[width=\linewidth]{./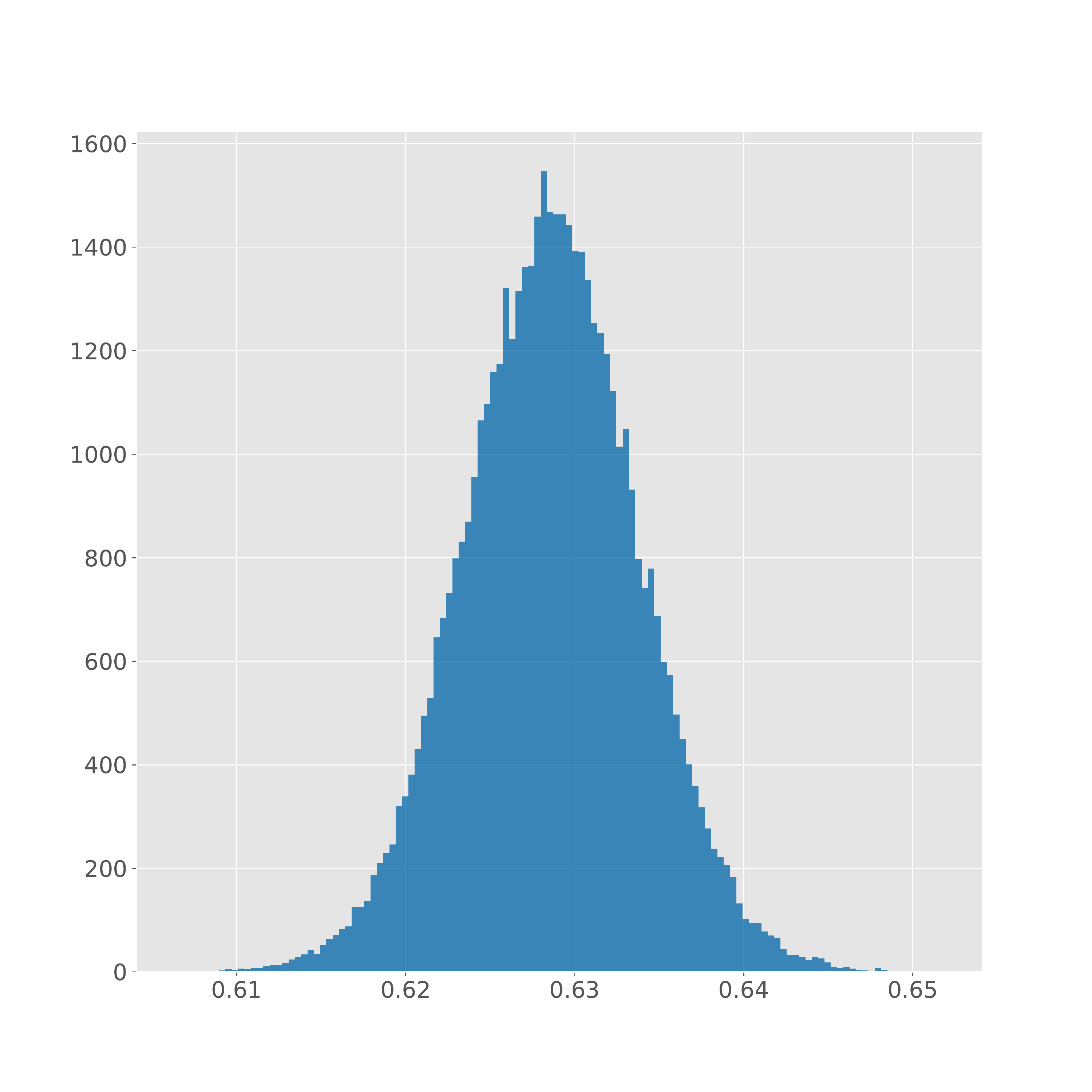}
  \caption{Plain \lterms.}
\end{subfigure}%
\begin{subfigure}{.5\textwidth}
  \centering
  \includegraphics[width=\linewidth]{./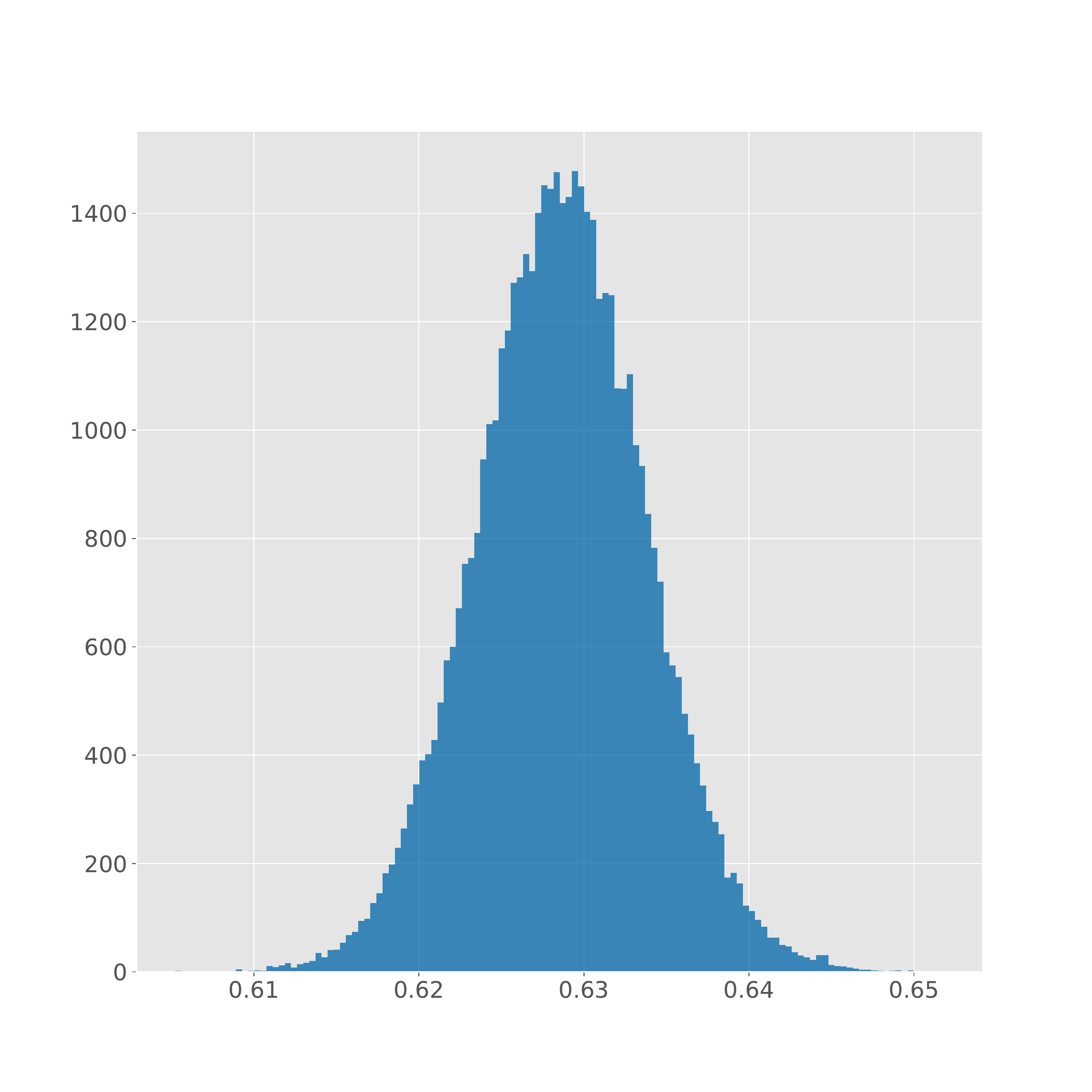}
  \caption{Closed \lterms.}
\end{subfigure}
\caption{Distribution histograms for binding abstractions in plain and closed \lterms.}
\label{fig:experimental:plot:closed:bindingabs}
\end{figure}

\begin{table}[!htb]
    \caption{The average, variance and standard deviation
    corresponding to the proportion of  binding abstractions in plain, closed and $h$\nobreakdash-shallow
    \lterms~within the obtained data sets.}
\label{fig:experimental:plot:closed:bindingabs:mean}
\centering
\begin{tabular}{cccc}
    & Average & Variance & Std. dev. \\
  \hline
    plain & $0.62862$ & $0.00003$ & $0.00521$ \\
    closed & $0.62863$ & $0.00001$ & $0.00518$ \\
    $h$-shallow & $0.62862$ & $0.00001$ & $0.00512$ \\
\end{tabular}
\end{table}

\begin{remark}
Like in the case of open subterms, the distribution of binding abstractions in
    plain and closed \lterms, respectively, mirrors a Gaussian law.
    Corresponding variances and standard deviations, though positive, are
    minuscule. The listed averages of order $0.6286$ suggest that most
    abstractions are binding, both in plain and closed terms.
\end{remark}

\subsubsection{Maximal number of variables bound to a single abstraction}
In the current subsection we turn to extremal statistics related to binding
abstractions. Specifically, we investigate the maximal number of variables bound
to a single abstraction in random \lterms.
\autoref{fig:experimental:plot:closed:maxvarabs} illustrates the related
distribution histogram for plain and closed \lterms, respectively. Numerical
approximations of averages, variances and standard deviations
are given in~\autoref{fig:experimental:plot:closed:maxvarabs:mean}.

\begin{figure}[!htb]
\centering
\begin{subfigure}{.5\textwidth}
  \centering
  \includegraphics[width=\linewidth]{./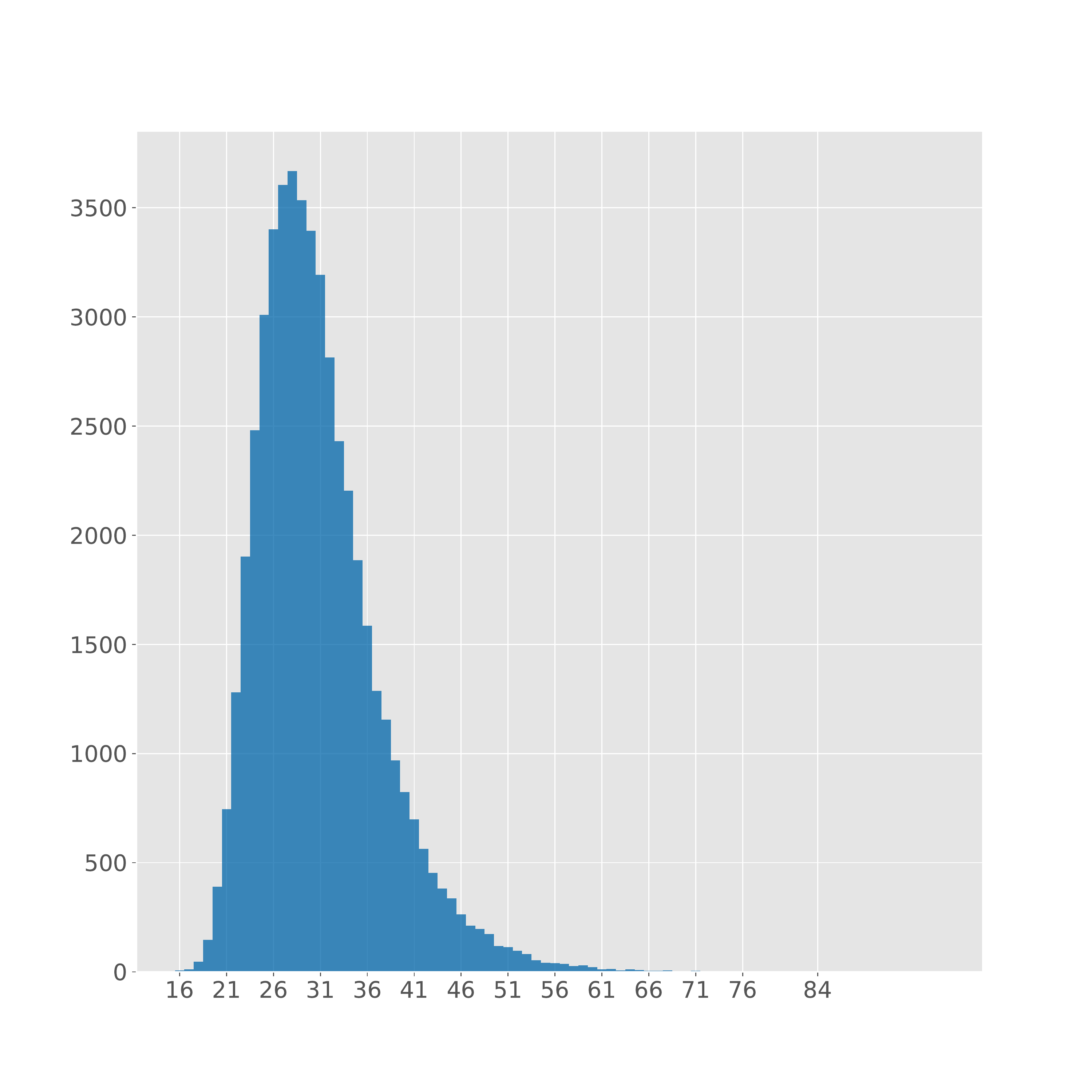}
  \caption{Plain \lterms.}
\end{subfigure}%
\begin{subfigure}{.5\textwidth}
  \centering
  \includegraphics[width=\linewidth]{./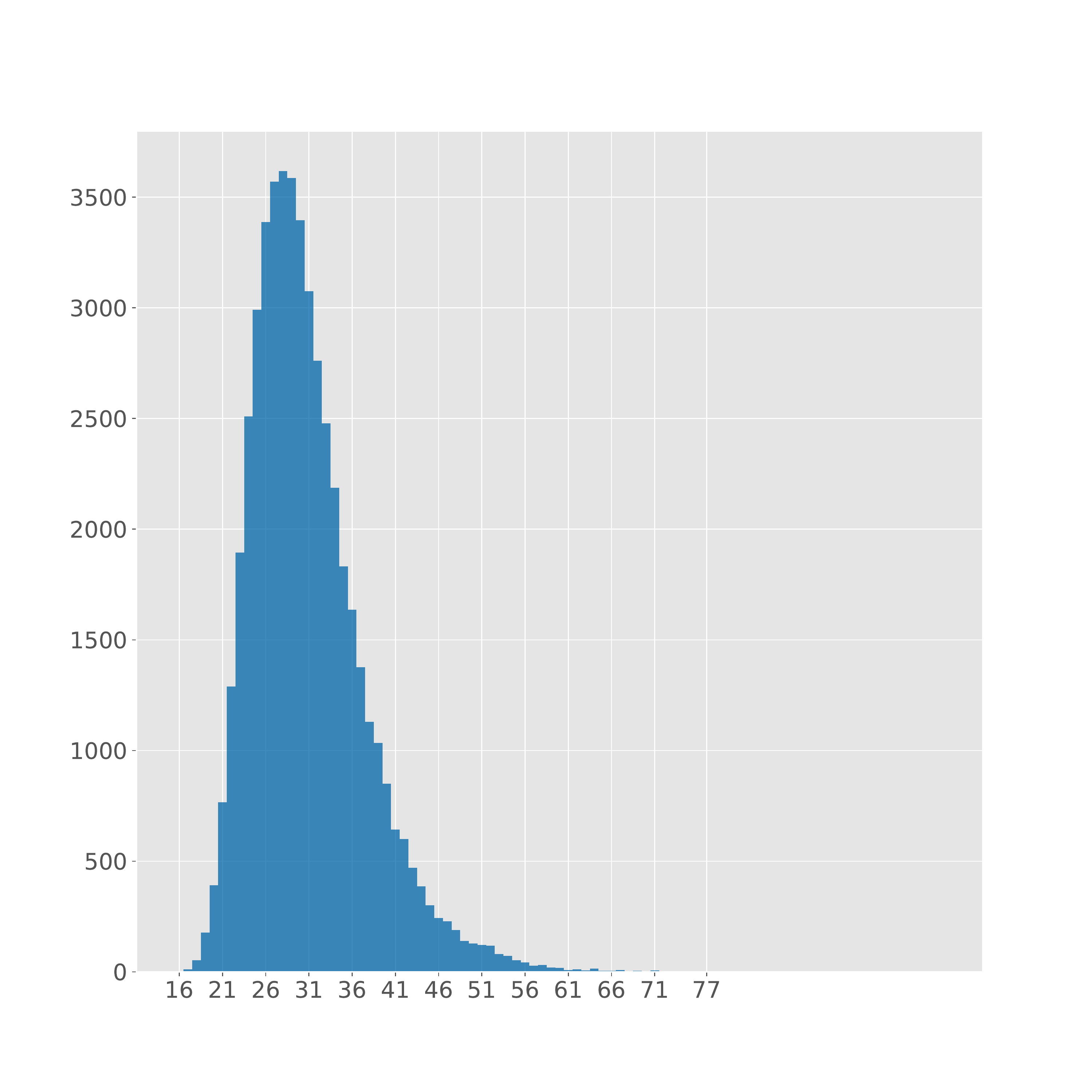}
  \caption{Closed \lterms.}
\end{subfigure}
\caption{Distribution histograms for the maximal number of variables bound to a
    single abstraction in plain and closed \lterms.}
\label{fig:experimental:plot:closed:maxvarabs}
\end{figure}

\begin{table}[!htb]
    \caption{The average, variance and standard deviation
    corresponding the maximal number of variables bound to a single abstraction
    in plain, closed and $h$\nobreakdash-shallow
    \lterms~within the obtained data sets.}
\label{fig:experimental:plot:closed:maxvarabs:mean}
\centering
\begin{tabular}{cccc}
    & Average & Variance & Std. dev. \\
  \hline
    plain & $30.83028$ & $43.08992$ & $6.56429$ \\
    closed & $30.8323$ & $43.12418$ & $6.5669$ \\
    $h$-shallow & $30.87194$ & $43.46242$ & $6.5926$ \\
\end{tabular}
\end{table}

\begin{remark}
Obtained distributions and moments are reminiscent of a double-exponential
    distribution related to extreme parameters in various combinatorial
    structures, see~\cite[p. 311]{flajolet09}. Notably, $\log_{1/\rho} n$ for $n
    \in [20,000;50,000]$ lies in the interval $[8.1259,8.8777]$ whereas as the
    same time $\sqrt{n} \geq 141.421$.  Consequently, the obtained average
    suggests a similar $C \cdot \log_{1/\rho} n$ type of behaviour occurring,
    for instance, in the analysis of longest runs in random words,
    see~\cite[Example V.4]{flajolet09}. Nonetheless, unlike the limit
    distribution of longest runs, the discovered distribution does not suggest a
    limit concentration due to the significant values of the observed variances.
\end{remark}

\subsubsection{$m$-openness of plain lambda terms}
The central notion of $m$\nobreakdash-openness plays an important rôle in the
analysis of closed \lterms. Given the complete analysis of $m$\nobreakdash-open
terms~\cite{BodiniGitGol17} and efficient techniques allowing to obtain
numerical estimates for the relative asymptotic density of $m$\nobreakdash-open
terms in the set of all plain \lterms~\cite{gittenberger_et_al:LIPIcs:2016:5741}
it is possible to approximate the limit distribution of the
$m$\nobreakdash-openness parameter in plain terms. However, in order to avoid the
laborious computations involved in this approach, we offer a simple Monte Carlo
approximation scheme.  \autoref{fig:experimental:plot:plain:mopenness} depicts
the distribution histogram of the obtained data set for plain \lterms. The
corresponding average, variance and standard deviation are listed
in~\autoref{fig:experimental:plot:plain:mopenness:mean}.

\begin{figure}[!htb]
  \centering
  \includegraphics[width=0.5\linewidth]{./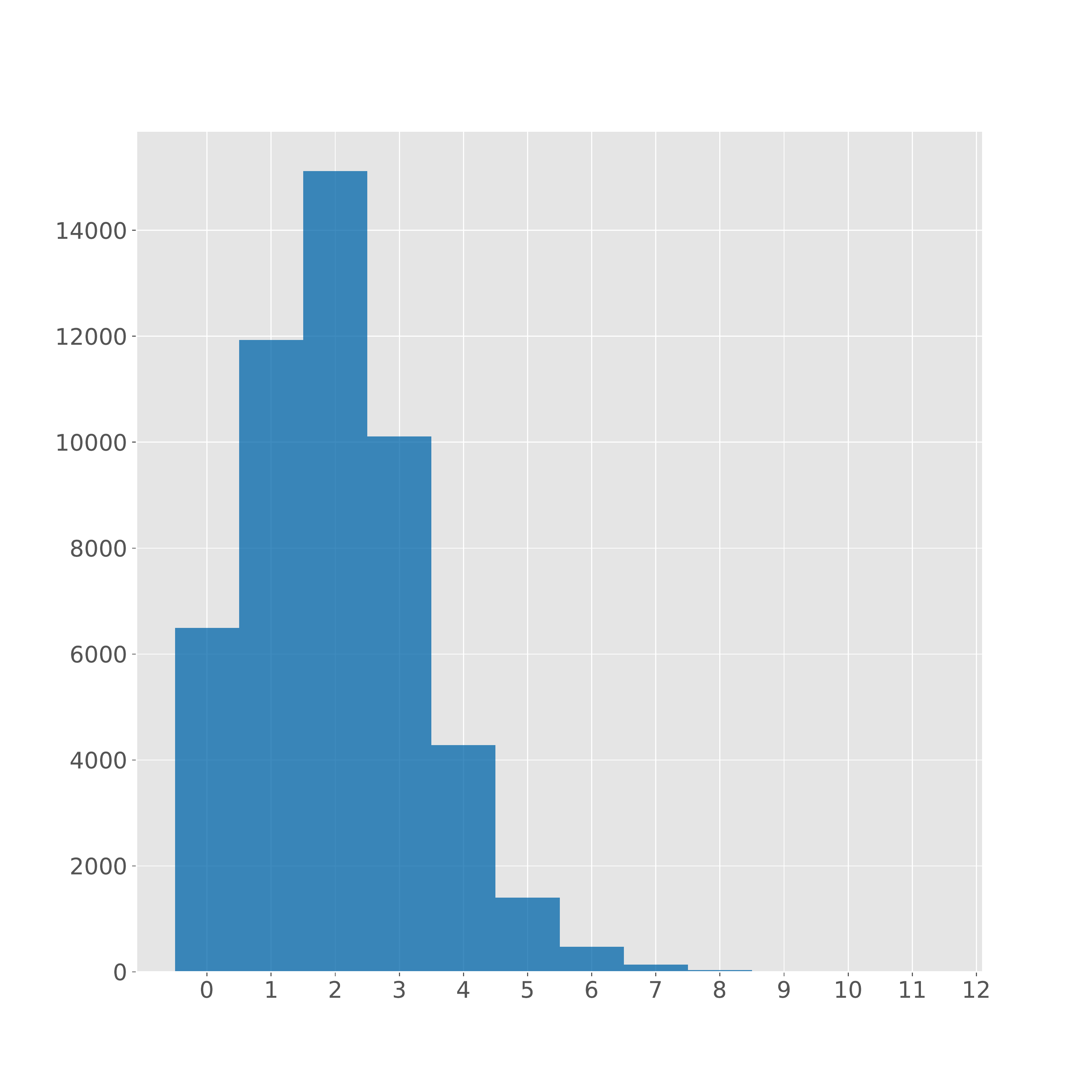}
\caption{Distribution histogram for $m$-openness of plain \lterms.}
\label{fig:experimental:plot:plain:mopenness}
\end{figure}

\begin{table}[!htb]
    \caption{The average, variance and standard deviation
    corresponding to the $m$\nobreakdash-openness of plain \lterms~within the obtained data set.}
\label{fig:experimental:plot:plain:mopenness:mean}
\centering
\begin{tabular}{cccc}
    & Average & Variance & Std. dev. \\
  \hline
    plain & $2.01856$ & $1.82538$ & $1.35106$ \\
\end{tabular}
\end{table}

\subsubsection{Generalised $m$-openness of plain and closed terms}
Though the notion of $m$\nobreakdash-openness is defined only for non-negative
values of $m$, we propose a natural generalisation to all integers in the
following manner.  We say that a closed \lterm~$T$ is
\emph{$m$\nobreakdash-closed}\footnote{Though this notion extends
$m$\nobreakdash-openness to negative values of $m$, we prefer to avoid the
term $m$\nobreakdash-open while referring to closed terms and instead use the
term $m$\nobreakdash-close. Hence, a term is $m$\nobreakdash-closed if it is,
intuitively, $(-m)$\nobreakdash-open.} for $m \geq 0$ if it is possible to
discard $m$ head abstractions of $T$ and still retain a closed \lterm. In other
words, a closed term is $m$\nobreakdash-closed if its top $m$ head abstractions
are non-binding.

Note that this new parameter provides a degree of term closeness; the higher the
factor $m$ of an $m$\nobreakdash-closed \lterm, the more closed it is. Like in
the case of $m$\nobreakdash-open \lterms, closed \lterms~are
$0$\nobreakdash-closed.  Moreover, if a term is $(m+1)$\nobreakdash-closed, then
it is also $m$\nobreakdash-closed.

\autoref{fig:experimental:plot:closed:mopenness} depicts the distribution
histogram of the obtained data sets for plain and closed \lterms, respectively.
The $x$-axis denotes the generalised $m$-openness factor  whereas the $y$-axis
corresponds to the number of terms attaining the corresponding cost value.
Numerical approximations of averages, variances and standard deviations
are listed in~\autoref{fig:experimental:plot:closed:mopenness:mean}.

\begin{figure}[!htb]
\centering
\begin{subfigure}{.5\textwidth}
  \centering
  \includegraphics[width=\linewidth]{./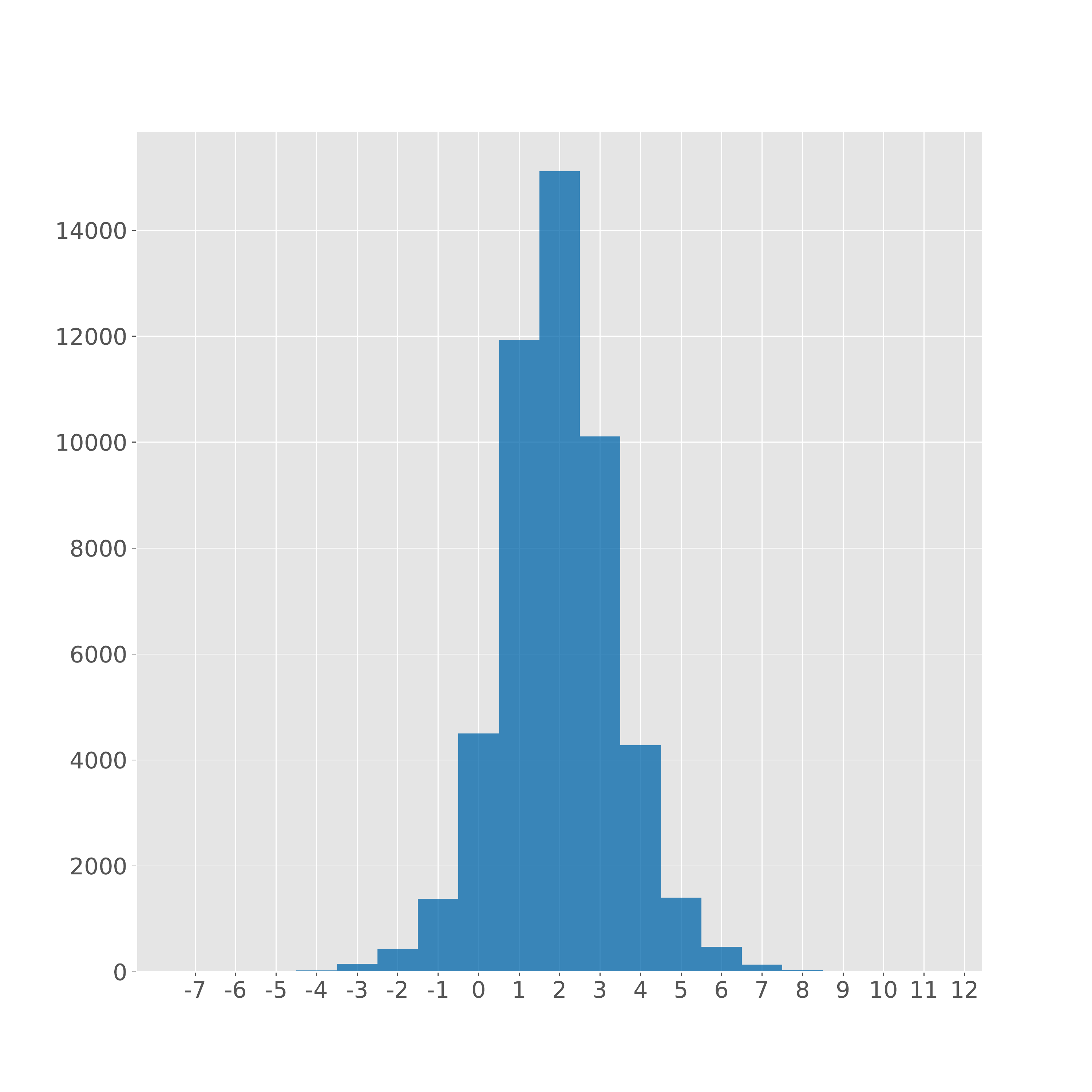}
  \caption{Plain \lterms.}
\end{subfigure}%
\begin{subfigure}{.5\textwidth}
  \centering
  \includegraphics[width=\linewidth]{./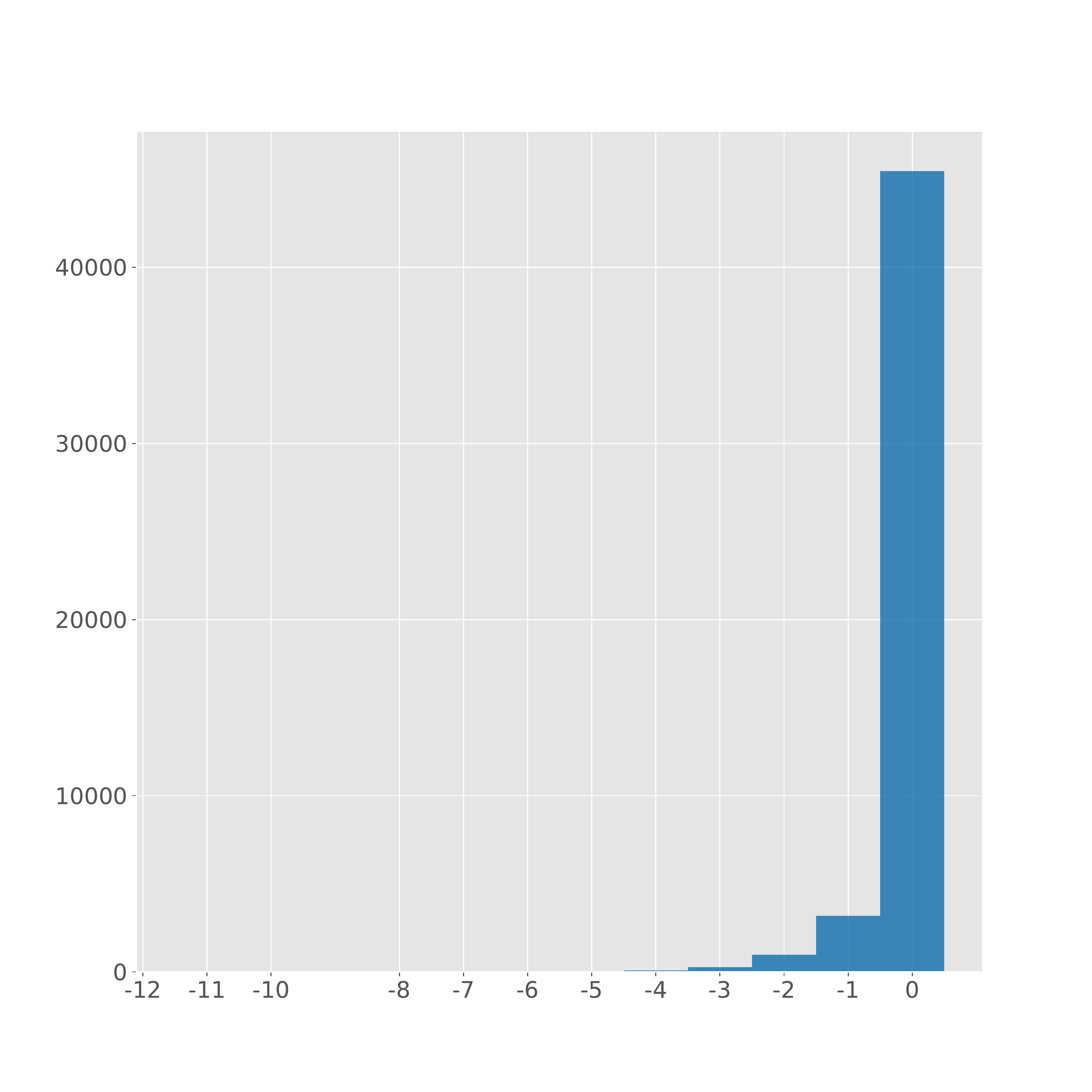}
  \caption{Closed \lterms.}
\end{subfigure}
\caption{Distribution histograms for the generalised $m$-openness in plain and closed \lterms.}
\label{fig:experimental:plot:closed:mopenness}
\end{figure}

\begin{table}[!htb]
    \caption{The average, variance and standard deviation
    corresponding the generalised $m$-openness factor
    in plain, closed and $h$\nobreakdash-shallow
    \lterms~within the obtained data sets.}
\label{fig:experimental:plot:closed:mopenness:mean}
\centering
\begin{tabular}{cccc}
    & Average & Variance & Std. dev. \\
  \hline
    plain & $1.96196$ & $2.15235$ & $1.46709$ \\
    closed & $-0.43814$ & $0.61569$ & $0.78466$ \\
    $h$-shallow & $-0.44054$ & $0.62834$ & $0.79268$ \\
\end{tabular}
\end{table}

\begin{remark}
The empirical distribution corresponding generalised $m$\nobreakdash-openness of
    closed \lterms~suggests that the vast majority of closed terms are
    $0$\nobreakdash-closed but not $1$\nobreakdash-closed. Given the related
    distribution of head abstractions,
    see~\autoref{fig:experimental:plot:closed:headabs}, such a result suggests
    that if a closed term has a leading head abstraction, it is more likely that
    it is in fact binding.
\end{remark}

Let us note that all of the presented empirical histograms exhibit a close
correspondence between parameters of closed \lterms~and their corresponding
equivalents in $h$\nobreakdash-shallow terms. Such a result should not be
surprising given the exponential convergence speed at which
$h$\nobreakdash-shallow \lterms~of size $N$ tend to closed \lterms~of size $N$
as $h \to \infty$, see~\autoref{sec:infinite:systems},
cf.~\cite{BodiniGitGol17}. Consequently, virtually all presented histograms for
closed \lterms~are also correct histograms for $h$\nobreakdash-shallow terms.

\subsection{Empirical evaluation of systems of generating functions}
The probability generating function \( p_n(u) \) of a random variable \( X_n \)
corresponding to a certain parameter inside a random \lterm~of size $n$, plain
or closed, can be obtained by evaluating the bivariate generating function \(
L(z, u) \) and taking the respective ratio of coefficients,
see~\autoref{subsec:limit:laws}:
\begin{equation}
    p_n(u) = \dfrac{[z^n] L(z, u)}{[z^n] L(z, 1)}.
\end{equation}
Remarkably, for statistics whose limit laws are discrete distributions, the
convergence is already evident at $n = 20$. We empirically evaluate the
coefficients of generating functions whenever the associated functional
equations are available.

For closed \lterms, the corresponding systems of functional equations are
infinite and hence cannot be directly evaluated. Fortunately, truncating the
infinite system at the \( k \)th level and replacing the \( k \)th function \( L_k(z,
u) \) by its infinite counterpart \( L_\infty(z, u) \) yields an exponentially
small difference in the dominant term coefficients,
see~\autoref{sec:infinite:systems}.  In what follows, we truncate the systems at
height \( k = 15 \).

\subsubsection{Head abstractions} We return to head abstractions in plain and
closed \lterms. The distribution of this parameter for large term sizes was
discussed in~\autoref{subsubsec:head:abs}. Here, we study this parameter for
small term sizes. The equations for corresponding generating functions are
presented in~\autoref{subsec:basic:head:abs,subsec:advanced:head:abs}.

\begin{figure}[!htb]
\centering
\begin{subfigure}{.5\textwidth}
  \centering
        \includegraphics[width=1.0\textwidth]{./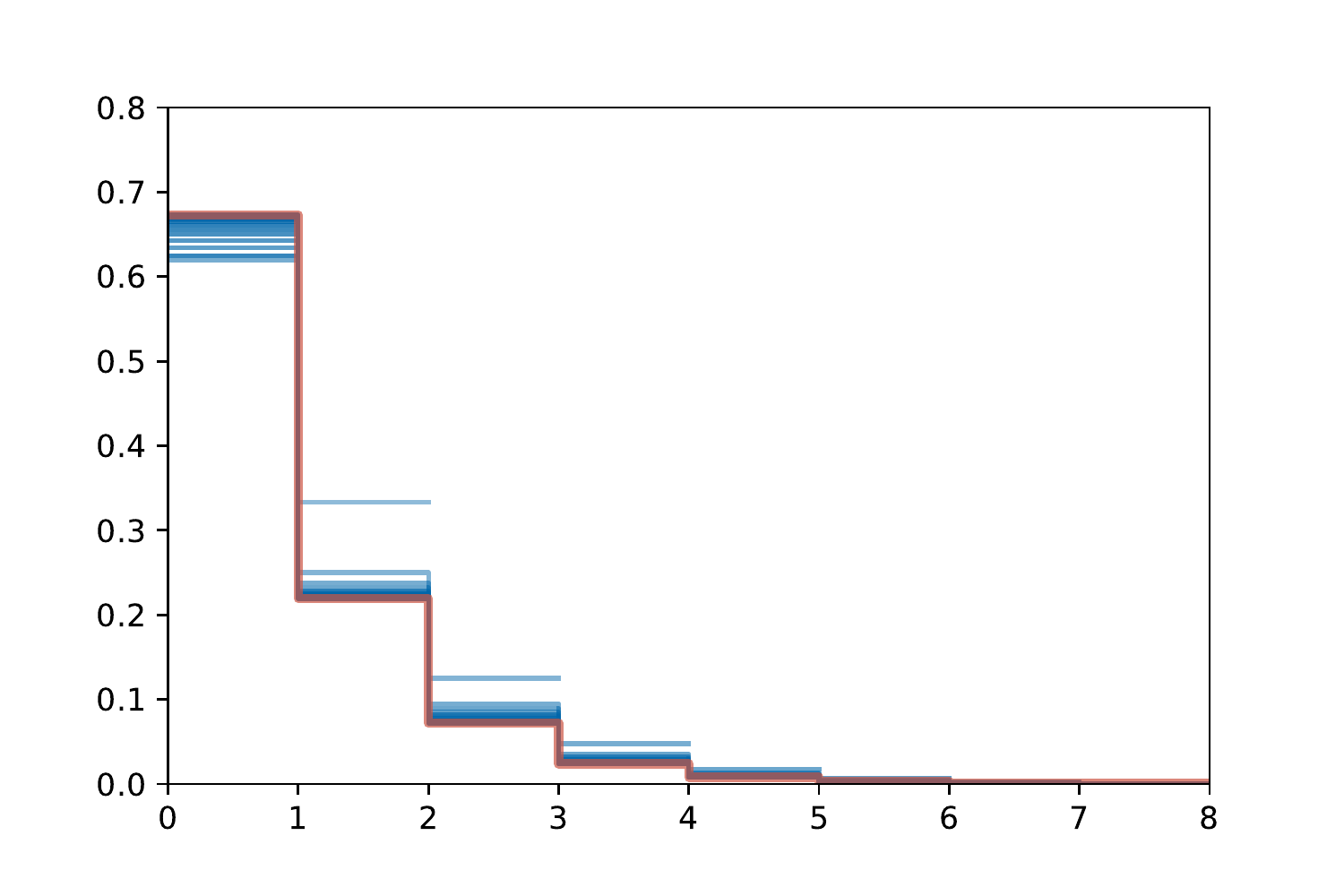}
  \caption{Plain \lterms.}
\end{subfigure}%
\begin{subfigure}{.5\textwidth}
  \centering
        \includegraphics[width=1.0\textwidth]{./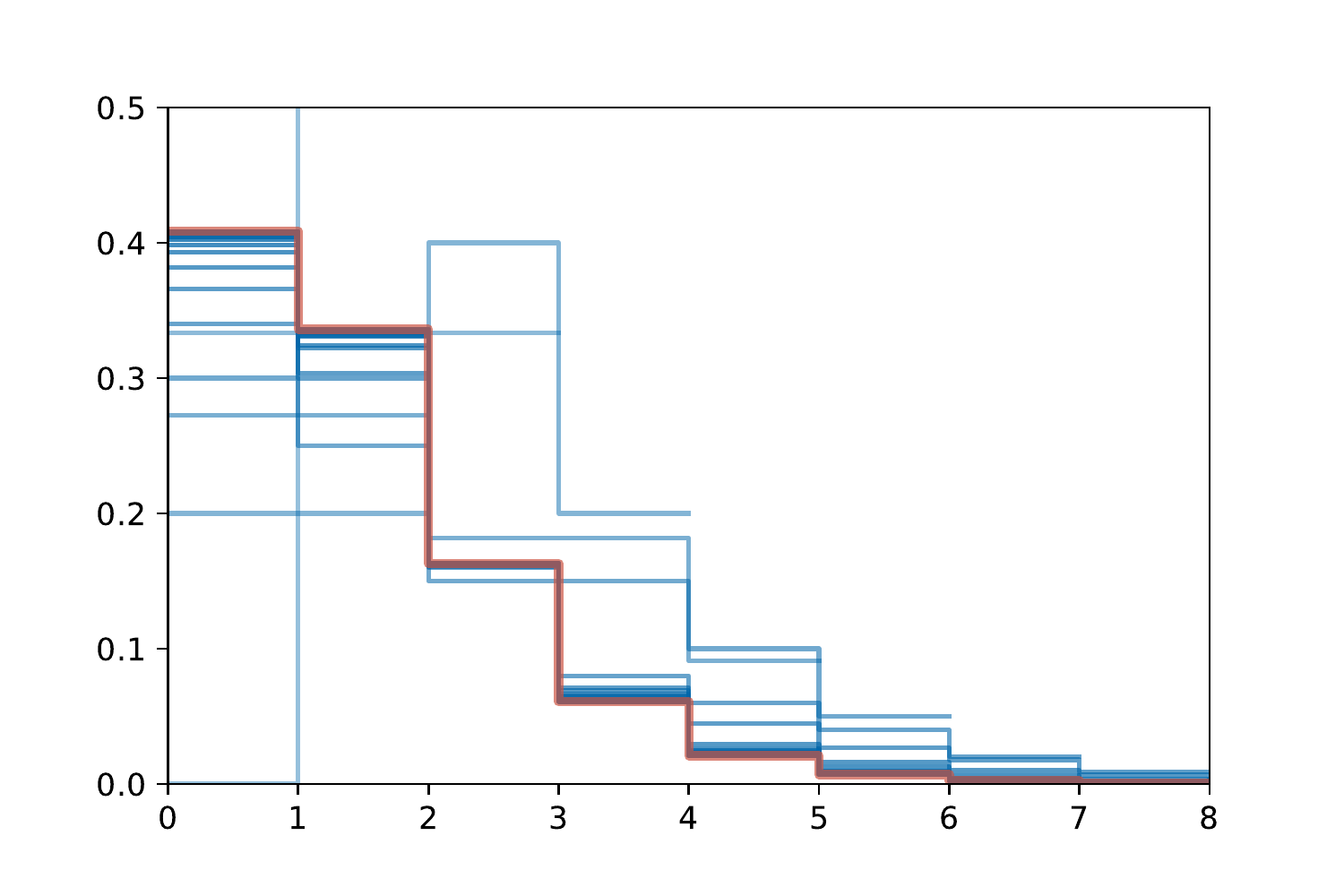}
  \caption{Closed \lterms.}
\end{subfigure}
\caption{Distribution histograms for the number of head abstractions in plain
    and closed \lterms.}
\label{fig:discrete:plot:headabs}
\end{figure}
The convergence process for $ n \in [1, 15] $ is depicted
in~\autoref{fig:discrete:plot:headabs}. Head abstractions in plain terms admit a
geometric distribution while the number of head abstractions in closed lambda
terms converges to a certain computable, discrete distribution.

\subsubsection{Leftmost-outermost redex search}
As already mentioned in~\autoref{subsec:empirical:boltzmann}, the distribution
associated with the cost of finding the leftmost-outermost
$`b$\nobreakdash-redex in plain and closed \lterms~tends to a discrete limiting
distribution. The functional systems for bivariate generating functions can be
found in~\autoref{subsec:basic:redex:discovery,subsec:advanced:redex:discovery}.

\begin{figure}[!htb]
\centering
\begin{subfigure}{.5\textwidth}
  \centering
        \includegraphics[width=1.0\textwidth]{./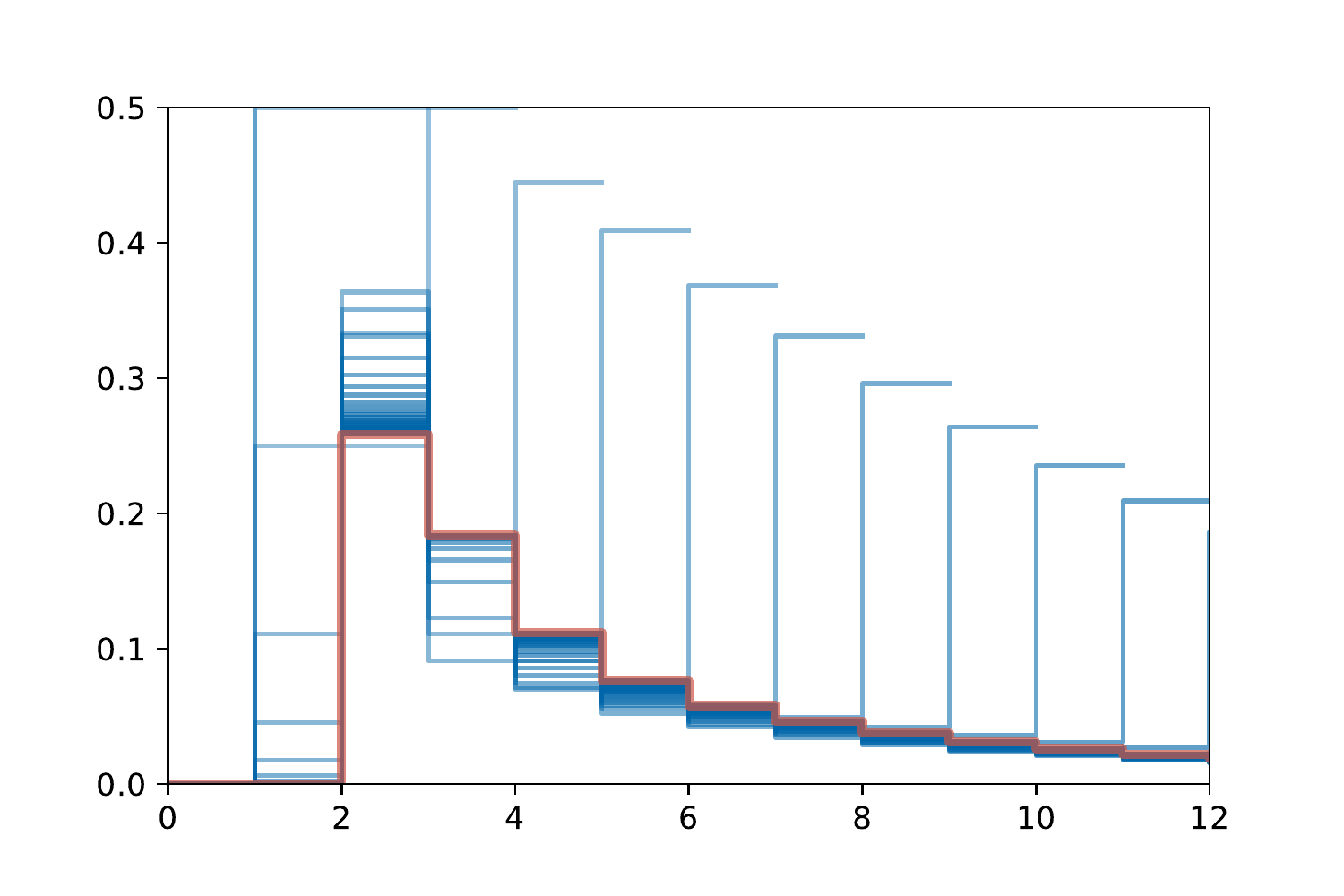}
  \caption{Plain \lterms.}
\end{subfigure}%
\begin{subfigure}{.5\textwidth}
  \centering
        \includegraphics[width=1.0\textwidth]{./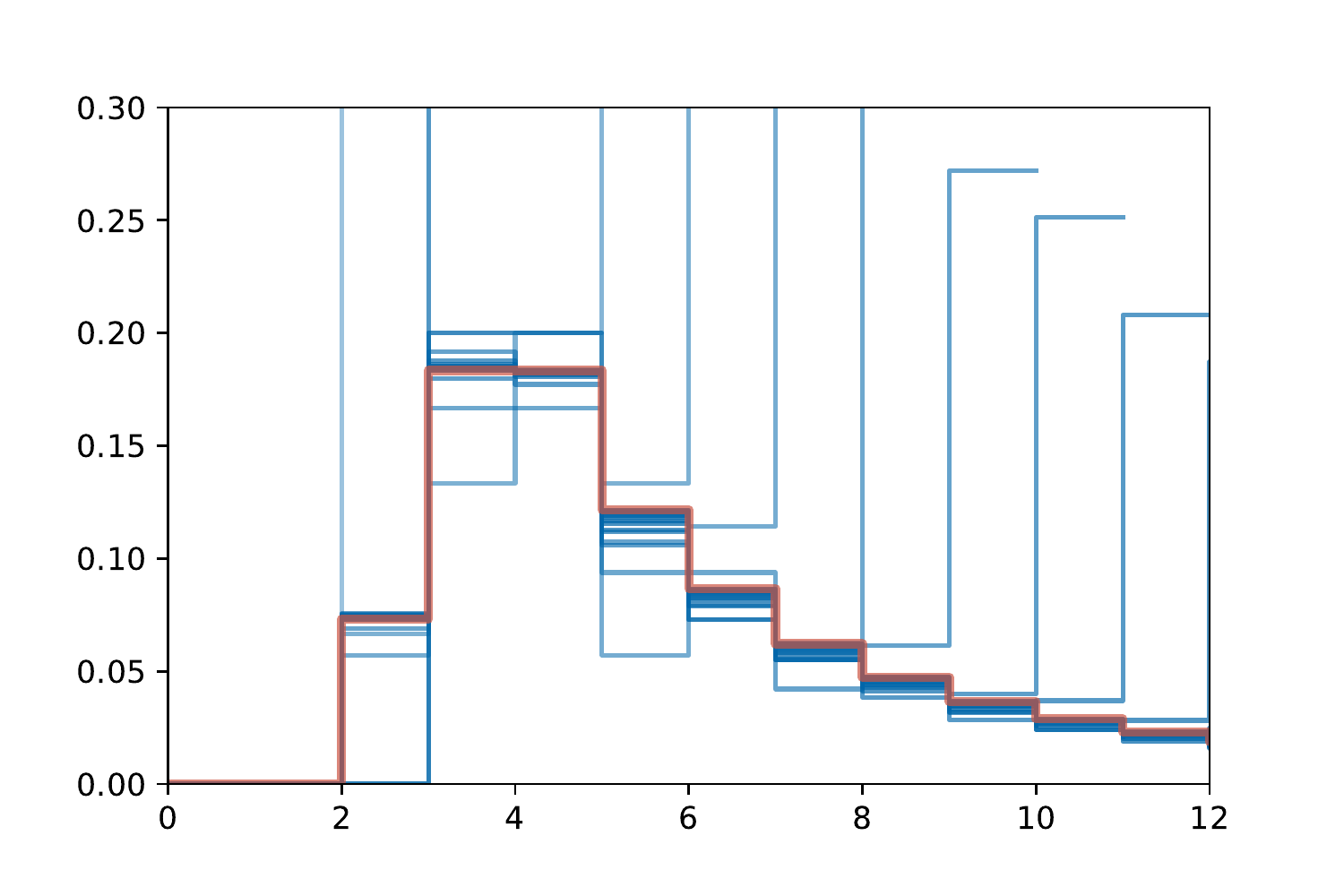}
  \caption{Closed \lterms.}
\end{subfigure}
\caption{Distribution histograms associated with the cost of redex search time in plain and
    closed \lterms.}
\label{fig:discrete:plot:redexsearch}
\end{figure}
We depict the convergence process in~\autoref{fig:discrete:plot:redexsearch}.
For both plain and closed \lterms, a remarkable detail can be observed. For
\lterms~of small size, the distribution has a peak at the value corresponding to
the term size. Note that such a phenomenon is related to $`b$\nobreakdash-normal
forms, witnessing the worst-case time complexity of the traversal algorithm.
For larger terms, the proportion of $`b$\nobreakdash-normal forms in all lambda
terms becomes exponentially negligible.

\subsubsection{Free variables and $m$-openness in plain terms}
Next, we consider two parameters of plain \lterms~that require advanced marking
techniques. The functional equations for the cases of free variables and
$m$\nobreakdash-openness are given in
subsequent~\autoref{subsec:m:openness,subsec:free:variables}.

\begin{figure}[!htb]
\centering
\begin{subfigure}{.5\textwidth}
  \centering
        \includegraphics[width=1.0\textwidth]{./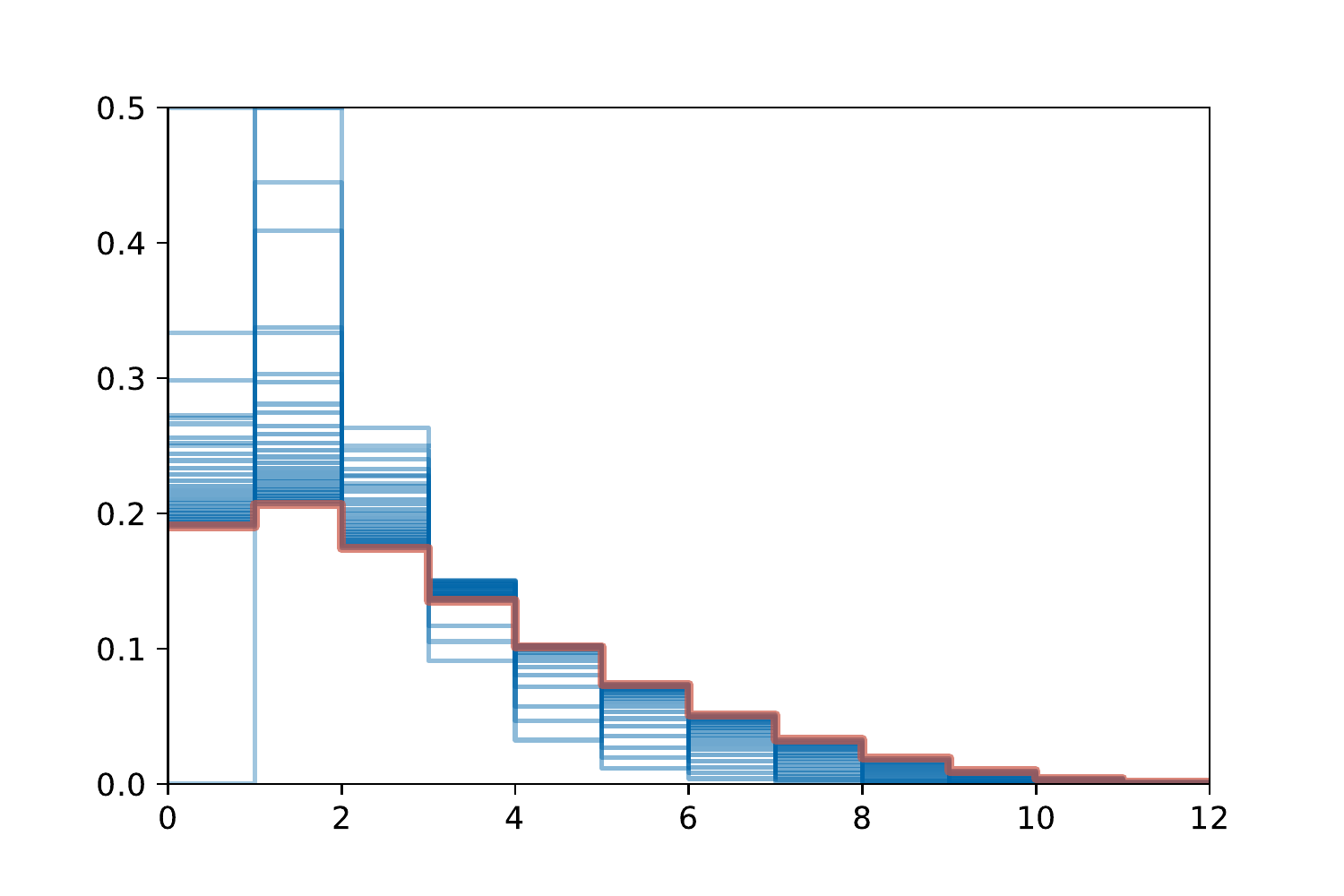}
  \caption{Free variables.}
\end{subfigure}%
\begin{subfigure}{.5\textwidth}
  \centering
        \includegraphics[width=1.0\textwidth]{./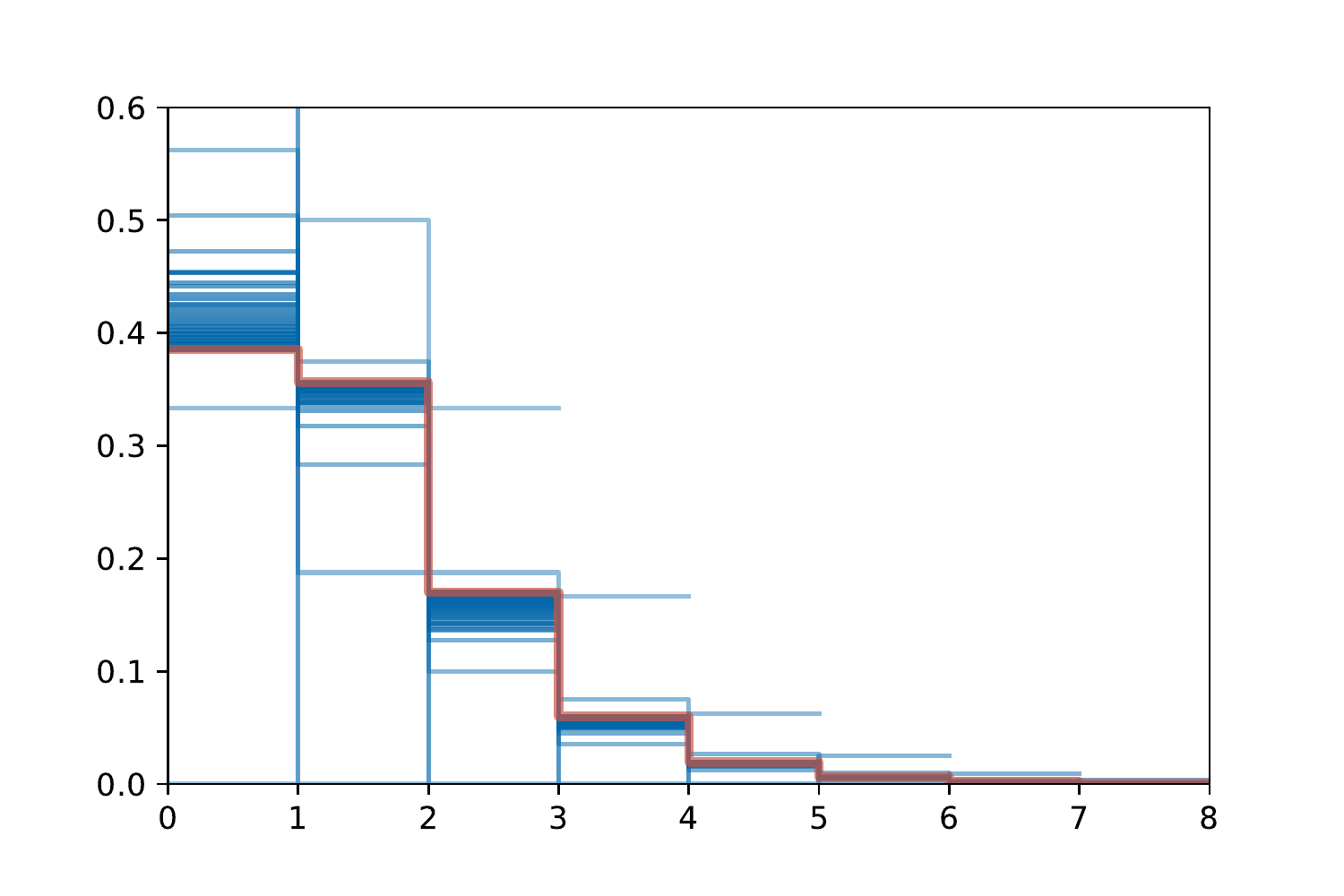}
  \caption{$m$-openness.}
\end{subfigure}
\caption{Distribution histograms for the number of free variables and
$m$-openness in plain \lterms.}
\label{fig:discrete:plot:advanced:plainterms}
\end{figure}
The convergence process is depicted
in~\autoref{fig:discrete:plot:advanced:plainterms}.  Let us remark that the
convergence process corresponding to the number of free variables proceeds more
slowly than all the other statistics that we consider in the current paper.

\subsubsection{De Bruijn index profile}
The final pair of plots concerns the profile of variables or, in other words,
the profile of de~Bruin indices in random plain and closed \lterms.
In~\autoref{subsec:debruijn:plain,subsec:debruijn:closed} we show that both
parameters tend to geometric limiting distributions.
\autoref{fig:discrete:dbindex:profile} depicts the probability distributions for
lambda terms of the size in the interval $n \in [1, 15]$.

\begin{figure}[!htb]
\centering
\begin{subfigure}{.5\textwidth}
  \centering
        \includegraphics[width=1.0\textwidth]{./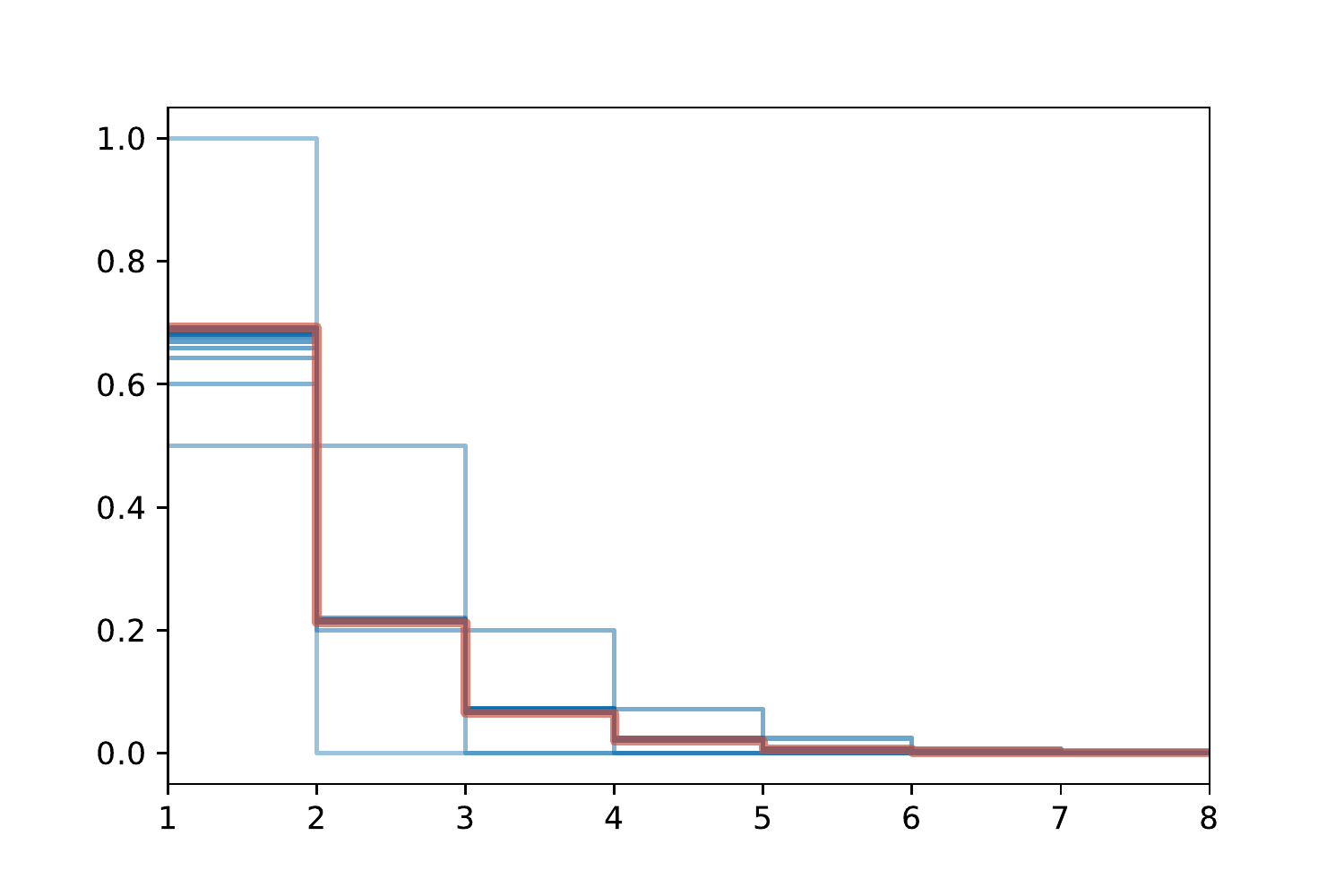}
  \caption{Plain \lterms.}
\end{subfigure}%
\begin{subfigure}{.5\textwidth}
  \centering
        \includegraphics[width=1.0\textwidth]{./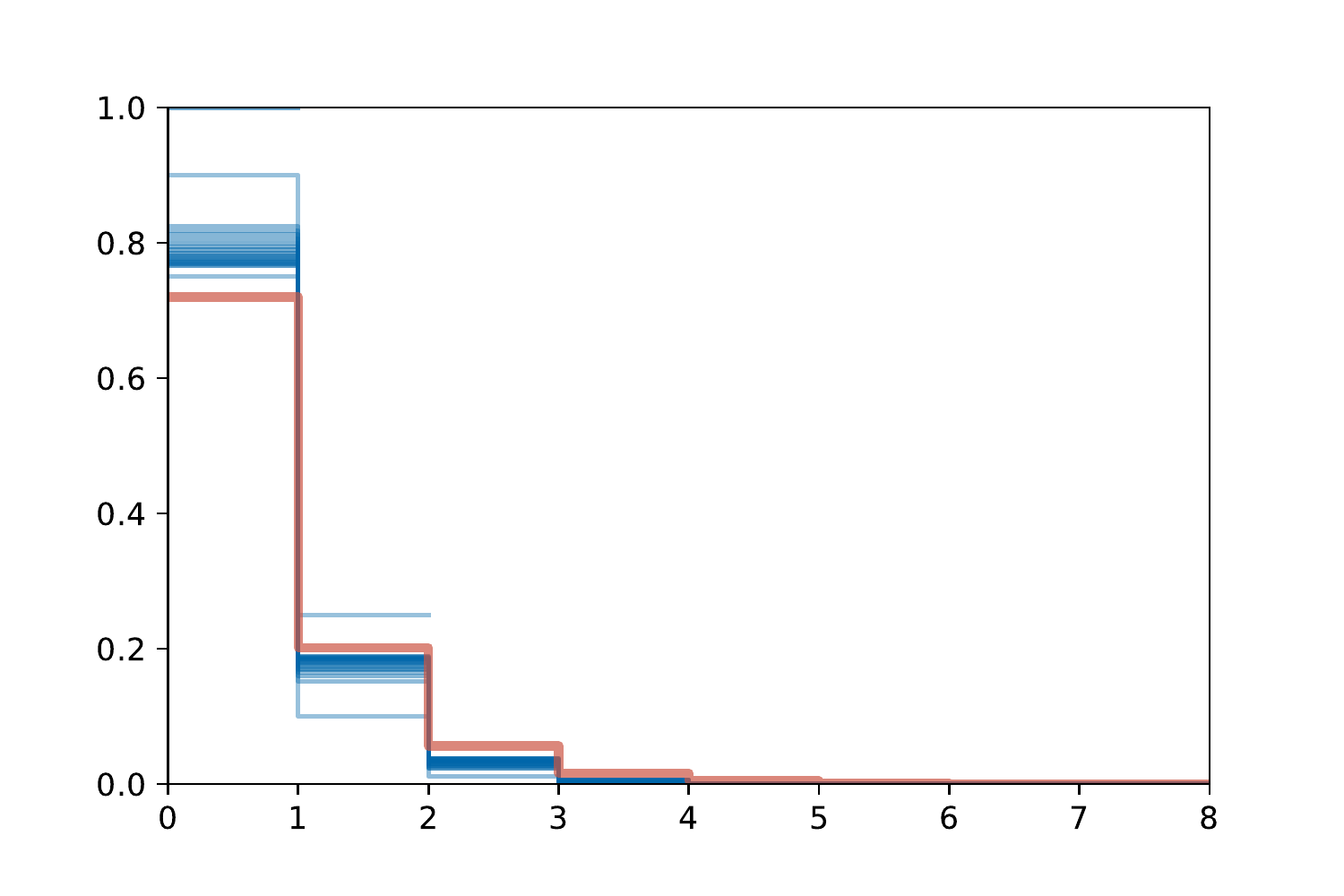}
  \caption{Closed \lterms.}
\end{subfigure}
\caption{Distribution histograms for de~Bruijn index profile in plain and closed
    \lterms.}
\label{fig:discrete:dbindex:profile}
\end{figure}

\section{Infinite systems of algebraic equations}\label{sec:infinite:systems}
In this section we present our main technical contribution within analytic
combinatorics meant for dealing with certain recursive, infinite systems of
generating functions, i.e.~\autoref{proposition:infinite:system}.  Although our
results admits broader applications than the one presented in the current paper,
for consistency, we focus only on the enumeration and statistical analysis of
various combinatorial parameters in closed \lterms.  Our proof is motivated by
the papers~\cite{BodiniGitGol17,gittenberger_et_al:LIPIcs:2016:5741} where the
authors consider the enumeration problem of closed \lterms~without additional
marking parameters.  For that purpose, they construct a series of sequences
$\seq{L_{m,N}}_{m \geq 0}$ that approximate $m$\nobreakdash-open \lterms~with
convergence rate of order $O\left(\frac{1}{\sqrt{N}}\right)$.  In what follows
we improve this rate to an exponential one.  Moreover, we abstract the
considered system away from \lterms~in the de~Bruijn notation, allowing for an
analysis of more general varieties of combinatorial systems.

Remarkably, in~\cite[Section 5]{gittenberger_et_al:LIPIcs:2016:5741} the authors
obtain the asymptotic estimate \( \Theta(n^{-3/2} \rho^{-n}) \) for the number
of closed lambda terms of size \( n \) with the difference between upper and
lower bounds on the constant multiple within \( 10^{-7} \). Let us notice that
this technique is quite different from the technique used
in~\cite{BodiniGitGol17}. Our approach is more similar to the former method,
however admits certain improvements. Specifically, we simplify the procedure of
improved constant estimation, give a rigorous proof about the exponential
convergence rate of the constant multiple, and provide more general and simpler
tools based on the properties of the Jacobian of the limiting system (instead of
exploiting the particular form of this equation in the case of \lterms).

\subsection{Calculus techniques for formal power series}
We start with some basic notation and properties of coefficient-wise
inequalities on formal power series and some geometric results on matrices of
formal power series. In what follows, we denote the spectral
radius of matrix \( \vec{\mathcal A} \), i.e.~the largest absolute value of its
eigenvalues, by \( r(\vec{\mathcal A}) \).

\begin{definition}[Formal power series domination]
    \label{definition:fps:domination}
We say that \( f(z) \) \emph{is dominated by} \( g(z) \), denoted as \( f(z)
    \preceq g(z) \), if for each \( n \geq 0 \) we have \( {[z^n] f(z) \leq
    [z^n] g(z)} \).  For multivariate formal power series
    \begin{equation}
        f(\vec z) = \sum_{\vec n \geq \vec 0} a_{\vec n} \vec z^{\vec n}
        \quad \text{and} \quad
        g(\vec z) = \sum_{\vec n \geq \vec 0} b_{\vec n} \vec z^{\vec n}
    \end{equation}
    where \( \vec z = (z_1, \ldots, z_d) \), \( \vec n = (n_1, \ldots, n_d) \)
        and \( \vec z^{\vec n} = z_1^{n_1} z_2^{n_2} \ldots z_d^{n_d} \) the
        domination \( {f(\vec z) \preceq g(\vec z)} \) means that for each
        vector of indices \( \vec n \) it holds \( a_{\vec n} \leq b_{\vec n}
        \). Certainly, if a combinatorial class \( \mathcal F \) is included in
        a combinatorial class \( \mathcal G \), then the generating functions \(
        f(z) \) and \( g(z) \) corresponding to respective classes satisfy \(
        f(z) \preceq g(z) \).  The same holds for marked classes and
        associated multivariate generating functions. Finally, for vectors \(
        \vec A \) and \( \vec B \) of identical (however not necessarily finite)
        dimension, we write \( \vec A \preceq \vec B \) to denote a
        coordinate-wise domination of respective components.
\end{definition}

In real analysis, the \emph{squeeze lemma} is a theorem regarding the limit of
the sequence which is upper- and lower-bounded by two sequences with the same
limit value. The following statement is a variant of this lemma, stated in the
context of formal power series admitting coefficient asymptotics suitable for
analysis of corresponding limit laws, see~\autoref{subsec:limit:laws}.

\begin{lemma}[Squeeze lemma for formal power series]
    \label{lemma:squeeze}
    Let \( z \in \mathbb C \) and
    \( {\vec u = (u_1, \ldots, u_r)} \in \mathbb C^r \).
    Assume that
    \( f(z, \vec u) \),
    \( \seq{h_m(z, \vec u)}_{m \geq 0} \), and
    \( \seq{g_m(z, \vec u)}_{m \geq 0} \)
    are multivariate formal power series in \( (z, \vec u) \)
    with non-negative coefficients,
    such that for
    every \( n \) and \( m \), the functions
    \begin{equation}
        [z^n] f(z, \vec u), \
        [z^n] h_m(z, \vec u) \ \text{and} \
        [z^n] g_m(z, \vec u)
    \end{equation}
    are polynomials in \( \vec u \).

    Moreover, assume that the following conditions hold:
    \begin{itemize}
        \item for each \( m \geq 0 \), we have
            \begin{equation}
                \label{ineq:domination}
                h_m(z, \vec u) \preceq f(z, \vec u) \preceq g_m(z, \vec u)
            \end{equation}
            in the sense of multivariate
            formal power series domination;
        \item there exists a sequence of real positive numbers \( C_n \)
            and functions
            \( \seq{\underline{A_m}(\vec u)}_{m\geq 0} \),
            \( \seq{\overline{A_m}(\vec u)}_{m\geq 0} \),
            \( B(\vec u) \) analytic in a common neighbourhood of
            \( \vec u = \vec 1 \),
            such that
            uniformly for \( m \geq 0 \) and
            uniformly in a fixed complex vicinity of
            \( \vec u = \vec 1 \), it holds
            \begin{equation}
            \label{eq:limit:upperandlower:bounds}
                \lim_{n \to \infty}
                \dfrac{
                    [z^n] h_m(z, \vec u)
                }{
                    C_n \underline{A_m}(\vec u) B(\vec u)^n
                }
                = 1
                \quad \text{and} \quad
                \lim_{n \to \infty}
                \dfrac{
                    [z^n] g_m(z, \vec u)
                }{
                    C_n \overline{A_m}(\vec u) B(\vec u)^n
                }
                = 1
                ;
            \end{equation}
        \item there exists a function \( A(\vec u) \) analytic
            near \( \vec u = \vec 1 \) such that
            \( A(\vec 1) \neq 0 \) satisfying
            uniformly in a
            complex vicinity of \( \vec u = \vec 1 \)
            \begin{equation}
                \label{eq:limit:m}
                \lim_{m \to \infty} \underline{A_m}(\vec u) =
                \lim_{m \to \infty} \overline{A_m}(\vec u) =
                A(\vec u).
            \end{equation}
    \end{itemize}
    Then, uniformly in a complex vicinity of \( \vec u = \vec 1 \), as
    \( n \to \infty \):
\begin{equation}
    [z^n] f(z, \vec u) \sim C_n A(\vec u) B(\vec u)^n.
\end{equation}
\end{lemma}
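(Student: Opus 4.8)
The plan is to first prove the equivalence for real arguments \( \vec u \) near \( \vec 1 \) by a direct sandwich, and then to promote it to a uniform statement on a complex neighbourhood by analytic continuation. First I would fix \( \vec u \) real and positive in a small neighbourhood of \( \vec 1 \), so that \( B(\vec u) > 0 \). For such arguments the coefficient-wise domination \eqref{ineq:domination} yields genuine numerical inequalities \( [z^n] h_m(z,\vec u) \le [z^n] f(z,\vec u) \le [z^n] g_m(z,\vec u) \). Dividing through by \( C_n B(\vec u)^n \) and letting \( n \to \infty \), the two outer quantities converge, by \eqref{eq:limit:upperandlower:bounds}, to \( \underline{A_m}(\vec u) \) and \( \overline{A_m}(\vec u) \) respectively, so that \( \underline{A_m}(\vec u) \le \liminf_n \tfrac{[z^n] f(z,\vec u)}{C_n B(\vec u)^n} \le \limsup_n \tfrac{[z^n] f(z,\vec u)}{C_n B(\vec u)^n} \le \overline{A_m}(\vec u) \) for every \( m \). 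Letting \( m \to \infty \) and invoking \eqref{eq:limit:m}, both bounds collapse to \( A(\vec u) \), which already gives \( [z^n] f(z,\vec u) \sim C_n A(\vec u) B(\vec u)^n \) for real \( \vec u \) near \( \vec 1 \).

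Next I would introduce \( \Psi_n(\vec u) := \tfrac{[z^n] f(z,\vec u)}{C_n B(\vec u)^n} \), which is analytic in a fixed complex neighbourhood of \( \vec 1 \) because \( [z^n] f \) is a polynomial in \( \vec u \) while \( B \) is analytic and non-vanishing there. By the previous step the \( \Psi_n \) converge pointwise to the analytic function \( A \) on the real part of this neighbourhood, a set with accumulation points. By Vitali's convergence theorem it then suffices to verify that \( (\Psi_n) \) is locally uniformly bounded on a complex neighbourhood; uniform convergence on compact subsets, and hence the claimed uniform equivalence, follows automatically.

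The hard part will be exactly this uniform boundedness off the real axis. Since \( g_m - f \succeq 0 \) and \( f - h_m \succeq 0 \) have non-negative coefficients, the triangle inequality gives \( |[z^n] f(z,\vec u)| \le [z^n] f(z,|\vec u|) \le [z^n] g_m(z,|\vec u|) \), where \( |\vec u| \) denotes the coordinate-wise modulus; combined with \eqref{eq:limit:upperandlower:bounds} evaluated at the real point \( |\vec u| \), uniformly in \( m \), this bounds \( |\Psi_n(\vec u)| \) by \( \overline{A_m}(|\vec u|)\,(1+o(1))\,(B(|\vec u|)/|B(\vec u)|)^n \). The obstacle is the prefactor \( (B(|\vec u|)/|B(\vec u)|)^n \): for \( \vec u \) away from the reals it may grow exponentially in \( n \) (in the combinatorial regime the exponent is governed by the positive asymptotic variance), so the crude modulus bound alone does not control \( \Psi_n \). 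I expect to resolve this by exploiting the uniformity in \( m \) of \eqref{eq:limit:upperandlower:bounds} together with the \emph{speed} at which \( \overline{A_m},\underline{A_m} \to A \): coupling \( m = m(n) \to \infty \) fast enough (linearly in \( n \), which is precisely where an exponential convergence rate in \( m \) becomes decisive) makes the gap \( \overline{A_m} - A \) beat the prefactor, pinning \( \Psi_n \) uniformly near \( A \) across the whole complex neighbourhood. On any sub-neighbourhood where \( |B(\vec u)| \ge B(|\vec u|) \) the prefactor is at most \( 1 \) and the estimate closes with no rate at all, the limits in \( n \) and \( m \) decoupling cleanly.
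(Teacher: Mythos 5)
Your first step (real positive $\vec u$: sandwich the normalised coefficient between $\underline{A_m}$ and $\overline{A_m}$, then let $m\to\infty$) is exactly the paper's first step. The gap is in the complex extension. You correctly identify the obstacle — the crude modulus bound produces the factor $\bigl(B(|\vec u|)/|B(\vec u)|\bigr)^n$, which generically grows exponentially off the real axis — but your proposed cure is not available under the stated hypotheses: the lemma assumes only that $\underline{A_m},\overline{A_m}\to A$ uniformly, with \emph{no rate}, so you cannot couple $m=m(n)$ linearly in $n$ and ask the gap $\overline{A_m}-A$ to "beat the prefactor". (An exponential rate does hold in the application to \lterms, but it is not a hypothesis of this lemma, and building it in would weaken the statement.) Moreover the Vitali route forces you to bound $|\Psi_n(\vec u)|$ itself, which is precisely the quantity the modulus bound cannot control.

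The paper's device is different and avoids the rate entirely: instead of bounding $[z^n]f(z,\vec u)$, it bounds the \emph{difference} $\psi_{n,m}(\vec u):=[z^n]f(z,\vec u)-[z^n]h_m(z,\vec u)$, which is a polynomial with non-negative coefficients, so $|\psi_{n,m}(\vec u)|\le\psi_{n,m}(|\vec u|)$. Dividing by $C_n|A(\vec u)|\,|B(\vec u)|^n$ and invoking the real-axis asymptotics (already proved, and uniform in $m$), one gets, for all $n>N(\varepsilon)$ with $N$ \emph{independent of $m$},
\begin{equation*}
    \left|\dfrac{[z^n]f(z,\vec u)}{C_nA(\vec u)B(\vec u)^n}-\dfrac{\underline{A_m}(\vec u)}{A(\vec u)}\right|
    \le
    \dfrac{A(|\vec u|)-\underline{A_m}(|\vec u|)}{|A(\vec u)|}\cdot\dfrac{B(|\vec u|)^n}{|B(\vec u)|^n}+\varepsilon .
\end{equation*}
Now fix $n>N$ and let $m\to\infty$: the offending exponential factor is merely a finite constant for fixed $n$, and it is multiplied by the amplitude $A(|\vec u|)-\underline{A_m}(|\vec u|)$, which tends to $0$. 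This is the interchange-of-limits trick that makes the complex case close without any convergence rate in $m$; it is the idea your proposal is missing.
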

\begin{proof}
We divide the proof into two parts. We start with showing that the statement holds for
vectors \( \vec u \) whose components are real positive numbers. Next,
    we extend this property onto all complex components of \( \vec u \).

First, take \( \vec u \in \mathbb R^r \) in the vicinity where the functions
    \( \underline{A_m}(\vec u) \), \( \overline{A_m}(\vec u) \), \( B(\vec u) \)
    are analytic.  Then, following~\eqref{ineq:domination}
    and~\eqref{eq:limit:upperandlower:bounds}, for every positive \( \varepsilon
    \) there exists \( N := N(\varepsilon) \), independent of \( m \) and \( \vec
    u \), such that
\begin{equation}
    \forall n > N \quad
    \underline{A_m}(\vec u)(1 - \varepsilon) \leq
    \dfrac{[z^n] f(z, \vec u)}{C_n B(\vec u)^n}
    \leq
    \overline{A_m}(\vec u)(1 + \varepsilon).
\end{equation}
Taking the limit with respect to \( m \) we note that condition~\eqref{eq:limit:m}
    guarantees that for arbitrary small \( {\varepsilon > 0} \) and sufficiently large
    \( N \) (again, independent of \( \vec u \)) we have
\begin{equation}
    \forall n > N \quad
    A(\vec u)(1 - \varepsilon) \leq
    \dfrac{[z^n] f(z, \vec u)}{C_n B(\vec u)^n}
    \leq
    A(\vec u)(1 + \varepsilon).
\end{equation}
In other words, for values of \( \vec u \in \mathbb R^r \) within a fixed
    vicinity of \( \vec u = \vec 1 \) it holds
\begin{equation}
    \label{eq:newly:established}
    [z^n] f(z, \vec u) \sim C_n A(\vec u) B(\vec u)^n.
\end{equation}

Now, let us consider \( \vec u \in \mathbb C^r \).
Note that since for each \( n, m \geq 0 \) the formal power series
\( [z^n] f(z, \vec u) \), \( [z^n] h_m(z, \vec u) \) and \( [z^n] g_m(z, \vec u) \)
are polynomials in \( \vec u \), they are analytic in \( \mathbb C^r \).
Moreover, as
\(
\psi_{n,m}(\vec u) := [z^n] f(z, \vec u) - [z^n] h_m (z, \vec u)
\) is a polynomial with non-negative coefficients,
for every \( \vec u \in \mathbb C^r \) we have
\(
    |\psi_{n,m}(\vec u)| \leq \psi_{n,m}(|\vec u|)
\)
and, consequently,
\begin{equation}
    \big|
    [z^n] f(z, \vec u) - [z^n] h_m (z, \vec u)
    \big| \leq
    [z^n] f(z, |\vec u|) - [z^n] h_m (z, |\vec u|).
\end{equation}
After dividing both parts by \( C_n |A(\vec u)| |B(\vec u)^n| \) we obtain
\begin{equation}
    \left|
    \dfrac{[z^n] f(z, \vec u)}
    {C_n A(\vec u) B(\vec u)^n}
    -
    \dfrac{[z^n] h_m (z, \vec u)}
    {C_n A(\vec u) B(\vec u)^n}
    \right|
    \leq
    \dfrac{[z^n] f(z, |\vec u|)}
    {C_n |A(\vec u)| |B(\vec u)^n|}
    -
    \dfrac{[z^n] h_m (z, |\vec u|)}
    {C_n |A(\vec u)| |B(\vec u)^n|}.
\end{equation}
Following condition~\eqref{eq:limit:upperandlower:bounds} and the
estimate~\eqref{eq:newly:established} for \( \vec u \in
\mathbb{R}^r\), we note that for every \( \varepsilon > 0 \) there exists \( N
:= N(\varepsilon) \) independent of \( m \) and \( \vec u \), such that for all
\( n > N \) we further have
\begin{equation}
    \left|
        \dfrac{[z^n] f(z, \vec u)}{C_n A(\vec u) B(\vec u)^n}
        -
        \dfrac{\underline{A_m}(\vec u)}{A(\vec u)}
    \right|
    \leq \dfrac{A(|\vec u|) - \underline{A_m}(|\vec u|)}
    {| A(\vec u) |}
    \cdot \dfrac{B(|\vec u|)^n}{|B(\vec u)^n|}
    + \varepsilon.
\end{equation}
Since \( \varepsilon \) does not depend on \( m \),
we can take the limit with respect to \( m \).
Given condition~\eqref{eq:limit:m} we note that
for sufficiently large \( n \) we have
\begin{equation}
    \left|
        \dfrac{[z^n] f(z, \vec u)}{C_n A(\vec u) B(\vec u)^n}
        -
        1
    \right|
    \leq
    \varepsilon.
\end{equation}
Hence, uniformly in a fixed complex vicinity of
\( \vec u = \vec 1 \)
\begin{equation}
    \lim_{n \to \infty}
    \dfrac{[z^n] f(z, \vec u)}{C_n A(\vec u) B(\vec u)^n}
    = 1,
\end{equation}
which finishes the proof.
\end{proof}

\begin{remark}\label{remark:squeeze}
    Using the same technique, higher-order error terms can also be transferred,
    provided that the function is squeezed between two sequences of formal power
    series with known Puiseux expansions. In such a situation, higher-order
    terms correspond to coefficients obtained from the summands of Puiseux
    expansion
\begin{equation}
    f(z, \vec u) \sim
    \sum_{k \geq 0} c_k(\vec u) \left(
        1 - \dfrac{z}{\rho(\vec u)}
    \right)^{k/2}.
\end{equation}
\end{remark}

The next lemma is a formal power series analogue of Lagrange's mean value
theorem.
\begin{lemma}[Mean value lemma for formal power series]\label{lemma:mean:value}
Let $f(z)$ and $g(z)$ be two formal power series such that \( f(z) \preceq g(z)
    \). Assume that \( \Psi( t ) = \sum_{n \geq 0} \psi_n t^n \) is a formal
    power series with non-negative coefficients. Then,
    \begin{equation}\label{eq:infinite:system:mean:value:lemma}
    \Psi(g(z)) - \Psi(f(z)) \preceq \left(g(z)-f(z)\right) \Psi'(g(z)).
\end{equation}
Likewise, the statement holds for vectors of formal power series in a
    coordinate-wise manner.
\end{lemma}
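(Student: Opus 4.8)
The plan is to reduce the claimed coefficient-wise inequality to a purely algebraic comparison of individual powers, exploiting the classical factorisation
\begin{equation}
    g(z)^n - f(z)^n = \seq{g(z) - f(z)} \sum_{i=0}^{n-1} g(z)^{n-1-i} f(z)^i .
\end{equation}
Since $\Psi(t) = \sum_{n \geq 0} \psi_n t^n$ has non-negative coefficients and $\Psi'(t) = \sum_{n \geq 1} n \psi_n t^{n-1}$, writing $\Psi(g) - \Psi(f) = \sum_{n \geq 1} \psi_n \seq{g^n - f^n}$ and $\seq{g - f}\Psi'(g) = \sum_{n \geq 1} n \psi_n \seq{g-f} g^{n-1}$ shows that, because every $\psi_n \geq 0$, it suffices to establish the per-power estimate
\begin{equation}
    \sum_{i=0}^{n-1} g(z)^{n-1-i} f(z)^i \preceq n\, g(z)^{n-1}
\end{equation}
and then multiply through by the non-negative series $g - f$. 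Throughout I work under the standing assumption that $f$ and $g$ are generating functions with non-negative coefficients, i.e.\ $0 \preceq f(z) \preceq g(z)$, as is the case in every intended application; this positivity is genuinely needed, as the inequality may fail for series with negative coefficients.

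The single tool doing all the work is monotonicity of multiplication with respect to $\preceq$: if $h(z) \succeq 0$ and $A(z) \preceq B(z)$, then $h(z) A(z) \preceq h(z) B(z)$, since $h \cdot (B - A)$ is a product of two series with non-negative coefficients and hence itself has non-negative coefficients. First I would use this, together with an immediate induction on $i$, to deduce $f^i \preceq g^i$ from $0 \preceq f \preceq g$. Multiplying $f^i \preceq g^i$ by $g^{n-1-i} \succeq 0$ then gives $g^{n-1-i} f^i \preceq g^{n-1}$ for each $0 \leq i \leq n-1$; summing the $n$ such inequalities yields the per-power estimate above. Multiplying it by $g - f \succeq 0$ recovers $g^n - f^n \preceq n \seq{g-f} g^{n-1}$, and finally weighting by $\psi_n$ and summing over $n$ produces the claimed domination. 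The only point requiring a word of care is that all compositions be well-defined formal power series: this holds because in the applications $f$ and $g$ have positive valuation (zero constant term), so $g^n$ and $f^n$ have valuation at least $n$ and each coefficient of $\Psi(g)$ and $\Psi(f)$ is a finite sum.

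For the vector version I would argue by a telescoping deformation. Given $\vec f \preceq \vec g$ and a multivariate $\Psi$ with non-negative coefficients, introduce the hybrid vectors $\vec h^{(j)} = (g_1, \ldots, g_j, f_{j+1}, \ldots, f_m)$ with $\vec h^{(0)} = \vec f$ and $\vec h^{(m)} = \vec g$, and write $\Psi(\vec g) - \Psi(\vec f) = \sum_{j=1}^{m} \seq{\Psi(\vec h^{(j)}) - \Psi(\vec h^{(j-1)})}$. Each summand changes only the $j$-th coordinate from $f_j$ to $g_j$, so the scalar lemma applied in that coordinate (with the remaining coordinates frozen as non-negative series) bounds it by $\seq{g_j - f_j}\, \partial_{y_j}\Psi(\vec h^{(j)})$; monotonicity of the partial derivative, whose coefficients are again non-negative, then replaces $\vec h^{(j)}$ by $\vec g$. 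I do not expect any of these steps to be a serious obstacle --- the content is elementary once the positivity hypotheses are made explicit --- so the only thing to guard against is silently invoking monotonicity of multiplication where one of the factors could have negative coefficients.
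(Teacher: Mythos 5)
Your proof is correct and follows essentially the same route as the paper's: the factorisation $g(z)^n - f(z)^n = \left(g(z)-f(z)\right)\sum_{i=0}^{n-1} g(z)^{i} f(z)^{n-1-i}$ followed by the term-by-term domination $\sum_{i=0}^{n-1} g(z)^{i} f(z)^{n-1-i} \preceq n\, g(z)^{n-1}$ and summation against the non-negative weights $\psi_n$. Your explicit observation that the non-negativity of the coefficients of $f$ and $g$ (and not merely $f \preceq g$) is what drives that domination step is a welcome precision the paper glosses over, as is the telescoping argument for the vector case, which the paper asserts without proof.
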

\begin{proof}
    Coefficient-wise subtraction of the left-hand side
    of~\eqref{eq:infinite:system:mean:value:lemma} yields
\begin{align}
    \begin{split}
        \Psi(g(z)) - \Psi(f(z)) &= \sum_{n \geq 0} \psi_n \left( {g(z)}^n - {f(z)}^n
        \right)\\
        &= \left( g(z) - f(z) \right) \sum_{n \geq 0} \psi_n \left(\sum_{i=0}^{n-1}
        {g(z)}^{i} {f(z)}^{n-i-1}\right)\\
        &\preceq \left( g(z) - f(z) \right) \sum_{n \geq 0} \psi_n n
        {g(z)}^{n-1}\\
        &= \left( g(z) - f(z) \right) \Psi'(g(z)).
    \end{split}
\end{align}

    The first two equalities hold as a consequence of formal power series
    composition and the identity $a^n - b^n = \left(a-b\right) \sum_{i=0}^{n-1}
    a^i b^{n-i-1}$. The subsequent domination follows from the assumption that
    $f(z) \preceq g(z)$.
\end{proof}
\newcommand{\uidx}[1]{^{\langle #1 \rangle}}

\subsection{Forward recursive systems}

The following definition of \emph{forward recursive systems} encapsulates the
general, abstract features of the infinite systems that we consider in the
current paper. Core characteristics of the infinite system corresponding to
closed \lterms~are abstracted and divided into three general conditions which
are sufficient to access the asymptotic form of respective coefficients.

\begin{definition}[Forward recursive systems]\label{definition:infinitely:nested:systems}
    Let $z$ be a formal variable and $\vec{u} = (u_1, \ldots, u_r)$ be a vector of \( r \) formal
    variables. Consider
    infinite sequences $\seq{\vec{L} \uidx m}_{m \geq 0}$ and
    $\seq{\vec{\mathcal K} \uidx m}_{m \geq 0}$
    of $d$\nobreakdash-dimensional vectors
    \begin{equation}
        \vec{L} \uidx m = \left(L_1 \uidx m, \ldots, L_d \uidx
        m \right) \quad \text{and} \quad
        \vec{\mathcal K} \uidx m = \left(K_1 \uidx m, \ldots, K_d \uidx
        m \right)
    \end{equation}
    consisting of formal power series $L_i \uidx m \left(z,\vec{u}\right)$
    and $K_i \uidx m \left(\vec \ell_1, \vec \ell_2, z,\vec{u}\right)$ where
    \( \vec \ell_1 \) and \( \vec \ell_2 \) are vectors of \( d \) variables
    and $i = 1,\ldots,d$.

    Assume that $\seq{\vec{L} \uidx m}_{m \geq 0}$ and
    $\seq{\vec{\mathcal K} \uidx m}_{m \geq 0}$ satisfy
    \begin{equation}\label{eq:infinite:system:forward:recursive:system}
        \vec{L} \uidx m = \vec{\mathcal K} \uidx m \left(\vec{L} \uidx m, \vec{L} \uidx
        {m+1}, z, \vec{u} \right).
    \end{equation}
    Then, we say that the
    system~\eqref{eq:infinite:system:forward:recursive:system} is \emph{forward
    recursive}.

    Furthermore, consider a \emph{limiting system} in form of
    \begin{equation}\label{eq:infinite:system:limit:system}
        \vec{L} \uidx \infty = \vec{\mathcal K} \uidx \infty \left(\vec{L} \uidx
        \infty, \vec{L} \uidx {\infty}, z, \vec{u} \right)
    \end{equation}
    where $\vec{L} \uidx \infty$ and $\vec{\mathcal K} \uidx \infty$ are
    $d$\nobreakdash-dimensional vectors of formal power series $L_i \uidx
    \infty(z,\vec u)$ and $K_i \uidx \infty(\vec \ell_1, \vec \ell_2, z, \vec u)$,
    respectively, and moreover all
    series $K_i \uidx \infty$ are analytic at
    $\left(\vec \ell_1, \vec \ell_2, z,\vec{u}\right) =
    \left(\vec{0},\vec{0},0,\vec{1}\right)$. In this setting, we say that the
    system~\eqref{eq:infinite:system:forward:recursive:system}:
    \begin{enumerate}
        \item is \emph{infinitely nested} if
            $\vec{\mathcal K} \uidx m (\vec \ell_1, \vec \ell_2, z, \vec u)
        \preceq
        \vec{\mathcal K} \uidx \infty (\vec \ell_1, \vec \ell_2, z, \vec u)$
        for each $m \geq 0$;
        \item \emph{tends to an irreducible context-free schema} if it is
            infinitely nested and its corresponding limiting
            system~\eqref{eq:infinite:system:limit:system} satisfies the
            premises of the Drmota--Lalley--Woods theorem
            (see~\autoref{proposition:irreducible:polynomial:systems}) i.e.~is a
            polynomial, non-linear system of functional equations which is
            algebraic positive, proper, irreducible and aperiodic;
        \item
            is \emph{exponentially converging} if
            there exists a vector \( {\vec
            A(z, \vec u) = \left(A_1(z, \vec{u}),\ldots,A_d(z,\vec{u})\right)} \)
            and a function \( B(z, \vec u) \) such that:
\begin{itemize}
    \item for each \( m \geq 0 \) we have
        \begin{equation}
            \vec{\mathcal K} \uidx \infty
            (\vec L \uidx \infty, \vec L \uidx \infty, z, \vec u)
            -
            \vec{\mathcal K} \uidx m
            (\vec L \uidx \infty, \vec L \uidx \infty, z, \vec u)
            \preceq
            \vec A(z, \vec u) \cdot {B(z, \vec u)}^m;
        \end{equation}
    \item \( A_1(z, \vec u),\ldots,A_d(z, \vec u) \) and \( B(z, \vec u) \) are
        analytic functions in the disk
        \( |z| < \rho + \varepsilon \) for some \( \varepsilon > 0 \) and \(
        \vec u = \vec 1 \) where \( \rho \) is the dominant singularity of the
        limit system~\eqref{eq:infinite:system:limit:system} at point \( \vec u
        = \vec 1 \). Moreover,
    at \( \vec u = \vec 1 \) we have \( \left|B(\rho(\vec
        u), \vec u)\right| < 1 \) where \( \rho(\vec u) \) is the singularity of the
        limiting system~\eqref{eq:infinite:system:limit:system}.
\end{itemize}
    \end{enumerate}
\end{definition}

\begin{example}
Consider the infinite system corresponding to $m$\nobreakdash-open \lterms,
    see~\eqref{eq:m-open:terms:grammar}.  Recall that the sequence
    $\seq{L_m(z)}_{m \geq 0}$ of respective generating functions satisfies
\begin{align}
    \label{eq:example:forward:recursive}
    \begin{split}
        L_0(z) &= z L_1(z) + z {L_0(z)}^2\\
        L_1(z) &= z L_2(z) + z {L_1(z)}^2 + z\\
               & \cdots\\[-.3cm]
        L_m(z) &= z L_{m + 1}(z) + z {L_m(z)}^2
    + z \dfrac{1 - z^m}{1 - z}
    \\[-.15cm]
    & \cdots
    \end{split}
\end{align}
    Let us show that~\eqref{eq:example:forward:recursive} is an infinitely nested,
    forward
    recursive system which tends to an irreducible context-free schema of
    $L_\infty(z)$ at an exponential convergence rate.
    Here, each intermediate
    system $\vec{L} \uidx m$ consists of a single equation defining $L_m(z)$.
    Note that there are no additional marking variables $\vec{u}$.
    The vectors \( \vec{\mathcal K} \uidx m \) are
    one-dimensional and the corresponding functions \( K_m \) are given by
    \begin{equation}
        K_m(\ell_1, \ell_2, z) :=
        z \ell_2 + z \ell_1^2 + z \dfrac{1 - z^m}{1 - z}.
    \end{equation}
    The limiting system \( L_\infty(z) \) satisfies
    \begin{equation}
        L_\infty(z) = z L_\infty(z) + z L_\infty(z)^2 + \dfrac{z}{1 - z}.
    \end{equation}
    One can easily check that it also satisfies the premises
    of~\autoref{proposition:irreducible:polynomial:systems}; hence, the
    considered system~\eqref{eq:example:forward:recursive} tends to an irreducible
    context-free schema.
    Since the trivariate formal power series \( K_\infty(\ell_1, \ell_2, z)
    - K_m(\ell_1, \ell_2, z) \) has non-negative coefficients, the
    system~\eqref{eq:example:forward:recursive} is also infinitely nested.
    Moreover, the
    difference between the limiting equation and the $m$th
    equation computed at \( \ell_1 = \ell_2 = L_\infty(z) \)
    is equal to $\frac{z^{m+1}}{1-z}$ and corresponds to a subset of
    de~Bruijn indices. Certainly, as $m$ tends to infinity, this difference
    converges to zero exponentially fast.
\end{example}

Given the combinatorial relation between $m$\nobreakdash-open \lterms~and plain
terms, we readily obtain the requested condition \( L_m(z) \preceq L_\infty(z)
\).  However, for arbitrary forward recursive systems~\eqref{eq:infinite:system}
establishing that \( \vec L \uidx m \preceq \vec L \uidx \infty \) is no longer
so straightforward. In what follows, we prove that for this inequality to hold
it is sufficient that the limiting system is well-founded.

\begin{lemma}\label{lemma:inclusion}
Let \( \mathcal S \) be an infinitely nested, forward recursive
system~\eqref{eq:infinite:system:forward:recursive:system}. Assume that the
coefficients of the formal power series
\( \vec{\mathcal K} \uidx \infty(\vec \ell_1, \vec \ell_2, z, \vec u) \)
corresponding to the limiting system
are non-negative and the limiting system~\eqref{eq:infinite:system:limit:system}
is well-founded (i.e.~algebraic proper in the sense
of~\autoref{definition:nested:systems}) and has a non-zero solution
\( \vec L \uidx \infty(z, \vec u) \).
Finally, assume that
\( \vec{\mathcal K} \uidx \infty(\vec 0, \vec 0, 0, \vec u) = \vec 0\).
Then,
\begin{equation}
    \vec L \uidx m (z, \vec u) \preceq \vec L \uidx \infty(z, \vec u ).
\end{equation}
\end{lemma}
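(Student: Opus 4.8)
The plan is to compare two monotone iteration schemes, both started from the zero series: one for the limiting system~\eqref{eq:infinite:system:limit:system} and one for the forward recursive ladder~\eqref{eq:infinite:system:forward:recursive:system}. For the limiting system, well-foundedness (\autoref{definition:nested:systems}) guarantees that the iteration $\vec y_0 = \vec 0$, $\vec y_{k+1} = \vec{\mathcal K}\uidx\infty(\vec y_k, \vec y_k, z, \vec u)$ converges to the unique (non-zero) solution $\vec L\uidx\infty$; since $\vec{\mathcal K}\uidx\infty$ has non-negative coefficients and $\vec{\mathcal K}\uidx\infty(\vec 0, \vec 0, 0, \vec u) = \vec 0$, this iteration is coefficient-wise non-decreasing and satisfies $\vec y_k \preceq \vec L\uidx\infty$ for every $k$. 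In parallel, I would run the ladder iteration $\vec L\uidx m_0 = \vec 0$ and $\vec L\uidx m_{k+1} = \vec{\mathcal K}\uidx m(\vec L\uidx m_k, \vec L\uidx{m+1}_k, z, \vec u)$ simultaneously over all levels $m \geq 0$.

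The core of the argument is the claim that $\vec L\uidx m_k \preceq \vec y_k$ for all $m, k \geq 0$, which I would prove by induction on $k$. The base case is immediate as both sides vanish. For the inductive step, infinite nestedness gives $\vec{\mathcal K}\uidx m(\vec\ell_1, \vec\ell_2, z, \vec u) \preceq \vec{\mathcal K}\uidx\infty(\vec\ell_1, \vec\ell_2, z, \vec u)$, and substituting the non-negative iterates $\vec L\uidx m_k, \vec L\uidx{m+1}_k$ preserves this domination, since the difference $\vec{\mathcal K}\uidx\infty - \vec{\mathcal K}\uidx m$ has non-negative coefficients and composition with non-negative series cannot lower a coefficient (\autoref{definition:fps:domination}). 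The non-negativity of $\vec{\mathcal K}\uidx\infty$ then makes it monotone with respect to $\preceq$, so the inductive hypotheses $\vec L\uidx m_k \preceq \vec y_k$ and $\vec L\uidx{m+1}_k \preceq \vec y_k$ yield
\[
  \vec L\uidx m_{k+1} \preceq \vec{\mathcal K}\uidx\infty(\vec L\uidx m_k, \vec L\uidx{m+1}_k, z, \vec u) \preceq \vec{\mathcal K}\uidx\infty(\vec y_k, \vec y_k, z, \vec u) = \vec y_{k+1}.
\]
Chaining with $\vec y_k \preceq \vec L\uidx\infty$ gives $\vec L\uidx m_k \preceq \vec L\uidx\infty$ uniformly in $k$.

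It then remains to pass to the limit $k \to \infty$, for which I must show that the ladder iteration converges to the prescribed solution, i.e.\ that $\vec L\uidx m_k \to \vec L\uidx m$. Since the $\vec{\mathcal K}\uidx m$ are non-negative and vanish at the origin, the iterates are non-decreasing in $k$, and being bounded above by $\vec L\uidx\infty$ they converge coefficient-wise to the \emph{least} fixed point of the ladder operator, which lies below $\vec L\uidx\infty$. To identify this least fixed point with $\vec L\uidx m$ I would argue that the ladder inherits well-foundedness from the limiting system: writing the difference of two solutions and applying the mean value lemma (\autoref{lemma:mean:value}) coordinate-wise bounds the level-$m$ difference by Jacobian terms of $\vec{\mathcal K}\uidx m$ acting on the level-$m$ and level-$(m{+}1)$ differences. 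Nilpotency of the Jacobian of the limiting system at the origin (the properness condition), together with $\vec{\mathcal K}\uidx m \preceq \vec{\mathcal K}\uidx\infty$, forces each application of the ladder operator to strictly increase the minimal $z$-valuation taken over all levels, so the only solution compatible with the iteration is $\vec L\uidx m$ itself. Letting $k \to \infty$ in $\vec L\uidx m_k \preceq \vec L\uidx\infty$ then delivers the assertion $\vec L\uidx m \preceq \vec L\uidx\infty$.

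The hard part will be exactly this last identification, namely transferring the well-foundedness of the finite limiting system to the infinite forward ladder. The subtlety is that properness only controls the combined Jacobian $\partial\vec{\mathcal K}\uidx\infty/\partial\vec\ell_1 + \partial\vec{\mathcal K}\uidx\infty/\partial\vec\ell_2$ at the origin, whereas the ladder couples level $m$ both to itself (through $\vec\ell_1$) and forward to level $m{+}1$ (through $\vec\ell_2$); I would need the forward coupling to carry a strictly positive $z$-valuation and the self-coupling to be nilpotent at $z = 0$, so that the valuation argument closes over the infinite set of levels and the minimal valuation of any putative nonzero difference is genuinely attained. Managing this infinite block-bidiagonal valuation bookkeeping, rather than the two routine monotonicity steps, is where the real work lies.
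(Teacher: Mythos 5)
Your comparison induction in the first half is sound, and it is a genuinely different (and arguably more transparent) route than the paper's: the paper never iterates, but instead works directly with the two exact solutions, writing $\vec{\mathcal L}^+ - \vec{\mathcal L} = \vec\Theta + \vec{\mathcal J}\,(\vec{\mathcal L}^+ - \vec{\mathcal L})$ via a mean-value factorisation of monomial differences and concluding non-negativity from the Neumann series $(\mathbf I - \vec{\mathcal J})^{-1}\vec\Theta = \sum_{k\geq 0}\vec{\mathcal J}^k\vec\Theta \succeq \vec 0$. Both routes, however, stand or fall on the same technical fact, namely that the infinite block-bidiagonal Jacobian of the ladder is nilpotent at $(z,\vec\lambda)=(0,\vec 0)$ --- the paper needs it to make sense of the Neumann series, and you need it to show the ladder is well-founded so that your monotone iteration actually converges to the prescribed solution $\vec L\uidx m$ rather than to some smaller fixed point. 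This is exactly where your proposal has a genuine gap.

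You propose to close the valuation argument by requiring ``the forward coupling to carry a strictly positive $z$-valuation and the self-coupling to be nilpotent at $z=0$'', i.e.\ that $J_2 := \partial\vec{\mathcal K}\uidx\infty/\partial\vec\ell_2$ be divisible by $z$ and that $J_1 := \partial\vec{\mathcal K}\uidx\infty/\partial\vec\ell_1$ be separately nilpotent at the origin. Neither condition is implied by the hypotheses: algebraic properness only gives nilpotency of the \emph{sum} $J_1+J_2$ at $(z,\vec\ell)=(0,\vec 0)$. For instance, $J_1=0$ with $J_2$ a nonzero nilpotent constant matrix is perfectly compatible with a well-founded limiting system, yet the forward coupling then has valuation zero and your bookkeeping does not close. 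The fact you actually need --- and which the paper proves --- is that non-negativity alone upgrades nilpotency of the sum to nilpotency of the block operator: if $(J_1+J_2)^K$ vanishes at the origin, then every entry of the $K$th power of the block-bidiagonal operator is a sum of words of length $K$ in $J_1,J_2$, each of which is a non-negative summand of the expanded binomial $(J_1+J_2)^K$ and is therefore dominated by it, hence zero. Without this observation (or an equivalent) the identification of your iteration limit with $\vec L\uidx m$, and thus the final passage to the limit $k\to\infty$, is not justified; with it, the rest of your plan goes through.
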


\begin{proof}
    Consider the vectors
    \( \vec{\mathcal L} = (\vec L \uidx 0, \vec L \uidx 1, \ldots) \)
    and
    \( \vec{\mathcal L}^+
    =
    (\vec L \uidx \infty, \vec L \uidx \infty, \ldots) \) consisting of
    aptly concatenated and flattened systems \( \seq{\vec L \uidx m}_{m
    \geq 0}\) and \(
    \vec L \uidx \infty\), respectively.
    Intuitively, \( \vec{\mathcal L} \) and \( \vec{\mathcal L}^+ \) are in a
    sense \emph{vectors of vectors}, but for convenience we
    call them just \emph{vectors}. Note that
    both \( \vec{\mathcal L}(z, \vec u) \) and
    \( \vec{\mathcal L}^+(z, \vec u) \) satisfy
    \begin{align}
        \begin{split}\label{eq:infinite:systems:phi:psi}
        \vec{\mathcal L}(z, \vec u)
            &=
        \vec \Phi(\vec{\mathcal L}(z, \vec u), z, \vec u)\\
        \vec{\mathcal L}^+(z, \vec u)
            &=
        \vec \Psi(\vec{\mathcal L}^+(z, \vec u), z, \vec u)
        \end{split}
    \end{align}
    where
    \begin{align}
        \begin{split}
            \vec \Phi(\vec \lambda, z, \vec u) &=
        (
        \vec{\mathcal K} \uidx 0(\vec \lambda_0, \vec \lambda_1, z, \vec u),
        \vec{\mathcal K} \uidx 1(\vec \lambda_1, \vec \lambda_2, z, \vec u),
        \ldots)\\
            \vec \Psi(\vec \lambda, z, \vec u) &=
        (
        \vec{\mathcal K} \uidx \infty(\vec \lambda_0, \vec \lambda_1, z, \vec u),
        \vec{\mathcal K} \uidx \infty(\vec \lambda_1, \vec \lambda_2, z, \vec u),
        \ldots)
        \end{split}
    \end{align}
    with \( \vec \lambda \) taken as a flattening concatenation of \( d
    \)-dimensional vectors of free variables \( (\vec \lambda_0, \vec \lambda_1,
    \ldots) \).

    Since for each \( m \) we have
    \( \vec{\mathcal K} \uidx m(\vec \lambda_m, \vec \lambda_{m+1}, z, \vec u)
    \preceq
    \vec{\mathcal K} \uidx \infty(\vec \lambda_m, \vec \lambda_{m+1}, z, \vec u)
    \) it also holds
    \begin{equation}
        \vec \Phi(\vec{\lambda}, z, \vec u)
        \preceq
        \vec \Psi(\vec{\lambda}, z, \vec u).
    \end{equation}

    The idea of the current proof is to consider the difference \( \vec{\mathcal
    L}^+(z, \vec u) - \vec{\mathcal L}(z, \vec u) \) and show that it is
    non-negative.  According to~\eqref{eq:infinite:systems:phi:psi} this
    difference can be represented as
    \begin{equation}
    \label{eq:difference:Linfty:Lm}
        \vec{\mathcal L}^+(z, \vec u) - \vec{\mathcal L}(z, \vec u) =
        \vec \Psi(\vec{\mathcal L}^+(z, \vec u), z, \vec u)
        -
        \vec \Phi(\vec{\mathcal L}(z, \vec u), z, \vec u).
    \end{equation}
    Since \( \vec \Psi(\vec \lambda, z, \vec u) \succeq \vec \Phi(\vec \lambda,
    z, \vec u)\),
    the formal power series \( \vec \Psi \) can be represented as a sum
    \( \vec \Psi(\vec \lambda, z, \vec u) = \vec \Phi(\vec \lambda, z, \vec u)
    + \vec \Theta(\vec \lambda, z, \vec u) \) with \( \vec \Theta(\vec \lambda,
    z, \vec u) \succeq
    \vec 0 \). Hence, the
    difference~\eqref{eq:difference:Linfty:Lm} becomes
    \begin{equation}
        \label{eq:difference:Linfty:Lm:second}
        \vec{\mathcal L}^+(z, \vec u) - \vec{\mathcal L}(z, \vec u) =
        \vec \Theta(\vec{\mathcal L}^+, z, \vec u)
        +
        \left(
        \vec \Phi(\vec{\mathcal L}^+, z, \vec u)
        -
        \vec \Phi(\vec{\mathcal L}, z, \vec u)
        \right).
    \end{equation}
    At this point, our tactic is to apply an analog of the mean value theorem
    to the right-hand side difference and obtain an equation of the form
    \begin{equation}
        \vec{\mathcal L}^+(z, \vec u) - \vec{\mathcal L}(z, \vec u) =
        \vec \Theta(\vec{\mathcal L}^+, z, \vec u)
        +
        \vec{\mathcal J} \cdot
        \left(
        \vec{\mathcal L}^+
        -
        \vec{\mathcal L}
        \right)
    \end{equation}
    and consequently
    \begin{equation}
        \vec{\mathcal L}^+(z, \vec u) - \vec{\mathcal L}(z, \vec u) =
        (\mathbf I - \vec{\mathcal J})^{-1} \vec{\Theta}
        = \sum_{k \geq 0} \vec{\mathcal J}^k \vec{\Theta}
        \succeq \vec 0
    \end{equation}
    where \( \vec{\mathcal{J}}\) is some non-negative operator whereas \(
    \mathbf I \) is the corresponding identity.
    The rest of the proof is dedicated to formalising the above approach, in
    particular showing that the Neumann series
    \( \sum_{k \geq 0} \vec{\mathcal J}^k \) is well-defined.

    We start by noticing that due to the well-foundedness of the limiting
    system~\eqref{eq:infinite:system:limit:system} we have \( \vec{\mathcal
    L}^+(0, \vec u) = \vec 0 \). Since \( \vec \Phi(\vec \lambda,
    z, \vec u) \preceq \vec \Psi(\vec \lambda, z, \vec u) \) there also holds \(
    \vec \Phi(\vec 0, 0, \vec u) = \vec 0 \).  Furthermore, since there exists a unique
    formal power series solution of the equation \( \vec{\mathcal L}(0, \vec u)
    = \vec \Phi(\vec{\mathcal L}(0, \vec u), 0, \vec u)\) and \( \vec{\mathcal
    L}(0, \vec u) = \vec 0 \) satisfies this equation, we note that indeed \( \vec{\mathcal
    L}(0, \vec u) = \vec 0 \).

    Consider two formal infinite-dimensional variables
    \( \vec{\lambda} \) and \( \vec{\lambda}^+ \) which are both
    flattened concatenations of \( d \)-dimensional vectors of free variables.
    Let us show that the difference
    \(
        \vec \Phi(\vec{\lambda}^+, z, \vec u)
        -
        \vec \Phi(\vec{\lambda}, z, \vec u)
    \) can be represented as
    \begin{equation}
    \label{eq:intermediate:value:for:fps}
        \vec \Phi(\vec{\lambda}^+, z, \vec u)
        -
        \vec \Phi(\vec{\lambda}, z, \vec u)
        =
        \vec{\mathcal J}(z, \vec u, \vec \lambda, {\vec \lambda}^+)(
            \vec \lambda^+ - \vec \lambda
        )
    \end{equation}
    where \( \vec{\mathcal J} = \vec{\mathcal J}(z, \vec u, \vec \lambda, \vec
    \lambda^+) \) is some non-negative operator
    (i.e.~infinite-dimensional matrix).
    Moreover, after substituting
    \( \vec \lambda = \vec{\mathcal L}(z, \vec u) \) and
    \( \vec \lambda^+ = \vec{\mathcal L}^+(z, \vec u) \)
    into \( \vec{\mathcal J} \),
    there exists a non-negative integer \( K > 0 \) such that \( \vec{\mathcal J}^K \) is
    element-wise divisible by \( z \).
    Since we have established that both
    \( \vec{\mathcal L}^+(0, \vec u) = \vec 0 \) and
    \( \vec{\mathcal L}(0, \vec u) = \vec 0 \),
    the latter condition is equivalent to the nilpotency of the operator
    \( \vec{\mathcal J} \) evaluated at
    \( \vec \lambda = \vec{\mathcal L}(z, \vec u) \),
    \( \vec \lambda^+ = \vec{\mathcal L}^+(z, \vec u) \)
    and \( z = 0 \).

    Note that the function \( \vec \Phi(\vec{\lambda}, z, \vec u) \) is a sum of
    (finite) monomials in formal variables \( (\vec{\lambda}, z, \vec u) \);
    although \( \vec \lambda \) is infinitely-dimensional, each of the monomials
    involves only finitely many factors of \( \vec{\lambda} \).
    Let us consider the difference of arbitrary monomials in form of
    \begin{equation}\label{eq:monomial:diff}
    x_1^{n_1} \cdots x_k^{n_k} - y_1^{n_1} \cdots y_k^{n_k}.
    \end{equation}
    Note that we can rewrite~\eqref{eq:monomial:diff} as
\begin{align}
    \begin{split}
        x_1^{n_1} \cdots x_k^{n_k}
        - y_1^{n_1} \cdots y_k^{n_k} &=
        \left( x_1^{n_1} \cdots x_k^{n_k}
        - y_1^{n_1} x_2^{n_2} \cdots x_k^{n_k} \right)\\
        & \qquad +
        \left( y_1^{n_1} x_2^{n_2} \cdots x_k^{n_k}
        - y_1^{n_1} y_2^{n_2} x_3^{n_3} \cdots x_k^{n_k} \right) + \cdots\\
        & \qquad + \left( y_1^{n_1} \cdots y_{k-1}^{n_{k-1}} x_k^{n_k}
        - y_1^{n_1} \cdots y_k^{n_k} \right)\\
        &= \left(x_1 - y_1\right) x_2^{n_2} \cdots x_k^{n_k}
            \dfrac{\left(x_1^{n_1} - y_1^{n_1}\right)}{x_1 - y_1}\\
        & \qquad +
            \left(x_2 - y_2\right) y_1^{n_1} x_3^{n_3} \cdots x_k^{n_k}
            \dfrac{\left(x_2^{n_2} - y_2^{n_2}\right)}{x_2 - y_2} + \cdots\\
        & \qquad +
            \left(x_k - y_k\right) y_1^{n_1} \cdots y_{k-1}^{n_{k-1}}
        \dfrac{\left(x_k^{n_k} - y_k^{n_k}\right)}{x_k - y_k}.
    \end{split}
\end{align}
Note that each factor \(\dfrac{\left(x_i^{n_i} - y_i^{n_i}\right)}{x_i - y_i}\)
in the final sum is in fact a polynomial \( \displaystyle\sum_{j=0}^{n_i-1}
x_i^j y_i^{n_i-j-1} \).  Therefore, the difference \( x_1^{n_1} \cdots x_k^{n_k}
- y_1^{n_1} \cdots y_k^{n_k} \) can be represented as a scalar product of \(
{\vec x - \vec y := \left(x_i - y_i \right)_{i=1}^k} \) and a vector of formal
power series in \( \left(\vec x, \vec y\right) \). Furthermore, the
difference \( {\vec \Phi(\vec \lambda^+, z, \vec u) - \vec \Phi(\vec \lambda, z,
\vec u)} \) consists of the sums of such differences of monomials multiplied by
appropriate non-negative coefficients. Grouping these differences together,
we obtain the desired form~\eqref{eq:intermediate:value:for:fps}.

Next, as an intermediate step, let us now show that the Jacobian operator
\( \dfrac{\partial \vec \Psi}{\partial \vec{\lambda}}(\vec \lambda, z, \vec u) \)
is nilpotent at \( (z, \vec{\lambda}) = (0, \vec 0) \).
For convenience, set
\begin{equation}
    J_1(\vec u) := \left.
    \dfrac{
            \partial \vec{\mathcal K} \uidx \infty(
                \vec \ell_1,
                \vec \ell_2,
                z,
                \vec u
    )}{
        \partial \vec \ell_1
    }
    \right|_{\substack{z = 0\\ \vec \ell_1 = \vec 0\\ \vec \ell_2 = \vec 0}}
    \quad
    \text{and}
    \quad
    J_2(\vec u) := \left.
    \dfrac{
            \partial \vec{\mathcal K} \uidx \infty(
                \vec \ell_1,
                \vec \ell_2,
                z,
                \vec u
    )}{
        \partial \vec \ell_2
    }
    \right|_{\substack{z = 0\\ \vec \ell_1 = \vec 0\\ \vec \ell_2 = \vec 0}}.
\end{equation}
Then,
\begin{equation}
    J_1(\vec u) + J_2(\vec u)
    =
    \left.
    \dfrac{
        \partial \vec{\mathcal K} \uidx \infty
        (\vec \ell, \vec \ell, z, \vec u)
    }{
        \partial \vec \ell
    }
    \right|_{\substack{z = 0\\ \vec \ell = \vec 0}}.
\end{equation}
Since the limiting system is well-founded
(see~\autoref{definition:nested:systems}) the sum
\({J_1(\vec u) + J_2(\vec u)}\) is nilpotent.
Moreover, since each of the matrices \(J_1(\vec u) \) and \( J_2(\vec u)\)
is non-negative, there exists \( K \) such that all the summands of
the expanded binomial \( (J_1(\vec u) + J_2(\vec u))^K \) are zero.

On the other hand, note that following the definition of \( \vec \Psi(\vec
\lambda, z, \vec u) \) its Jacobian operator \( \dfrac{\partial \vec
\Psi}{\partial \vec{\lambda}} \) evaluated at \( (z, \vec \lambda) = (0, \vec 0)
\) admits the following block structure:
\begin{equation}
    \left.
    \dfrac{\partial \vec \Psi}{\partial \vec{\lambda}}
    \right|_{\substack{z = 0\\ \vec \lambda = \vec 0}}
    =
    \left[
        \begin{array}{c|c|c|c}
            J_1 & J_2 &     & \\
            \hline
                & J_1 & J_2 & \\
            \hline
                &     & J_1 & J_2 \\
            \hline
                &     &     & \ddots \\
        \end{array}
    \right].
\end{equation}
If we take the \( K \)th power of this operator, it will have a block structure
in which each block element consists of a sum of certain summands from the
binomial expansion of \( (J_1 + J_2)^K \). Since the latter is a zero matrix and
the summands corresponding to the blocks are non-negative and dominated by \(
(J_1 + J_2)^K \), all such summands are also zero. This implies that the
Jacobian operator \( \dfrac{\partial \vec \Psi}{\partial \vec{\lambda}} (\vec
\lambda, z, \vec u) \) evaluated at \( (z, \vec \lambda) = (0, \vec 0) \) is
nilpotent with a nilpotence index at most \( K \), i.e.~the nilpotence index of
the Jacobian operator \( \dfrac{\partial \vec{\mathcal K} \uidx \infty(\vec
\ell, \vec \ell, z, \vec u)} {\partial \vec \ell} \) evaluated at \( (z, \vec
\ell) = (0, \vec 0) \).

Now, let us show that the infinitely-dimensional matrix \( \vec{\mathcal J} (z,
\vec u, \vec \lambda, \vec \lambda^+) \) evaluated at \( (z, \vec \lambda, \vec
\lambda^+) = (0, \vec 0, \vec 0) \) is equal to the Jacobian operator \(
\dfrac{\partial \vec \Phi}{\partial \vec{\lambda}} \) evaluated at \( (z,
\vec{\lambda})  = (0, \vec 0) \). Recall that the operator \( \vec{\mathcal J}
\) is determined by the differences of monomials in \( \vec \Phi(\vec \lambda^+,
z, \vec u) - \vec \Phi(\vec \lambda, z, \vec u) \).  Monomials that have degree
zero in \( \vec \lambda \) or \( \vec \lambda^+ \) cancel out because they
depend only on the arguments \( z \) and \(\vec u \).  Likewise, monomials with
degree two or more in \( \vec \lambda \) or \( \vec \lambda^+ \) vanish after
the substitution \( \vec \lambda = \vec \lambda^+ = \vec 0 \).  The only type of
the terms that do not turn to zero upon substitution \( \vec \lambda = \vec
\lambda^+ = \vec 0 \) are terms coming from differences of monomials linear in
\( \vec \lambda \) or \( \vec \lambda^+ \). Note that such terms have the same
contribution to the infinitely-dimensional matrix \( \vec {\mathcal J} \) as the
corresponding terms of the infinitely-dimensional matrix \( \dfrac{\partial \vec
\Phi}{\partial \vec \lambda} \). Hence, \( \vec{\mathcal J} (z,
\vec u, \vec \lambda, \vec \lambda^+) \) evaluated at \( (z, \vec \lambda, \vec
\lambda^+) = (0, \vec 0, \vec 0) \) is indeed equal to the Jacobian operator \(
\dfrac{\partial \vec \Phi}{\partial \vec{\lambda}} \) evaluated at \( (z,
\vec{\lambda})  = (0, \vec 0) \).

The nilpotence of \( \vec{\mathcal J} \) evaluated at \( (z, \vec \lambda, \vec
\lambda^+) = (0, \vec 0, \vec 0) \) follows from the fact that
\( \vec \Psi(\vec \lambda, z, \vec u) \) is dominating \( \vec \Phi(\vec
\lambda, z, \vec u) \), and therefore, the corresponding Jacobian operator
\( \dfrac{\partial \vec \Psi}{\partial \vec \lambda} \)
(respectively its $N$th power) is dominating the operator
\( \dfrac{\partial \vec \Phi}{\partial \vec \lambda} \) (respectively its
$N$th power). For this reason, the latter Jacobian operator
\( \dfrac{\partial \vec \Phi}{\partial \vec \lambda} \), evaluated at \( (z, \vec \lambda, \vec
\lambda^+) = (0, \vec 0, \vec 0) \)
is nilpotent with the nilpotence index at most the corresponding nilpotence
operator of the former Jacobian
operator \( \dfrac{\partial \vec \Psi}{\partial \vec \lambda} \).

And so, we have established that \( \vec{\mathcal J} \) evaluated at \( \vec
\lambda = \vec{\mathcal L}(z, \vec u) \), \( \vec \lambda^+ = \vec{\mathcal
L}^+(z, \vec u) \) and \( z = 0 \) is nilpotent. Equivalently, it meas that
after substituting \( \vec \lambda = \vec{\mathcal L}(z, \vec u) \) and \( \vec
\lambda^+ = \vec{\mathcal L}^+(z, \vec u) \) into \( \vec{\mathcal J} \) there
exists a non-negative integer \( K > 0 \) such that \( \vec{\mathcal J}^K \) is
element-wise divisible by \( z \).  Consequently, each coefficient in \( z \) of
the formal sum \( \sum_{j \geq 0} \vec{\mathcal J}^j \) is finite.  Indeed, for
each integer \( N \geq 0 \), the coefficient at \( z^N \) in this formal series
is a sum of coefficients at \( z^N \) in the finite sum \( \sum_{j = 0}^{K\cdot
N} \vec{\mathcal J}^j \).  Moreover, since \( \vec{\mathcal J} \) is
non-negative, this sum is also non-negative. Finally, this infinite formal
series is equal to \( (\vec I - \vec{\mathcal J})^{-1} \) where \( \vec I \) is
the identity operator of appropriate dimension.
\end{proof}

\begin{remark}
    The condition that
    \( \vec{\mathcal K} \uidx \infty(\vec 0, \vec 0, 0, \vec u) = \vec 0 \)
    can be omitted but we keep it for the simplicity of the proof.
    For the above proof, it is enough to guarantee that each coefficient in \( z
    \) of the infinite formal sum \( \sum_{j \geq 0} \vec{\mathcal J}^j \) is
    finite, which is equivalent to saying that some power of \( \vec{\mathcal J}
    \) is divisible by \( z \). More details on well-founded systems can be
    found in~\cite{pivoteau2012algorithms}.
\end{remark}

\subsection{Coefficient transfer for infinite systems}
Finally, we give our main theorem on the transfer of coefficients for infinitely
nested forward-recursive systems.

\begin{theorem}\label{proposition:infinite:system}
    Let $\mathcal{S}$ be an infinitely nested, forward recursive
    system~\eqref{eq:infinite:system:forward:recursive:system} which tends to an
    irreducible context-free schema at an exponential convergence rate.  Then,
    the respective solutions \( L_j\uidx m (z, \vec u) \) of $\mathcal{S}$ admit
    for each \( m \geq 0 \) an asymptotic expansion of their coefficients
    as \( n \to \infty \) in form of
    \begin{equation}\label{eq:prop:infinite:sys:puiseux}
    [z^n]L_j \uidx m(z, \vec u) \sim [z^n] \sum_{k \geq 0} c_{j,k} \uidx m(\vec u) \left(
        1 - \dfrac{z}{\rho(\vec u)}
    \right)^{k/2}
\end{equation}
where \( \rho(\vec u) \) is the dominant singularity of the corresponding
    limiting system~\eqref{eq:infinite:system:limit:system} and the coefficients
    \( c_{j,k} \uidx m(\vec u) \) are analytic at \( \vec u = \vec 1 \).
    Furthermore, \( \rho(\vec u) \) is analytic near \( \vec u = \vec 1 \).

    The coefficients \( c_{j,k} \uidx m (\vec u) \) can be approximated by
    taking first \( (M-1) \) equations
    from~\eqref{eq:infinite:system:forward:recursive:system} and replacing
    the \( M \)th equation by its following limit variant:
    \begin{equation}
        \vec L \uidx M = \vec{\mathcal K} \uidx \infty
        \left(
            \vec L \uidx M,
            \vec L \uidx M,
            z, \vec u
        \right).
    \end{equation}
    Such a truncated system can be solved recursively. Consequently, the
    coefficients of respective Puiseux
    expansions~\eqref{eq:prop:infinite:sys:puiseux} are estimated with an error
    which is exponentially small in \( M \).
\end{theorem}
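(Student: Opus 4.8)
The plan is to analyse each component $\vec L \uidx m$ by squeezing it, via \autoref{lemma:squeeze}, between two families of \emph{finite} truncated systems, both of which carry the dominant singularity $\rho(\vec u)$ of the limiting system and whose leading Puiseux coefficients converge — exponentially fast in the truncation height — to a common limit. First I would apply the Drmota--Lalley--Woods theorem (\autoref{proposition:irreducible:polynomial:systems}) to the limiting system~\eqref{eq:infinite:system:limit:system}, which by hypothesis satisfies its premises. This yields the square-root Puiseux expansion of $\vec L \uidx \infty(z, \vec u)$ at $\rho(\vec u)$, with $\rho(\vec u)$ and all expansion coefficients analytic near $\vec u = \vec 1$, together with the unique-dominant-singularity and delta-domain analyticity needed for the coefficient transfer.

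Next I would set up the bounding families. For a fixed level $m$ and truncation height $M \geq m$, the \emph{upper} system is the one described in the statement: keep the first $M-1$ genuine equations, inject $\vec L \uidx M = \vec L \uidx \infty$ at the top, and back-substitute down to level $m$; since $\vec L \uidx \infty \succeq \vec L \uidx M$ by \autoref{lemma:inclusion} and every $\vec{\mathcal K} \uidx j$ has non-negative coefficients, monotonicity of the least fixed point in its parameters yields $\vec L \uidx m \preceq \overline{\vec L}_M \uidx m$. For the matching \emph{lower} family I would first control the gap $\vec \delta \uidx m := \vec L \uidx \infty - \vec L \uidx m$. Subtracting the recurrence from the limiting equation, inserting $\pm\, \vec{\mathcal K} \uidx m(\vec L \uidx \infty, \vec L \uidx \infty, z, \vec u)$, bounding the index-kernel difference by the exponential convergence hypothesis and the remaining difference by the mean value lemma (\autoref{lemma:mean:value}), gives the coordinatewise recursion
\begin{equation}
    (\mathbf I - J_1)\, \vec \delta \uidx m \preceq \vec A\, B^m + J_2\, \vec \delta \uidx {m+1},
\end{equation}
where $J_1, J_2 \preceq \partial_{\vec \ell_1}\vec{\mathcal K}\uidx\infty,\ \partial_{\vec \ell_2}\vec{\mathcal K}\uidx\infty$ are the non-negative partial Jacobians of the limiting kernel. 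Because $|B(\rho(\vec u),\vec u)| < 1$ while $J_1 + J_2$ has spectral radius $1$ at $z = \rho$ by the differential condition (\autoref{proposition:dominant:singularity}), the damped operator $J_1 + B J_2$ has spectral radius strictly below $1$, so $(\mathbf I - J_1 - B J_2)^{-1}$ is a non-negative operator analytic near $(\rho, \vec 1)$; solving the recursion gives $\vec \delta \uidx m \preceq \vec C(z,\vec u)\, B(z,\vec u)^m$ with $\vec C$ analytic there. Injecting the resulting lower estimate of $\vec L \uidx M$ at the top and back-substituting produces $\underline{\vec L}_M \uidx m \preceq \vec L \uidx m$, again with singularity $\rho(\vec u)$.

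It then remains to propagate the expansions through the finite back-substitution and conclude. Each step solves $\vec L_M \uidx j = \vec{\mathcal K}\uidx j(\vec L_M\uidx j, \vec L_M\uidx{j+1}, z, \vec u)$ for $\vec L_M\uidx j$ with the already-expanded $\vec L_M\uidx{j+1}$ as external input; since $\partial_{\vec \ell_1}\vec{\mathcal K}\uidx j \preceq J_1$ has spectral radius strictly below $1$ even at $z = \rho$ — as $J_2 \neq \vec 0$ forces $r(J_1) < r(J_1+J_2) = 1$ by Perron--Frobenius positivity — the implicit function theorem applies up to the singularity, and $\vec L_M\uidx j$ inherits the singularity $\rho(\vec u)$ and the square-root Puiseux type of its input. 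Inducting from level $M$ down to $m$, both $\overline{\vec L}_M\uidx m$ and $\underline{\vec L}_M\uidx m$ acquire expansions $\sum_k c_k(\vec u)(1 - z/\rho(\vec u))^{k/2}$ with coefficients analytic near $\vec u = \vec 1$, and their leading coefficients $\overline{A_M}(\vec u)$, $\underline{A_M}(\vec u)$ differ only by the injected gap $O(B^M)$, hence both converge to a common $A(\vec u) = c_{j,1}\uidx m(\vec u)$. Feeding these into \autoref{lemma:squeeze} with $C_n = n^{-3/2}/(2\sqrt\pi)$ and $B(\vec u) = 1/\rho(\vec u)$ gives $[z^n] L_j \uidx m(z, \vec u) \sim C_n A(\vec u)\, \rho(\vec u)^{-n}$ uniformly near $\vec u = \vec 1$; the higher-order coefficients transfer identically by \autoref{remark:squeeze}, yielding~\eqref{eq:prop:infinite:sys:puiseux}. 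Since the truncated system of the statement is exactly $\overline{\vec L}_M$, its leading coefficient is $\overline{A_M}$, which by the $O(B^M)$ estimate approximates the true $c_{j,k}\uidx m$ with error exponentially small in $M$.

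The hard part is controlling both ingredients uniformly and simultaneously: guaranteeing that \emph{both} bounding families carry exactly $\rho(\vec u)$ rather than drifting to a larger, height-dependent singularity — which is precisely what limited the earlier height-truncation approach of~\cite{gittenberger_et_al:LIPIcs:2016:5741,BodiniGitGol17} to an $O(1/\sqrt N)$ rate — while showing that the exponentially small discrepancy injected at height $M$ is amplified neither by the back-substitution nor by the passage from real positive to complex $\vec u$ (the two-stage argument of \autoref{lemma:squeeze}). Both issues hinge on the spectral-radius estimates of \autoref{proposition:dominant:singularity}: the gap between $r(J_1)$ and $r(J_1+J_2)=1$ at $z=\rho$ makes each back-substitution subcritical, and the damping $|B(\rho,\vec u)|<1$ keeps the Neumann series $(\mathbf I - J_1 - B J_2)^{-1} = \sum_{k \geq 0} (J_1 + B J_2)^k$ convergent and analytic uniformly for $\vec u$ near $\vec 1$.
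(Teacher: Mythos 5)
Your proposal is correct and follows essentially the same route as the paper's proof: Drmota--Lalley--Woods for the limiting system, the mean-value lemma plus the inclusion lemma to bound $\vec L \uidx \infty - \vec L \uidx m$ by an exponentially decaying Neumann series (your single inversion of $\mathbf I - J_1 - B J_2$ is algebraically the paper's two-step $(\vec I_d - \partial_1\vec{\mathcal K})^{-1}$ followed by $(\vec I_d - B\vec{\mathcal R})^{-1}$), downward propagation of upper and lower bounds through the truncated system controlled by the spectral radius of $\vec{\mathcal R} = (\vec I_d - \partial_1\vec{\mathcal K})^{-1}\partial_2\vec{\mathcal K}$, and the squeeze lemma to transfer coefficients. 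The only cosmetic imprecision is that the gap after back-substitution is $\vec{\mathcal R}^M \cdot O(B^M)$ rather than $O(B^M)$ outright, which is still exponentially small since $r(\vec{\mathcal R}) \leq 1$ --- exactly as in the paper's third part.
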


\begin{remark}\label{remark:conditions:limit:system}
    ~ 
    \begin{itemize}
        \item The condition that the system \( \mathcal S \) tends to an
            irreducible context-free schema can be replaced by a weaker
            condition asserting that the limiting
            system~\eqref{eq:infinite:system:limit:system} admits a suitable Puiseux
            expansion.
        \item Instead of limiting systems with square-root type singularities,
            it is also possible to consider rational systems or other types of
            systems. The same set of conditions is sufficient to establish the
            transfer of behaviours around the dominant singular point.
    \end{itemize}
\end{remark}

\begin{proof}{(\autoref{proposition:infinite:system})}
\def\vp{\vphantom{hp}}
We divide the proof into three conceptual parts.
\begin{enumerate}
\item First, we show that each component of the difference vector \( \vec L
    \uidx \infty(z, \vec u) - \vec L \uidx m(z, \vec u) \) can be upper bounded
        by a Puiseux series expansion whose coefficients decay exponentially
        fast as
        \( m \) tends to infinity;
    \item Next, we show that if for \( m \geq 1 \) there exist coordinate-wise upper and lower bounds
\begin{equation}
    \underline{\vp \vec L \uidx m}(z, \vec u)
    \preceq
    \vec L \uidx m(z, \vec u)
    \preceq
    \overline{\vp \vec L \uidx m}(z, \vec u),
\end{equation}
then the vector of functions \( \vec L \uidx {m-1}(z, \vec u) \)
obtained from the infinite system \( \mathcal S \) admits upper
        and lower bounds
\( \overline{\vp \vec L \uidx {m-1}} (z, \vec u) \)
and
\( \underline{\vp \vec L \uidx {m-1}} (z, \vec u) \)
satisfying
\begin{equation}
    \overline{\vp \vec L \uidx {m-1}}(z, \vec u)
    -
    \underline{\vp \vec L \uidx {m-1}}(z, \vec u)
    \preceq
    \vec {\mathcal R}(z, \vec u) \left(
    \overline{\vp \vec L \uidx {m}}(z, \vec u)
    -
    \underline{\vp \vec L \uidx {m}}(z, \vec u)
    \right)
\end{equation}
for some matrix \( \vec{\mathcal R} (z, \vec u) \) with spectral radius
satisfying
\( r(\vec {\mathcal R} (z, \vec u)) \leq 1 \) for \( z \in [0, \rho(\vec u)] \)
where \( \rho(\vec u) \) is the dominant singularity of \( \vec L \uidx
\infty(z, \vec u) \);
\item Finally, we combine two previous results and prove that the difference
    between the Puiseux coefficients of upper and lower bounds of
    \( \vec L \uidx m(z, \vec u) \) can be reduced to zero.
\end{enumerate}
\textit{First part.}
According to~\autoref{lemma:inclusion} we have \( \vec L \uidx m \preceq \vec L
\uidx \infty \).
Following the functional definitions of \( \vec L \uidx \infty \) and
\( \vec L \uidx m \) from the infinite system of equations,
their difference can be represented as
\begin{equation}
    \vec L \uidx \infty - \vec L \uidx m
    =
    \vec{\mathcal K} \uidx \infty(
        \vec L \uidx \infty,
        \vec L \uidx \infty, z, \vec u
    )
    -
    \vec{\mathcal K} \uidx m(
        \vec L \uidx m,
        \vec L \uidx{m+1}, z, \vec u
    ).
\end{equation}
For convenience, henceforth we omit the arguments \( z \) and \( \vec u \).
Moreover, \( \vec{\mathcal K} \uidx \infty \) and
\( \vec{\mathcal K} \uidx m \) become functions of two vector arguments
\begin{equation}
    \vec{\mathcal K} \uidx \infty \colon \mathbb C^{d} \times
    \mathbb C^d \to \mathbb C^d
    \quad \text{and} \quad
    \vec{\mathcal K} \uidx m \colon \mathbb C^{d} \times
    \mathbb C^d \to \mathbb C^d.
\end{equation}
In addition, we use the nabla notation to denote the Jacobian operator
\begin{equation}
    \nabla_{\vec x} \vec{\mathcal K}(\vec x, \vec y)
    = \left(
        \dfrac{\partial}{\partial x_1} \vec{\mathcal K}
        ,
        \ldots,
        \dfrac{\partial}{\partial x_d} \vec{\mathcal K}
    \right)
    \quad \text{and} \quad
    \nabla_{\vec y} \vec{\mathcal K}(\vec x, \vec y)
    = \left(
        \dfrac{\partial}{\partial y_1} \vec{\mathcal K}
        ,
        \ldots,
        \dfrac{\partial}{\partial y_d} \vec{\mathcal K}
    \right).
\end{equation}

We start with the following subtraction-addition trick. Each component of the
vector difference is evaluated through~\autoref{lemma:mean:value} (mean value
lemma for formal power series) and then upper-bounded by the value of the
functional \( \vec{\mathcal K} \uidx \infty \) at \( \vec L \uidx \infty \).
Specifically,
\begin{align*}
    \begin{split}
    \vec L \uidx \infty - \vec L \uidx m
    & =
    \vec{\mathcal K} \uidx \infty (
        \vec L \uidx \infty,
        \vec L \uidx \infty
    )
    -
    \vec{\mathcal K} \uidx \infty (\vec L \uidx m, \vec L \uidx \infty)
    +
    \vec{\mathcal K} \uidx \infty (\vec L \uidx m, \vec L \uidx \infty)
    \\&
    -
    \vec{\mathcal K} \uidx \infty (\vec L \uidx m, \vec L \uidx {m+1})
    +
    \vec{\mathcal K} \uidx \infty (\vec L \uidx m, \vec L \uidx {m+1})
    -
    \vec{\mathcal K} \uidx m (\vec L \uidx m, \vec L \uidx {m+1})
    \\
    & \preceq
    \left.
    \nabla_{\vec x} \vec{\mathcal K} \uidx \infty (\vec x, \vec y)
    \right|_
        {
            \substack{
                \vec x = \vec L \uidx \infty \\
                \vec y = \vec L \uidx \infty
            }
        }
        (\vec L \uidx \infty - \vec L \uidx m)
    +
    \left.
    \nabla_{\vec y} \vec{\mathcal K} \uidx \infty (\vec x, \vec y)
    \right|_
        {
            \substack{
                \vec x = \vec L \uidx \infty \\
                \vec y = \vec L \uidx \infty
            }
        }
        (\vec L \uidx \infty - \vec L \uidx {m+1})
    \\ &
        + \Big(
        \vec{\mathcal K} \uidx \infty (\vec L \uidx m, \vec L \uidx {m+1})
        -
        \vec{\mathcal K} \uidx m (\vec L \uidx m, \vec L \uidx {m+1})
        \Big).
    \end{split}
\end{align*}
Since \( \vec{\mathcal K} \uidx m \preceq \vec{\mathcal K} \uidx \infty \), the
final difference in the above sum is a vector of formal power series with non-negative
coefficients. Consequently, the last summand can be bounded by \( \vec{\mathcal
K} \uidx \infty(\vec L \uidx \infty, \vec L \uidx \infty) - \vec{\mathcal K}
\uidx m(\vec L \uidx \infty, \vec L \uidx \infty) \). By the condition of
exponential convergence, this difference can be even further bounded by a vector of
exponentially decaying functions \( \vec A(z, \vec u) (B(z, \vec u))^m \).
For brevity, set
\begin{equation*}
    \partial_1 \vec{\mathcal K}
    :=
    \left.
    \nabla_{\vec x} \vec{\mathcal K} \uidx \infty (\vec x, \vec y)
    \right|_
    {(\vec x, \vec y) = (\vec L \uidx \infty, \vec L \uidx \infty)}
    \quad \text{and} \quad
    \partial_2 \vec{\mathcal K}
    :=
    \left.
    \nabla_{\vec y} \vec{\mathcal K} \uidx \infty (\vec x, \vec y)
    \right|_
    {(\vec x, \vec y) = (\vec L \uidx \infty, \vec L \uidx \infty)}.
\end{equation*}
Note that
\( \partial_1 \vec{\mathcal K}, \partial_2 \vec{\mathcal K} \in \mathbb C^{d
\times d} \).
Now, the upper bound on \( \vec L \uidx \infty -
\vec L \uidx m \) can be stated as
\begin{equation}
    (\vec L \uidx \infty - \vec L \uidx m)
    \preceq
    \vec A(z, \vec u) {B(z, \vec u)}^m
    +
    \partial_1
    \vec{\mathcal K} \cdot
    (\vec L \uidx \infty - \vec L \uidx {m} )
    +
    \partial_2
    \vec{\mathcal K} \cdot
    (\vec L \uidx \infty - \vec L \uidx {m+1} )
\end{equation}
or, equivalently,
\begin{equation}\label{eq:specral:radius:ineq}
    (\vec I_d - \partial_1 \vec{\mathcal K})
    (\vec L \uidx \infty - \vec L \uidx m)
    \preceq
    \vec A(z, \vec u) {B(z, \vec u)}^m
    +
    \partial_2
    \vec{\mathcal K} \cdot
    (\vec L \uidx \infty - \vec L \uidx {m+1} )
\end{equation}
where \( \vec I_d \in \mathbb C^{d \times d} \) is the \( d \times d \) identity
matrix.

Let us show that the inverse matrix \( (\vec I_d - \partial_1 \vec{\mathcal
K})^{-1} \) exists and has non-negative coefficients in the sense of formal
power series. As discussed in the proof of~\autoref{lemma:inclusion}, the matrix
\( \partial_1 \vec{\mathcal{K}} \) is nilpotent at $z = 0$. Equivalently, there
exists a non-negative integer $K$ such that \( \left(\partial_1
\vec{\mathcal{K}}\right)^K \) is divisible by $z$.  It means that the formal
series \( \sum_{j \geq 0}\left(\partial_1 \vec{\mathcal{K}}\right)^j \) is
well-defined and hence so is the formal inverse \( (\vec I_d - \partial_1
\vec{\mathcal K})^{-1} \). Moreover, since \(\partial_1 \vec{\mathcal{K}} \) has
non-negative coefficients, the same holds for the investigated inverse matrix
\( (\vec I_d - \partial_1 \vec{\mathcal K})^{-1} \).

Now, let us focus on the behaviour of \( (\vec I_d - \partial_1 \vec{\mathcal
K})^{-1} \) as a function near $z = \rho( \vec u)$.  As follows
from~\autoref{proposition:dominant:singularity} applied to the system of
equations
\(
    \vec L \uidx \infty =
    \vec{\mathcal K} \uidx \infty(
        \vec L \uidx \infty,
        \vec L \uidx \infty,
        z,
        \vec u
    )
\), for each real \( 0 < z < \rho(\vec u) \) we have
the following inequality:
\begin{equation}
    r(\partial_1 \vec{\mathcal K} + \partial_2 \vec{\mathcal K}) < 1.
\end{equation}
By Perron--Frobenius theorem (see e.g.~\cite[section 2.2.5]{drmota2009random}
and references therein) the spectral radius of a matrix with positive entries is
monotonic in its coefficients. Hence, for all real \( 0 < z < \rho(\vec u)\)
\begin{equation}
    r(\partial_1 \vec{\mathcal K}) < 1
\end{equation}
and so, in the same interval
\begin{equation}
    (\vec I_d - \partial_1 \vec{\mathcal K})^{-1}
    =
    \sum_{j \geq 0} (\partial_1 \vec{\mathcal K})^j.
\end{equation}
Moreover, due to the continuity of the spectral radius, the same identity
can be extended to some complex neighbourhood of \( \vec u = \vec{1}\).

Consequently, we can multiply both sides of~\eqref{eq:specral:radius:ineq} by \(
( \vec I_d - \partial_1 \vec{\mathcal K} )^{-1} \) and obtain
\begin{equation}
    \vec L \uidx \infty - \vec L \uidx m
    \preceq
    (\vec I_d - \partial_1 \vec{\mathcal K})^{-1}
    \vec A(z, u) {B(z, \vec u)}^m
    +
    (\vec I_d - \partial_1 \vec{\mathcal K})^{-1}
    \partial_2
    \vec{\mathcal K}
    (\vec L \uidx \infty - \vec L \uidx {m+1} )
\end{equation}
Let us denote \({
    \vec \delta_m
    :=
    (\vec I_d - \partial_1 \vec{\mathcal K})^{-1}
    \vec A(z, \vec u) B(z, \vec u)^m
}\) and
\({
    \vec {\mathcal R}
    :=
    (\vec I_d - \partial_1 \vec{\mathcal K})^{-1} \partial_2 \vec{\mathcal K}
}\).
Note that the inequality
\(
    \vec L \uidx \infty - \vec L \uidx m
    \preceq
    \vec \delta_m + \vec{\mathcal R}
    ( \vec L \uidx \infty - \vec L \uidx {m+1} )
\)
can be further iterated for increasing values of \( m \).
In doing so, we find that
\begin{equation}
    \vec L \uidx \infty - \vec L \uidx m
    \preceq
    \vec \delta_m +
    \vec{\mathcal R}
    \vec \delta_{m+1} +
    \vec{\mathcal R}^2
    \vec \delta_{m+2} +
    \cdots
\end{equation}
Hence, the difference \( \vec L \uidx \infty - \vec L \uidx m \)
can be bounded by the tail of a geometric progression
which appears as a summation of the formal Neumann series
\begin{equation}
    \label{eq:upper:bound}
    \vec L \uidx \infty - \vec L \uidx m
    \preceq
    B(z, u)^m
    \sum_{k \geq 0}
    (B(z, \vec u)\vec{\mathcal R})^k
    (\vec I_d - \partial_1 \vec{\mathcal K})^{-1}
    \vec A(z, \vec u).
\end{equation}

Let us now focus on the above formal Neumann series.
Note that applying~\autoref{proposition:irreducible:polynomial:systems} (Drmota--Lalley--Woods
theorem) to the limiting system~(\ref{eq:infinite:system:limit:system}), we obtain that the
vector of functions \( \vec L \uidx \infty(z, \vec u) \) admits a
coordinate-wise Puiseux expansion in form of
\begin{equation}
    \vec L \uidx \infty (z, \vec u) \sim
    \vec \ell_0(\vec u) - \vec \ell_1(\vec u) \sqrt{1 - \dfrac{z}{\rho(\vec
    u)}}
\end{equation}
where functions \( \vec\ell_0(\vec u), \vec \ell_1(\vec u), \rho(\vec u) \) are
analytic near \( \vec u = 1 \). Likewise, both matrices \( \partial_1 \vec{\mathcal K} \)
and \( \partial_2 \vec{\mathcal K} \) admit Puiseux expansions of the same kind.

Let us prove that coordinate-wise Puiseux expansions of the matrix \(
\vec{\mathcal R} \) near the singular point \( z = \rho(\vec u) \) have the form
\begin{equation}
    \vec{\mathcal R} =
    (\vec I_d - \partial_1 \vec{\mathcal K})^{-1}
    \partial_2 \vec{\mathcal K}
    \sim \vec{\mathcal R}_0
    -
    \vec{\mathcal R}_1 \sqrt{1 - \dfrac{z}{\rho(\vec u)}}
    , \quad
    z \to \rho(\vec u)
\end{equation}
where the spectral radius of \( \vec{\mathcal R}_0 \) satisfies \(
r(\vec{\mathcal R}_0) = 1 \).  According to Perron--Frobenius theorem, since the
coefficients of \( \vec{\mathcal R} \) are non-negative, the eigenvalue of the
matrix \( \vec{\mathcal R} \) with the largest absolute value, i.e.~the
eigenvalue corresponding to the spectral radius of \( \vec{\mathcal R} \), is
the largest real positive solution \( \lambda \) of the characteristic equation
\begin{equation}
    \label{eq:characteristic:equation}
    \det \left(
        (\vec I_d - \partial_1 \vec{\mathcal K})^{-1}
        \partial_2 \vec{\mathcal K} - \lambda \vec I_d
    \right)
     = 0.
\end{equation}
Since the determinant of a matrix product is equal to the product of respective
determinants and \( \det \left( \vec I_d - \partial_1 \vec{\mathcal{K}} \right)
\neq 0 \), as \(\vec I_d - \partial_1 \vec{\mathcal{K}}\) is invertible, the
above condition is equivalent to
\begin{equation}
    \det\left(
        \partial_2 \vec{\mathcal K}
        + \lambda \partial_1 \vec{\mathcal K}
        - \lambda \vec I_d
    \right) = 0
    \quad \text{and also} \quad
    \det(
        \partial_1 \vec{\mathcal K}
        +
        \lambda^{-1}\partial_2 \vec{\mathcal K}
        - \vec I_d
    ) = 0.
\end{equation}
Let us show that the largest positive real solution (as a function of \( z <
\rho(\vec u)\)) of this equation, does not exceed \( 1 \), with equality when
\( z = \rho(\vec u) \).
Assume, by contrary, that \( \lambda > 1 \).
The matrix \( (\partial_1 \vec{\mathcal K} + \lambda^{-1}
\partial_2 \vec{\mathcal K}) \) is a matrix with non-negative coefficients,
whose coefficients are strictly smaller
than the coefficients of the matrix
\( (\partial_1 \vec{\mathcal K} + \partial_2 \vec{\mathcal K}) \).
By Perron--Frobenius theorem (see e.g.~\cite[section 2.2.5]{drmota2009random}
and references therein), the spectral radius of a matrix with positive
coefficients is monotonic in its coefficients, so for \( \lambda > 1 \)
\begin{equation}
    r(
        \partial_1 \vec{\mathcal K}
        +
        \lambda^{-1}
        \partial_2 \vec{\mathcal K}
    )
    <
    r(
        \partial_1 \vec{\mathcal K}
        +
        \partial_2 \vec{\mathcal K}
    ) = 1.
\end{equation}
Therefore, the characteristic equation cannot have a solution \( \lambda >
1 \). Moreover, the spectral radius of \( \vec{\mathcal R}_0 \) is equal to the
spectral radius of \( \vec{\mathcal R} \) when \( z = \rho(\vec u) \) because in
this case, the two matrices coincide. Therefore, \( r(\vec{\mathcal R}_0) = 1
\).

Moving back to the upper bound~\eqref{eq:upper:bound},
according to the exponential convergence condition
in~\autoref{definition:infinitely:nested:systems},
in a complex vicinity of \( \vec u = \vec 1 \), the absolute value of the
function \( B(z, \vec u) \) is strictly smaller
than \( 1 \), hence the inverse matrix
\(
    (
        \vec I_d
        -
        B(z,u)
        \vec{\mathcal R}
    )^{-1}
\) in~(\ref{eq:upper:bound}) exists.
Moreover, since
\begin{equation}
    A^{-1}= \dfrac{1}{\det\left(A\right)} \cdot \mathbf{adj}(A)
\end{equation}
where \( \mathbf{adj}(A) \) is the adjugate matrix of \(A\),
each element of the inverse matrix
\(
    (
        \vec I_d
        -
        B(z,u)
        \vec{\mathcal R}
    )^{-1}
\)
can be represented as a ratio of a sum of products of functions admiting
Puiseux series expansions in form of
\(
\vec a(\vec u) -
{\vec b(\vec u) \sqrt{1 - z / \rho(\vec u)}}
+ {O(| 1 - z / \rho(\vec u)|)}
\)
and a non-zero determinant of
\(
        {\vec I_d
        -
        B(z,u)
        \vec{\mathcal R}}
\).
It means that each coordinate in the inverse matrix
\(
    (
        \vec I_d
        -
        B(z,u)
        \vec{\mathcal R}
    )^{-1}
\)
also admits a Puiseix series expansion of similar form.  Thus,
the Neumann series in~\eqref{eq:upper:bound} converges and we obtain
\begin{equation}
    \label{eq:upper:bound:next}
    \vec L \uidx \infty - \vec L \uidx m
    \preceq
    B(z, \vec u)^m
    \times
    \left(
        \vec I_d - B(z, \vec u) \vec{\mathcal R}
    \right)^{-1}
    (\vec I_d - \partial_1 \vec{\mathcal K})^{-1}
    \vec A(z, \vec u).
\end{equation}

From~(\ref{eq:upper:bound:next}) we now note that the difference \( \vec L \uidx
\infty - \vec L \uidx m \) can be bounded by a vector of functions having the
same singularity \( \rho(\vec u) \) as the components of the vector \( \vec L
\uidx \infty \). The Puiseux coefficients of this upper bound decay
exponentially fast as \( m \to \infty \).  Moreover, these coefficients are
analytic functions near \( \vec u = \vec 1 \).

\textit{Second part.} Assume that function \( \vec L \uidx m(z, \vec u) \) admits some
upper and lower bounds and denote by \( \Delta_m(z, \vec u) \) the difference
between these bounds:
\def\vp{\vphantom{hp}}
\begin{equation}
    \underline{\vp \vec L \uidx m}(z, \vec u)
    \preceq
    \vec L \uidx m(z, \vec u)
    \preceq
    \overline{\vp \vec L \uidx m}(z, \vec u) ,
    \quad
    \Delta \uidx m(z, \vec u) :=
    \overline{\vp \vec L \uidx m}(z, \vec u)
    -
    \underline{\vp \vec L \uidx m}(z, \vec u)
    .
\end{equation}
Then, another pair of upper and lower bound can be
established for \( \vec L \uidx {m-1} \) from~\eqref{eq:infinite:system:forward:recursive:system}
(infinite system of equations) with the difference \( \Delta \uidx {m-1} \) satisfying
\begin{equation}
    \Delta \uidx {m-1} = \vec{\mathcal K} \uidx {m-1}
    (\overline{\vp \vec L \uidx {m-1}}, \overline{\vp \vec L \uidx m})
    -
    \vec{\mathcal K} \uidx {m-1}
    (\underline{\vp \vec L \uidx {m-1}}, \underline{\vp \vec L \uidx m})
    \preceq
    \partial_1 \vec{\mathcal K}
    \cdot \Delta \uidx {m-1}
    +
    \partial_2 \vec{\mathcal K}
    \cdot \Delta \uidx {m}
    .
\end{equation}
That is, repeating the argument that allows to multiply both sides of the
inequality by the inverse matrix, we obtain
\begin{equation}
\label{eq:chain:of:bounds}
    \Delta \uidx {m-1}
    \preceq
    (\vec I_d - \partial_1 \vec{\mathcal K})^{-1}
    \partial_2 \vec{\mathcal K}
    \cdot
    \Delta \uidx m
    .
\end{equation}
As we discovered in the first part, the matrix
\( \vec{\mathcal R} := (\vec I_d - \partial_1 \vec{\mathcal K})^{-1} \partial_2
\vec{\mathcal K} \) has spectral radius \( 1 \) at the singular point
\( z = \rho(\vec u) \).

\textit{Third part.}
As a result of the first part, we know that \( \vec L \uidx \infty(z, \vec u) -
\vec L \uidx m(z, \vec u) \) can be bounded in the following manner:
\begin{equation}
    \vec 0
    \preceq
    \vec L \uidx \infty(z, \vec u) - \vec L \uidx m(z, \vec u)
    \preceq
    B(z, \vec u)^m
    (\vec I_d - B(z, u) \vec{\mathcal R})^{-1} \vec A (z, \vec u).
\end{equation}
Let us assign
\begin{equation}
    \Delta_0 \uidx m :=
    B(z, \vec u)^m
    (\vec I_d - B(z, u) \vec{\mathcal R})^{-1} \vec A (z, \vec u)
\end{equation}
for the difference between upper and lower bounds for the vector of
functions \( \vec L \uidx m(z, \vec u) \).
Next, using the result of the second part, we construct a family of differences
\( \Delta_k \uidx {m} \) between upper and lower bounds for
\( \vec L \uidx m (z, \vec u) \), so that for every \( k, m \geq 0 \)
it holds
\begin{equation}
    \underline{\vec L_k \uidx m}(z, \vec u)
    \preceq
    \vec L \uidx m(z, \vec u)
    \preceq
    \overline{\vec L_k \uidx m}(z, \vec u)
    \quad \text{and} \quad
    \Delta_k \uidx m :=
    \overline{\vec L_k \uidx m}(z, \vec u)
    -
    \underline{\vec L_k \uidx m}(z, \vec u)
    .
\end{equation}
The family of upper and lower bounds is defined using the procedure described in
the second part. More specifically, for every \( m \geq 1 \) and \( k \geq 0 \)
these bounds satisfy the equations
\begin{equation}
    \begin{split}
        \overline{\vec L_{k+1} \uidx {m-1}}(z, \vec u)
        &:=
        \vec{\mathcal K}\uidx {m-1}
        (
            \overline{\vec L_{k+1} \uidx {m-1}}(z, \vec u)
            ,
            \overline{\vec L_{k} \uidx {m}}(z, \vec u)
            , z, \vec u
        );
        \\
        \underline{\vec L_{k+1} \uidx {m-1}}(z, \vec u)
        &:=
        \vec{\mathcal K}\uidx {m-1}
        (
            \underline{\vec L_{k+1} \uidx {m-1}}(z, \vec u)
            ,
            \underline{\vec L_{k} \uidx {m}}(z, \vec u)
            , z, \vec u
        ).
    \end{split}
\end{equation}

According to the second part, the differences
\( \Delta_k \uidx m \) satisfy formal power series inequalities
\(\Delta_{k+1} \uidx {m-1} \preceq \vec {\mathcal R} \Delta_{k} \uidx{m} \).
By iteration, we thus obtain
\begin{equation}
    \Delta_m \uidx 0
    \preceq
    \vec {\mathcal R}^m \Delta_0 \uidx m.
\end{equation}
Since the spectral radius of \( \vec {\mathcal R} \) is bounded by \( 1 \),
and \( \Delta_0 \uidx m \) is exponentially small in \( m \), the
values of Puiseux coefficients of \( \vec L \uidx 0(z, \vec u) \) can be
approximated within an exponentially small in \( m \) gap, for
arbitrarily large value of \( m \).

Finally, we note that the functions
\( \underline{\vec L_m \uidx 0}(z, \vec u) \) and
\( \overline{\vec L_m \uidx 0}(z, \vec u) \) have Puiseux expansions of type
\begin{equation}
    f_m(z, \vec u) \sim c_m(\vec u) - a_m(\vec u) \sqrt{1 - \dfrac{z}{\rho(\vec
    u)}}
\end{equation}
in a certain delta-domain. According to~\autoref{proposition:transfer:theorem}
(transfer theorem), their coefficients admit the following asymptotic estimate:
\begin{equation}
    f_m(z, \vec u) \sim C_n A_m(\vec u) B(\vec u)^n.
\end{equation}
A final application of~\autoref{lemma:squeeze} (squeeze lemma for formal power
series) combined with~\autoref{remark:squeeze} finishes the proof.
\end{proof}

\begin{remark}
    In~\autoref{proposition:infinite:system} we prove a so-called \emph{weak}
    transfer theorem, i.e.~prove that the asymptotics of the coefficients of
    each \( \vec L \uidx m \) can be obtained by taking the asymptotic expansion
    of the correseponding Puiseux expansion. A stronger version would suggest
    that the generating functions \( \vec L \uidx m(z, \vec u) \) can be
    analytically continued beyond the circle of convergence of corresponding
    formal power series, in a certain delta-domain. However, for our analysis,
    the presented weak variant is enough. The techniques presented above, can be
    further extended to obtain a stronger transfer theorem, by computing the
    Taylor series expansions at points \( z_0 \) inside the circle of
    convergence.
\end{remark}

\section{Advanced marking}\label{sec:advanced:marking}
In the following section we investigate more parameters related to plain and
closed \lterms. In particular, we consider:
\begin{itemize}
    \item Several parameters related to closed \lterms, resulting in Gaussian
        limit laws;
    \item Further parameters whose limiting distributions are discrete,
        including the leftmost-outermost redex search time in closed terms, the
        number of free variables in plain terms, the number of head abstractions in
        closed terms, and mean degree profile in closed terms;
    \item Finally, the mean height profile of closed terms for several different
        notions of height.
\end{itemize}

\subsection{$m$-openness and the enumeration of closed terms}\label{subsec:m:openness}
Recall that a term is said to be $m$\nobreakdash-open
(see~\autoref{subsec:lambda:calculus}) if by prepending it with $m$ head
abstractions we obtain a closed \lterm~as a result. Following this
natural, hierarchical notion, the set $\mathcal L_m$ of \( m \)-open \lterms~can
be specified as
\begin{align}\label{eq:advanced:marking:Lm:spec}
    \begin{split}
        \classL[m] &::= `l \classL[m+1]~|~(\classL[m] \classL[m])~|~ \idx{0},
        \idx{1}, \ldots, \idx{m - 1}\\
        \classL[m+1] &::= `l \classL[m+2]~|~(\classL[m+1] \classL[m+1])~|~ \idx{0},
        \idx{1}, \ldots, \idx{m}\\
        \ldots & \qquad \ldots
    \end{split}
\end{align}

Let \( L_m(z) \) denote the generating function associated with the set of $m$\nobreakdash-open
\lterms, i.e.
\(
{L_m(z) = \sum_{n \geq 0} a_{n,m} z^n}
\)
where \( a_{n,m} \) stands for the number of \( m \)\nobreakdash-open lambda
terms of size \( n \). Using~\eqref{eq:advanced:marking:Lm:spec} we obtain a
corresponding infinite system for the functions \( L_m(z) \):
\begin{align}
    \label{eq:infinite:system}
    \begin{split}
        L_0(z) &= z L_1(z) + z {L_0(z)}^2
    , \\
        L_1(z) &= z L_2(z) + z {L_1(z)}^2
    + z
    , \\
    & \cdots
    , \\[-.3cm]
        L_m(z) &= z L_{m + 1}(z) + z {L_m(z)}^2
    + z \dfrac{1 - z^m}{1 - z}
    , \\[-.15cm]
    & \cdots
    \end{split}
\end{align}

In~\cite[Lemma 8]{BodiniGitGol17} the authors prove that for each \( m \geq 0
\) the generating functions for $m$\nobreakdash-open \lterms~$L_m(z)$ admit Puiseux
expansions in form of
\begin{equation}
    L_m(z) \sim a_m - b_m \sqrt{1 - \dfrac{z}{\rho}}.
\end{equation}
Moreover, by the virtue of their proof, we obtain an suitable approximation
procedure for the coefficients \( a_m \) and \( b_m \) by truncating the
system~\eqref{eq:infinite:system} and replacing the function \( L_m(z) \) with
\( L_\infty(z) \). Furthermore, the estimated coefficients \( \widetilde{a}_m \)
and \( \widetilde{b}_m \) tend to their respective limits with an error of order
\( O(\frac{1}{\sqrt{m}}) \).  Using~\autoref{proposition:infinite:system} it is
possible to prove that \( \widetilde{a}_m \) and \( \widetilde{b}_m \) converge
to their respective limits exponentially fast.  Consequently, the approximation
procedure proposed in~\cite{BodiniGitGol17} convergences exponentially fast, as
well.

Immediately, this implies that the probability that a random plain \lterms~is
$m$\nobreakdash-open, but not $(m-1)$\nobreakdash-open is $\dfrac{b_m -
b_{m-1}}{b_\infty}$. Certainly, the limiting distribution associated with
$m$\nobreakdash-openness is discrete.

Note that the probability distribution function of $m$\nobreakdash-openness is
proportional to the coefficient at $z^n$ in the bivariate generating function
\begin{equation}
    L(z, u) = \sum_{k \geq 1} u^k \big(L_k(z) - L_{k-1}(z)\big)
    \sim
    \sum_{k \geq 1} u^k (a_k - a_{k-1}) -
    \sum_{k \geq 1} u^k (b_k - b_{k-1}) \sqrt{1 - \frac{z}{\rho}}.
\end{equation}
The mean value corresponding to $m$\nobreakdash-openness of plain terms can be
calculated as
\begin{equation}
    \dfrac{\left.
        [z^n] \dfrac{\partial}{\partial u} L(z, u)
    \right|_{u=1}}{[z^n] L_\infty(z)}
    \sim \dfrac{\sum_{k \geq 1} k (b_k - b_{k-1})}{\sum_{k \geq 1} (b_k -
    b_{k-1})}
    = \dfrac{\sum_{k \geq 0} (b_\infty - b_k)}{b_\infty}
    = \sum_{k \geq 0} \left(
        1 - \dfrac{b_k}{b_\infty} 
    \right).
\end{equation}
In order to compute this expectation we use the approximation procedure
discussed above. Using the (aptly truncated) recurrence for the coefficients \(
a_m \) and \( b_m \)
\begin{equation}
    a_{m} = \dfrac{1}{2 \rho}\left(
        1 - \sqrt{
            1 - 4 \rho^2 \frac{1 - \rho^m}{1 - \rho} - 4 \rho^2 a_{m+1}
        }
    \right), \quad
    b_m = \dfrac{\rho b_{m+1}}{\sqrt{1 - 4 \rho^2 \frac{1 - \rho^m}{1 - \rho}
    - 4 \rho^2 a_{m+1}}}
\end{equation}
we obtain the numerical approximation for the mean value corresponding to \( m
\)\nobreakdash-openness. Numerical approximation yields an estimate
$2.01922912627$.

\subsection{Variables, abstractions, successors and redexes in closed terms}
In the following section we investigate the joint distribution of several
parameters in closed \lterms, utilising the
novel~\autoref{proposition:infinite:system}.

\begin{prop}\label{proposition:joint:closed}
Let \( \vec X_n = (X_{n (\textsf{var})}, X_{n (\textsf{red})},X_{n
    (\textsf{suc})},X_{n (\textsf{abs})}) \)  denote a vector of random
    variables denoting  the number of variables, redexes, successors and
    abstractions in a random closed \lterm~of size \( n \), respectively.  Then,
    after standardisation, the random vector \( \vec X_n \) converges in law to
    a multivariate Gaussian distribution with identical parameters as plain terms.
\end{prop}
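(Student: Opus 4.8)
The plan is to mirror the finite-system argument of \autoref{prop:joint:plain}, but now carried out on the infinite, hierarchical specification of $m$\nobreakdash-open terms, invoking \autoref{proposition:infinite:system} to recover the Puiseux expansion that the multivariate central limit theorem requires. First I would introduce, for each level $m$, an auxiliary class $\mathcal{A}_m$ of $m$\nobreakdash-open de~Bruijn indices and $m$\nobreakdash-open terms starting with an application, exactly as $\mathcal{A}$ in the plain case. Marking variables, redexes, successors and abstractions with $\vec u = (u_{(\textsf{var})}, u_{(\textsf{red})}, u_{(\textsf{suc})}, u_{(\textsf{abs})})$ then yields the forward recursive system
\begin{align*}
    L_m(z, \vec u) &= u_{(\textsf{abs})} z L_{m+1}(z, \vec u) + A_m(z, \vec u), \\
    A_m(z, \vec u) &= u_{(\textsf{var})} z \frac{1 - (u_{(\textsf{suc})} z)^m}{1 - u_{(\textsf{suc})} z} + u_{(\textsf{red})} u_{(\textsf{abs})} z^2 L_{m+1}(z, \vec u) L_m(z, \vec u) + z A_m(z, \vec u) L_m(z, \vec u),
\end{align*}
whose only $m$\nobreakdash-dependence sits in the geometric index term and whose limiting system ($m \to \infty$) is precisely the plain joint system~\eqref{eq:plain:joint:multivariate:system}.

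Next I would check the three hypotheses of \autoref{definition:infinitely:nested:systems}. Infinite nestedness is immediate since $u_{(\textsf{var})} z \frac{1 - (u_{(\textsf{suc})} z)^m}{1 - u_{(\textsf{suc})} z} \preceq \frac{u_{(\textsf{var})} z}{1 - u_{(\textsf{suc})} z}$, being a truncation of a series with non-negative coefficients. That the system tends to an irreducible context-free schema reduces, at $\vec u = \vec 1$, to the properties of the plain joint system already used in \autoref{prop:joint:plain}: it is polynomial, non-linear, algebraic positive and proper (the Jacobian at the origin is the nilpotent shift $\left(\begin{smallmatrix} 0 & 1 \\ 0 & 0 \end{smallmatrix}\right)$), its dependency graph on $\{L, A\}$ is strongly connected, and aperiodicity follows from the same counting argument as for plain terms. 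For exponential convergence I would note that, evaluated at the limiting solution, $\vec{\mathcal K}\uidx\infty - \vec{\mathcal K}\uidx m$ has only the second component $u_{(\textsf{var})} z \frac{(u_{(\textsf{suc})} z)^m}{1 - u_{(\textsf{suc})} z}$, so one may take $B(z, \vec u) = u_{(\textsf{suc})} z$ and $\vec A(z, \vec u) = \big(0,\, \frac{u_{(\textsf{var})} z}{1 - u_{(\textsf{suc})} z}\big)$; at $\vec u = \vec 1$ this gives $|B(\rho, \vec 1)| = \rho < 1$, as required.

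With the hypotheses in place, \autoref{proposition:infinite:system} applied at level $m = 0$ gives that the closed-term generating function $L_0(z, \vec u)$ admits, uniformly near $\vec u = \vec 1$, a Puiseux expansion $L_0(z, \vec u) = \alpha(\vec u) - \beta(\vec u)\sqrt{1 - z/\rho(\vec u)} + O(|1 - z/\rho(\vec u)|)$ with $\alpha, \beta$ analytic at $\vec 1$ and, crucially, with $\rho(\vec u)$ the dominant singularity of the limiting system, i.e.\ identical to the moving singularity of plain terms. I would then conclude exactly as in the moving-singularity framework of \autoref{remark:bender:richmond:framework}: the probability generating function satisfies $p_n(\vec u) \sim \frac{\beta(\vec u)}{\beta(\vec 1)}\big(\rho(\vec 1)/\rho(\vec u)\big)^n$, so \autoref{proposition:multivariate:clt} yields a Gaussian limit whose mean vector and covariance matrix are governed solely by the quasi-power base $B(\vec u) = \rho(\vec 1)/\rho(\vec u)$.

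Since $\rho(\vec u)$ is shared with the plain system, the base $B(\vec u)$, and hence the mean vector and covariance matrix, coincide with those of \autoref{prop:joint:plain}; the variability condition~\eqref{eq:variability:condition}, depending only on $\rho(\vec u)$, is inherited from the plain verification (the differing prefactor $\beta(\vec u)$ affects only the constant, $n$\nobreakdash-independent part of $p_n$ and not the limiting Gaussian parameters). I expect the main obstacle to be the rigorous confirmation that the exponential-convergence hypothesis, and the analyticity of $\rho(\vec u)$ as the common dominant singularity, persist uniformly in a complex neighbourhood of $\vec u = \vec 1$ rather than merely at $\vec u = \vec 1$; once this is secured, the identification of the parameters with the plain case is immediate because only $\rho(\vec u)$ enters $B(\vec u)$.
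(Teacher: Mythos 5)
Your proposal follows essentially the same route as the paper: set up the infinite forward-recursive system for $m$\nobreakdash-open terms obtained by indexing the joint plain specification according to openness, verify the hypotheses of \autoref{proposition:infinite:system} (which the paper dismisses as ``straightforward to check'' but you spell out, including explicit exponential-convergence witnesses $B(z,\vec u)=u_{(\textsf{suc})}z$ and $\vec A$), and conclude that $L_0(z,\vec u)$ inherits the moving singularity $\rho(\vec u)$ of the plain system, so the multivariate central limit theorem yields a Gaussian law with the same mean vector and covariance matrix. The argument is correct and matches the paper's proof, with the added benefit of making the condition-checking explicit.
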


\begin{proof}
    Let us recall that the system of equations from~\autoref{prop:joint:plain}
    associated with the four parameters that we consider is, in the general class of
    plain terms, of the form
\begin{align}
    \begin{split}
        L(z, \vec u) &= u_{(\textsf{abs})} z L(z, \vec u) + A(z, \vec u)
        , \\
        A(z, \vec u) &= \dfrac{u_{(\textsf{var})} z}{1 - u_{(\textsf{suc})} z} +
        u_{(\textsf{red})} u_{(\textsf{abs})}
        z^2 L(z, \vec u)^2 + z A(z, \vec u) L(z, \vec u)
    \end{split}
\end{align}
with \( \vec u
    =
    (
        u_{(\textsf{var})},
        u_{(\textsf{red})},
        u_{(\textsf{suc})},
        u_{(\textsf{abs})}
        ) \) corresponding to respective components of \( \vec X_n \).

In order to compose a similar, infinite system for \( m \)\nobreakdash-open
    terms, we index respective generating functions in accordance with the
    natural combinatorial interpretation of $m$\nobreakdash-openness; if an
    abstraction stands before an occurrence of $L(z, \vec u)$, its respective
    index should be increased by one. This leads us to the following system:
    \begin{align}\label{eq:advanced:marking:joint:distribution:i}
\begin{split}
        L_m(z, \vec u) &= u_{(\textsf{abs})} z L_{m+1}(z, \vec u) + A_m(z, \vec u)
        , \\
        A_m(z, \vec u) &= u_{(\textsf{var})}
        \dfrac{z(1 - (u_{(\textsf{suc})} z)^m)}{1 - u_{(\textsf{suc})} z} +
        u_{(\textsf{red})} u_{(\textsf{abs})} z^2 L_m(z, \vec u) L_{m+1}(z, \vec u) + z A_m(z, \vec u) L_m(z, \vec u)
        .
\end{split}
\end{align}
    Equivalently, we can
    represent~\eqref{eq:advanced:marking:joint:distribution:i} as
\begin{equation}
    \begin{pmatrix}
        L_m(z, \vec u) \\ A_m(z, \vec u)
    \end{pmatrix}
    = \vec{\mathcal K}_m(L_m(z, \vec u), L_{m+1}(z, \vec u), A_m(z, \vec u), z, \vec u)
    , \quad m = 0, 1, 2, \ldots
\end{equation}

It is straightforward to check that all the conditions of
    \autoref{proposition:infinite:system} are satisfied. Consequently, the
    function \( L_0(z, \vec u) \) admits a Puiseux expansion in form of
\begin{equation}
    L_0(z, \vec u) \sim
    a_0(\vec u) - b_0(\vec u) \sqrt{
        1 - \dfrac{z}{\rho(\vec u)}
    }
\end{equation}
with the same \( \rho(\vec u) \) as in~\autoref{prop:joint:plain}.  Therefore,
    the limiting distribution, after standardisation, is Gaussian with the mean
    vector and the covariance matrix completely determined by the behaviour of
    the singularity \( \rho(\vec u) \) near the point \( \vec u = \vec 1 \).
\end{proof}

\subsection{Free variables in plain terms}\label{subsec:free:variables}

\begin{prop}
    Let \( X_n \) be a random variable denoting the number of free variables in
    a random plain lambda term of size \( n \).  Then, \( X_n \) converges in law
    to a computable, discrete limiting distribution.
\end{prop}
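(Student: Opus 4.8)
The plan is to analyse free variables through the hierarchical, depth-indexed decomposition already underlying $m$-openness, and then to invoke the infinite-system transfer theorem (\autoref{proposition:infinite:system}) together with the discrete limit law of \autoref{proposition:discrete:limit:laws}. The starting observation is purely combinatorial: an index occurrence $\idx{k}$ sitting at unary depth $h$ (i.e.\ with exactly $h$ abstractions on its path to the root) occurs freely if and only if $k \geq h$. Hence I would introduce, for each $h \geq 0$, a bivariate generating function $F_h(z,u)$ enumerating plain \lterms~whose root lies at abstraction-depth $h$, with $u$ marking free-variable occurrences. Splitting the index atoms at depth $h$ into the bound ones $\idx{0},\dots,\idx{h-1}$ and the free ones $\idx{h},\idx{h+1},\dots$ yields the forward recursive system
\begin{equation*}
F_h(z,u) = z F_{h+1}(z,u) + z F_h(z,u)^2 + \frac{z(1-z^h)}{1-z} + u\,\frac{z^{h+1}}{1-z},
\end{equation*}
whose target is $F_0(z,u)$, the bivariate generating function of plain terms with marked free variables (at depth $0$ every index is free), and which satisfies $F_h(z,1) = L_\infty(z)$ for each $h$.

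Second, I would identify the limiting system. Letting $h \to \infty$, the bound part tends to $\frac{z}{1-z}$ while the marked free part $u\,\frac{z^{h+1}}{1-z}$ vanishes, so the limiting equation is exactly that of $L_\infty(z)$ and is independent of $u$. In particular its dominant singularity is the fixed plain-term value $\rho$, the hallmark of a fixed-singularity situation producing a discrete limit law. The heuristic behind discreteness is transparent from the system: a free variable at depth $h$ necessarily carries an index of value $\geq h$, hence contributes weight $\succeq z^{h+1}$, so free variables at large depth are exponentially suppressed and only a bounded (in expectation) number of shallow free occurrences survive. Concretely, differentiating at $u=1$ and iterating the resulting linear recurrence gives the mean generating function in closed form, $\left.\partial_u F_0\right|_{u=1} = \tfrac{z}{(1-z)\bigl(z(1-z)+R(z)\bigr)}$ with $R(z) = \sqrt{(1-z)^2 - \tfrac{4z^2}{1-z}}$, which is analytic on $[0,\rho)$ and inherits the square-root singularity of $R$ at $\rho$, yielding a finite limiting mean consistent with the empirical value of \autoref{fig:experimental:plot:closed:freevars:mean}.

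Third, to upgrade this to a genuine limit law I would feed the system into the machinery of \autoref{sec:infinite:systems}. The limiting system is the irreducible, aperiodic, proper positive schema of $L_\infty$, so it admits the required Puiseux expansion; and the exponential convergence condition holds because the $h$th equation differs from its limit by $(1-u)\tfrac{z^{h+1}}{1-z}$, whose modulus at $z=\rho$ decays geometrically in $h$ since $\rho < 1$. Granting these hypotheses, \autoref{proposition:infinite:system} delivers a uniform Puiseux expansion
\begin{equation*}
F_0(z,u) = \alpha(u) - \beta(u)\sqrt{1 - \frac{z}{\rho}} + O\!\left(\Bigl|1-\frac{z}{\rho}\Bigr|\right)
\end{equation*}
for $u$ in a complex neighbourhood of $1$, with $\rho$ fixed and $\alpha,\beta$ analytic at $u=1$. \autoref{proposition:discrete:limit:laws} then gives convergence of $X_n$ to a discrete law with probability generating function $\beta(u)/\beta(1)$, and the truncation scheme of \autoref{proposition:infinite:system} (solve the first $M-1$ equations and close the system with the limit variant of the $M$th) makes $\alpha$, $\beta$, and hence the whole distribution, computable to exponentially small error in $M$.

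The main obstacle is the verification of the \emph{infinitely nested} domination $\vec{\mathcal K}\uidx m \preceq \vec{\mathcal K}\uidx \infty$ required by \autoref{definition:infinitely:nested:systems}: here the difference equals $(1-u)\tfrac{z^{h+1}}{1-z}$, which is not a series with non-negative coefficients in $(z,u)$, since marking mass migrates between the $u^0$- and $u^1$-sectors as $h$ varies. I would address this by exploiting that each $[z^n]F_h(z,u)$ is a polynomial in $u$ and that domination is only genuinely needed coordinate-wise after extracting coefficients; concretely, I would run the squeeze argument of \autoref{lemma:squeeze} on the positive series obtained at real $u$ and transfer to complex $u$ via the bound $|[z^n]F_0(z,u)| \leq [z^n]F_0(z,|u|)$, invoking the weaker hypothesis of \autoref{remark:conditions:limit:system} that only asks the limiting system to admit a suitable Puiseux expansion. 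Making this substitution rigorous, i.e.\ replacing the literal domination by the exponential suppression of deep free variables while keeping all estimates uniform for $u$ near $1$, is the delicate point on which the argument hinges.
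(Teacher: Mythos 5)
Your proposal follows essentially the same route as the paper: your system for $F_h(z,u)$ is, after renaming $h$ to $m$, exactly the paper's system~\eqref{eq:advanced:marking:free:variables}, the limiting equation is likewise the $u$-independent equation for $L_\infty(z)$ with fixed singularity $\rho$, the conclusion is drawn from \autoref{proposition:infinite:system} followed by \autoref{proposition:discrete:limit:laws}, and your closed form $\left.\partial_u F_0\right|_{u=1} = z/\bigl((1-z)(z(1-z)+R(z))\bigr)$ agrees with the paper's iterated solution for $L_0^\square(z)$ and its mean $2/(1-\rho)^3$. The one place you go beyond the paper is in flagging that the literal domination $\vec{\mathcal K}\uidx{m} \preceq \vec{\mathcal K}\uidx{\infty}$ fails here (the difference $(1-u)z^{m+1}/(1-z)$ has negative coefficients in the $u^1$-sector); the paper applies the theorem without comment on this point, so your proposed repair via real $u$, the polynomial bound $|[z^n]F_0(z,u)|\leq [z^n]F_0(z,|u|)$, and the relaxed hypothesis of \autoref{remark:conditions:limit:system} is a legitimate and arguably necessary supplement rather than a defect of your argument.
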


\begin{proof}
    Consider the infinite system of functional equations \( \seq{L_m(z,
    u)}_{m=0}^{\infty} \) where $L_m(z,u)$ corresponds to the generating
    function for plain \lterms~in which each de~Bruijn index
    whose value \( k \) exceeds its unary height at least by \( m \),
    is marked.
     For example, \( L_0(z,u) \)
    corresponds to plain \lterms~with marked free variables. Note that
    \begin{align}\label{eq:advanced:marking:free:variables}
    \begin{split}
        L_0(z) &= z L_1(z) + z {L_0(z)}^2
    + uz + uz^2 + \cdots
    , \\
        L_1(z) &= z L_2(z) + z {L_1(z)}^2
    + z + uz^2 + \cdots
    , \\
    & \cdots
    , \\[-0.3cm]
        L_m(z) &= z L_{m + 1}(z) + z {L_m(z)}^2
    + z \dfrac{1 - z^m}{1 - z}
    + u z \dfrac{z^m}{1-z}
    , \\[-0.15cm]
    & \cdots
    \end{split}
\end{align}
Let us apply~\autoref{proposition:infinite:system} to this system taking
    $L_\infty(z)$ as the limiting equation. Certainly, the limiting equation
    does not depend on the marking variable \( u \).  Therefore, the singular
    point \( z^\ast \) also does not depend on \( u \).  An application
    of~\autoref{proposition:discrete:limit:laws} finishes the proof.
\end{proof}

In order to compute the mean value we set
\( L_m^\square(z) := \left.
    \dfrac{\partial}{\partial u} L_m(z, u)
    \right|_{u=1} \) and represent the respective derivative as
\begin{equation}
    L_m^\square(z) \sim c_m - d_m \sqrt{1 - \dfrac{z}{\rho}}.
\end{equation}
Based on~\eqref{eq:advanced:marking:free:variables} we note that
\begin{equation}
    L_m^\square(z) =
    \dfrac{z}{1 - 2z L_\infty(z)}
    \left(L_{m+1}^\square(z) + \dfrac{z^m}{1 - z} \right).
\end{equation}
Since \( \frac{z}{1 - 2z L_\infty(z)} \sim 1 - 2b \sqrt{1 - z/\rho} \) as
\( z \to \rho \) we establish the following recurrence relation for the
coefficients \( c_m \) and \( d_m \):
\begin{equation}
    c_m = c_{m+1} + \dfrac{\rho^m}{1 - \rho}
    \quad \text{and} \quad
    d_m = d_{m+1} + 2 b_\infty c_{m+1} + 2 b_\infty \dfrac{\rho^{m}}{1-\rho}.
\end{equation}
Once solved, this implies
\begin{equation}
    c_m = \dfrac{\rho^m}{(1 - \rho)^2}
        \quad
        \text{and}
        \quad
        d_m = \dfrac{2b_\infty \rho^m}{(1-\rho)^3}.
\end{equation}
Consequently, the mean value corresponding the number of
free variables in a random plain lambda term is equal to
\( \frac{d_0}{b_\infty} = \frac{2}{(1-\rho)^3} \doteq 5.7222625231204 \).

\subsection{Head abstractions in closed terms}\label{subsec:advanced:head:abs}
\begin{prop}
Let \( X_n \) be a random variable denoting the number of head abstractions in a
    closed \lterm~of size \( n \), chosen uniformly at random.  Then, \( X_n \)
    converges in law to a computable, discrete limiting distribution.  The
    corresponding expectation is close to \( 1.447 \).
\end{prop}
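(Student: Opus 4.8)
The plan is to mirror the treatment of head abstractions in plain terms (see~\autoref{prop:plain:terms:head:abstractions}), but now working inside the infinite, hierarchical system of $m$\nobreakdash-open terms and invoking the transfer machinery of~\autoref{proposition:infinite:system}. First I would decompose a closed term according to its maximal leading run of abstractions: a closed term with exactly $h$ head abstractions is obtained by prepending $h$ abstractions to an $h$\nobreakdash-open term which does \emph{not} itself begin with an abstraction. Writing $B_m(z)$ for the generating function of $m$\nobreakdash-open terms in index- or application-form, the grammar~\eqref{eq:infinite:system} gives
\begin{equation}
    B_m(z) = z {L_m(z)}^2 + z \dfrac{1 - z^m}{1 - z},
\end{equation}
so that marking each head abstraction with $u$ yields the bivariate generating function
\begin{equation}
    L(z, u) = \sum_{h \geq 0} (uz)^h B_h(z),
    \quad \text{with} \quad
    L(z, 1) = L_0(z)
\end{equation}
the generating function of closed terms. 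Equivalently, this sum is the level\nobreakdash-$0$ solution of the \emph{linear} forward-recursive system $L_m(z, u) = uz L_{m+1}(z, u) + B_m(z)$; note that its bodies carry $u = 1$, so the nonlinearity needed for~\autoref{proposition:infinite:system} lives entirely in the unmarked system~\eqref{eq:infinite:system}.

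The second step feeds in the singular behaviour of that unmarked system. By~\autoref{proposition:infinite:system}, applied exactly as in~\autoref{subsec:m:openness}, each $L_m(z)$ admits a Puiseux expansion $L_m(z) \sim a_m - b_m \sqrt{1 - z/\rho}$ at the fixed dominant singularity $\rho$ of the closed-terms system, with $a_m, b_m$ converging to their limits exponentially fast in $m$. Consequently $B_m(z) \sim \alpha_m - \beta_m \sqrt{1 - z/\rho}$ with $\alpha_m = \rho a_m^2 + \rho \frac{1 - \rho^m}{1-\rho}$ and $\beta_m = 2\rho a_m b_m$, again with exponentially convergent coefficients. Since $\rho < 1$ and $\alpha_m, \beta_m$ are bounded, the series
\begin{equation}
    \alpha(u) := \sum_{h \geq 0} (u\rho)^h \alpha_h
    \quad \text{and} \quad
    \beta(u) := \sum_{h \geq 0} (u\rho)^h \beta_h
\end{equation}
converge and are analytic near $u = 1$. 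Writing $(uz)^h = (u\rho)^h \bigl(1 - (1 - z/\rho)\bigr)^h$ and collecting the $\sqrt{1 - z/\rho}$ contributions, I would obtain $L(z, u) \sim \alpha(u) - \beta(u)\sqrt{1 - z/\rho}$, so that the singularity stays \emph{fixed} at $\rho$ and does not move with $u$. An application of~\autoref{proposition:discrete:limit:laws} then gives the asserted discrete limit law with probability generating function $p(u) = \beta(u)/\beta(1)$ and mean
\begin{equation}
    \lim_{n \to \infty} \mathbb E X_n = \dfrac{\beta'(1)}{\beta(1)}
    = \dfrac{\sum_{h \geq 0} h\, \rho^h a_h b_h}{\sum_{h \geq 0} \rho^h a_h b_h},
\end{equation}
whose numerical evaluation via the truncated recurrences for $a_m, b_m$ from~\autoref{subsec:m:openness} yields the stated value $\doteq 1.447$.

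The main obstacle lies in the third step: justifying that the Puiseux expansion of the infinite sum $L(z, u)$ really is the term-by-term sum of the individual expansions, uniformly for $u$ near $1$ and uniformly as $z \to \rho$ in a delta-domain. The difficulty is that each summand carries its own $z$\nobreakdash-dependent prefactor $(uz)^h$, so the interchange of summation with the singular expansion is not automatic. Here I would exploit the exponential decay of $\rho^h$ together with the uniform boundedness of $\alpha_h, \beta_h$ guaranteed by the exponential convergence in~\autoref{proposition:infinite:system}: truncating the sum at level $M$ yields an approximation whose tail is dominated by a geometric series in $\rho$, hence exponentially small in $M$ and analytic in a fixed complex vicinity of $u = 1$. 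Squeezing $L(z, u)$ between truncations that respectively under- and over-approximate this tail, I would invoke~\autoref{lemma:squeeze} together with~\autoref{remark:squeeze} to transfer the term-by-term Puiseux expansion to the full sum, thereby legitimising the application of~\autoref{proposition:discrete:limit:laws}.
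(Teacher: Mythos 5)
Your proposal is correct and follows essentially the same route as the paper: the paper arrives at the identical decomposition $L_0(z,u) = z\sum_{m\ge 0}(uz)^m L_m(z,1)^2 + \sum_{m\ge 1}(uz)^m z\tfrac{1-z^m}{1-z}$ by unrolling the forward-recursive system~\eqref{eq:advanced:marking:head:abstractions}, then uses the Puiseux expansions $L_m(z)\sim a_m - b_m\sqrt{1-z/\rho}$ and the bounds $a_m b_m \le a_\infty b_\infty$ to sum the series and obtain the fixed-singularity expansion with $b_0(u) = 2\rho\sum_{m\ge 0}(u\rho)^m a_m b_m$, exactly matching your $\beta(u)$. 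Your extra care in the third step (squeezing the tail of the sum to justify term-by-term transfer) is a slightly more explicit justification of what the paper asserts directly, but it is the same argument in substance.
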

\begin{proof}
    Let \( L_m(z, u) \) be the bivariate generating function associated with \(
    m \)\nobreakdash-open lambda where \( u \) marks head abstractions. Then,
    the system \( \seq{L_m(z,u)}_{m=0}^{\infty} \) satisfies
    \begin{align}\label{eq:advanced:marking:head:abstractions}
        \begin{split}
            L_0(z,u) &= zu L_1(z, u) + z {L_0(z,1)}^2
            , \\
            L_1(z,u) &= zu L_2(z, u) + z {L_1(z,1)}^2
            + z
            , \\
            & \ldots
            \\[-0.3cm]
            L_m(z,u) &= zu L_{m+1}(z, u) + z {L_m(z,1)}^2
            + z \dfrac{1 - z^m}{1 - z}
            , \\[-.15cm]
            & \ldots
        \end{split}
    \end{align}
If a \lterm~starts with a head abstraction, then after its removal, the openness
    of the respective subterm increases by one. Consequently, we include the expression
    \( uz L_{m+1} (z, u) \) in the equation for $L_m(z,u)$.  On the other hand,
    if the \lterm~does not start with a head abstraction, i.e. starts with an
    application or is itself a de~Bruijn index, we do not mark remaining
    abstractions as they are no longer head abstractions. Hence, we also include
    expressions \( z L_m^2(z, 1) \) and \( z \frac{1 - z^m}{1 - z} \) in the
    equation corresponding to $L_m(z,u)$.

    Having established the system~\eqref{eq:advanced:marking:head:abstractions} we
    note that \( L_0(z, u) \) can be obtained as a limit of the solutions of
    truncated systems (see the description of the approximation procedure
    in~\autoref{proposition:infinite:system}) and this limit is equal to the sum
    \begin{align}
        \begin{split}
            L_0(z,u) &= z {L_0(z,1)}^2 + z u \left( z u L_2(z,u) + z
            {L_1(z,1)}^2 + z \right)\\
            &= z {L_0(z,1)}^2 + z^2 u {L_1(z,1)}^2 + z^2 u + {(z u)}^2 \left(
            z u L_3(z,u) + z {L_2(z,1)}^2 + z + z^2 \right)\\
            &= \ldots
        \end{split}
    \end{align}
and so
    \begin{align}\label{eq:advanced:marking:head:abstractions:ii}
    \begin{split}
        L_0(z, u) &= z \sum_{m \geq 0} {(u z)}^m {L_m(z, 1)}^2
    + \sum_{m \geq 1} u^m (z^{m+1} + \cdots + z^{2m})
    \\
        &= z \sum_{m \geq 0} {(u z)}^m {L_m(z, 1)}^2
        + \sum_{m \geq 1} {(u z)}^m z \dfrac{1 - z^m}{1 - z}
    \\
        &= z \sum_{m \geq 0} {(u z)}^m {L_m(z, 1)}^2
    + \dfrac{u z^2 }{(1-z)(1-uz)}
    - \dfrac{u z^3}{(1-z)(1-uz^2)}.
    \end{split}
\end{align}
    Denote the final sum in~\eqref{eq:advanced:marking:head:abstractions:ii} as
    $S(z,u)$.
    Since for each \( m \) the function \( L_m(z, 1) \) admits a Puiseux series
    expansion \( L_m(z,1) \sim a_m - b_m \sqrt{1 - z/\rho} \), near $z = \rho$
    it holds
    \begin{equation}
        S(z,u)
        \sim
        c(u) +
        z \sum_{m \geq 0}
            {(u z)}^m \left(
                a_m^2 - 2 a_m b_m \sqrt{1 - \dfrac{z}{\rho}}
            \right)
    \end{equation}
    where \( c(u) \) comes from the last two summands of the previous
    expression. Since \( a_m^2 \leq a_\infty^2 \) and \( a_m b_m \leq a_\infty
    b_\infty \),  $S(z,u)$ is convergent near
    $(z,u) = (\rho,1)$ and the
    function $L_0(z,u)$ admits a Puiseux series expansion in form of
\begin{equation}
    L_0(z, u) \sim a_0(u) - b_0(u) \sqrt{1 - \dfrac{z}{\rho}}.
\end{equation}
Consequently, \( p(u) = b_0(u) / b_0(1) \) is the
limiting probability generating function corresponding to the number of head
abstractions in closed \lterms.  The function $b_0(u)$ satisfies
\begin{equation}
    b_0(u) = 2 \rho \sum_{m \geq 0} {(u \rho)}^m a_m b_m .
\end{equation}
\end{proof}

\subsection{De Bruijn index values in closed lambda terms}\label{subsec:debruijn:closed}
\begin{prop}
    Let \( X_n \) be a random variable denoting the de~Bruijn index value $m$ of
    a random index $\idx{m}$ in a random closed \lterm~of size $n$. Then, $X_n$
    converges in law to a geometric distribution $\Geom(\rho)$ with parameter
    $\rho$. Specifically,
    \begin{equation}\label{eq:closed:terms:head:abstractions:distribution}
        \mathbb P(X_n = h) \xrightarrow[n\to\infty]{} \mathbb P(\Geom(\rho) =
        h) = (1 - \rho) \rho^h.
    \end{equation}
\end{prop}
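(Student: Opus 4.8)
The plan is to mirror the argument for plain terms (\autoref{proposition:dbindices:plain}), but to carry it out inside the infinite $m$\nobreakdash-open system~\eqref{eq:infinite:system} using the machinery of \autoref{proposition:infinite:system}. First I would mark, in the specification~\eqref{eq:advanced:marking:Lm:spec} of $\classL[m]$, a single fixed index $\idx{h}$ with a variable $u$; this index occurs directly at level $m$ precisely when $h \leq m-1$. Writing $L_m(z,u)$ for the resulting bivariate generating function and setting $D_m^{\langle h\rangle}(z) := \partial_u L_m(z,u)\big|_{u=1}$, differentiation of the marked system at $u = 1$ yields the linear infinite system
\begin{equation*}
D_m^{\langle h\rangle}(z) = z D_{m+1}^{\langle h\rangle}(z) + 2 z L_m(z) D_m^{\langle h\rangle}(z) + z^{h+1}[h \leq m-1],
\end{equation*}
where $L_m(z) = L_m(z,1)$. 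As in the plain case I would pass to the double generating function $S_m(z,w) := \sum_{h \geq 0} D_m^{\langle h\rangle}(z)\, w^h$, so that $[z^n w^h] S_0(z,w)$ counts occurrences of $\idx{h}$ over all closed terms of size $n$ and $[z^n] S_0(z,1)$ is the total number of indices; the sought probabilities are then $\mathbb P(X_n = h) = [z^n w^h] S_0(z,w) / [z^n] S_0(z,1)$.

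Summing the derivative system against $w^h$ collapses the indicator into a geometric sum and produces the forward recursive, linear system
\begin{equation*}
\bigl(1 - 2 z L_m(z)\bigr)\, S_m(z,w) = z\, S_{m+1}(z,w) + z\,\frac{1 - (zw)^m}{1 - zw}, \qquad m \geq 0.
\end{equation*}
Its limiting equation, obtained by sending $m \to \infty$ and using $L_m \to L_\infty$, is exactly the plain relation $(1 - z - 2 z L_\infty)\, S_\infty = z/(1-zw)$, i.e.\ $S_\infty(z,w) = \frac{z}{(1-zw)\sqrt{(1-z)^2 - 4z^2/(1-z)}}$ by~\eqref{eq:genfun-plain-lambda-terms}. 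The decisive structural observation is that this system has a \emph{fixed} dominant singularity, namely the $\rho$ of~\eqref{eq:plain-lambda-terms-dominant-singularity}, independent of $w$ (the only $w$\nobreakdash-dependence sits in the analytic prefactor $1/(1-zw)$, which is regular at $z = \rho$ since $\rho < 1$). This is the fixed-singularity situation, hence of a discrete limit law.

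To locate the singular behaviour of $S_0$ itself I would solve the linear recursion by forward iteration. Writing $\psi_j(z) := (1 - 2 z L_j(z))^{-1}$ one obtains
\begin{equation*}
S_0(z,w) = \frac{z}{1-zw} \sum_{m \geq 1} z^m \Bigl(\prod_{j=0}^{m} \psi_j(z)\Bigr)\bigl(1 - (zw)^m\bigr) = \frac{z}{1-zw}\bigl(\Sigma_1(z) - \Sigma_2(z,w)\bigr).
\end{equation*}
Near $z = \rho$ the Puiseux expansions $L_j(z) \sim a_j - b_j\sqrt{1 - z/\rho}$ give $\psi_j(\rho) \to 1/\rho$, so $\rho\,\psi_j(\rho) \to 1$; the infinite product $\prod_j \rho\,\psi_j(\rho)$ then converges because $a_j \to a_\infty$ \emph{exponentially fast} by \autoref{proposition:infinite:system}. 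Consequently $\Sigma_1$ is a divergent series whose tail behaves like a geometric series of ratio $z\,\psi_\infty(z) \sim 1 - \mathrm{const}\cdot\sqrt{1 - z/\rho}$, so $\Sigma_1(z) \sim K/\sqrt{1-z/\rho}$ with $K$ independent of $w$; whereas $\Sigma_2(z,w)$, carrying the extra factor $(z^2 w)^m$ of effective ratio $\rho w < 1$ at $z = \rho$, converges there and contributes only a less singular, square-root type term. Hence the coefficient of $1/\sqrt{1-z/\rho}$ in $S_0(z,w)$ is $\beta(w) = \rho K/(1-\rho w)$.

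Finally, \autoref{proposition:discrete:limit:laws} (in the analogous form for a $1/\sqrt{1-z/\rho}$ singularity, where the limiting probability generating function is the ratio of the leading singular coefficients) gives $p(w) = \beta(w)/\beta(1) = (1-\rho)/(1-\rho w)$, which is precisely the probability generating function of $\Geom(\rho)$ and yields~\eqref{eq:closed:terms:head:abstractions:distribution}. The hardest part will be the rigorous control, \emph{uniform in $w$}, of the singular expansion of the infinite product and of the series $\Sigma_1$ and $\Sigma_2$: the convergence of $\prod_j \rho\,\psi_j$ and the separation of the $1/\sqrt{1-z/\rho}$ part from lower-order terms both hinge on the exponential convergence $a_j, b_j \to a_\infty, b_\infty$ supplied by \autoref{proposition:infinite:system}, and transferring these asymptotics uniformly in a complex neighbourhood of $w = 1$ is exactly what \autoref{lemma:squeeze} is designed to deliver.
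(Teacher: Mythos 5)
Your setup coincides exactly with the paper's: the same marked system for $L_{m,k}(z,u)$, the same differentiated linear system for $D_m^{\langle h\rangle}(z)$ (the paper's $L_{m,k}^\square$), and the same double generating function $S_m(z,w)$ (the paper's $E_m(z,w)$) satisfying $(1-2zL_m)S_m = zS_{m+1} + z\frac{1-(zw)^m}{1-zw}$. Where you part ways is the concluding step. The paper treats this linear forward recursive system as another instance of \autoref{proposition:infinite:system} (with the remark that the system is linear, with the $L_m(z)$ entering as coefficients, so only the exponential-convergence check and the Puiseux expansion of the limiting equation are needed), and concludes at once that the limit law is the one already computed for plain terms in \autoref{proposition:dbindices:plain}. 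You instead unroll the recursion explicitly into $S_0 = \frac{z}{1-zw}\sum_{m\geq 1} z^m\bigl(\prod_{j=0}^m \psi_j\bigr)\bigl(1-(zw)^m\bigr)$ and locate the singularity by hand, splitting off the $w$\nobreakdash-independent divergent part $\Sigma_1 \sim K/\sqrt{1-z/\rho}$ from the convergent part $\Sigma_2$. This is essentially the technique the paper uses for head abstractions in closed terms (\autoref{subsec:advanced:head:abs}), transplanted to the index-value profile; it is correct, and it has the virtue of making visible \emph{why} the answer is geometric (the only $w$\nobreakdash-dependence surviving at order $(1-z/\rho)^{-1/2}$ is the prefactor $1/(1-zw)$, exactly as in the plain case), at the price of having to control the infinite product $\prod_j(z\psi_j)$ and the tails of $\Sigma_1,\Sigma_2$ uniformly in $w$ — which, as you note, again reduces to the exponential convergence $a_j,b_j \to a_\infty,b_\infty$ furnished by \autoref{proposition:infinite:system} together with \autoref{lemma:squeeze}. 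Both routes therefore rest on the same key ingredient; the paper's is shorter because it delegates the uniformity issues to the general theorem, yours is more self-contained and explicit. Your computation of the limiting probability generating function $p(w) = (1-\rho)/(1-\rho w)$ and the resulting $\Geom(\rho)$ law agree with the paper.
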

\begin{proof}
    Let $L_{m,k}(z,u)$ denote the generating function for $m$\nobreakdash-open
    \lterms~with $u$ marking the number of occurrences of de~Bruijn index
    $\idx{k}$. Note that $L_{m,k}(z)$ satisfies a functional equation
    \begin{equation}\label{eq:advanced:marking:debruijn}
        L_{m,k}(z,u) = z L_{m+1,k}(z,u) + z {L_{m,k}(z,u)}^2 + z \dfrac{1 - z^m}{1 - z}
        + (u-1) z^{k+1} \boldsymbol 1_{[k<m]}
    \end{equation}
    where $\boldsymbol 1_{[\cdot]}$ stands for the Iverson bracket notation.

    Taking the partial derivative of~\eqref{eq:advanced:marking:debruijn} with
    respect to $u$ and assigning $u=1$, we obtain the generating function
    corresponding to \lterms~weighted by the number of occurrences of de~Bruijn
    index $\idx{k}$. Denote \({\deriv{u}{L_{m,k}(z,u)}}\at{u=1}\) as \(
    L_{m,k}^\square(z) \). Then, taking into account that $L_{m,k}(z,1) =
    L_m(z)$ we arrive at
    \begin{equation}
        \label{eq:debruijn:closed:lowlevel}
        L_{m,k}^\square(z) = z L_{m+1,k}^\square(z) + 2z L_{m}(z)
        L_{m,k}^\square(z) +
        z^{k+1} \boldsymbol 1_{[k < m]}.
    \end{equation}
    Consider the generating function
    \begin{equation}
        E_m(z, w) = \sum_{k \geq 0} L_{m,k}^\square(z) w^k.
    \end{equation}
    Note that $[z^n]E_m(z, w)$ denotes the probability generating function
    associated with the distribution of variables in $m$\nobreakdash-open
    \lterms~(cf.~\autoref{proposition:dbindices:plain}).  Consequently,
    summing~\eqref{eq:debruijn:closed:lowlevel} over $k$ we obtain
    \begin{equation}
        E_m(z, w) = z E_{m+1}(z, w) + 2z L_m(z) E_m (z, w) + z \dfrac{1 -
        (wz)^m}{1 - wz}.
    \end{equation}
    These equations generate an infinite system for
    which~\autoref{proposition:infinite:system} with a small modification is
    applicable.  Each of the equations of the infinite system is linear, and the
    generating functions \( L_m(z) \) enter the equations as coefficients. This
    yields the desired behaviour of the Puiseux expansions, because the
    non-linearity of the components is used only to provide the Puiseux
    expansion of the limiting equation, which is given in our case by
    construction.

    The condition of exponential convergence holds because the difference
    between the limiting system and the $m$th equation of the system is equal to
    \begin{equation}
        2z E_\infty(z, w) \left(L_\infty(z) - L_m(z)\right) + \dfrac{z}{1 - wz} {(wz)}^m
    \end{equation}
    and decreases at exponential speed. Hence, the limiting distribution of
    de~Bruijn index value is identical to the respective parameter in plain \lterms.
\end{proof}

\subsection{Leftmost-outermost redex search time in closed terms}\label{subsec:advanced:redex:discovery}
\begin{prop}
    Let \( X_n \) denote the number of vertices visited by depth-first traversal
    algorithm searching for the leftmost-outermost $`b$\nobreakdash-redex in a
    random closed \lterm~of size $n$
    (see~\autoref{subsec:basic:redex:discovery}). Then, the random variable \(
    X_n \) converges in law to a computable, discrete limiting distribution.
\end{prop}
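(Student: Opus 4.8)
The plan is to adapt the proof template established for plain \lterms~in~\autoref{proposition:redex:discovery:plain}, lifting it to an infinite system indexed by the openness level $m$ and then invoking~\autoref{proposition:infinite:system}. First I would recall that the {\sf LO} traversal cost is captured, in the plain case, by the auxiliary classes $\mathcal N$ of $`b$\nobreakdash-normal forms and $\mathcal M$ of neutral terms together with the difference class $\mathcal A \setminus \mathcal M$, see~\eqref{eq:plain:redex:spec}. The key structural observation is that neutrality and normal-form status of a subterm depend on its $m$\nobreakdash-openness only through the available indices, so each of these classes admits a natural $m$\nobreakdash-indexed refinement: I would introduce $N_m(z)$ and $M_m(z)$ for the $m$\nobreakdash-open $`b$\nobreakdash-normal forms and $m$\nobreakdash-open neutral terms, satisfying an infinite forward recursive system analogous to~\eqref{eq:normal:forms:neutral:terms}, namely
\begin{align}
\begin{split}
    N_m(z) &= z N_{m+1}(z) + M_m(z), \\
    M_m(z) &= z M_m(z) N_m(z) + z \dfrac{1 - z^m}{1 - z}.
\end{split}
\end{align}
These sequences converge to the limiting functions $N(z)$ and $M(z)$ from~\eqref{eq:normal:forms:gf}, with the index contribution $z\frac{1-z^m}{1-z}$ replacing $\frac{z}{1-z}$ exactly as in the running example~\eqref{eq:example:forward:recursive}.

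Next I would build the bivariate system marking {\sf LO}-visited nodes, mirroring~\autoref{fig:search:time:plain} but indexed by $m$. Writing $L_m(z,u)$ and $A_m(z,u)$ for the $m$\nobreakdash-open analogues of $\mathcal L_\infty$ and $\mathcal A$, the specification yields
\begin{align}
\begin{split}
    L_m(z,u) &= z u L_{m+1}(z,u) + A_m(z,u), \\
    A_m(z,u) &= \dfrac{z u (1 - z^m)}{1 - z} + z^2 u^2 {L_m(z,1)}^2 + z u M_m(z u) L_m(z,u) \\
    & \qquad + z u \big(A_m(z,u) - M_m(z u)\big) L_m(z,1),
\end{split}
\end{align}
where the abstraction step raises the openness index by one (as in~\eqref{eq:advanced:marking:head:abstractions}) and the index block $\frac{zu(1-z^m)}{1-z}$ accounts for the finitely many admissible de~Bruijn indices of an $m$\nobreakdash-open term. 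The limiting system at $m \to \infty$ recovers exactly~\eqref{eq:plain:redex:spec}, whose solution is the plain generating function~\eqref{eq:plain:redex:genfun}. I would then verify the three hypotheses of~\autoref{definition:infinitely:nested:systems}: infinite nestedness follows because $\frac{1-z^m}{1-z}$ has non-negative coefficients dominated by $\frac{1}{1-z}$; the limiting system tends to an irreducible context-free schema via the plain-term analysis already carried out; and exponential convergence holds because the per-equation difference, evaluated at the limiting functions, is governed by terms of order $z^{m}$ and $(zu)^m$ that decay geometrically, precisely as in the head-abstraction proof~\eqref{eq:advanced:marking:head:abstractions:ii}.

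With these conditions checked, \autoref{proposition:infinite:system} delivers that $L_0(z,u)$ admits a Puiseux expansion of the form $\alpha(u) - \beta(u)\sqrt{1 - z/\rho(u)}$ with $\rho(u)$ and the coefficients analytic near $u=1$. The decisive point, analogous to the plain case, is that the dominant singularity is \emph{fixed}: the limiting denominator $F(z,u)$ from~\eqref{eq:plain:redex:genfun} satisfies $F(\rho,1) > 0$, so in a neighbourhood of $u=1$ no singularity moves, and $\rho(u) = \rho$ is constant. An application of~\autoref{proposition:discrete:limit:laws} then yields the limiting discrete distribution with probability generating function $p(u) = \beta(u)/\beta(1)$, and the mean is obtained by the standard approximation-by-truncation procedure.

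The hard part will be establishing exponential convergence rigorously, since the $u$\nobreakdash-marked system is genuinely more delicate than the unmarked enumeration of~\cite{BodiniGitGol17}: one must control not only the difference $L_\infty(z,1) - L_m(z,1)$ but also the marked corrections $M_m(zu) - M(zu)$ and $A_m - A$ uniformly in a complex neighbourhood of $u = \vec 1$, ensuring that $|B(\rho(u),u)| < 1$ continues to hold after perturbing $u$ away from $1$. I expect this to reduce, as in the earlier proofs, to the observation that all openness-dependent discrepancies are supported on coefficients of the form $z^{m+1}$ (the tail of de~Bruijn indices), so that the governing function is $B(z,u) = zu$ up to analytic factors, whose modulus at the fixed singularity is $\rho \doteq 0.2956 < 1$; the remaining technical burden is then merely to confirm analyticity of the auxiliary vector $\vec A(z,u)$ in the requisite disk.
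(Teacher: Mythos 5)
Your overall strategy --- refining the classes $\mathcal N$, $\mathcal M$, $\mathcal A$ by openness level, writing a forward recursive system for $(L_m(z,u),A_m(z,u))$ whose limit is \eqref{eq:plain:redex:spec}, and then applying \autoref{proposition:infinite:system} followed by \autoref{proposition:discrete:limit:laws} --- is exactly the paper's. One small slip first: in your equation for $A_m(z,u)$ the redex contribution should be $z^2u^2\,L_{m+1}(z,1)\,L_m(z,1)$ rather than $z^2u^2\,{L_m(z,1)}^2$, because in a redex $(\lambda N)M$ the body $N$ sits under one more binder and is therefore $(m+1)$-open; this is the same index shift you correctly applied in the term $zu\,L_{m+1}(z,u)$.

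The more substantive gap is your second bullet: the plan to ``verify that the limiting system tends to an irreducible context-free schema'' cannot be carried out, because that hypothesis of \autoref{definition:infinitely:nested:systems} is simply false here. Once $L_\infty(z,1)$, $M(zu)$ and $M_m(zu)$ are treated as known coefficients, the limiting system \eqref{eq:plain:redex:spec} is \emph{linear} in the unknowns $L_\infty(z,u)$ and $A(z,u)$, its coefficients are not polynomials, and its dominant singularity is fixed rather than moving --- all incompatible with the Drmota--Lalley--Woods premises. You in fact notice the tension yourself when, two sentences later, you argue that $\rho(u)=\rho$ is constant. The paper's resolution is the first bullet of \autoref{remark:conditions:limit:system}: the irreducible-context-free-schema condition is replaced by the weaker requirement that the limiting system admit a suitable Puiseux expansion, which is exactly what \autoref{proposition:redex:discovery:plain} supplies (fixed singularity $z^\ast$ independent of $u$ and a square-root expansion there). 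With that substitution the rest of your argument goes through as in the paper; the exponential-convergence point you flag at the end is handled just as you anticipate, via the $z^{m+1}/(1-z)$ index tails and the exponentially decaying differences $M_\infty(z)-M_m(z)$ obtained by applying \autoref{proposition:infinite:system} to the auxiliary normal-form/neutral-term system.
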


\begin{proof}
    Recall that the system~\eqref{eq:plain:redex:spec} defining the generating function \(
    L_\infty(z, u) \) corresponding to plain terms with \( u \) marking visited
    nodes is written as
    \begin{align}\label{eq:advanced:marking:leftmost:redex}
    \begin{split}
        L_\infty(z, u) &= u z L_\infty(z, u) + A(z, u)
        , \\
        A(z, u) &= u \dfrac{z}{1 - z} + z^2 u^2 {L_\infty(z, 1)}^2
        + z u M(z u) L_\infty(z, u) + uz (A(z, u) - M(zu)) L_\infty(z, 1)
    \end{split}
\end{align}
with \( M(z) \) being the generating function associated with so-called neutral
    terms and also Motzkin numbers, see~\autoref{rem:motzkin:numbers:neutral:terms}:
\begin{equation}
    M(z) = \dfrac{
        1 - z - \sqrt{(1+z)(1 - 3z)}
    }{
        2z
    }.
\end{equation}

\begin{figure}[hbt]
\centering
\begin{tikzpicture}[>=stealth',level/.style={sibling distance = 1.5cm/#1,
  level distance = 1.5cm, thick}]
\matrix
{
\draw
node[triangle]{\( \mathcal L_m \)};
&
\draw
node{\(\boldsymbol =\)};
&
\draw
--(0,0.5)
node(abstraction1)[arn_r]{\( \lambda\)}
    child{
        node[triangle, text depth = -1.6ex](rt)
        {\scalebox{0.65}{\hspace{-2.7ex}{
            \(
                \left.
                    \mathcal L_{m+1}
                \right|_{u=1}
            \)
        }}}
    }
++(1,-0.5)
node{\( \boldsymbol + \)}
++(1.2, 0)
node[triangle_b]{ \( \mathcal A_m \) }
;
\node[rectangle,dashed,draw,fit=(abstraction1),
rounded corners=3mm,inner sep=4pt, bblue , very thick] {};
\\
\draw
node[triangle_b]{\( \mathcal A_m \)};
&
\draw
node{\(\boldsymbol =\)};
&
\draw
node(var)[arn_g]{\( \mathcal D_m \) }
++(1,0)
node{\( \boldsymbol + \)}
++(2.0,0.5)
node[arn_n](app){\( @ \)}
    child[level distance=0.5cm, sibling distance=2.3cm]{
        node(abstraction2)[arn_r]{ \( \lambda \) }
        child{
            node[triangle, text depth = -1.6ex](rt)
            {\scalebox{0.65}{\hspace{-2.7ex}{
                \(
                    \left.
                        \mathcal L_{m+1}
                    \right|_{u=1}
                \)
            }}}
        }
    }
    child[level distance=1.5cm, child anchor=north]{
        node[triangle, text depth = -1.2ex](rt)
        {\scalebox{0.8}{\hspace{-2.3ex}{
            \(
                \left.
                    \mathcal L_m
                \right|_{u=1}
            \)
        }}}
    }
++(1.5, -0.5)
node{\( \boldsymbol + \)}
++(2.3, +0.5)
node[arn_n](app2){ \( @ \) }
    child[child anchor=north, sibling distance = 2.5cm]{
        node(nf1)[triangle_g]{ \( \mathcal M_m \) }
    }
    child[child anchor=north]{
        node[triangle]{ \( \mathcal L_m \) }
    }
++(1.6, -0.5)
node{\( \boldsymbol + \)}
++(1.6, +0.5)
node[arn_n](app3){ \( @ \) }
    child[child anchor=north]{
        node[triangle_v, text depth = -1.2ex]{
            \scalebox{0.7}{
                \hspace{-.55cm}
            \(
         \mathcal A_m \! \backslash \mathcal M_m
             \)} }
    }
    child[child anchor=north]{
        node[triangle, text depth = -1.2ex](rt)
        {\scalebox{0.8}{\hspace{-2.3ex}{
            \(
                \left.
                    \mathcal L_m
                \right|_{u=1}
            \)
        }}}
    }
;
\node[rectangle,dashed,draw,fit=(app),
rounded corners=3mm,inner sep=4pt, bblue, very thick] {};
\node[rectangle,dashed,draw,fit=(app2),
rounded corners=3mm,inner sep=4pt, bblue, very thick] {};
\node[rectangle,dashed,draw,fit=(app3),
rounded corners=3mm,inner sep=4pt, bblue, very thick] {};
\node[rectangle,dashed,draw,fit=(nf1),
rounded corners=3mm,inner sep=8pt, bblue, very thick] {};
\node[rectangle,dashed,draw,fit=(var),
rounded corners=3mm,inner sep=8pt, bblue, very thick] {};
\node[rectangle,dashed,draw,fit=(abstraction2),
rounded corners=3mm,inner sep=4pt, bblue , very thick] {};
\\
};
\end{tikzpicture}
\caption{\label{fig:search:time:closed}Specification corresponding to redex
search time in closed lambda terms.}
\end{figure}

Note that including indices in~\eqref{eq:advanced:marking:leftmost:redex}
according to \( m \)\nobreakdash-openness we obtain
\begin{align}
    \label{eq:redex:search:closed}
    \begin{split}
        L_m(z, u) &= uz L_{m+1}(z, u) + A_m(z, u)
        , \\
        A_m(z, u) &= uz \dfrac{1 - z^m}{1 - z} + z^2 u^2 L_m(z, 1) L_{m+1}(z, 1)
        \\&
        + z u M_m(zu) L_m(z, u) + uz (A_m(z, u) - M_m(zu)) L_m(z, 1)
    \end{split}
\end{align}
where \( M_m(z) \) is the generating function for \( m \)-open neutral lambda
terms. The sequence of functions \( (M_m(z))_{m=0}^\infty \) can be obtained
from the system of equations
\begin{align}\label{eq:advanced:marking:leftmost:redex:ii}
    \begin{split}
        N_m(z) &= z N_{m+1}(z) + M_m(z) , \\
        M_m(z) &= z M_m(z) N_m(z) + z \dfrac{1 - z^m}{1 - z} .
    \end{split}
\end{align}
Comparing~\eqref{eq:advanced:marking:leftmost:redex:ii} with its limiting
counterpart~\eqref{eq:normal:forms:neutral:terms} we note that the \(
M_\infty(z) - M_m(z) \) decays at exponential speed as \( m \to \infty \) by
virtue of~\autoref{proposition:infinite:system}.  As additionally follows from
the theorem, the functions \( (M_m(z))_{m=0}^\infty \) share the same singularity
\( \rho = 1/3 \).

Next, the system of
equations~\eqref{eq:advanced:marking:leftmost:redex:ii} can be represented in
the form
\begin{equation}
    \label{eq:advanced:marking:leftmost:redex:iii}
\begin{pmatrix}
    L_m(z,u) \\ A_m(z,u)
\end{pmatrix}
=
    \vec{\mathcal K}_m (L_m(z,u), L_{m+1}(z,u), A_m(z,u), z, u).
\end{equation}
In order to apply~\autoref{proposition:infinite:system}
to~\eqref{eq:advanced:marking:leftmost:redex:iii} we need to replace the condition that the
limiting system satisfies the premises of Drmota--Lalley--Woods theorem by an
assumption that the limiting system admits Puiseux expansion.
It was proven in~\autoref{proposition:redex:discovery:plain} that
\( L_\infty(z, u) \) has a fixed singularity \( z^\ast \) which is independent
of \( u \).
Therefore, all the functions \(
L_m(z, u) \) have a fixed singularity \( z^\ast \) and by
applying~\autoref{proposition:discrete:limit:laws}, we obtain that the limiting
distribution of the redex search time is discrete.
\end{proof}

\subsection{Node height profile in closed terms}
Like in~\autoref{subsection:height:profile:plain}, in the current section we
consider unary and natural height profile of variables, abstractions and
applications in closed lambda terms.  For this purpose, we provide a variation
of the semi-large powers theorem
(see~\autoref{proposition:semilarge:power:theorem}).
\begin{theorem}
    \label{theorem:semilarge:powers:variation}
    Let \( (f_k(\rho z))_{k \geq 0} \) be a sequence of functions
    analytic in delta-domain \( \Delta(R) \)
    (see~\autoref{proposition:transfer:theorem}) for some \( R > \rho \)
    admitting Puiseux series expansions in form of
    \begin{equation}
        f_k(z) \sim \sigma_k - a_k \sqrt{1 - \dfrac{z}{\rho}}
    \end{equation}
    as \( z \to \rho \).
    Assume there exist \( \beta \) and \( \widehat \sigma \) such that
    the sequences \( (\sigma_k)_{k \geq 0} \) and \( (a_k)_{k \geq 0} \)
    satisfy
        \begin{equation}
            \sum_{j=0}^k \dfrac{a_j}{\sigma_j} \sim \beta k
            \quad \text{and} \quad
            \lim_{k \to \infty} \prod_{j=0}^k \sigma_k \to \widehat\sigma.
        \end{equation}
        Then, for $x$ in any compact subinterval of \( (0, +\infty) \), as \( n
        \to \infty \), it holds
    \begin{equation}
        [z^n] \prod_{j=0}^k f_j(z) \sim \widehat \sigma
        \dfrac{\rho^{-n}}{n} S(\beta x)
        \quad \text{and} \quad
        x = \dfrac{k}{\sqrt n}
    \end{equation}
    where \( S(x) \) is the Rayleigh function defined
    in~\autoref{proposition:semilarge:power:theorem}.
\end{theorem}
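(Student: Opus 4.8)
The plan is to reduce the product $\prod_{j=0}^k f_j(z)$ to the setting of the classical semi-large powers theorem (see~\autoref{proposition:semilarge:power:theorem}) by passing to logarithms. Write $G_k(z) := \prod_{j=0}^k f_j(z)$. I would first factor out the constant terms of the Puiseux expansions, so that near $z=\rho$ each factor reads
\[
    f_j(z) = \sigma_j\left(1 - \frac{a_j}{\sigma_j}\sqrt{1 - \frac{z}{\rho}}
    + O\!\left(\left|1 - \frac{z}{\rho}\right|\right)\right).
\]
Taking logarithms and using $\log(1-w) = -w + O(w^2)$ gives, uniformly as $z \to \rho$ in the delta-domain,
\[
    \log f_j(z) = \log\sigma_j - \frac{a_j}{\sigma_j}\sqrt{1 - \frac{z}{\rho}}
    + O\!\left(\left|1 - \frac{z}{\rho}\right|\right).
\]
Summing over $j = 0, \ldots, k$ and invoking the two hypotheses, namely $\prod_{j=0}^k \sigma_j \to \widehat\sigma$ (equivalently $\sum_{j=0}^k \log\sigma_j \to \log\widehat\sigma$) and $\sum_{j=0}^k a_j/\sigma_j \sim \beta k$, I would obtain the singular expansion
\[
    \log G_k(z) = \log\widehat\sigma - \beta k\sqrt{1 - \frac{z}{\rho}}
    + O\!\left(k\left|1 - \frac{z}{\rho}\right|\right),
\]
and hence, upon exponentiating, $G_k(z) \sim \widehat\sigma\,\exp\!\left(-\beta k\sqrt{1 - z/\rho}\right)$. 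This is exactly the dominant singular behaviour of $\widehat\sigma\,h(z)^k$ for any $h$ with expansion $h(z) = 1 - \beta\sqrt{1 - z/\rho} + O(|1 - z/\rho|)$, since $\log h(z) \sim -\beta\sqrt{1 - z/\rho}$ forces $h(z)^k \sim \exp\!\left(-\beta k\sqrt{1 - z/\rho}\right)$.

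With this singular expansion in hand, I would extract coefficients through a Hankel-type contour around $z = \rho$, following the proof of~\autoref{proposition:semilarge:power:theorem} verbatim (cf.~\cite[Theorem IX.16]{flajolet09}). The substitution $z = \rho(1 - s/n)$ collapses $z^{-n}$ to $\rho^{-n}e^{s}$ and the square root to $n^{-1/2}\sqrt{s}$, so the exponent $-\beta k\sqrt{1 - z/\rho}$ becomes $-\beta x\sqrt{s}$ with $x = k/\sqrt{n}$. The remaining contour integral is then the standard integral representation of the Rayleigh function $S(\beta x)$, scaled by the constant $\widehat\sigma$ and the prefactor $\rho^{-n}/n$, which yields the asserted estimate $[z^n] G_k(z) \sim \widehat\sigma\,\rho^{-n} n^{-1}\, S(\beta x)$.

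The hard part will be uniformity in the semi-large regime $k \asymp \sqrt{n}$. The square-root term contributes at scale $k\sqrt{1 - z/\rho} \asymp \sqrt{n}\cdot n^{-1/2} = O(1)$ along the critical arc of the contour, which is precisely what produces the Rayleigh profile; but the accumulated error $O\!\left(k\,|1 - z/\rho|\right) \asymp \sqrt{n}\cdot n^{-1} = O(n^{-1/2})$ must be shown to be negligible after coefficient extraction. This demands that the per-factor remainders, both the $O(|1 - z/\rho|)$ term of $f_j$ and the quadratic contribution $\tfrac12(a_j/\sigma_j)^2(1 - z/\rho)$ from the logarithm, be bounded uniformly in $j$, which in turn requires the ratios $a_j/\sigma_j$ to stay bounded. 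I would secure this from the analyticity of the $f_k(\rho z)$ on a \emph{common} delta-domain together with the exponential convergence of the Puiseux coefficients established in~\autoref{proposition:infinite:system}: there $(\sigma_j)$ and $(a_j)$ converge to finite limits at geometric speed, so $a_j/\sigma_j$ is bounded and the summed error is genuinely $O\!\left(k\,|1 - z/\rho|\right)$, vanishing in the limit.
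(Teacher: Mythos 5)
Your proposal follows essentially the same route as the paper's proof: factor out the constants $\sigma_j$, replace the exponent $k\log f(z)$ of the classical semi-large powers argument by $\sum_{j=0}^k \log f_j(z)$ so that the hypothesis $\sum_{j=0}^k a_j/\sigma_j \sim \beta k$ plays the role of $ka$, and push the product through the same Hankel-contour substitution $z=\rho(1-t/n)$ to land on the Rayleigh integral. Your discussion of uniformity of the per-factor error terms in the regime $k\asymp\sqrt{n}$ is in fact more explicit than the paper's, which silently assumes it.
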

\begin{proof}
    We recall that in the course of the proof of the semi-large power theorem,
    see~\cite[Theorem IX.16]{flajolet09}, the coefficient \( [z^n] {f(z)}^k \) is
    expressed as the following complex contour integral with the help of
    Cauchy's integral theorem:
    \begin{equation}
        [z^n] {f(z)}^k = \dfrac{1}{2 \pi i} \oint {f(z)}^k \dfrac{dz}{z^{n+1}}
    = \dfrac{1}{2 \pi i} \oint e^{h_{n,k}(z)} \dfrac{dz}{z},
    \quad
    h_{n,k}(z) = k \log f(z) - n \log z.
    \end{equation}

    With the change of variables $z = \rho(1 - t/n)$ the coefficient
    $[z^n]{f(z)}^k$ can be accordingly approximated by the following real
    integral:
\begin{equation}
\label{eq:real:integral}
    [z^n] {f(z)}^k
    \sim
    - \dfrac{\rho^n}{n}
    \dfrac{1}{2 \pi i}
    \int_{0}^\infty e^{t - a x \sqrt{t}} dt.
\end{equation}
As proven in the referenced literature, this yields the Rayleigh approximation.
In the statement of the current theorem, the function \( h_{n,k}(z) \), i.e.~the
logarithm of the sub-integral expression, is replaced by
    \begin{equation}
    \widetilde h_{n,k} = \sum_{j=0}^{k} \log f_j(z) - n \log z.
    \end{equation}
Accordingly, with the variable change \( z = \rho(1 - t/n) \) the coefficient
\( [z^n] \prod_{j=0}^k f_j(z) \) becomes
    \begin{equation}
    [z^n] \prod_{j=0}^k f_j(z)
    =
    \prod_{j=0}^k \sigma_k
    \cdot
    [z^n] \prod_{j=0}^k
    \left(
        1 - \dfrac{a_k}{\sigma_k} \sqrt{1 - \dfrac z \rho}
    \right)
    \sim - \dfrac{\rho^n \prod_{j=0}^k \sigma_k}{n}
    \dfrac{1}{2 \pi i} \int_0^\infty e^{t - \beta x \sqrt t} dt
    \end{equation}
which has the same form as~\eqref{eq:real:integral}, finishing the proof.
\end{proof}

\begin{prop}
    Let \( H_n \) be a random variable denoting the unary (respectively natural)
    height of a uniformly random  variable in a random closed lambda term.
    Then, with \( x \) in any compact subinterval of \( (0, +\infty) \), \( H_n
    \) follows the Rayleigh limiting distribution
    \begin{equation}
        \mathbb P(H_n = k) \sim
        \dfrac{C}{\sqrt n} \cdot \dfrac{x}{2} e^{-x^2/4},
        \quad \text{where} \quad
        x = \dfrac{k}{\sqrt n} \cdot C
    \end{equation}
with \( C \doteq 4.30187 \) for unary height and \( C \doteq 1.27162 \) for
    the natural height.
\end{prop}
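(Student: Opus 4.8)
The plan is to adapt the analysis of \autoref{proposition:hprofile:vars:plain} to the openness-indexed, infinite setting, the essential new ingredient being the product-form variant of the semi-large powers theorem, \autoref{theorem:semilarge:powers:variation}. First I would introduce a doubly-indexed family of generating functions $C_{m,k}(z,u)$, where $C_{m,k}$ enumerates $m$-open \lterms~with $u$ marking the variables lying at unary height $k$ relative to the root of the subterm. Mirroring the plain recursion, passing under an abstraction raises the openness index by one while lowering the remaining height by one, whereas an application leaves both unchanged, and the indices available in $\classL[m]$ are marked precisely at level $k=0$. This yields, for $k>0$,
\begin{equation}
C_{m,k}(z,u) = z\frac{1-z^m}{1-z} + zC_{m+1,k-1}(z,u) + zC_{m,k}(z,u)^2,
\end{equation}
together with the boundary equation $C_{m,0}(z,u) = uz\frac{1-z^m}{1-z} + zL_{m+1}(z) + zC_{m,0}(z,u)^2$.

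Next I would differentiate at $u=1$ and set $D_{m,k}(z):=\left.\partial_u C_{m,k}(z,u)\right|_{u=1}$; since $C_{m,k}(z,1)=L_m(z)$, the resulting system is linear and telescopes. Solving the recurrence $D_{m,k}(1-2zL_m)=zD_{m+1,k-1}$ with base case $D_{m,0}=\frac{z}{1-2zL_m}\cdot\frac{1-z^m}{1-z}$ gives the clean product representation
\begin{equation}
D_{0,k}(z) = \frac{1-z^k}{1-z}\prod_{j=0}^{k}\frac{z}{1-2zL_j(z)},
\end{equation}
which is exactly the structure anticipated by \autoref{theorem:semilarge:powers:variation}, with factors $f_j(z):=\frac{z}{1-2zL_j(z)}$. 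From the Puiseux expansions $L_j(z)\sim a_j - b_j\sqrt{1-z/\rho}$, valid for each $j$ by~\cite{BodiniGitGol17} or equivalently by \autoref{proposition:infinite:system}, I obtain $f_j(z)\sim\sigma_j - \widetilde a_j\sqrt{1-z/\rho}$ with $\sigma_j=\frac{\rho}{1-2\rho a_j}$ and $\widetilde a_j$ computable from $a_j$ and $b_j$.

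The decisive step, and the reason the limiting constant coincides with the plain case, is the verification of the two premises of \autoref{theorem:semilarge:powers:variation}. By the exponential convergence guaranteed by \autoref{proposition:infinite:system} we have $a_j\to a_\infty=L_\infty(\rho)=\frac{1-\rho}{2\rho}$ and $b_j\to b_\infty$ at a geometric rate. Substituting $a_\infty$ gives $1-2\rho a_\infty=\rho$, hence $\sigma_\infty=1$, so $\sigma_j\to 1$ geometrically and $\sum_j|\sigma_j-1|<\infty$; therefore the product $\prod_{j=0}^k\sigma_j$ converges to a finite nonzero limit $\widehat\sigma$. Simultaneously $\widetilde a_j/\sigma_j\to 2b_\infty=\beta\doteq 4.30187$, precisely the constant appearing in the plain factor $\frac{z}{1-2zL_\infty(z)}\sim 1-\beta\sqrt{1-z/\rho}$, so a Cesàro argument yields $\sum_{j=0}^k\widetilde a_j/\sigma_j\sim\beta k$. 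With both premises met, \autoref{theorem:semilarge:powers:variation} produces $[z^n]D_{0,k}(z)\sim\widehat\sigma\,\frac{1}{1-\rho}\,\frac{\rho^{-n}}{n}S(\beta x)$ with $x=k/\sqrt n$; dividing by the total number of variables $\sum_{k}[z^n]D_{0,k}(z)$, accessed through \autoref{proposition:transfer:theorem}, normalizes this to the asserted Rayleigh law with $C=\beta$.

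For the natural height profile I would run the identical argument with the factors $g_j(z):=z+2zL_j(z)$ replacing $f_j$, reflecting that each ascent by one natural level may occur through either an abstraction or an application; here $g_\infty(\rho)=\rho+2\rho a_\infty=1$, so the constant terms again tend to $1$ while the square-root coefficients tend to $\gamma=\beta\rho\doteq 1.27162$, giving the second constant. I expect the main obstacle to be the rigorous interchange of the two limits: the height level $k$ scales like $\sqrt n$ and so tends to infinity together with $n$, while the openness index $m$ is pushed to infinity through the limiting system. Reconciling these is exactly what \autoref{theorem:semilarge:powers:variation} is built to handle, but feeding it the required uniform Puiseux data — in particular the geometric decay of $|a_j-a_\infty|$ and $|b_j-b_\infty|$ underwriting both the convergence of $\prod\sigma_j$ and the estimate $\sum\widetilde a_j/\sigma_j\sim\beta k$ — is the delicate point, resting entirely on the exponential convergence established in \autoref{proposition:infinite:system}.
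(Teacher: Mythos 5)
Your proposal is correct and follows essentially the same route as the paper: the same openness-indexed family $C_{m,k}(z,u)$ (your decrementing-$k$ convention is just a reindexing of the paper's ``height $k-m$'' convention), the same telescoped product $\frac{1-z^k}{1-z}\prod_{j=0}^{k}\frac{z}{1-2zL_j(z)}$ for the derivative, the same verification of the premises of \autoref{theorem:semilarge:powers:variation} via the exponential convergence of the Puiseux coefficients from \autoref{proposition:infinite:system}, and the same replacement of the factors by $z+2zL_j(z)$ for natural height. The constants you compute ($\sigma_\infty=1$, $\beta=2b_\infty$, $\gamma=\beta\rho$) match the paper's.
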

\begin{proof}
    Let \( C_{m,k}(z, u) \) denote the bivariate generating function
    corresponding to \( m \)\nobreakdash-open \lterms~where variable \( u \)
    marks de~Bruijn indices at unary height \( k-m \). Certainly, $C_{m,k}(z,1)
    = L_m(z)$ for each $m$ and $k$.  Note that, the functions \(
    \seq{C_{m,k}(z,u)}_{m=0}^\infty \) satisfy jointly
    \begin{equation}
    \begin{cases}
        C_{m,k}(z,u) = z\dfrac{1 - z^{m}}{1 - z}
        + z C_{m+1, k}(z, u) + z C_{m,k}(z, u)^2
        & \text{ if } m < k,\\[0.3cm]
        C_{m,k}(z,u) = zu\dfrac{1 - z^{m}}{1 - z}
        + z L_{m+1}(z) + z C_{m,k}(z, u)^2
        & \text{ if } m = k,\\[0.3cm]
        C_{m,k}(z,u) = L_m(z)
        & \text{ if } m > k.
    \end{cases}
    \end{equation}
A straightforward induction yields
\begin{equation}
    \left.
        \dfrac{\partial}{\partial u}
        C_{0,k}(z, u)
    \right|_{u=1}
    =
    \prod_{j=0}^k \dfrac{z}{1 - 2z L_j(z)}
    \cdot
    \dfrac{1 - z^k}{1 - z}.
\end{equation}
This function is amenable to asymptotic analysis of their coefficients
by~\autoref{theorem:semilarge:powers:variation}.
First show that in the respective Puiseux expansions of the functions
\[
    \frac{z}{1 - 2z L_j(z)} \sim \sigma_j - c_j \sqrt{1 - z/\rho}
\]
the sequence \( \sigma_j \) tends to \( 1 \) at exponential speed, and the
sequence \( c_j \) tends to a limit \( 2 b_\infty \doteq 4.30187 \) again at exponential
speed. This holds because in the course of the proof
of~\autoref{proposition:infinite:system} we have shown that the sequences of
coefficients of the Puiseux expansion of \( (L_j(z))_{j=0}^\infty \)
(respectively, the the sequence of first coefficients \( L_j(\rho) \), and the
sequence of the second coefficients) tend to their respective limits, i.e.~to
the coefficients of the Puiseux expansion of \( L_\infty(z) \) exponentially
fast. Comparing with the Puiseux expansion of \( \frac{z}{1 - 2z L_\infty(z)} \)
given in the proof of~\autoref{proposition:hprofile:vars:plain} we obtain the
limiting values of the sequences \( (\sigma_j)_{j=0}^\infty \) and \(
(c_j)_{j=0}^\infty \). Since the speed of convergence is exponential, the
product \( \prod_{j=0}^k \sigma_j \) converges to some \( \widehat \sigma \),
and the sum of the ratios \( c_j / \sigma_j \) tends to a linear function
\( \beta k = 2 b_\infty k \).

Note that up to a normalising constant, the height profile of other
parameters, namely the height profile distribution of abstractions and applications,
remains asymptotically the same because from the generating function viewpoint
only the multiple in front of the product
\( \prod_{j=0}^k f_j(z) \)
changes (see~\autoref{proposition:hprofile:vars:plain}).

In the same manner, there can be obtained Rayleigh distribution for natural
height profile of different parameters. For example, in the case of variable
height profile, we obtain the system of equations for the family of generating
functions \( C_{m,k}(z, u) \) for \( m \)-open lambda terms with variable \( u
\) marking de Bruijn indices at unary height \( k-m\):
\[
    \begin{cases}
        C_{m,k}(z, u) = z \dfrac{1 - z^m}{1 - z}
        + z C_{m+1,k}(z, u) + z C_{m+1,k}(z, u)^2,
        & 0 \leq m < k;\\[0.3cm]
        C_{m,k}(z, u) = u z \dfrac{1 - z^m}{1 - z}
        + z L_{m+1}(z) + z L_m(z)^2,
        & m = k;\\[0.3cm]
        C_{m,k}(z, u) = L_m(z),
        & m > k.
    \end{cases}
\]
This implies
\[
    \left.
        \dfrac{\partial}{\partial u}
        C_{0,k}(z, u)
    \right|_{u=1}
    =
    \prod_{j=1}^k (z + 2z L_j(z)) \cdot z\dfrac{1 - z^k}{1 - z}.
\]
Using the same argument as in the previous case, and taking into account
two first terms of Puiseux expansion of \( (z + 2z L_\infty(z)) \)
(see proof of~\autoref{proposition:hprofile:vars:plain}), we obtain again
Rayleigh distribution, with the same parameter as for plain lambda terms.
\end{proof}

\section{Conclusions}\label{sec:conclusions}
We investigated the statistical properties of \lterms~in the de~Bruijn notation,
providing some insight into their internal, quantitative characteristics. In
essence, our results suggest that random \lterms, both plain and closed,
exhibit typical traits of various tree-like structures. For instance, the distribution
of sub-patterns inside random \lterms~is typically Gaussian whereas their
corresponding finding time tends to a discrete limit distribution. Similarly, the
height profile of random terms follows the Rayleigh distribution.

Nonetheless, some of the investigated parameters do not have analogues in other
tree-like structures such as, for instance, $m$\nobreakdash-openness or the number of
free variables. In both cases we have established a discrete limiting
distribution.  Remarkably, we have not discovered substantially different
statistical traits of plain and closed lambda terms;  however, we found that
among the statistics with discrete limiting distributions, the distribution in
closed terms is often different from the associated distribution in plain terms.

Given the general algorithmic frameworks meant for the construction of effective
exact- and approximate-size combinatorial samplers, such as Boltzmann
samplers~\cite{Duchon04boltzmannsamplers} and the recursive
method~\cite{NijenhuisWilf1978,FLAJOLET19941}, presented parameter
specifications provide a novel source of effective sampling methods for
\lterms~with additional control over their specific combinatorial parameters. In
this context, most parameter specifications associated with plain terms are
finite and hence also readily applicable.  Remaining, infinite specifications
are a bit more involved.  Nonetheless, an appropriate truncation of the
specification followed by a final rejection phase allows to discard inadmissible
terms. The exponential convergence of intermediate, truncated specifications
rationalises such an approach and provides effective samplers for corresponding
\lterms.  Let us also remark that so generated terms do not have to be
restricted to their natural parameter distributions. It is possible to gain an
additional control over the expected parameter values using a dedicated tuning
procedure which distorts the intrinsic parameter distribution and hence allows
for a skewed parameter distribution sampler
construction~\cite{doi:10.1137/1.9781611975062.9}.  Consequently, the presented
analysis provides means for an effective construction of various samplers for
(plain or closed) \lterms~with additional control over their parameter
distribution.

Few more aspects of the parameter analysis of \lterms~remain untouched. For
instance, our empirical data suggests that the distribution of binding
abstractions, both in plain and closed terms, is Gaussian. The same holds for
the number of open subterms. Alas, the theoretical verification of our empirical
findings is left open. Moreover, we have not investigated other, well-known parameters. Let us mention,
for instance, the height distribution of random closed terms, or the
distribution of certain extremal statistics, such as the maximal de~Bruijn
index, longest lambda run, or the maximal number of variables bound to a single
abstraction.  We conjecture that the behaviour of these parameters in closed
terms does not substantially differ from the behaviour of respective parameters
in plain terms.  Finally, the question of generalised $m$\nobreakdash-openness
also has not been settled and the corresponding techniques are still to be
developed.

Arguably, from the viewpoint of analytic combinatorics, our novel result
complements the existing result of Drmota, Gittenberger and
Morgenbesser~\cite{infinitesystems} on infinite systems. In our formulation, the
infinite system is not required to be strongly connected. Consequently, we
conjecture that the properties of the Jacobian operator of the infinite system
are not sufficient to deduce the result, in contrast with the mentioned paper.
In other words, we conjecture that the condition of exponential convergence is
essential. Moreover, we discovered that it is possible to rewrite the infinite
system defining closed \lterms~as a strongly connected system, however the
Jacobian of the resulting system is not compact. Alas, the
framework~\cite{infinitesystems} is not applicable.

Consequently, we finish the paper with an even more general question: what can
be stated about the properties of infinite systems which are either not strongly
connected or have a non-compact Jacobian operator?

\bibliographystyle{plain}
\bibliography{lambda-statistics}
\end{document}